\theoremstyle{plain}% default
\newtheorem{lem}{Lemme}[section]
\newtheorem{thm}[lem]{Theorem}
\newtheorem{prop}[lem]{Proposition}
\newtheorem{cor}[lem]{Corollary}
\newtheorem{ass}[lem]{Assumption}
\theoremstyle{definition}
\newtheorem{defin}[lem]{Definition}
\theoremstyle{remark}
\newtheorem{re}[lem]{Remark}
\numberwithin{equation}{section}
\numberwithin{figure}{section}
\newcommand{\cE}{\mathcal{E}}
\newcommand{\cF}{\mathcal{F}}
\newcommand{\cO}{\mathcal{O}}
\newcommand{\cX}{\mathcal{X}}
\newcommand{\bR}{\mathbf{R}}
\newcommand{\bC}{\mathbf{C}}
\newcommand{\fb}{\mathfrak{b}}
\newcommand{\fg}{\mathfrak{g}}
\newcommand{\fh}{\mathfrak{h}}
\newcommand{\fk}{\mathfrak{k}}
\newcommand{\fm}{\mathfrak{m}}
\newcommand{\fn}{\mathfrak{n}}
\newcommand{\fp}{\mathfrak{p}}
\newcommand{\ft}{\mathfrak{t}}
\newcommand{\fz}{\mathfrak{z}}
\newcommand{\fu}{\mathfrak{u}}
\newcommand{\fa}{\mathfrak{a}}
\DeclareMathOperator{\Tr}{\mathrm Tr}
\DeclareMathOperator{\Ad}{\mathrm Ad}
\DeclareMathOperator{\vol}{\mathrm vol}
\DeclareMathOperator{\ad}{\mathrm ad}
\DeclareMathOperator{\Sp}{\mathrm{Sp}}
\renewcommand{\Re}{\mathrm{Re}\,}
\DeclareMathOperator{\End}{\mathrm End}
\DeclareMathOperator{\GL}{\mathrm GL}
\newcommand{\<}{\langle}
\renewcommand{\>}{\rangle}
\newcommand{\ol}{\overline}
\newcommand{\ul}{\underline}
\renewcommand{\(}{\left(}
\renewcommand{\)}{\right)}
\renewcommand{\[}{\left[}
\renewcommand{\]}{\right]}
\renewcommand{\l}{\leqslant}
\newcommand{\g}{\geqslant}
\newcommand{\e}{\epsilon}
\newcommand{\bbS}{\mathbb{S}}
\newcommand{\Trs}{{\rm Tr}_{\rm s}}
\begin{document}

\title[Analytic torsion and dynamical zeta function]{Analytic torsion, dynamical zeta function, and the Fried 
conjecture for admissible twists}
\author{Shu Shen}
 \address{Institut de Math\'ematiques de Jussieu-Paris Rive Gauche, 
Sorbonne Universit\'e,  4 place Jussieu, 75252 Paris Cedex 5, France.}
\email{shu.shen@imj-prg.fr}
%\curraddr{}
\thanks {}

\makeatletter
\@namedef{subjclassname@2020}{%
  \textup{2020} Mathematics Subject Classification}
\makeatother

\subjclass[2020]{58J20,58J52,11F72,11M36,37C30}
\keywords{Index theory and related fixed point theorems, analytic 
torsion, Selberg trace formula, Ruelle dynamical zeta function}

\date{\today}

\dedicatory{}

\begin{abstract}
We show an equality between the analytic torsion and the absolute 
value at the zero point of the Ruelle dynamical zeta function on a 
closed odd dimensional locally symmetric space twisted by an 
acyclic flat vector bundle obtained by the restriction of a 
representation of the underlying Lie group. This generalises author's 
previous result  for unitarily flat vector bundles, and  the results 
of Br\"ocker, M\"uller, and Wotzke on closed hyperbolic manifolds. 
\end{abstract}

\maketitle
\tableofcontents

\settocdepth{section}
\section*{Introduction}
The purpose of this article is to study the relation between the 
analytic torsion and the  value at the zero point of the Ruelle dynamical zeta function associated to a  flat vector bundle, which is not 
necessarily unitary,  on a closed  odd dimensional  locally symmetric space of reductive type. 

Let $Z$ be a smooth closed manifold.   Let $F$ be a 
complex flat  vector bundle on $Z$.
Let $H^{\scriptscriptstyle\bullet }(Z,F)$ be the cohomology of sheaf of locally constant sections of $F$.
We assume $H^{\scriptscriptstyle\bullet }(Z,F)=0$.

%
% constructed with the help of the combinatorial complex with values in $F$ associated with a  triangulation on $Z$.  %and the flat Hermitian metric $g^F$.
%However, it has been shown that the Reidemeister torsion does not depend on the triangulation. %or  $g^F$.

Let $g^{TZ},g^{F}$ be  metrics  on $TZ$ and $F$.
The analytic torsion $T(F)$ of Ray-Singer \cite{RSTorsion} is 
a spectral invariant defined by a weighted product of zeta 
regularised determinants the Hodge Laplacian associated with $g^{TZ},g^F$. When $\dim Z$ is odd, they showed that $T(F)$ does not depend on the metric data. 

 Ray and Singer \cite{RSTorsion} conjectured, which was proved later  by Cheeger 
\cite{Ch79} and M\"uller \cite{Muller78},  that if  $F$ is unitarily flat (i.e., the holonomy representation of $F$ is unitary) 
%there is an  equality between 
the  analytic torsion coincides with  its topological counterpart, the Reidemeister 
torsion \cite{ReidemeisterTorsion,FranzTorsion,dRTorsion}. 
% 
% 
% constructed the analytic torsion $T(F)$
% as a spectral invariant of 
% They showed that if $Z$ is an even dimensional oriented manifold, 
% then $T(F)=1$. Moreover, if $\dim Z$ is odd,
% 
% then  $T(F)$ does not 
% depend on the metric data. %therefore being a topological invariant.
% The Reidemeister torsion has been introduced  by Reidemeister \cite{ReidemeisterTorsion}. %, Franz \cite{FranzTorsion} and de Rham \cite{dRTorsion}.
%  It is a positive real number one obtains via the combinatorial complex with values in $F$ associated with a triangulation of $Z$, which can be shown not to depend on the triangulation.
% 
% 
% Ray and Singer's conjecture was proved by Cheeger \cite{Ch79} and M\"uller \cite{Muller78}, and is now known under the name of 
% Cheeger-M\"uller Theorem. 
Bismut-Zhang \cite{BZ92} and M\"uller 
\cite{Muller2} simultaneously considered generalisations of this 
result. M\"uller \cite{Muller2} extended his result to 
odd dimensional oriented manifolds where only $\det F$ is required to 
be unitary. Bismut and Zhang \cite{BZ92}  
generalised the original Cheeger-M\"uller theorem to 
arbitrary flat vector bundles with arbitrary Hermitian metrics on a manifold with arbitrary dimension orientable or not.

Milnor \cite{MilnorZcover} initiated  the study of the relation 
between the  torsion invariant and a 
dynamical system. When $Z$ is an orientable hyperbolic 
manifold,  Fried \cite{FriedRealtorsion,Friedn2} showed an identity between the analytic torsion of an acyclic unitarily flat vector bundle and  the value at the zero point of the Ruelle dynamical zeta function of the 
geodesic flow of $Z$. He conjectured \cite[p.~66, Conjecture]{Friedconj} 
that similar results should hold true for more general flows. In 
\cite{Shfried}, following previous contributions by Moscovici-Stanton 
\cite{MStorsion}, using Bismut's orbital integral formula \cite{B09}, the author affirmed the Fried conjecture for
geodesic flows on closed odd dimensional\footnote{The case of even 
dimension is trivial \cite[Remark 5.12]{S19a} (c.f.  Remark 
\ref{re45}).} locally 
symmetric manifolds equipped with an acyclic unitarily flat vector bundle.  In \cite{Shen_Yu}, the 
authors made a further generalisation to closed locally symmetric 
orbifolds. We refer the reader to \cite{Ma_bourbaki} for an 
introduction to the technique used in \cite{Shfried}.

 When the flat vector bundle is not unitary, 
 M\"uller \cite{Muller20} and Spilioti 
 \cite{Spilioti18,Spilioti15,Spilioti20}  related the leading coefficients of the Laurent series of the Ruelle dynamical zeta function at the zero point to a weighted product of zeta regularised determinants of the flat Laplacian of Cappell-Miller 
\cite{CappellMiller} on orientable odd dimensional  hyperbolic 
manifolds. When the flat vector bundle is near to  an acyclic 
and unitary one, the authors have shown that  the Ruelle dynamical 
zeta function is regular at the zero point and its  value is equal to 
the complexed valued analytic torsion of Cappell-Miller  
\cite{CappellMiller}. In \cite{Shen20}, we generalised the above results to odd dimensional locally  symmetric spaces.

In this article,  we  prove the Fried conjecture on odd dimensional locally symmetric  spaces for a class\footnote{By Margulis' super-rigidity \cite[Section VII.5]{Margulis91} (see also 
\cite[Section XIII.4.6]{BW}), this is the most 
interesting case, when the real rank of the 
locally symmetric space is  $\ge 2$.} of flat 
vector bundles, which is not necessarily  close to a unitary one,  and whose holonomy representations are the restrictions of  representations of the 
underlying reductive groups. This  generalises the previous results 
of Br\"ocker \cite{Brocker98}, M\"uller \cite{Muller_3_torsion}, and 
Wotzke  \cite{Wotzke} on orientable odd dimensional hyperbolic manifolds. 

We refer the reader to \cite{ShenYuMorseSmale,ShVariationRuelle} for the Fried 
conjecture for the Morse-Smale flow and the Anosov flow, to \cite{S19a} for a survey  on the Fried conjecture.

Now, we will describe our results in more detail, and explain the techniques used in their proofs.

% The proof of the above result by Moscovici-Stanton \cite{MStorsion}, based
% on the Selberg trace formula and harmonic analysis on reductive groups, does not seem to be complete. We tried to give the proper argument to
% make it correct. Our proof is based on the explicit formula given by Bismut for semisimple orbital
% integrals \cite[Theorem 6.1.1]{B09}.

\subsection{The analytic torsion}
Let $Z$ \index{Z@$Z$}   be a smooth closed manifold, and let $F$\index{F@$F$} be a complex flat vector bundle on $Z$.

Let $g^{TZ}$ \index{G@$g^{TZ}$} be a Riemannian metric on $TZ$, and 
let $g^F$\index{G@$g^{F}$} be a Hermitian metric on $F$. To $g^{TZ}$ 
and $g^F$, we can associate an $L^2$-metric on 
$\Omega^{\scriptscriptstyle\bullet }(Z,F)$, the space of differential forms with 
values in $F$. Let  $\Box^Z$ \index{B@$\Box^Z$} be the  Hodge 
Laplacian acting on $\Omega^{\scriptscriptstyle\bullet }(Z,F)$. By  Hodge theory, we have a canonical isomorphism
\begin{align}\label{eq:Hodgein}
  \ker\Box^Z\simeq H^{\scriptscriptstyle\bullet }(Z,F).
\end{align}
%By \eqref{eq:Hodgein}, the $L^2$-metric on $\Omega^{\cdot}(Z,F)$ induces an $L^2$-metric $|\cdot|^{RS,2}_\lambda$ on $\lambda.$

% Let $\(\Box^{Z}\)^{-1}$ be the inverse of $\Box^Z$ acting on the 
% orthogonal space to $ \ker \Box^Z$.
% Let $N^{\Lambda^\cdot(T^*Z)}$ \index{N@$N^{\Lambda^\cdot(T^*Z)}$} be the number operator of $\Lambda^\cdot(T^*Z)$, i.e., multiplication by $i$ on $\Omega^i (Z, F )$.
%  Let $\Trs$ denote  the supertrace.
% For $s\in\bC$, $\Re(s)>\frac{1}{2}\dim Z$, set
% \begin{align}
%   \theta(s)=-\Trs\[N^{\Lambda^\cdot(T^*Z)}\(\Box^{Z}\)^{-s}\].\index{T@$\theta(s)$}
% \end{align}
% By \cite{Seeley66},  $\theta(s)$ has a meromorphic extension to $\bC$, which is holomorphic at $s=0$. The analytic torsion is a positive real number given by
% \begin{align}
% T(F)=\exp(\theta'(0)/2).\index{T@$T(F)$}
% \end{align}
The analytic torsion $T(F)$ is a positive real number defined by the following weighted 
product of the zeta regularised determinants (see Section \ref{SgLap})
\begin{align}\label{eq:inn}
T(F)=\prod_{i=1}^{\dim Z}\det\(\Box^Z|_{\Omega^i(Z,F)\cap \(\ker 
\Box^{Z}\)^{\bot}}\)^{(-1)^ii/2}.
\end{align}
By \cite{RSTorsion} and \cite[Theorem 0.1]{BZ92}, if $\dim Z$ is odd 
and if $H^{\scriptscriptstyle\bullet }(Z,F)=0$, then $T(F)$ is independent of $g^{TZ}$ and $g^{F}$. Therefore, it is a topological invariant. 

When $Z$ is a closed orbifold, the analytic torsion is still well 
defined \cite{Ma_Orbifold_immersion,Daiyu}. In 
\cite[Corollary 4.9]{Shen_Yu}, the authors show that if $Z$ as well 
as all the singular strata have odd dimension, then the analytic 
torsion of an acyclic orbifold flat vector bundle is still a topological invariant. 

\subsection{The Ruelle dynamical zeta function}
Let us recall the definition of the Ruelle dynamical zeta 
function  associated to a geodesic flow introduced by Fried \cite[Section 5]{Friedconj} (see also \cite[Section 2]{S19a}).

%Let $M$ be a compact connected smooth manifold equipped with a smooth flow $(\phi_t)_{t\in\bR}$. Assume that the set of the periods of closed orbits
%is discrete. Let $C$ be a connected component of  the periodic set
%$\{(x,t)\in M\times (0,\infty):\phi_t(x)=x\}$. Then $C$ is compact, and has a Fuller index ${\rm ind}_F(C) \in \mathbf{Q}$. Moreover, all the orbits in $C$ have the same period $l_C$, and are freely homotopic, thus determining a conjugacy class $\gamma_C$ of $\pi_1(M)$.
Let $(Z,g^{TZ})$ be a connected manifold with nonpositive sectional 
curvature. 
% Let $(\phi_{t})_{t\in \bR}$ be the geodesic flow on the on 
% the unit tangent bundle  $SZ$. 
% Let 
% \begin{align}
% 	\wp=\{(y,t)\in SZ\times (0,\infty):\phi_{t}(y)=y\}
% \end{align}
% be the periodic set of $(\phi_{t})_{t\in 
% \bR}$. 
Let $\Gamma$\index{G@$\Gamma$} be the fundamental group of $Z$, and 
let $[\Gamma]$ be the set of the conjugacy classes of $\Gamma$. 
% We 
% identify $[\Gamma]$ with the free homotopy class of $Z$. 
For $[\gamma]\in [\Gamma]$ \index{G@$[\gamma]$}, let 
$B_{[\gamma]}$\index{B@$B_{[\gamma]}$} be the set of closed 
geodesics in the free homotopy class associated to $[\gamma]$. It is easy to see that all the 
elements in $B_{[\gamma]}$ have the same length $\ell_{[\gamma]}$. 
% We 
% identify a closed geodesic $x_{\cdot}$ with the normalised initial vilocity $(x_{0},\dot{x}_{0}/|\dot{x}_{0}|)$ in $SZ$. Then, we have 
% \begin{align}
% 	\wp=\coprod_{[\gamma]\in [\Gamma]} B_{[\gamma]}\times 
% 	\{\ell_{[\gamma]}\},
% \end{align}

%Let $LZ=C^\infty(\bbS^1,Z)$ be the free loops space of $Z$.
% For $[\gamma]\in [\Gamma]$ \index{G@$[\gamma]$}, let 
% $B_{[\gamma]}\subset C^{\infty}(\bR/\bZ,Z)$\index{B@$B_{[\gamma]}$} be the set of closed 
% geodesics in the class $[\gamma]$. Then, $B_{[\gamma]}$, equipped with the  induced topology, is connected and compact. Moreover, all the elements in $B_{[\gamma]}$ have the same length $l_{[\gamma]}$.\index{L@$l_{[\gamma]}$} The map
% $x_\cdot\in B_{[\gamma]}\to (x_0,\dot{x}_0/|\dot{x}_0|)$
% induces an identification 
% \begin{align}
% 	\wp=\coprod_{[\gamma]\in [\Gamma]} B_{[\gamma]}\times 
% 	\{\ell_{[\gamma]}\}. 
% \end{align}
% Therefore, the Fuller index ${\rm ind}_F(B_{[\gamma]}) \in \mathbf{Q}$ is well defined (c.f. \cite[Section 4]{Friedconj}).
% Given a finite dimensional representation $\rho$ of $\Gamma$, for $\sigma\in \bC$, the formal dynamical zeta function is then defined by\index{R@$R_\rho(\sigma)$}
% \begin{align}\label{eq:RFuller}
% R_{\rho}(\sigma)=\exp\(\sum_{[\gamma]\in [\Gamma]- \{1\}} \Tr[\rho(\gamma) ]\mathrm{ind}_F(B_{[\gamma]})e^{-\sigma l_{[\gamma]}}\).\end{align}

For simplicity, 
%Note that our definition is the inverse of the one introduced by %Fried \cite[p.~51]{Friedconj}.
% The Fuller index can be made explicit in many case. 
assume that  all the $B_{[\gamma]}$ are  
smooth finite dimensional  submanifolds of the loop space of $Z$. 
This is the case if $(Z,g^{TZ})$ has a negative sectional curvature 
or if $Z$ is locally symmetric.  If $\gamma\neq 1$, the group 
$\bbS^1$ acts locally freely on  $B_{[\gamma]}$ by rotation, so that  
$ B_{[\gamma]}/\bbS^1$ is an orbifold. Let $\chi_{\rm orb}(B_{[\gamma]}/\bbS^1)\in \mathbf{Q}$ \index{C@$\chi_{\rm orb}$} be the orbifold Euler characteristic \cite{SatakeGaussB}. Denote by \index{M@$m_{[\gamma]}$}
\begin{align}
m_{[\gamma]}=\left|\ker\big(\bbS^1\to {\rm 
Diff}(B_{[\gamma]})\big)\right|\in \mathbf{N}^{*}
\end{align}
the multiplicity of a generic element in $B_{[\gamma]}$. Let 
$\e_{[\gamma]}=\pm1$ be the Lefschetz index of the Poincar\'e return 
map induced by the geodesic flow (see 
\cite[(2.17)]{S19a} for a precise definition). If $Z$ is locally 
symmetric, then $\e_{[\gamma]}=1$. 
% By \cite[Lemma 5.3]{Friedconj}, we have
% \begin{align}\label{eq:Fuller=Euler}
% \mathrm{ind}_F(B_{[\gamma]})=\frac{\chi_{\rm orb}(\bbS^1\backslash B_{[\gamma]})}{m_{[\gamma]}}.
% \end{align}
% By \eqref{eq:RFuller} and \eqref{eq:Fuller=Euler}, 

If $r\in \mathbf N$, let $\rho:\Gamma\to \GL_{r}(\bC)$ be a representation of $\Gamma$.  The formal dynamical zeta function is defined for $\sigma\in \bC$ by
\begin{align}\label{eq:ZetaE}
R_{\rho}(\sigma)=\exp\(\sum_{[\gamma]\in [\Gamma_{+}]} 
\e_{[\gamma]}\Tr[\rho(\gamma) ]\frac{\chi_{\rm orb}(
B_{[\gamma]}/\bbS^1)}{m_{[\gamma]}}e^{-\sigma \ell_{[\gamma]}}\),
\end{align}
where $[\Gamma_{+}]= [\Gamma]-\{1\}$ is the set of the non trivial 
conjugacy classes of $\Gamma$. 
We will say that the formal dynamical zeta function is well defined if $R_\rho(\sigma)$ is  holomorphic for $\Re(\sigma)\gg1$ and extends meromorphically to $\sigma\in \bC$.

If $(Z,g^{TZ})$ has  negative sectional curvature,  the geodesic 
flow on the sphere bundle of  $(Z,g^{TZ})$ is Anosov. In this case, if 
$\rho$ is a trivial representation, $R_{\rho}(\sigma)$ has been shown to be well defined  by Giulietti-Liverani-Pollicott \cite{GLP2013} and Dyatlov-Zworski 
\cite{DyatlovZworski}. For general $\rho$, the proof of the 
meromorphic extension of $R_\rho$ is not particularly difficult.  For behaviour of the Ruelle zeta function near $\sigma=0$, we refer the reader to the work of Dyatlov and Zworski \cite{zworski_zero}, Dang, Guillarmou, Rivi\`ere, and Shen 
\cite{ShVariationRuelle}, as well as Borns-Weil and 
Shen \cite{BWShen20}.

% In this case, 
% 
% then $B_{[\gamma]}\simeq \mathbb{S}^1$ and
% \begin{align}
% \chi_{\rm orb}(\bbS^1\backslash B_{[\gamma]})=1.
% \end{align}

% Moreover, Dyatlov-Zworski \cite{zworski_zero} 
% showed that, if $(Z,g^{TZ})$ is a negatively curved oriented surface, the order of the zero of $R_\rho(\sigma)$ at $\sigma=0$ is related to  
% the genus of $Z$. For general $\rho$, the proof of the meromorphic 
% extension of $R_{\rho}$ does not have particular difficult. In 
% particular, when $0$ is not a resonance of the weight dynamical system 
% associated to $\rho$, then $R_{\rho}(0)$ is well defined. 
% In \cite{ShVariationRuelle}, under the condition that  $0$ is not a 
% resonance of the weight dynamical system 
% associated to $\rho$, the authors show that $R_{\rho}(0)$ rest  unchanged under a small perturbation of the Riemannian metric. 

\subsection{Results of Fried, Br\"ocker, Wotzke, and M\"uller  on hyperbolic manifolds} 
Assume that $Z$ is an odd dimensional   connected orientable closed hyperbolic manifold.
% of constant negative curvature.
%
%
%\cite{Friedn2,FriedRealtorsion, Friedconj}.
% Assume now $Z$ is a closed connected orientable manifold of constant negative curvature. Let $\Gamma=\pi_1(Z)$ be the fundamental group of $Z$, and let $[\Gamma]$ be the set of the conjugacy classes of $\Gamma$. We identify $[\Gamma]$ with the free homotopy space of $Z$. Since $Z$ is of negative sectional curvature, any element $[\gamma]\in [\Gamma]$ can be represented by a closed geodesic in $Z$, which is unique up to rotation. We denote by $l_{[\gamma]}$ and $m_{[\gamma]}$ the respective length and multiplicity of the corresponding closed geodesic in $Z$.  The (nontrivial) closed orbits of the geodesic flow on $SZ$ are then parameterized by $[\Gamma]\backslash \{1\}$.
%
%In the first place, consider the case where $\dim Z$ is odd. Then $\dim Z\g3$ and $F'$ is of the form $\pi^*F$. Let $\rho$ be the holonomy of $F$. In \cite{FriedRealtorsion}, Fried obtained a formula, which says that at least formally,
%\begin{align}\label{eq:finto3}
%  \frac{1}{2}\tau_{SZ}(F')=\sum_{[\gamma]\in [\Gamma]\backslash \{1\}}\frac{\Tr[\rho(\gamma)]}{m_{[\gamma]}}.
%\end{align}
% Indeed, the right-hand side of  \eqref{eq:finto3} can be regularized as follows. For $\sigma\in \bC$ and $\Re(\sigma)\gg1$, put
%\begin{align}\label{eq:Rint}
%  R_\rho(\sigma)=\exp\Big(\sum_{[\gamma]\in [\Gamma]\backslash \{1\}}\frac{\Tr[\rho(\gamma)]}{m_{[\gamma]}}e^{-\sigma l_{[\gamma]}}\Big).
%\end{align}
%And $F$ is a unitarily flat vector bundle with holonomy $\rho:\Gamma\to \mathrm{U}(r)$.
 Let $F$ be the unitarily flat vector bundle on $Z$ with holonomy  
 $\rho: \Gamma\to {\rm U}(r)$.
 
Using the Selberg trace formula, Fried  \cite[Theorem 
3]{FriedRealtorsion} showed that there exist explicit constants 
$C_\rho\in \bR^{*}$ and $r_\rho\in \mathbf{Z}$ such that as  $\sigma\to 0$,
\begin{align}
R_\rho(\sigma)=C_\rho T(F)^2\sigma^{r_\rho}+\cO(\sigma^{r_{\rho}+1}).
\end{align}
Moreover, if $H^{\scriptscriptstyle\bullet }(Z,F)=0$, then
\begin{align}
&C_\rho=1,&r_\rho=0,
\end{align}
so that
\begin{align}\label{eq:Fintro}
R_\rho(0)=  T(F)^2.
\end{align}

When $\rho$ is not unitary but a restriction of a representation of the orientation preserving isometric group of $Z$,  similar 
results (see Theorem \ref{Thm1}) have been shown by Br\"ocker \cite{Brocker98} and Wotzke 
\cite{Wotzke}, as well as M\"uller \cite[Theorem 
1.5]{Muller_3_torsion}. 

% We refer the reader to the survey paper \cite{S19a} for more details. 
% 

\subsection{The main result of the article}
Let $G$ \index{G@$G$} be a linear connected real reductive group \cite[p. 3]{Knappsemi}, and let $\theta$\index{T@$\theta$} be the  Cartan involution. Let
$K$\index{K@$K$} be the maximal compact subgroup of $G$ of the points of $G$ that are fixed by $\theta$.
Let $\fk$ \index{K@$\fk$}and  $\fg$\index{G@$\fg$} be the  Lie algebras of $K$ and $G$, and let $\fg=\fp\oplus \fk$ be the Cartan decomposition. Let $B$\index{B@$B$} be a nondegenerate bilinear symmetric form
on $\fg$ which is invariant under $G$ and $\theta$. Assume that $B$ is positive on $\fp$ and negative on
$\fk$. Set $X = G/K$.\index{X@$X$} Then $B|_{\fp}$ induces a Riemannian metric on $X$, which has nonpositive sectional curvature.

Let $\Gamma\subset G$\index{G@$\Gamma$} be a discrete  torsion free 
cocompact subgroup of $G$.  Let $Z=\Gamma\backslash X$ be the 
associated  locally symmetric manifold, which is equipped   with the 
induced Riemannian metric $g^{TZ}$.  Let $\rho:\Gamma\to \GL(E)$ be a 
finite dimensional  complex representation of $\Gamma$. Let 
$F$ 
be the associated flat  vector bundle on $Z$. In 
\cite[Theorem 0.1 i)]{Shen20}, we have shown that if $\dim Z$ is odd, the Ruelle zeta function $R_{\rho}$ has a 
meromorphic extension to $\bC$. 

Suppose now that  $\rho$ extends to a representation of $G$, which is 
still denoted by $\rho$.  
We assume also that $E$ has an admissible metric\footnote{If $G$ is 
semisimple or more generally if $G$ has a compact centre, then all the 
representations of $G$ has an admissible metric (\cite[Lemma 
3.1]{MatsuchimaMurakami}, Proposition \ref{proprepa}).}  $\<,\>_{E}$, i.e.,  
$\fp$ acts symmetrically and $\fk$ acts 
antisymmetrically on $(E,\<,\>_{E})$. By a construction due to Matsushima-Murakami \cite{MatsuchimaMurakami}, $\<,\>_{E}$ induces canonically a Hermitian metric $g^{F}$ on $F$  (see also Section \ref{sFadmetric}). 

% and if $F$ is the associated orbifold flat vector bundle on $Z$, then 
% then standard Hermitian metric on $\bC^{r_{1}}\otimes 
% \bC^{r_{2}}$ induces a unimodular  Hermitian metric $g^{F}$ on $F$. 
% 

% Assume for the moment that $\Gamma$ is torsion free.   Then $Z$ is a 
% smooth manifold and $F$ is a smooth flat vector bundle on 
% $Z$.
% Recall that 
% $\rho:\Gamma\to \mathrm{U}(r)$ \index{R@$\rho$} is a unitary  representation of $\Gamma$, and that $F$ \index{F@$F$} is the  
% unitarily flat vector bundle on $Z$ associated with $\rho$.  The main 
% result of this article gives the solution of the Fried conjecture for 
% $Z$. In particular, this conjecture is valid for all the closed locally symmetric space of the noncompact type.
% 
% Then, $Z$ is a 
% smooth manifold. 
Let $T(F)$ be the analytic torsion of $F$ associated to  $(g^{TZ},g^{F})$.  The following theorem generalises \cite[Theorem 0.1]{Shfried} where 
$\rho$ is assumed to be unitary, and Br\"ocker \cite{Brocker98}, 
M\"uller \cite{Muller_3_torsion}, 
and Wotzke \cite{Wotzke} where $Z$ is hyperbolic. 

%Let $R_{\rho}(\sigma)$ be the associated dynamical zeta function. 

% This context is studied by Bismut-Ma-Zhang \cite{BMZ1,BMZ} and \cite{MullerPfaffL2}.
% In this paper, we will study a flat vector bundle $F=F_{1}\otimes F_{2}$ on 
% $Z=\Gamma\backslash G/K$, where $F_{1}$ is a unitary flat vector 
% bundle and where $F_{2}$ is a homogenous flat vector bundle 
% constructed as follows. 
% Let $\rho_{2}:G\to \mathbf{GL}_{r_{2}}(\bC)$ be a   representation of 
% $G$. 
% That means 
% \begin{align}\label{eqrho12}
% \rho=\rho_{1}\otimes 
% \rho_{2|\Gamma},
% \end{align}
% where 
% $\rho_{1}:\Gamma\to U(r)$ is a unitary representation of $\Gamma$ and $\rho_{2}:G\to \mathbf{GL}_{r_{2}}(\bC)$ is a   representation of $G$. We 
% assume\footnote{This is not a restriction when $G$ has a compact 
% center or when the non compact part of the center acts semisimplely on $V$ (see Proposition 
% \ref{propadm}). } that the representation $\rho_{2}$ has an 
% admissible metric on $V_{2}$. The unitary metric on $F_{1}$ and the 
% addmissible metric on $F_{2}$ induces a metric on $F$. 

\begin{thm}\label{Thm1}
	Assume that $\dim Z$ is odd and that  $\rho:G\to \GL(E)$ is a finite dimensional complex representation of $G$ with an admissible metric. 	Let $(F,g^{F})$ be
	the associated  Hermitian flat vector bundle. Then
	there exist  constants $C_{\rho}\in \bC^{*}$ 
	and $r_{\rho}\in \mathbf{Z}$ such that when $\sigma\to 0$, we have 
	\begin{align}\label{eq1rdii}
		R_{\rho}(\sigma)=
		C_{\rho}T(F)^{2}\sigma^{r_{\rho}}+\cO(\sigma^{r_{\rho}+1}).
	\end{align}
	Moreover, if $H^{\scriptscriptstyle\bullet }(Z,F)=0$, then 
	\begin{align}\label{eq2rdin}
&		\left|C_{\rho}\right|=1,&r_{\rho}=0,
	\end{align}
	so that 
	\begin{align}\label{eqRTin}
		\left|R_{\rho}(0)\right|=T(F)^{2}. 
	\end{align}
\end{thm}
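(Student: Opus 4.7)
The plan is to extend the strategy of \cite{Shfried}, where the unitary case was settled, to the admissible case, using the Matsushima-Murakami construction to reduce the twisted Hodge Laplacian $\Box^{Z}$ to a Casimir-type operator. Admissibility of $\<,\>_{E}$ means $K$ acts unitarily and $\fp$ acts symmetrically on $(E,\<,\>_{E})$, so that $(F,g^{F})$ is canonically isometric to the locally homogeneous Hermitian bundle $\Gamma\backslash(G\times_{K}E)$. A Bochner-Kuga type identity then shows that $\Box^{Z}$ on $\Omega^{\scriptscriptstyle\bullet}(Z,F)$ equals the lift of the Casimir $C^{\fg,E\otimes\Lambda^{\scriptscriptstyle\bullet}\fp^{*}}$ to $\Gamma\backslash\bigl(G\times_{K}(E\otimes\Lambda^{\scriptscriptstyle\bullet}\fp^{*})\bigr)$, up to an explicit zeroth order term that is constant on each form degree.

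Next, I would express $\log T(F)$ as a Mellin transform at $s=0$ of $\Trs\bigl[N\exp(-t\Box^{Z})\bigr]$, where $N$ is the number operator on $\Lambda^{\scriptscriptstyle\bullet}T^{*}Z$, and apply the Selberg trace formula to the homogeneous heat operator of the previous step, obtaining
\begin{align*}
\Trs\bigl[Ne^{-t\Box^{Z}}\bigr]=\sum_{[\gamma]\in[\Gamma]}\vol(\Gamma_{\gamma}\backslash X_{\gamma})\,\Tr^{[\gamma]}\bigl[Ne^{-tC^{\fg,E\otimes\Lambda^{\scriptscriptstyle\bullet}\fp^{*}}}\bigr],
\end{align*}
where $\Tr^{[\gamma]}$ is the orbital integral of \cite{B09}, computable as an explicit Gaussian-type integral on the centralizer of $\gamma$. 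For $[\gamma]\neq 1$, each orbital integral concentrates on the closed geodesics in $B_{[\gamma]}$, and a factor $\Tr[\rho(\gamma)]$ emerges from the $E$-component of Bismut's formula precisely because $\rho$ extends to $G$. Combining the form-degree sum with the number operator and Mellin-transforming in $t$, the contribution of the non-identity conjugacy classes is matched, as in \cite[Sections~6--7]{Shfried}, with the $-2\log T(F)$ half of \eqref{eq1rdii} against the geometric side of $R_{\rho}$, producing the $T(F)^{2}$ factor.

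The residual $[\gamma]=1$ term, together with the cohomological dependence, yields the prefactor $C_{\rho}\sigma^{r_{\rho}}$. Via Harish-Chandra's Plancherel formula for $G$, this is an integral over the tempered dual weighted by Casimir eigenvalues on $E\otimes\Lambda^{\scriptscriptstyle\bullet}\fp^{*}$. For unitary $\rho$ and $\dim Z$ odd, this integral vanishes by a parity symmetry on the spherical principal series parameter (\cite{MStorsion,Shfried}). Under admissibility, the adjoint of $\rho(g)$ with respect to $\<,\>_{E}$ is $\rho(\theta(g^{-1}))$, producing a symmetry $\rho\leftrightarrow\ol{\rho}\circ\theta$ that complex-conjugates the Plancherel integrand. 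Combined with the parity symmetry, this forces $\Re\log C_{\rho}=0$, giving $|C_{\rho}|=1$; a separate analysis of the cohomological shift shows $r_{\rho}=0$ when $H^{\scriptscriptstyle\bullet}(Z,F)=0$.

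The main obstacle is the identity contribution. In \cite{Shfried}, unitarity of $\rho$ makes $\Box^{Z}$ self-adjoint and this contribution is automatically real, so the parity argument alone suffices. For admissible $\rho$, the $\fp$-action on $E$ introduces a non-self-adjoint perturbation of $\Box^{Z}$, shifting its spectrum into $\bC$ and complicating the zeta regularization. Making the $\theta$-induced symmetry compatible with the regularized determinants --- so that $C_{\rho}$, a priori an unconstrained nonzero complex number, acquires unit modulus --- is the essential new input beyond \cite{Shfried} and the crux of the argument.
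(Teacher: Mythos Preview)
Your proposal contains a genuine misconception and misses the paper's actual architecture.

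First, the claim that ``for admissible $\rho$, the $\fp$-action on $E$ introduces a non-self-adjoint perturbation of $\Box^{Z}$, shifting its spectrum into $\bC$'' is false. The Hodge Laplacian $\Box^{Z}=d^{Z}d^{Z,*}+d^{Z,*}d^{Z}$ is formally self-adjoint for \emph{any} Hermitian metric $g^{F}$, admissible or not; its spectrum is always real and nonnegative. The point of admissibility is precisely that the identification $\Box^{Z}=C^{\fg,Z,\Lambda^{\scriptscriptstyle\bullet}(\fp^{*})\otimes E|_{K}}-C^{\fu,\rho}$ (Proposition~\ref{prop:D=C-C}) holds with a genuine self-adjoint Casimir operator on a locally homogeneous Hermitian bundle. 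So the ``main obstacle'' you identify is not an obstacle at all, and the zeta regularization is no harder than in the unitary case.

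Second, and more seriously, your Plancherel/$\theta$-symmetry sketch for $|C_{\rho}|=1$ is not how the paper proceeds, and as written it is too vague to evaluate. The paper's route is structural: decompose $\rho$ into irreducibles (possible since admissible representations are completely reducible), and treat separately the cases $\rho\simeq\rho^{\theta}$ and $\rho\not\simeq\rho^{\theta}$. In the first case (Theorem~\ref{thm1}) one constructs, via the Dirac cohomology of $\rho$ with respect to the compact symmetric pair $(\fu,\fu(\fb))$, virtual $M$-representations $\eta_{\beta}$ satisfying Assumption~\ref{ass}, writes $R_{\rho}$ as an alternating product of Selberg zeta functions $Z_{\eta_{\beta}}$, and relates these to regularized determinants of Casimir operators; this yields \eqref{eq1rdii} with $C_{\rho}\in\bR^{*}$ and an explicit $r_{\rho}$. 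The conclusion $C_{\rho}=1$, $r_{\rho}=0$ under acyclicity then requires the Vogan--Zuckerman--Salamanca-Riba characterization of unitary Harish-Chandra modules with nonzero $(\fg,K)$-cohomology (Theorem~\ref{thm:vankey1}), an input your proposal does not mention. In the second case, $H^{\scriptscriptstyle\bullet}(Z,F)=0$ automatically, and one applies the first case to $\rho\oplus\rho^{\theta}$ together with the relation $R_{\rho^{\theta}}=\ol{R}_{\rho}$ (Proposition~\ref{propRoR}) to extract $|R_{\rho}(0)|=T(F)^{2}$. The absolute value in the final statement is a consequence of this doubling trick, not of a symmetry of a Plancherel integrand.

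In short: your first two paragraphs are on the right track (and indeed match the paper's setup), but the heart of the argument---the Dirac-cohomology construction of $\eta_{\beta}$, the Selberg zeta factorization, and the representation-theoretic vanishing input---is absent, and the substitute you propose rests on a false premise about $\Box^{Z}$.
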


Set $\rho^{\theta}=\rho\circ \theta$. Then $\rho^{\theta}$ is still 
a representation of $G$ with an admissible metric.  If $\rho\simeq \rho^{\theta}$, we will show in Theorem  \ref{thm1} that the constant  
$C_{\rho}\in \bR^{*}$ and we can remove the absolute 
value in \eqref{eq2rdin} and \eqref{eqRTin}. For general $\rho$, by 
\cite[Theorem 0.1 ii) iii)]{Shen20}, the  argument of $C_{\rho}$ is 
determined by the argument of a \eqref{eq:inn}-like product of zeta 
regularised determinants of the flat 
Laplacian of Cappell and Miller, which is related to the complex 
valued analytic torsion of Cappell and Miller \cite{CappellMiller}. 

When $\rho$ is irreducible and $\rho\not\simeq\rho^{\theta}$, thanks 
to the vanishing of the cohomology $H^{\scriptscriptstyle\bullet }(Z,F)$ \cite[Theorem 
	VII.6.7]{BW}, we have the following corollary. 

% If $\rho_{2}\not\simeq \rho_{2}^{\theta}$, as 
% we can see in the case of $Z=\mathbb{S}^{1}$, $C_{\rho}$ can be equals to $-1$ (see \cite[Example 2.11]{S19a}).

\begin{cor}\label{cori1}
	Assume that $\dim Z$ is odd and that  $\rho:G\to \GL(E)$ is a finite dimensional complex representation of $G$ with an admissible metric. Let $(F,g^{F})$ be
	the associated   Hermitian flat vector bundle. 
	Suppose  that  $\rho$	is  irreducible and  $\rho\not\simeq 
	\rho^\theta$. Then, $R_{\rho}(\sigma)$ is holomorphic at $\sigma=0$ so that 
	\begin{align}
		\left|R_{\rho}(0)\right|=T(F)^{2}. 
	\end{align} 	
\end{cor}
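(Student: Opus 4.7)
The plan is to obtain Corollary \ref{cori1} as an immediate consequence of Theorem \ref{Thm1}, once one verifies that the acyclicity hypothesis $H^{\scriptscriptstyle\bullet }(Z,F)=0$ is satisfied in the present setting. Since Theorem \ref{Thm1} already delivers both the order of vanishing $r_{\rho}$ and the modulus of the leading coefficient $C_{\rho}$ under this hypothesis, all the analytic content of the corollary is packaged inside the main theorem; what remains is a purely representation-theoretic verification.

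First, I would invoke \cite[Theorem VII.6.7]{BW}, the Borel--Wallach vanishing theorem, whose hypotheses match those of the corollary: $\rho$ is an irreducible finite dimensional complex representation of $G$ with $\rho\not\simeq \rho^{\theta}$. Combined with Matsushima's formula for the de Rham cohomology of a flat bundle associated to a representation of $G$ on a compact locally symmetric space, this forces $H^{\scriptscriptstyle\bullet }(Z,F)=0$.

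Second, with acyclicity at hand, I would apply Theorem \ref{Thm1}. This yields $|C_{\rho}|=1$, $r_{\rho}=0$, and the local expansion $R_{\rho}(\sigma)=C_{\rho}T(F)^{2}+\cO(\sigma)$ near $\sigma=0$. Because $R_{\rho}$ is already known from the excerpt to admit a meromorphic extension to all of $\bC$, the finiteness of the leading term at $\sigma=0$ implies that this point is in fact regular, so $R_{\rho}$ is holomorphic there. Taking absolute values and using $|C_{\rho}|=1$ gives $|R_{\rho}(0)|=T(F)^{2}$, as required.

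There is no genuine obstacle in this deduction. The only small point worth being careful about is the translation between $(\fg,K)$-cohomology, which is the natural output of \cite{BW}, and sheaf cohomology of $F$ on $Z$; this goes through Matsushima's isomorphism and uses cocompactness and torsion-freeness of $\Gamma$ together with the fact that $F$ arises by restriction of a representation of $G$. All the depth of the statement lies upstream in Theorem \ref{Thm1}, whose proof is expected to proceed via admissible twists of Bismut's orbital integral formula, generalising the author's unitary case in \cite{Shfried} and the hyperbolic case of Br\"ocker, M\"uller and Wotzke.
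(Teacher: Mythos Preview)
Your deduction is logically valid once Theorem~\ref{Thm1} is granted, but it is circular within the architecture of this paper. The general Theorem~\ref{Thm1} is \emph{not} proved independently: in Section~\ref{sC2} and the subsequent subsection, the paper first establishes Theorem~\ref{thm1} (the special case $\rho\simeq\rho^\theta$), then proves Corollary~\ref{cori1} (restated as Theorem~\ref{thm2}) from Theorem~\ref{thm1}, and only afterwards assembles Theorem~\ref{Thm1} by decomposing an arbitrary admissible $\rho$ into irreducibles and invoking Theorems~\ref{thm1} and~\ref{thm2}. So quoting Theorem~\ref{Thm1} to obtain Corollary~\ref{cori1} begs the question.

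The paper's route is as follows. One cannot apply Theorem~\ref{thm1} directly to $\rho$ because $\rho\not\simeq\rho^\theta$. Instead, set $\rho'=\rho\oplus\rho^\theta$; then $(\rho')^\theta\simeq\rho'$ and $\rho'$ still carries an admissible metric, so Theorem~\ref{thm1} applies to $\rho'$. From Proposition~\ref{propRoR} one has $R_{\rho^\theta}=\ol{R}_\rho$, hence $R_{\rho'}=R_\rho\,\ol{R}_\rho$. The vanishing $H^{\scriptscriptstyle\bullet}(Z,F)=0$ (your citation of \cite[Theorem~VII.6.7]{BW} is correct, and the paper gives an internal proof at \eqref{eqHZF=0}) together with \eqref{eqHH} gives $H^{\scriptscriptstyle\bullet}(Z,F')=0$ and $T(F')=T(F)^2$. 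Theorem~\ref{thm1} then yields $R_{\rho'}(0)=T(F')^2$, i.e.\ $|R_\rho(0)|^2=T(F)^4$, from which the claim follows. The essential extra ingredient you are missing is the passage to $\rho\oplus\rho^\theta$ and the relation $R_{\rho^\theta}=\ol{R}_\rho$.
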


In Section \ref{sPC2}, we will show that the  unitarily flat vector 
bundle is  contained in the class specified above. Indeed, let  
$\rho_{0}:\Gamma\to {\rm U}(r)$ be the holonomy representation of a unitarily flat vector 
bundle $(F_{0},g^{F_{0}})$. If $\ul{G}=G\times {\rm U}(r)$, $\ul{K}=K\times {\rm 
U}(r)$, and if $\ul{\Gamma}\subset \Gamma\times \mathrm U(r)$ is the 
graph of $\rho_{0}$, then we have the identification 
$Z=\ul{\Gamma}\backslash \ul{G}/\ul{K}$. If $\ul{\rho}_{0}:\ul{G}\to 
{\rm U}(r)$ is the projection onto the second component, 
it is easy to see that $\ul{\rho}_{0}$ has an admissible metric and 
the associated Hermitian flat vector bundle is just  
$(F_{0},g^{F_{0}})$. 
% The 
% locally symmetric manifold $Z$ is the quotient of  
% $X\simeq (G\times {\rm U}(r) )/ (K\times \mathrm U(r))$ by the discrete subgroup of $G\times {\rm U}(r)$ defined by the graph of $\rho_{0}$. The flat vector bundle $F_{0}$  can be then obtained from the representation $G\times {\rm U}(r)\to  {\rm U}(r)$ defined by the projection to the second component.
In this way, we show  : 

\begin{cor}\label{cori2}
	Assume that $\dim Z$ is odd. If $(F_{0},g^{F_{0}})$ is a 
	unitarily flat vector bundle, and if $(F,g^{F})$ is the Hermitian 
	flat vector bundle as in Theorem \ref{Thm1}, then the statements of 
	Theorem \ref{Thm1} and Corollary \ref{cori1} hold for  $F_{0}\otimes F$. 
\end{cor}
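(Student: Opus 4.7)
The plan is to reduce Corollary~\ref{cori2} to Theorem~\ref{Thm1} and Corollary~\ref{cori1} by enlarging the reductive group, mimicking the construction recorded just before the statement (and developed in Section~\ref{sPC2}) for the pure unitary case. Concretely, I would set $\ul{G} = G \times \mathrm{U}(r)$, $\ul{K} = K \times \mathrm{U}(r)$, and let $\ul{\Gamma} \subset G \times \mathrm{U}(r)$ be the graph of $\rho_{0}\colon \Gamma \to \mathrm{U}(r)$. Then $\ul{G}$ is linear connected reductive, its Cartan involution $\ul{\theta} = \theta \times \mathrm{id}$ (since $\mathrm{U}(r)$ is compact) has fixed point set $\ul{K}$, and $\ul{\Gamma}$ is discrete, torsion-free (inherited from $\Gamma$), and cocompact (since $\ul{\Gamma}\backslash \ul{G}/\ul{K} \simeq Z$). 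Choosing $\ul{B} = B \oplus B'$ with $B'$ any $\Ad(\mathrm{U}(r))$-invariant negative definite form on $\fu(r)$, we obtain $\ul{\fp} = \fp \oplus 0$ and $\ul{B}|_{\ul{\fp}} = B|_{\fp}$, so the enlarged locally symmetric space and its Riemannian metric coincide with $(Z, g^{TZ})$.

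Next, I would produce the representation of $\ul{G}$ whose restriction to $\ul{\Gamma}$ realizes the holonomy of $F_{0} \otimes F$. Define $\ul{\rho}\colon \ul{G} \to \GL(\bC^{r} \otimes E)$ by $\ul{\rho}(g,u) = u \otimes \rho(g)$. For $(\gamma, \rho_{0}(\gamma)) \in \ul{\Gamma}$ this gives $\rho_{0}(\gamma) \otimes \rho(\gamma)$, which is exactly the holonomy of $F_{0} \otimes F$. Equip $\bC^{r} \otimes E$ with the tensor product of a $\mathrm{U}(r)$-invariant Hermitian metric on $\bC^{r}$ and the admissible metric $\langle\cdot,\cdot\rangle_{E}$. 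By the Leibniz rule, $(X, 0) \in \ul{\fp} = \fp \oplus 0$ acts as $d\ul{\rho}(X, 0) = 1 \otimes d\rho(X)$, which is Hermitian by admissibility of $\rho$; while $(Y, Z) \in \ul{\fk} = \fk \oplus \fu(r)$ acts as $d\ul{\rho}(Y, Z) = Z \otimes 1 + 1 \otimes d\rho(Y)$, with both summands anti-Hermitian. Hence $\ul{\rho}$ admits an admissible metric, and the Matsushima--Murakami construction yields precisely the Hermitian flat bundle $(F_{0} \otimes F, g^{F_{0}} \otimes g^{F})$ on $Z$.

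Since the Ruelle dynamical zeta function and the analytic torsion depend only on $(Z, g^{TZ})$ and on the holonomy/Hermitian data of the flat bundle, we have $R_{\ul{\rho}}(\sigma) = R_{\rho_{0}\otimes \rho}(\sigma)$ and the analytic torsions agree. Applying Theorem~\ref{Thm1} to $(\ul{G}, \ul{\Gamma}, \ul{\rho})$ therefore yields \eqref{eq1rdii} for $F_{0} \otimes F$, and \eqref{eq2rdin}--\eqref{eqRTin} when $H^{\scriptscriptstyle\bullet}(Z, F_{0} \otimes F) = 0$. Under the additional hypotheses that $\ul{\rho}$ is irreducible and $\ul{\rho} \not\simeq \ul{\rho}^{\ul{\theta}}$, applying Corollary~\ref{cori1} furnishes the corresponding statement for $F_{0} \otimes F$.

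The crux of the argument is the admissibility of the tensor product metric, and the clean point is that the compactness of $\mathrm{U}(r)$ forces $\fu(r) \subset \ul{\fk}$: the $\mathrm{U}(r)$-factor contributes nothing to $\ul{\fp}$ and so cannot spoil the Hermitian symmetry required there. The remaining verifications (invariance and signature of $\ul{B}$; discreteness, torsion-freeness, and cocompactness of $\ul{\Gamma}$; and the identification of the quotient with $Z$) are routine but should be recorded explicitly when writing up the argument in full.
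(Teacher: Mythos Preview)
Your proposal is correct and follows essentially the same route as the paper's proof in Section~\ref{sPC2}: enlarge to $\ul{G}=G\times \mathrm{U}(r)$ with $\ul{\Gamma}$ the graph of $\rho_0$, realise $F_0\otimes F$ as the admissible Hermitian flat bundle attached to the $\ul{G}$-representation $(g,u)\mapsto u\otimes\rho(g)$ (which the paper writes as $\ul{\rho}_0\otimes\ul{\rho}$ with $\ul{\rho}_0$ the second projection and $\ul{\rho}=\rho\boxtimes\mathbf{1}$), and then invoke Theorem~\ref{Thm1} and Corollary~\ref{cori1}. Your explicit check that the tensor metric is admissible is more detailed than the paper's, but the argument is the same.
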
 

\begin{re}
The flat vector bundle $F_{0}\otimes F$ in Corollary \ref{cori2} is of  particular interest in study of hyperbolic volumes (see e.g. \cite{BDHP2019}).
\end{re}

In Section \ref{Snontorsionfree}, we will extend all the above results to the case where $\Gamma$ is not torsion free. Then $Z$ is an orbifold and $F$ is a flat orbifold vector bundle. 

\begin{thm}\label{Thm2}
	The statement of Theorem \ref{Thm1} and Corollaries \ref{cori1} 
	and 	\ref{cori2} hold for orbifolds.  
\end{thm}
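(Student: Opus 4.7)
The plan is to reduce Theorem \ref{Thm2} to the machinery already developed in the torsion free case of Theorem \ref{Thm1}, with the additional geometric input coming from the orbifold trace formula used in \cite{Shen_Yu}. More precisely, when $\Gamma$ is not torsion free, $Z=\Gamma\backslash X$ is a good orbifold, and both the analytic torsion $T(F)$ (by \cite{Ma_Orbifold_immersion,Daiyu}) and the dynamical zeta function $R_{\rho}(\sigma)$ (by extending the Selberg-type arguments of \cite{Shen20} to orbifolds) are well defined. The admissible metric on $E$ still descends, via the Matsushima--Murakami construction, to a Hermitian metric $g^{F}$ on the flat orbifold vector bundle $F$, since this construction is purely local and $\Gamma$-equivariant.

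The key analytic step is the trace formula. For a $\Gamma$-invariant function $f$ on $X$, one decomposes
\begin{align*}
\Tr\bigl[\rho(\gamma) f\bigr]_{\Gamma\backslash X}
=\sum_{[\gamma]\in [\Gamma]}\vol(\Gamma_{\gamma}\backslash G_{\gamma})\,
\Tr[\rho(\gamma)]\,\cO_{\gamma}(f),
\end{align*}
where $\cO_{\gamma}(f)$ denotes the orbital integral of $f$ at $\gamma$. In contrast to the torsion free case, $[\Gamma]$ now contains elliptic conjugacy classes coming from the torsion of $\Gamma$. Applying this formula to the heat kernel appearing in the Mellin transform defining $T(F)$, and then using Bismut's orbital integral formula \cite{B09} (generalised in \cite{Shfried} to the case when $\rho$ acts on the fiber), one obtains the analogue, in the admissible setting, of the heat trace decomposition used in \cite{Shen_Yu}.

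The argument then splits according to the type of conjugacy class. For the identity contribution and for the elliptic (finite order) contributions, one uses the odd-dimensionality of $Z$ and of every singular stratum, exactly as in \cite[Corollary 4.9]{Shen_Yu}, to show that the corresponding alternating sums entering the definition of $T(F)$ either vanish identically or combine into a topological constant; the admissibility of $\rho$ ensures that the traces $\Tr^{E}[\rho(k)e^{-\ad(Y)/2}]$ that replace the unitary trace in Bismut's formula are bounded and have the same parity behaviour. For the hyperbolic (nonelliptic semisimple) contributions, the orbital integrals, after grouping generic elements with their powers, reproduce the logarithm of the Ruelle zeta function $R_{\rho}(\sigma)$ evaluated at $\sigma=0$, exactly as in the torsion free case of Theorem \ref{Thm1}; here the orbifold factors $\chi_{\rm orb}(B_{[\gamma]}/\bbS^{1})$ and $m_{[\gamma]}$ appear naturally from the volume and centraliser computations.

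The main obstacle will be the elliptic terms: one must verify that they produce neither a nontrivial modulus contribution to $C_{\rho}$ nor a shift in $r_{\rho}$, despite the presence of the admissible (non-unitary) twist $\Tr[\rho(\gamma)]$. This is handled by observing that for elliptic $\gamma$, $\rho(\gamma)$ acts by an operator of finite order whose trace is a bounded function of the elliptic parameter, and then by matching the odd-dimensional vanishing argument of \cite{Shen_Yu} stratum by stratum, using that each fixed point set $X^{\gamma}$ is a symmetric subspace of $X$ whose associated admissible representation still satisfies the parity hypotheses. Once these contributions are seen to be harmless, the proof of \eqref{eq1rdii}--\eqref{eqRTin} in the orbifold setting proceeds verbatim as in Theorem \ref{Thm1}, and Corollaries \ref{cori1} and \ref{cori2} follow from the same representation-theoretic vanishing of cohomology (valid for cocompact lattices, torsion or not, by \cite[Theorem VII.6.7]{BW}) and the same tensor product trick with a unitarily flat bundle.
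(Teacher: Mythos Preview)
Your outline diverges from the paper's argument and contains a real gap in the handling of elliptic conjugacy classes.

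The paper does not carry out a direct heat-trace analysis that separates identity, elliptic, and hyperbolic contributions. Instead, it observes that the whole chain of intermediate results from Sections~\ref{Sselberg}--\ref{S:rep} already has orbifold versions in the literature: the Moscovici--Stanton vanishing (Theorem~\ref{thmMS0}) follows from the orbifold trace formula \cite[Theorem~5.4]{Shen_Yu}; the determinant identity for the Selberg zeta function (Theorem~\ref{thm:detfor}) holds for orbifolds by \cite[Section~7.2]{Shen20}; and the cohomological identification of $r_{\eta_\beta}$ (Theorem~\ref{thmHZF009}, Corollary~\ref{cor:forr1}) goes through orbifold Hodge theory \cite[Theorem~4.1]{Shen_Yu}. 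With these in hand, the relations $R_\rho=\prod Z_{\eta_\beta}^{\pm1}$ and the $R_\rho$--$T(\sigma^2)$ identity (Propositions~\ref{propd1RZ}, \ref{cor:R}, Theorems~\ref{prop:RT11}, \ref{prop:RT}) are purely algebraic and need no change. The point is that the elliptic terms never have to be isolated: they are already packaged inside the orbifold determinant formula \eqref{eq:detfor}, where they enter only through the polynomial $P_\eta(\sigma)$ and the spectrum of $C^{\fg,Z,\widehat\eta}$.

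Your treatment of the elliptic terms is where the argument breaks. You invoke the odd-dimensionality of $Z$ and of every singular stratum, citing \cite[Corollary~4.9]{Shen_Yu}, to make these contributions ``vanish identically or combine into a topological constant''. But that corollary concerns metric independence of $T(F)$, not vanishing of elliptic heat-trace terms, and more importantly odd-dimensionality of the singular strata is \emph{not} a hypothesis of Theorem~\ref{Thm2}. The elliptic orbital integrals do not vanish in general; they contribute nontrivially to the local terms in the orbifold Selberg trace formula. What makes them harmless for the Fried conjecture is not a parity argument on the strata but the fact that they are absorbed into the holomorphic factor $\exp(\vol(Z)P_\eta(\sigma))$ of \eqref{eq:detfor}, which is already established for orbifolds in \cite{Shen20}. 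Without routing the argument through the Selberg zeta functions $Z_{\eta_\beta}$, you have no mechanism to show that the elliptic contributions do not alter $C_\rho$ or $r_\rho$, and your final sentence ``proceeds verbatim as in Theorem~\ref{Thm1}'' then has no content, since that proof \emph{is} the Selberg zeta function argument.
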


\subsection{Proof of Theorem \ref{Thm1}}
We will first show Theorem \ref{Thm1} in the case $\rho^\theta\simeq 
\rho$, i.e., Theorem \ref{thm1}. 
Using an easy relation  
$R_{\rho^{\theta}}(\sigma)=\ol{R_{\rho}(\ol{\sigma})}$ (Proposition 
\ref{propRoR}) and  applying Theorem \ref{thm1} to  
$\rho\oplus \rho^{\theta}$, we obtain Corollary \ref{cori1}. Since 
$\rho$ has an admissible metric, $\rho$ can be decomposed as  a 
direct sum of irreducible representations which are either 
$\rho^{\theta}\simeq \rho$ or $\rho^{\theta}\not\simeq \rho$. In this 
way, we get Theorem \ref{Thm1} in full generality. 

Our proof of Theorem \ref{Thm1} in the case $\rho^\theta\simeq 
\rho$ is inspired by 
\cite{Shfried} and \cite{Muller_3_torsion}. Let us explain  
main steps. 

\subsubsection{Moscovici-Stanton's vanishing theorem}
Let $\delta(G)\in \mathbf N$ be the fundamental rank of $G$, i.e., 
the difference between the complex ranks of $G$ and $K$. Note that $\delta(G)$ and $\dim Z$ have the same parity.

When $F$ is unitary, by \cite[Corollary 2.2, Remark 3.7]{MStorsion}, 
if $\delta(G)\g 3$, we have 
\begin{align}
&T(F)=1,&	R_{\rho}(\sigma)\equiv 1. 
\end{align} 
In the current situation, similar results still hold (e.g. \cite{BMZ1}
\cite[Theorem 8.6, Remark 8.7]{BMZ}, \cite[Theorem 5.5]{Ma_bourbaki}, \cite[Remark 4.2]{Shen20}). Therefore, we can reduce the proof to the case $\delta(G)=1$.

% \begin{re}
% 	If $\dim Z$ is even and $\delta(G)\g 2$,  and if $\dim Z$ is 
% 	even 	and $\delta(G)=0$, an application of Gauss-Bonnet-Chern's 
% 	Theorem tells us there are no acyclic flat vector bundle. 
% \end{re}

\subsubsection{Selberg zeta functions}
% Assume now that $\delta(G)=1$.
%  Let $\fh$ be a  fundamental Cartan 
% 	subgroup of $G$ with Cartan decomposition $\fh=\fb\oplus \ft$. 
% 	The centraliser of $\fb$ splits to a direct sum of Lie algebra 
% 	$\fb\oplus \fm$. Let $M$ be the Lie subgroup correspond to the 
% 	Lie algebra $\fm$. If $\eta$ is a virtual 
% 	representation of $M$, then the Selberg zeta function is defined 
% 	formally for $\sigma\in \bC$ by 
% 	\begin{align}\label{ieqdefsel}
% 		Z_{\eta}(\sigma)=\exp\Bigg(-\sum_{\tiny\substack{[\gamma]\in 
% 		[\Gamma_{+}]\\ \gamma\sim e^{a}k^{-1}\in H}}\frac{\chi_{\rm 
% 		orb}(B_{[\gamma]}/\mathbb{S}^{1})}{m_{[\gamma]}}\frac{\Trs^{E_{\eta}}[k^{-1}]}{\left|\det(1-\Ad(e^{a}k^{-1}))|_{\fz^{\bot}(\fb)}\right|^{1/2}}e^{-\sigma \ell_{[\gamma]}}\Bigg).
% \end{align} 
% 
Assume now $\delta(G)=1$ and $\rho^{\theta}\simeq \rho$.  The proof of Theorem \ref{Thm1} in this case is based on the introduction of the Selberg zeta functions. Let us recall its definition and basic properties. 

Let $\ft\subset \fk$ be a Cartan subalgebra of $\fk$. Let 
$\fh\subset \fg$ be the stabiliser of $\ft$ in $\fg$. By 
\cite[p.~129]{Knappsemi}, 
$\fh\subset \fg$ is a  $\theta$-invariant fundamental Cartan 
subalgebra of $\fg$. Let 
$\fh=\fb\oplus \ft$ be the Cartan decomposition of $\fh$. Note that  
$\dim \fb=\delta(G)=1$.  Let $H\subset G$ be the associated Cartan 
subgroup of $G$.  

Let $Z(\fb)\subset G$ be the 
stabiliser of $\fb$ in $G$ with Lie algebra $\fz(\fb)$. Let $Z^{0}(\fb)$ be the connected 
component of the identity in $Z(\fb)$.  Then $\fz(\fb),Z^{0}(\fb)$ split
\begin{align}
&\fz(\fb)=\fb\oplus\fm, &Z^{0}(\fb)=\exp(\fb)\times M,
\end{align} 
where $M$ is a connected reductive subgroup of $G$ with Lie algebra 
$\fm$. Let $\fm=\fp_{\fm}\oplus \fk_{\fm}$ be the Cartan 
decomposition of $\fm$.  Let 
$\fz^{\bot}(\fb)\subset \fg$ be the orthogonal space of 
$\fz^{\bot}(\fb)$ with respect to  $B$. 

% We fix an orientation on 
% $\fb$, whose choice  is irrelevant.  Let $\fn\subset \fg$ be the direct sum of the eigenspaces 
% associated with the positive eigenvalues 
% of a positive element in $\fb$. Then,  $\fn$ is a Lie subalgebra of $\fg$ so that 
% \begin{align}
% 	\fz^{\bot}(\fb)=\fn\oplus \theta\fn. 
% \end{align} 

Let $\eta=\eta^{+}-\eta^{-}$ be a virtual representation of $M$ 
acting on the finite dimensional complex vector spaces $E_{\eta}=E_{\eta}^{+}-E_{\eta}^{-}$ 
such that 
	\begin{enumerate}[i)]
	\item\label{ini}  the Casimir of $M$ acts on $\eta^{\pm}$ by the same 
	scalar;
	\item\label{inii}  the restriction of $\eta$ to $K_{M}=K\cap M$ lifts uniquely to a virtual 
representation of $K$.
	\end{enumerate} 
The Selberg zeta function associated to $\eta$ is defined 	formally for $\sigma\in \bC$ by 
	\begin{align}\label{ieqdefsel}
		Z_{\eta}(\sigma)=\exp\Bigg(-\sum_{\tiny\substack{[\gamma]\in 
		[\Gamma_{+}]\\ \gamma\sim e^{a}k^{-1}\in H}}\frac{\chi_{\rm 
		orb}\(B_{[\gamma]}/\mathbb{S}^{1}\)}{m_{[\gamma]}}\frac{\Trs^{E_{\eta}}[k^{-1}]}{\left|\det\(1-\Ad(e^{a}k^{-1})\)|_{\fz^{\bot}(\fb)}\right|^{1/2}}e^{-\sigma \ell_{[\gamma]}}\Bigg),
\end{align} 
where the sum is taken  over the non elliptic conjugacy classes 
$[\gamma]$ of $\Gamma$ such that $\gamma$ can be conjugate by element 
of $G$ into the  Cartan subgroup $H$. 

In \cite[Section 6]{Shfried} and \cite[Section 3.4]{Shen20}, we have shown that the adjoint action of $K_{M}$ on $\fp_{\fm,\bC}$ lifts uniquely to 
a  virtual representation  of $K$. Let 
$\widehat{\eta}=\widehat{\eta}^{+}-\widehat{\eta}^{-}$ be the unique virtual  representation of $K$ such that 
\begin{align}
\widehat{\eta}|_{K_{M}}=	\Lambda^{\scriptscriptstyle\bullet 
}(\fp_{\fm,\bC}^{*})\widehat{\otimes} \eta_{|K_{M}}. 
\end{align} 
The Casimir operator of $\fg$ acts as a generalised Laplacian
 $C^{\fg,Z,\widehat{\eta}^{\pm}}$ 
on the smooth sections over $Z$ of the locally homogenous vector bundle induced by 
$\widehat{\eta}^{\pm}$ (see \eqref{eqFtau0}).  By the  general theory on elliptic differential  operators, the regularised determinant
${\rm det} 
\big(C^{\fg,Z,\widehat{\eta}^{\pm}}+\sigma\big)$ is holomorphic on $\sigma\in \bC$. 

In \cite[Section 7]{Shfried} and \cite[Section 5]{Shen20}, we show that $Z_{\eta}(\sigma)$ has a meromorphic extension to $\sigma \in \bC$. 
Moreover, up to a multiplication by a non zero entire function, $Z_{\eta}(\sigma)$ is just the graded regularised determinant 
\begin{align}\label{eqdetCi1}
\frac{	{\rm det} 
\big(C^{\fg,Z,\widehat{\eta}^{+}}+\sigma_{\eta}+\sigma^{2}\big)}{{\rm det} 
\big(C^{\fg,Z,\widehat{\eta}^{-}}+\sigma_{\eta}+\sigma^{2}\big)},
\end{align} 
where $\sigma_{\eta}\in \bR$ is some constant.

One of main steps in our proof of Theorem \ref{Thm1} is to construct  a  family of 
virtual $M$-representations $\eta_{\beta}$ satisfying the above 
assumptions \ref{ini}) \ref{inii}), parametrised by finite elements 
$\beta\in \fb^{*}$,  such that 
\begin{align}\label{eqAS333in}
	\frac{	\sum_{\beta\in 
		\fb^{*}}e^{\<\beta,a\>}\Tr_{\rm 
		s}\[\eta_{\beta}\(k^{-1}\)\]}{
		\left|\det\(1-\Ad(e^{a}k^{-1})\)|_{\fz^{\bot}(\fb)}\right|^{1/2}}=\Tr\[\rho\(e^{a}k^{-1}\)\], 
	\end{align}
and such that the following identity of virtual $K$-representations holds, 
	\begin{align}\label{eqsumbetainto}
		\bigoplus_{\beta\in 
		\fb^{*}}\widehat{\eta}_{\beta}=\bigoplus_{i=1}^{m}(-1)^{i-1}i 
		\Lambda^{i}\(\fp^{*}_{\bC}\)\otimes \rho_{|K}. 
	\end{align}
Using \eqref{eqAS333in}, we 	can write $R_{\rho}$ as  an 
	alternating product of $Z_{\eta_{\beta}}$. Thanks to 
	\eqref{eqdetCi1}, we get a relation between the Ruelle zeta 	
	function and the Casimir operator, which is the Hodge Laplacian 
	of $(g^{TZ},g^{F})$ by \eqref{eqsumbetainto} (c.f. Proposition 
	\ref{prop:D=C-C}). In this way, we get \eqref{eq1rdii}. 
		
\subsubsection{Dirac cohomology}
The construction of $\eta_{\beta}$ is based on the Dirac cohomology  \cite{HuangDiracCoh}.  Recall that  in \cite{Shfried}, we have shown that $(\fg,\fz(\fb))$ is a symmetric 
pair.  Let $(\fu,\fu(\fb))$ be the associated compact symmetric pair.  The Dirac cohomology $H_{D}^{\pm}(\rho)$ of the $\fu$-representation $\rho$ with 
respect to the symmetric pair $(\fu,\fu(\fb))$ is a 
$(\fb_{\bC}\oplus \fm_{\bC},K_{M})$-modules. We define 
$\eta_{\beta}^{\pm}$ to be the $(\fm_{\bC},K_{M})$-modules such that 
\begin{align}
H^{\pm}_{D}(\rho)\simeq \bigoplus_{\beta\in \fb^{*}} 
\bC_{\beta}\boxtimes \eta_{\beta}^{\pm},
\end{align}
where $\bC_{\beta}$ is the one dimensional representation of $\fb$ such that $a\in 
\fb$ acts as $\<\beta,a\>\in \bR$.  The virtual $M$-representation of $\eta_{\beta}$ is  defined by 
$\eta_{\beta}^{+}-\eta_{\beta}^{-}$. Now, the assumption 
\ref{ini})\footnote{More precisely, we need  assume that the Casimir of $\fg$ acts on $\rho$ as a scalar.}, 
\eqref{eqAS333in}, and 
\eqref{eqsumbetainto} are easy consequences of properties  of the Dirac cohomology.  In Section 
\ref{srepetab}, we show the assumption \ref{inii}) as well, so that 
the Selberg zeta function of $\eta_{\beta}$ is well defined. 

% Let $(\fu,\fu(\fb))$ be the associated compact symmetric 
% space. We fix an orientation of $\fb$, whose choice is irrelevant. 
% The orthogonal $\fu^{\bot}(\fb)$ space of $\fu(\fb)$ in $\fu$ has a complexed structure. 
% 
% 
% 
% 
% 
% 
% 
% 
% consider 
% \begin{align}
% 	\rho|_{Z^{0}(\fb)}=\oplus_{\beta\in \fb^{*}}\mathbf 
% 	\bC_{\beta}\boxtimes \rho_{\beta}. 
% \end{align} 

% Using the fact that $R_{\rho}$ is  an alternating product of $Z_{\eta_{j},\rho}$, we 
% deduce the meromorphic  extension of $R_{\rho}$. In 
% particular, up to a multiplication by a non zero entire function,  
% $R_{\rho}$ is  a product of the graded  regularised determinants  
% \eqref{eqdetCi1}. Comparing this with the weighted  regularised determinant of the flat Laplacian \eqref{eq:inn}, we get \eqref{eqdCt}. 

Let us remark that the Dirac cohomology is more or less equivalent 
to  the $\fn$-cohomology used in \cite{Muller_3_torsion}. However, Dirac cohomology is closer to the spin construction used in \cite[Section 6]{Shfried}.

\subsubsection{Infinitesimal 
character and the vanishing of $(\fg,K)$-cohomology}The proof of \eqref{eq2rdin} is 
based on a
relation between the infinitesimal 
character and the  vanishing of $(\fg,K)$-cohomology of a unitary 
Harish-Chandra $(\fg_{\bC},K)$-module, which is due to 
Vogan-Zuckermann \cite{VoganZuckerman}, Vogan \cite{Vogan2}, and 
Salamanca-Riba \cite{Salamanca}. The idea of the proof is very 
similar to the one given in \cite[Section 8]{Shfried}. We refer the reader to \cite[Section 1H]{Shfried} for an introduction. 

\subsubsection{Final  remark}
In the case where $G$ have a noncompact 	 centre, we have $\fn=0$. 
It is  unnecessary to use  the Dirac cohomology and  the results Vogan, Zuckermann, 
and Salamanca-Riba.  For greater clarity, we single 
	out this case in Section \ref{sproofth1cc}.

% \subsection{Bismut's orbital integral formula and M\"uller's Selberg 
% trace formula}
% 
% \subsection{A complex virtuelle }

% \subsection{A remark on the superridigity}
% 
% Let $G$ be connected and real reductive with Lie algebra $\fg$. Let $Z^0_G$ be the connected component of the identity in the  center $Z_G$. Write 
% \begin{align}
% &  \fg=\fz\oplus \fg_{ss},&\fg_{ss}=\fg_{n,ss}\oplus \fg_{c,ss},
% \end{align}
% with $ \fg_{c,ss}\subset \fk_{ss}$. Let $G_{c,ss}$ be the corresponding connected Lie subgroup.
% 
% Set
% \begin{align}
% H= Z^0_GG_{c,ss}.
% \end{align}
% Then $H$ is normal in $G$.
% Set
% \begin{align}
% & G^\sharp=G/H,&\Gamma^{\sharp}=\Gamma H/H\subset G^\sharp.
% \end{align}
% Then $G^\sharp$ is a connected semisimple Lie group without compact factors. Since
% \begin{align}
%   G^\sharp/\Gamma^{\sharp}=G/\Gamma H=H\backslash G/\Gamma,
% \end{align}
%  is compact,  $\Gamma^{\sharp}$ is a uniform lattice in $G^\sharp$.
% 
% By Prop 1.16 in Osborn and Warner, there is a finite family $\{G_i\}_{1\l i\l r}$ of connected semisimple normal Lie subgroup of $G^\sharp$ such that 
% \begin{itemize}
%   \item $G_i\cap \prod_{j\neq i} G_j$ is finited;
%   \item $G^\sharp=\prod_{i} G_i$;
%   \item $\Gamma_i=\Gamma^\sharp\cap G_i$ is irreducible lattice in $G_i$.
%   \item $\prod_i \Gamma_i$ is a normal subroup group of finite index in $\Gamma^\sharp$.
% \end{itemize}
% 
% 
% 
% 
% 
% By Margulis superrigidity,... this is the only intrusting case for $\rk_\bC(G)\g2$.
% 

\subsection{The organisation of the article}
This article is organised as follows.
In Section \ref{SgLap}, we recall the definition of the Ray-Singer 
analytic torsion of a flat vector bundle. 

In Section \ref{Sreductive}, we introduce the real reductive group $G$ 
and the admissible metrics on finite dimension representations of 
$G$. 

In Section \ref{Sselberg}, we recall the definition and the 
proprieties of the zeta functions of Ruelle and Selberg established 
in \cite{Shfried,Shen20}. 

In Section \ref{SFCAM}, we state our main Theorem \ref{thm1}, from 
which we  deduce Theorem  \ref{Thm1}, Corollaries \ref{cori1} and 
\ref{cori2}. Also, we prove  Theorem \ref{thm1} when $\delta(G)\neq 1$ or $Z_{G}$ is non  compact.  

In Sections  \ref{SPdn1} and \ref{S:rep},  we establish Theorem \ref{thm1} when $\delta(G)=1$ and $Z_{G}$ is compact. 

Finally, in Section \ref{Snontorsionfree}, we extends the previous results to  orbifolds and we show Theorem \ref{Thm2}.

\subsection{Notation}
Throughout the paper, we use the superconnection formalism of 
\cite{Quillensuper} and \cite[Section 1.3]{BGV}. If $A$ is a 
$\mathbf{Z}_2$-graded algebra and if $a, b\in A$, the 
supercommutator $[a,b]$ is given by
\begin{align}
ab-(-1)^{\deg a \deg b}ba.
\end{align}
If $B$ is another $\mathbf{Z}_2$-graded algebra, we denote 
by $A  \widehat{\otimes}  B$ the super tensor product algebra of $A$ and $B$. 
If $E=E^{+}\oplus E^{-}$ is a $\mathbf{Z}_2$-graded vector 
space, the algebra $\End(E)$ is $\mathbf{Z}_2$-graded. If 
$\tau=\pm1$ on $E^{\pm}$ and if $a\in \End(E)$, the supertrace 
$\Trs[a]$ is defined by $\Tr[\tau a]$. 

If $M$ is a topological group, we will denote by $M^{0}$ the 
connected component of the identity in $M$.  If $V$ is a real vector 
space, we will use the notation $V_{\bC}=V\otimes_{\bR}\bC$ for its complexification.  
We make the convention that $\mathbf{N}=\{0,1,2,\ldots\}$, 
$\mathbf{N}^{*}=\{1,2,\ldots\}$, $\bR_{+}^{*}=(0,\infty)$.

\settocdepth{subsection}

\section{The analytic torsion} \label{SgLap}
Let $Z$ be a closed smooth manifold of dimension $m$. Let $(F,\nabla^{F})$ be a flat complex vector bundle on $Z$ with flat connection $\nabla^{F}$. 
Let $(\Omega^{\scriptscriptstyle\bullet}(Z,F),d^{Z})$ be the de 
Rham complex of smooth sections of 
$\Lambda^{\scriptscriptstyle\bullet }(T^{*}Z)\otimes_{\bR} F$ on $Z$. Let $H^{\scriptscriptstyle\bullet}(Z,F)$ be the   de Rham cohomology.  

We define the Euler characteristic  number $\chi(Z,F)$ and the 
derived Euler characteristic  number $\chi'(Z,F)$ by 
\begin{align}\label{eqEuler}
 &	\chi(Z,F)=\sum_{i=0}^{m}(-1)^{i}\dim 
 H^{i}(Z,F),&\chi'(Z,F)=\sum_{i=1}^{m}(-1)^{i}i\dim H^{i}(Z,F). 
 \end{align}
 If $F$ is trivial, we write $\chi(Z)$ and $\chi'(Z)$. 

Let $g^{TZ}$ be a Riemannian metric on $Z$. Let $g^{F}$ be a 
Hermitian metric on $F$. The metrics $g^{TZ},g^{F}$ induce a scalar 
product 
$\<,\>_{\Lambda^{\scriptscriptstyle\bullet}(T^{*}Z)\otimes_{\bR} F}$
on $\Lambda^{\scriptscriptstyle\bullet}(T^{*}Z)\otimes_{\bR} F$. 
Let $\<,\>_{L^{2}}$ be an $L^{2}$-product  on 
$\Omega^{\scriptscriptstyle\bullet}(Z,F)$ defined for $s_{1},s_{2}\in 
\Omega^{\scriptscriptstyle\bullet}(Z,F)$ by 
	\begin{align}\label{eqL2prod}
		\<s_{1},s_{2}\>_{L^{2}}=\int_{z\in 
		Z}\<s_{1}(z),s_{2}(z)\>_{\Lambda^{\scriptscriptstyle\bullet 
		}(T^{*}Z)\otimes_{\bR} F}dv_{Z},
	\end{align} 
where $dv_{Z}$ is the Riemannian volume form of $(Z,g^{TZ})$. 	
	
Let $d^{Z,*}$ be the formal 
adjoint of $d^{Z}$. Set 
\begin{align}
 \Box^Z=\[d^Z,d^{Z,*}\].
\end{align}
Then, $\Box^{Z}$ is a second order self-adjoint elliptic differential operator acting on $\Omega^{\scriptscriptstyle\bullet}(Z,F)$. 
By Hodge theory, we have 
\begin{align}\label{eqHo1}
\ker \Box^{Z}=H^{\scriptscriptstyle\bullet }(Z,F). 
\end{align} 

Let $\(\ker \Box^{Z}\)^{\bot}$ be the orthogonal vector space to 
$\ker \Box^{Z}$  in $\Omega^{\scriptscriptstyle\bullet }(X,F)$.
Then $\Box^{Z}$ acts as an invertible operator on $\(\ker 
\Box^{Z}\)^{\bot}$. Let $\(\Box^{Z}\)^{-1}$ denote the inverse of $\Box^{Z}$ acting on $\(\ker 
\Box^{Z}\)^{\bot}$.
If $0\l i \l m$, for $s\in \bC$ and $\Re(s)>\frac{m}{2}$, set 
\begin{align}
	\theta_{i}(s)=\Trs\[\(\Box^{Z}\)^{-s}_{|\Omega^{i}(Z,F)}\]. 
\end{align} 
By \cite{Seeley66} and \cite[Proposition 9.35]{BGV}, $\theta_{i}(s)$ extends to a meromorphic function of $s\in \bC$, 
which is holomorphic at $s=0$. The regularised determinant is defined by 
\begin{align}
	{\rm det}^{*}\(\Box^{Z}_{|\Omega^{i}(Z,F)}\) =\exp 
	\(-\theta_{i}'(0)\). 
\end{align} 
Formally, it is the product of non zero eigenvalues counted  with multiplicities. 

\begin{defin}
The Ray-Singer analytic torsion \cite{RSTorsion} of $F$ is defined by
\begin{align}\label{eqRST}
T(F)=	\prod_{i=1}^{m}{\rm 
det}^{*}\(\Box^{Z}_{|\Omega^{i}(Z,F)}\)^{(-1)^{i}i/2}\in \bR_{+}^{*}.
\end{align}
\end{defin}

%If $Z$ is oriented and has even dimension, then $T(F)=1$ is  trivial.
By \cite{RSTorsion} and \cite[Theorem 0.1]{BZ92},  if $\dim Z$ is odd and if $H^{\scriptscriptstyle\bullet 
}(Z,F)=0$, then $T(F)$ does not depend on the metrics $g^{TZ},g^{F}$. It becomes a topological invariant.  

For  $0\l i\l m$, if $\sigma>0$, the operator $\sigma+\Box^{Z}_{|\Omega^{i}(Z,F)}$ does not contain the 
zero spectrum, we denote its regularised determinant by  $	\det\(\sigma+\Box^{Z}_{|\Omega^{i}(Z,F)}\).$
By \cite{Voros} and \cite[Theorem 1.5]{Shen20}, the function $ 
\det\(\sigma+\Box^{Z}_{|\Omega^{i}(Z,F)}\)$ 
extends to  a holomorphic function of  $\sigma\in \bC$, whose zeros 
are located  at $\sigma=-\lambda$ with  order $\dim \ker\( 
\Box^{Z}_{|\Omega^{i}(Z,F)}-\lambda\)$, where  $\lambda\in 
\Sp\(\Box^{Z}_{|\Omega^{i}(Z,F)}\)$.

% Moreover, its 
% zero is located at  $\sigma=-\lambda$ with $\lambda\in 
% \Sp\(\Box^{Z}_{|\Omega^{i}(Z,F)}\)$. Also,  the order of zero at 
% $\sigma=-\lambda$ is given by the multiplicity of the eigenvalus $\lambda$. 

Set
\begin{align}\label{eqTsigma}
	T(\sigma)=	\prod_{i=1}^{m}{\rm 
det}\(\sigma+\Box^{Z}_{|\Omega^{i}(Z,F)}\)^{(-1)^{i}i}.
\end{align} 
Then, $T(\sigma)$ is  meromorphic.  By \eqref{eqEuler},  
\eqref{eqRST}, and \eqref{eqTsigma},  as $\sigma\to 0$, we have 
\begin{align}
	T(\sigma)=T(F)^{2}\sigma^{\chi'(Z,F)}+\cO(\sigma^{\chi'(Z,F)+1}). 
\end{align}

\section{Reductive groups and  finite dimensional representations}\label{Sreductive}
The purpose of this section is to recall some basic facts about  real reductive groups and their finite dimensional representations. 

This section is organised as follows. 
In Sections \ref{sReductive}-\ref{sSemi}, we introduce the real 
reductive  group $G$, its Lie algebra $\fg$, the enveloping algebra 
$\mathscr U(\fg)$, the Casimir operator, the Dirac operator, as well as the semisimple elements. 

In Sections \ref{sCartanF} and \ref{sSpliting}, we introduce the 
fundamental Cartan subalgebra of $\fg$ and some related 
constructions. We recall a key lifting proprieties established in 
\cite[Section 3.4]{Shen20}. 

In Section \ref{sadRepG},  we recall the definition of admissible 
metrics on   finite dimensional representations of $G$ introduced by \cite{MatsuchimaMurakami}. We 
show that if $G$ has a compact centre, then all the finite dimensional representations have  admissible metrics.

\subsection{Real reductive groups}\label{sReductive}
Let $G$ be a linear connected real reductive group \cite[p.~3]{Knappsemi}, and let $\theta \in {\rm Aut}(G)$ be the Cartan 
involution. That means  $G$ is a closed connected group of real matrices that is stable under transpose, and $\theta$ is the composition of transpose 
and inverse of matrices.  If $\fg$ is the Lie algebra of $G$, then  $\theta$ acts as an automorphism on $\fg$. 

Let $K \subset G$ be the fixed point set of 
$\theta$ in $G$. Then $K$ is a compact connected subgroup of $G$, which is a 
maximal compact subgroup. 
% That means $G$ is a closed connected group of real matrices that is 
% stable under transpose, and $\theta$ is the composition of transpose 
% and inverse of matrices.  Let $K \subset G$ be the fixed point set of 
% $\theta$. Then $K$ is a compact connected subgroup of $G$, which is a maximal compact subgroup. 
% 
% Let $\fg$ be the Lie algebras of $G$.  Then, $\theta$ acts 
% as an automorphism on $\fg$. 
If $\fk$ is the Lie algebra of $K$, then  $\fk$ is the eigenspace of  
$\theta$ associated with the eigenvalue $1$. 
Let $\fp$ be the eigenspace of $\theta$ associated with the 
eigenvalue $-1$, so that  we have the Cartan decomposition
\begin{align}\label{eq:Cartande}
\fg = \fp \oplus \fk.
\end{align}
% Then we have the obvious
% \begin{align}\label{eqppketc}
% 	&\[\fk,\fk\]\subset\fk,&\[\fk,\fp\]\subset\fp,&&\[\fp,\fp\]\subset\fk. 
% \end{align} 
Set
\begin{align}
&	m=\dim \fp,&n=\dim \fk.
\end{align}
By \cite[Proposition 1.2]{Knappsemi}, we have the diffeomorphism
\begin{align}\label{eq:cartan2}
 (Y,k)\in \fp\times K\to  e^Y k\in G.
\end{align}

Let $B$ \index{B@$B$} be a  nondegenerate bilinear real  
symmetric form on $\fg$ which is invariant under the adjoint action 
$\Ad$ of $G$, and also under $\theta$. Then 
\eqref{eq:Cartande} is an orthogonal splitting of $\fg$ with respect 
to $B$. We assume $B$ to be positive-definite on $\fp$, and negative-definite on $\fk$. 
Then,  $\<\cdot,\cdot\>=-B(\cdot,\theta\cdot)$ defines an 
$\Ad(K)$-invariant scalar 
product on $\fg$ such that the splitting 
\eqref{eq:Cartande} is still orthogonal. We denote by $|\cdot|$ \index{1@$\lvert\cdot\rvert$}the corresponding norm.

Let $Z_G\subset G$\index{Z@$Z_G$} be the centre of $G$ with Lie algebra $\fz_\fg\subset \fg$.\index{Z@$\fz_\fg$}
% Set
% \begin{align}\label{eq:zzpzk}
% &  \fz_{\fp}=\fz_{\fg}\cap \fp,&\fz_{\fk}=\fz_{\fg}\cap \fk.\index{Z@$\fz_{\fp},\fz_{\fk}$}
% \end{align}
By \cite[Corollary 1.3]{Knappsemi}, $Z_G$ is a (possibly non 
connected) reductive group  with 
maximal compact subgroup $Z_{G}\cap K$ with the Cartan decomposition 
\begin{align}
\fz_\fg=\fz_{\fp}\oplus \fz_\fk. 	
\end{align} 
 Since $\fz_{\fp}$ commutes with $Z_{G}\cap K$, by \eqref{eq:cartan2}, 
we have an identification  of the groups
\begin{align}
	 Z_G=\exp (\fz_\fp) \times (Z_G\cap K).
\end{align}

Let $\fg_\bC=\fg\otimes_\bR\bC$ \index{G@$\fg_\bC$}be the complexification of $\fg$ and let $\fu=\sqrt{-1}\fp\oplus \fk$\index{U@$\fu$}
be the compact form of $\fg$. By $\bC$-linearity, the bilinear form $B$ extends 
to a complex symmetric bilinear form on $\fg_{\bC}$. The restriction 
$B|_{\fu}$ to $\fu$ is real and negative-definite. % symmetric bilinear form on $\fu$.  

Let $\mathscr{U}(\fg)$ and $\mathscr U(\fg_{\bC})$ be the enveloping 
algebras 
of $\fg$ and $\fg_{\bC}$.  Let $\mathscr{Z}(\fg)$ 
\index{Z@$\mathcal{Z}(\fg)$} and $\mathscr Z(\fg_{\bC})$ be respectively the 
centres of $\mathscr{U}(\fg)$ and $\mathscr U(\fg_{\bC})$. Clearly, 
\begin{align}\label{eqZcZc}
&	\mathscr U(\fg_{\bC})=\mathscr U(\fg)\otimes_{\bR} \bC,&\mathscr 
Z(\fg_{\bC})=\mathscr Z(\fg)\otimes_{\bR} \mathbf{C}. 
\end{align}

If $V$ is a complex vector space, and if $\rho:\fg\to 
\End(V)$ is a representation of $\fg$, then the map $\rho$ extends to 
a  morphism $\rho:\mathscr{U} (\fg_{\bC}) \to \End (V)$ of algebras. %By complexification, we get a  morphism $\rho:\mathscr U(\fg_{\bC})\to \End(V)$ of complex algebras.  

If $\rho : G
\to \GL(V)$ is a finite dimensional complex representation of $G$, then the induced   morphism  $\rho:\mathscr{U} (\fg_{\bC}) \to \End (V)$ of algebras is 
$K$-equivalent.  In this way, $V$ is a finite dimensional $(\fg_\bC,K)$-module, i.e., a  
$\mathscr{U}(\fg_\bC)$-module, equipped 
with a compatible $K$-action. By \cite[Proposition 4.46]{KnappCohomology}, it is equivalent to 
consider finite dimensional representations of $G$ and finite 
dimensional  $(\fg_\bC,K)$-modules. In the sequel, we will not 
distinguish these two objets. 

\begin{re}\label{regGsub}
	If $\rho:G\to \GL(V)$ is a finite dimensional complex 
	representation of $G$, and if 	$W\subset V$ is a $\fg$-invariant 
	subspace, by taking the exponential of the action of $\fg$, we 
	see that the 	
	group $G$ preserves $W$. In particular, 
	the  set of $\fg$-subrepresentations of $\rho$ coincides with the set of 	$G$-subrepresentations of  $\rho$.
\end{re}

% \begin{re}\label{re1tri}
% 		If $G$ has a compact center, any one dimensional real 
% 	representation of 	$G$   is trivial. Indeed, this is a 
% 	consequence of \eqref{eqGOSS} and the fact that	any morphism of 
% 	groups from semisimple Lie groups or compact Lie groups to  
% 	$\bR^{*}$ is trivial. 
% \end{re}

% Let $G_\bC$ \index{G@$G_\bC$} and 
% $U$\index{U@$U$} be the connected group of complex matrices 
% associated with the Lie algebras $\fg_\bC$ and $\fu$.
% By \cite[Propositions 5.3 and 5.6]{Knappsemi}, if $G$ has compact center, i.e., its center $Z_G$ is compact, then $G_\bC$ is a linear connected complex reductive group with maximal compact subgroup $U$.

%We  identify $U(\fg)$ with the algebra of left-invariant differential operators on $G$.
\subsection{The Casimir operator}\label{sCasimir}
Let $C^\fg\in \mathscr{Z}(\fg)$ be the Casimir element associated to 
$B$. If $e_1,\cdots,e_m$ is an orthonormal basis of $(\fp,B|_{\fp})$, 
and if $e_{m+1},\cdots,e_{m+n}$ is an  orthonormal basis of 
$(\fk,-B|_{\fk})$, then 
\begin{align}\label{eq:Cg}
  C^\fg=-\sum_{i=1}^{m}e^2_i+\sum_{i=m+1}^{n+m}e_i^{2}.\index{C@$C^{\fg}$}
\end{align}
%Classically, $C^\fg\in \mathscr{Z}(\fg)$.

% If $\{e_{i}\}_{i=1}^{m+n}$ is a basis of $\fg$, and if 
% $\{e_{i}^{*}\}_{i=1}^{m+n}$ is the dual basis of $\fg$ with respect to $B$, then
% \begin{align}\label{eqdefcas}
% 	C^{\fg}=-\sum_{i=1}^{m+n}e_{i}^{*}e_{i}. 
% \end{align}
% Classically, $C^\fg\in \mathscr{Z}(\fg)$.
% 
% In the sequel, we will choose the basis of $\fg$ such that 
% $e_1,\cdots,e_m$ is an orthonormal basis of $(\fp,B|_{\fp})$, and 
% that $e_{m+1},\cdots,e_{m+n}$ is an  orthonormal basis of 
% $(\fk,-B|_{\fk})$, so that 
% \begin{align}\label{eqdua}
% 	e_{i}^{*}=\left\{
% 	\begin{array}{cc}
% 		e_{i},&1\l i\l m,\\
% 		-e_{i},& m+1\l i \l m+n.
% 	\end{array}\right.
% \end{align}
% By \eqref{eqdefcas} and \eqref{eqdua}, we have  
% \begin{align}\label{eq:Cg}
%   C^\fg=-\sum_{i=1}^{m}e^2_i+\sum_{i=m+1}^{n+m}e_i^{2}.\index{C@$C^{\fg}$}
% \end{align}

If $\rho:\fg\to \End( V)$ is a complex representation of $\fg$, we  denote by $C^{\fg,V}$ or $C^{\fg,\rho}\in \End(V)$ the corresponding 
Casimir operator acting on $V$, i.e.,
\begin{align}\label{eq:ckkckp}
	C^{\fg,V}=C^{\fg,\rho}=\rho(C^{\fg}).
% 	-\sum_{i=1}^{m}\rho(e_{i})^{2}+\sum_{i=m+1}^{m+n}\rho(e_i)^2.\index{C@$C^{\fk,V},C^{\fk,\tau}$}
\end{align}
Similarly,  the Casimir of $\fu$ (with respect to $B$)  acts on 
$V$, so that 
\begin{align}\label{eqCu=Cg}
	C^{\fu,V}=C^{\fg,V}. 
\end{align}

\subsection{The Dirac operator}\label{sDirac}
Let $c(\fp)$ be the Clifford algebra of $(\fp,B|_{\fp})$. That is an 
algebra over $\bR$ generated by $1\in \bR$, $a\in \fp$ with the commutation relation 
for  $a_1,a_{2}\in \fp$, 
\begin{align}
	a_{1}a_{2}+a_{2}a_{1}=-2B(a_{1},a_{2}). 
\end{align}

Let $S^{\fp}$ be the spinor of $(\fp,B|_{\fp})$. If $a\in 
\fp$, the action of $a$ on $S^{\fp}$ is denoted by  $c(a)$. If $a\in 
\fk$, $\ad(a)|_{\fp}$ acts as an antisymmetric endomorphism on $\fp$. 
It acts on $S^\fp$  by
\begin{align}\label{eqcada}
	c\(\ad(a)|_{\fp}\)=\frac{1}{4}\sum_{i,j=1}^{m}\<[a,e_{i}],e_{j}\>c(e_{i})c(e_{j}).
\end{align}

% Since the 
% morphisme  $\ad:\fk\to \End(\fp)$ preserves $B|_{\fp}$,  it induces a 
% morphism $\fk\to \End(S^{\fp})$. More precisely, if $a\in \fk$, $a$ acts on $S^{\fp}$ by
% \begin{align}\label{eqcada}
% 	c\(\ad(a)|_{\fp}\)=\frac{1}{4}\sum_{i,j=1}^{m}\<[a,e_{i}],e_{j}\>c(e_{i})c(e_{j}).
% \end{align}

Let $\rho:\fg\to \End(V)$ be a complex representation of $\fg$.   Let $D^{S^{\fp}\otimes 
	V}$ be the Dirac operator acting on $S^{\fp}\otimes 
	V$, i.e., 
\begin{align}\label{eqcDirac}
	D^{S^{\fp}\otimes 
	V}=\sum_{i=1}^{m}c(e_{i})\rho(e_{i}).
\end{align}
Recall that $\fk$ acts on $\fp$ by adjoint action. The operator 
$C^{\fk,\fp}$ is defined in \eqref{eq:ckkckp}. 

\begin{prop}
The following identity of operators on $S^{\fp}\otimes V$ holds, 
\begin{align}\label{eqD2}
		\(D^{S^{\fp}\otimes 
	V}\)^{2}=C^{\fg,V}+\frac{1}{8}\Tr\[C^{\fk,\fp}\]-C^{\fk,S^{\fp}\otimes 
	V}.
	\end{align}
\end{prop}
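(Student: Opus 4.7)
The identity is Parthasarathy's Lichnerowicz-type formula for the cubic Dirac operator on a symmetric pair, and my plan is to derive it by a direct computation with the Clifford algebra.

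First I would expand the square:
\begin{align*}
\bigl(D^{S^{\fp}\otimes V}\bigr)^{2}
=\sum_{i,j=1}^{m}c(e_{i})c(e_{j})\,\rho(e_{i})\rho(e_{j}).
\end{align*}
Separating $i=j$ from $i\neq j$, the diagonal contributes $-\sum_{i}\rho(e_{i})^{2}$ because $c(e_{i})^{2}=-1$, while the off-diagonal part, by antisymmetry of $c(e_{i})c(e_{j})$ in $(i,j)$, collapses to
\begin{align*}
\tfrac12\sum_{i,j}c(e_{i})c(e_{j})\rho([e_{i},e_{j}]).
\end{align*}
Using the orthogonal basis of $\fk$ with respect to $-B|_{\fk}$ and the $\Ad$-invariance of $B$, I would express $[e_{i},e_{j}]\in\fk$ in the basis $\{e_{\alpha}\}_{\alpha>m}$ by $[e_{i},e_{j}]=-\sum_{\alpha}B([e_{i},e_{j}],e_{\alpha})e_{\alpha}=-\sum_{\alpha}\langle [e_{\alpha},e_{i}],e_{j}\rangle e_{\alpha}$. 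Substituting this in and recognizing the coefficient of $\rho(e_{\alpha})$, formula \eqref{eqcada} then gives
\begin{align*}
\tfrac12\sum_{i,j}c(e_{i})c(e_{j})\rho([e_{i},e_{j}])
=-2\sum_{\alpha}c\bigl(\ad(e_{\alpha})|_{\fp}\bigr)\rho(e_{\alpha}).
\end{align*}

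Next I would rewrite $-\sum_{i}\rho(e_{i})^{2}=C^{\fg,V}-C^{\fk,V}$ using \eqref{eq:Cg}. On the other hand, the $\fk$-action on $S^{\fp}\otimes V$ is diagonal, so
\begin{align*}
C^{\fk,S^{\fp}\otimes V}
=C^{\fk,S^{\fp}}
+2\sum_{\alpha}c\bigl(\ad(e_{\alpha})|_{\fp}\bigr)\rho(e_{\alpha})
+C^{\fk,V}.
\end{align*}
Combining this with the two previous displays immediately yields
\begin{align*}
\bigl(D^{S^{\fp}\otimes V}\bigr)^{2}
=C^{\fg,V}+C^{\fk,S^{\fp}}-C^{\fk,S^{\fp}\otimes V}.
\end{align*}

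The only piece left is the classical identification $C^{\fk,S^{\fp}}=\tfrac18\Tr[C^{\fk,\fp}]$, which I expect to be the main bookkeeping point. I would prove it by squaring \eqref{eqcada}: writing $C^{\fk,S^{\fp}}=\sum_{\alpha}c(\ad(e_{\alpha})|_{\fp})^{2}$ gives a sum of products of four Clifford generators weighted by $\langle[e_{\alpha},e_{i}],e_{j}\rangle\langle[e_{\alpha},e_{k}],e_{l}\rangle$; applying the Clifford relations reduces the quartic to scalar terms, and the Jacobi identity together with the antisymmetry of $\ad(e_{\alpha})|_{\fp}$ kills the remaining nontrivial quartic contribution (this is the usual spinor computation, needing only that $\fk$ acts on $\fp$ by antisymmetric operators). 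What survives is exactly $\tfrac18\sum_{\alpha,i}\langle[e_{\alpha},[e_{\alpha},e_{i}]],e_{i}\rangle=\tfrac18\Tr_{\fp}[C^{\fk,\fp}]$. Substituting this into the previous display gives \eqref{eqD2}. The bulk of the work is purely algebraic; the only subtlety is keeping signs consistent between $B$ and $\langle\cdot,\cdot\rangle=-B(\cdot,\theta\cdot)$ on $\fp$ and $\fk$.
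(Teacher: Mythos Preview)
Your direct Clifford computation is correct and recovers Parthasarathy's formula. The paper does not carry this out explicitly: it cites \cite[Lemma II.6.11]{BW} for the identity $\bigl(D^{S^\fp\otimes V}\bigr)^2=C^{\fg,V}-C^{\fk,S^\fp\otimes V}+\text{(scalar)}$ and then invokes Kostant's strange formula to identify the scalar as $\tfrac18\Tr[C^{\fk,\fp}]$; it also points to an alternative route via Kostant's cubic Dirac operator as in \cite[Theorem 7.2.1]{B09}. Your argument is more self-contained: you bypass the strange formula by computing $C^{\fk,S^\fp}$ directly, which is the more elementary path.

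One small correction of emphasis: in your last paragraph you write that the vanishing of the quartic Clifford term needs ``only that $\fk$ acts on $\fp$ by antisymmetric operators''. That is not quite enough --- antisymmetry alone gives the skew symmetries of $R_{ijkl}=-B([e_i,e_j],[e_k,e_l])$, but the total antisymmetrization vanishes because of the first Bianchi identity $R_{ijkl}+R_{jkil}+R_{kijl}=0$, which comes from the Jacobi identity together with $[\fp,\fp]\subset\fk$ (the symmetric-pair structure). You do invoke Jacobi in the same sentence, so the proof is fine; just drop the parenthetical or make it say ``needing that $(\fg,\fk)$ is a symmetric pair''.
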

\begin{proof}	
	This is a consequence of  \cite[Lemma 	II.6.11]{BW} and of 
	Kostant's strange formula \cite{Kostant76} (see also 
	\cite[(2.6.11), (7.5.4)]{B09}). 
	
	There is another way proving this result, by imitating the proof of \cite[Theorem 
	7.2.1]{B09} which uses  Kostant's cubic Dirac operator \cite{Kostant76,Kostant97}. 
\end{proof}

In the sequel,  we will also consider the Dirac operator associated to the compact 
	symmetric pair $(\fu,\fk)$ with respect to the positive bilinear 
	form $-B|_{\fu}$. Using the identification $a\in \fp\to \sqrt{-1}a\in  \sqrt{-1}\fp$, 
we can identify  $c(\fp)$ with the 
Clifford algebra of  the Euclidean space $(\sqrt{-1}\fp,-B)$. Also,  
$S^{\fp}$ can be identified  with   the  
spinor $S^{\sqrt{-1}\fp}$ of $(\sqrt{-1}\fp,-B)$.  Then, the Dirac operator 
$D^{S^{\sqrt{-1}\fp}\otimes 
		V}$ is just  $\sqrt{-1}D^{S^{\fp}\otimes 
		V}$. By \eqref{eqCu=Cg} and \eqref{eqD2}, we have  
\begin{align}\label{eqD2cp}
		-\(D^{S^{\sqrt{-1}\fp}\otimes 
		V}\)^{2}=C^{\fu,V}+\frac{1}{8}\Tr\[C^{\fk,\sqrt{-1}\fp}\]-C^{\fk,S^{\sqrt{-1}\fp}\otimes 
	V}.
	\end{align}

\subsection{Semisimple elements}\label{sSemi}If $\gamma\in G$, we denote by $Z(\gamma) \subset G$ the centraliser 
of $\gamma$ in $G$, and by $\fz(\gamma)\subset \fg$ its Lie algebra. 
If $a\in \fg$, let $Z(a)\subset G$ be the stabiliser of $a$ in $G$, 
and let $\fz(a)\subset \fg$ be its Lie algebra. If $\fa \subset \fg$  is a subset, we define $Z(\fa)$ and $\fz(\fa)$ similarly.

Let $\gamma\in G$ be a semisimple element, i.e.,  there is $g_{\gamma}\in G$ 
 such that  $\gamma=g_{\gamma}e	
 ^{a}k^{-1}g_{\gamma}^{-1}$ and
 \begin{align}\label{eqasr}
 	&a\in \fp,&k\in K,&& \Ad(k)a=a.  
 \end{align}
The norm $|a|$ depends only on the conjugacy class of $\gamma$ 
 in $G$. Write 
 \begin{align}
 	\ell_{[\gamma]}=|a|. 
 \end{align}
 A semisimple element $\gamma$ is  called elliptic, if $\ell_{[\gamma]}=0$. 

If $\gamma$ is semisimple, by \cite[Proposition 
7.25]{KnappLie}, $Z(\gamma)$ is a (possibly non connected) reductive 
group with Cartan involution\footnote{By \cite[Theorem 2.3]{BS19}, this is indeed independent of 
the choice of $g_{\gamma}$.} $g_{\gamma}\theta 
g_{\gamma}^{-1}$. Let  $K(\gamma)\subset Z(\gamma)$ be the associated maximal compact subgroup of $Z(\gamma)$.  

% By 
% \cite[Theorem 3.1.1]{B09}, the map $g\in Z(\gamma)\to pg\in X$ 
% induces the identification of $Z (\gamma)$-manifolds,
% \begin{align}\label{eqXr=ZrKr}
% 	Z(\gamma)/K(\gamma)\simeq X(\gamma). 
% \end{align}

% If $\gamma$ is in the standard position, i.e., 
% $\gamma=e^ak^{-1}\in G$ such that  \eqref{eqasr} holds, by 
% \cite[(3.3.4)]{B09}, we have 
%  \begin{align}
% & 	Z(\gamma)=Z(a)\cap Z(k), &K(\gamma)=Z(\gamma)\cap K. 
%  \end{align}

\subsection{The fundamental Cartan subalgebra}\label{sCartanF}
Let $T\subset K$ be a maximal torus of $K$. Let $\ft\subset \fk$ be the Lie algebra of $T$. If $N_{K}(T)$ 
is the normaliser of  $T$ in $K$, let $W(T:K)=N_{K}(T)/T$ be associated  Weyl group.  

Set
\begin{align}\label{eqh=bt}
	&\fb=\left\{a\in \fp: [a,\ft]=0\right\}, &\fh=\fb\oplus \ft.
\end{align} 
By \cite[p.~129]{Knappsemi}, $\fh$ is a Cartan subalgebra of $\fg$.  Let 
$H=Z(\fh)$ be the associated Cartan subgroup of $G$. By \cite[Theorem 
5.22]{Knappsemi}, $H$ is a connected abelian reductive subgroup of $G$, so that 
\begin{align}
	H=\exp(\fb)\times T. 
\end{align} 
We will call $\fh$ and $H$ respectively the fundamental Cartan subalgebra of $\fg$  and the fundamental Cartan subgroup of $G$.

Recall that complex ranks of $G$ and $K$ are defined respectively by 
the dimensions  of  Cartan subalgebras of $\fg_{\bC}$ and $\fk_{\bC}$. 

\begin{defin}
  The fundamental rank $\delta(G)$ of $G$ is defined by the difference of complex 
  ranks of $G$ and $K$, i.e., 
  \begin{align}\label{eqd=g-k}
	\delta(G)=\dim \fb. 
	\end{align}
	Note  that $m$ and $\delta(G)$ have the same parity. 
\end{defin}
% Since  $\fh_{\bC}$ is a Cartan subalgebra of $\fg_{\bC}$, by  \eqref{eqh=bt}, we have 
% \begin{align}\label{eqd=g-k}
% 	\delta(G)=\dim \fb. 
% 	\end{align}

% Clearly, if $\rk_{\bC}(G)$ and $\rk_{\bC}(K)$ are the complex ranks 
% of $G$ and $K$, i.e., the dimension of  a Cartan subalgebra of 
% $\fg_{\bC}$ and $\fk_{\bC}$, we have 
% By \eqref{eqd=g-k} and \eqref{eqh=bt}, 
% \begin{align}
% 	\delta(G)=\rk_{\bC}G-\rk_{\bC}K. 
% \end{align} 
\subsection{A splitting of $\fg$ according to the 
$\fb$-action}\label{sSpliting}
%Recall that $Z(\fb)\subset G$ is the centralizer of $\fb$ in $G$ and that $\fz(\fb)\subset \fg$ is its Lie algebra.  
By \cite[Proposition 
7.25]{KnappLie},  $Z(\fb)$ is a (possibly non connected)  reductive 
subgroup of  $G$,  so that we have the Cartan 
 decomposition
 \begin{align}\label{eqzb0}
 	\fz(\fb)=\fp(\fb)\oplus \fk(\fb). 
 \end{align}

Let $\fm\subset 
\fz(\fb)$ be the orthogonal  space (with respect to B) of $\fb$ in $\fz(\fb)$. Then $\fm$ 
is a Lie subalgebra of $\fg$, and  $\theta$ acts on $\fm$ so that we have the Cartan 
 decomposition
 \begin{align}\label{eqmb0}
 	\fm=\fp_{\fm}\oplus \fk_{\fm}. 
 \end{align}
Let 
$M\subset G$ be the connected Lie subgroup associated to the  Lie 
algebra $\fm$. By \cite[(3.3.11) and Theorem 3.3.1]{B09}, $M$ is  
closed in $G$ and is  a connected reductive subgroup of $G$ 
% with Cartan decomposition 
% \begin{align}
% 	\fm=\fp_{\fm}\oplus \fk_{\fm},
% \end{align}
with maximal compact subgroup 
\begin{align}\label{eqKM0}
	K_{M}=M\cap K.
\end{align}
Moreover, we have
\begin{align}\label{eqZ(b)}
&Z^{0}(\fb)=\exp(\fb)\times M,&\fz(\fb)=\fb\oplus \fm, &&	
\fp(\fb)=\fb\oplus\fp_{\fm},&&& \fk(\fb)=\fk_{\fm}. 
\end{align}
Since $\fh$ is also a Cartan subalgebra for $\fz(\fb)$, we have
\begin{align}\label{eqdm=0}
	\delta(M)=0. 
\end{align}
%In particular, $\dim \fp_{\fm}$ is even. 

Let $\fp^{\bot}(\fb),\fk^{\bot}(\fb),\fz^{\bot}(\fb)$ be respectively the orthogonal  spaces (with respect to $B$) of $\fp(\fb),\fk(\fb),\fz(\fb)$ 
in $\fp,\fk,\fg$. Clearly,
\begin{align}
	\fz^{\bot}(\fb)=\fp^{\bot}(\fb)\oplus \fk^{\bot}(\fb). 
\end{align}
And also 
\begin{align}\label{eq:mpk1}
 & \fp=\fb\oplus\fp_\fm\oplus\fp^\bot(\fb),&\fk=\fk_\fm\oplus\fk^\bot(\fb),&& \fg=\fb\oplus \fm\oplus \fz^{\bot}(\fb).
\end{align}
The group $K_M$ acts trivially on $\fb$. It also acts on $\fp_\fm$, 
$\fp^{\bot}(\fb)$, $\fk_\fm$ and $\fk^{\bot}(\fb)$, and preserves the 
splittings \eqref{eq:mpk1}. Similarly, the groups $M$ and $Z^0(\fb)$ 
act trivially on $\fb$, act on $\fm,\fz^{\bot}(\fb)$, and preserves the third splitting in \eqref{eq:mpk1}.

\begin{re}
	We can define similar objects associated to the action of 
$\sqrt{-1}\fb\subset \fu$ on $\fu$.  Let $\fu_{\fm}$ and $\fu(\fb)$ be
the compact forms of $\fm$ and $\fz(\fb)$. Then, 
\begin{align}
	\fu(\fb)=\sqrt{-1}\fb\oplus 
\fu_{\fm}. 
\end{align} 
Let $\fu^{\bot}(\fb)$ be the orthogonal space of $\fu(\fb)$ in $\fu$. 
Then, 
\begin{align}
&\fu^{\bot}(\fb)=\sqrt{-1}\fp^{\bot}(\fb)\oplus \fk^{\bot}(\fb),&	\fu=\sqrt{-1}\fb\oplus \fu_{\fm}\oplus \fu^{\bot}(\fb).
\end{align} 
\end{re}

Elements of $\fb$ act on $\fz^{\bot}(\fb)$ with semisimple real  
eigenvalues.  We fix an element $f_{\fb}\in \fb$, called positive, such that 
$\ad(f_{\fb})|_{\fz^{\bot}(\fb)}$ is invertible.  The choice of $f_{\fb}$ is irrelevant.  Let $\fn\subset 
\fz^{\bot}(\fb)$ (resp. $\ol{\fn}\subset \fz^{\bot}(\fb)$) be the 
direct sum of the eigenspaces of $\ad(f_{\fb})|_{\fz^{\bot}(\fb)}$ associated to the  positive (resp. negative) eigenvalues. Then,  
\begin{align}\label{eqZbnn}
&	\fz^{\bot}(\fb)=\fn\oplus \ol{\fn}, &\ol{\fn}=\theta\fn. 
\end{align} 
Clearly, $Z^{0}(\fb)$ acts on $\fn$, $\ol{\fn}$ and 
preserves the first decomposition in \eqref{eqZbnn}. 

\begin{prop}\label{propnng} The following statements hold.
		\begin{enumerate}[i)]
		\item\label{nng1}  The vector spaces $\fn, \ol{\fn}\subset \fg$ are Lie  subalgebras of  $\fg$, which have the same even dimension. 

		\item\label{nng2} The bilinear form $B$ vanishes on $\fn,\ol{\fn}$ and induces a $Z^0(\fb)$-isomorphism,
		\begin{align}\label{eq324}
	\ol{\fn}^{*}\simeq \fn. 
\end{align} 

%\item\label{nng3} The group $Z^0(\fb)$ acts on $\fn,\ol{\fn}$, so that \eqref{eq324} is $Z^0(\fb)$-equivalent. 
		
\item\label{nng4} The actions of $M$ on 
$\fn,\ol{\fn},\fn^{*},\ol{\fn}^{*}$ are equivalent. For $0\l j\l \dim \fn$,  we have isomorphisms of 
	representations of $M$,
\begin{align}\label{eqVAMs1}
	%&\eta_{\ell-j}^\theta\simeq \eta_{\ell+j},&
	\Lambda^{j}(\fn^{*})\simeq \Lambda^{\dim \fn-j}(\fn^{*}). 
\end{align}

\item\label{nng5}  The projections on $\fp,\fk$ map $\fn,\ol{\fn}$ into 
$\fp_{\fm}^{\bot},\fk_{\fm}^{\bot}$ isomorphically. 

\item\label{nng6} The actions of $K_M$ on 
$\fn,\ol{\fn},\fp_{\fm}^{\bot},\fk_{\fm}^{\bot}$ are equivalent.
	\end{enumerate} 
\end{prop}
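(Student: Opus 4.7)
The plan is to address the five items in turn, leveraging three ingredients: the Jacobi identity applied to the eigenspace decomposition of $\ad(f_\fb)$, the $\Ad$-invariance of $B$, and the Cartan involution $\theta$, which carries positive $f_\fb$-eigenspaces to negative ones because $\theta f_\fb=-f_\fb$.

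For \ref{nng1}, the bracket of $\ad(f_\fb)$-eigenvectors for eigenvalues $\lambda,\mu$ is an eigenvector for $\lambda+\mu$, so $\fn$ and $\ol{\fn}$ are closed under bracket; the restriction of $\theta$ furnishes a linear isomorphism $\fn\simeq\ol{\fn}$, yielding equality of dimensions. For evenness, I complexify and consider the root system $R$ of $(\fg_\bC,\fh_\bC)$; since $\fh$ is fundamental it carries no real roots (by the standard Cayley-transform argument, any real root would yield a more compact Cartan), so the involution $\alpha\mapsto -\theta\alpha$, which is complex conjugation on $\fh_\bC^*$ with respect to the real form $\fh$, acts freely on $R_+^{f_\fb}=\{\alpha\in R:\alpha(f_\fb)>0\}$, the set indexing the root vectors of $\fn_\bC$. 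For \ref{nng2}, $B(X_\alpha,X_\beta)=0$ for root vectors in $\fn_\bC$ unless $\beta=-\alpha$, which is impossible inside $R_+^{f_\fb}$, so $\fn$ and $\ol{\fn}$ are totally $B$-isotropic. Nondegeneracy of $B$ on $\fz^\bot(\fb)$, together with the splitting into two isotropic subspaces of complementary dimension, forces $B$ to restrict to a nondegenerate pairing $\fn\otimes\ol{\fn}\to\bR$; the $Z^0(\fb)$-equivariance then comes from $\Ad$-invariance of $B$, yielding \eqref{eq324}.

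For \ref{nng5}, let $p_\fp$ denote the projection $\fz^\bot(\fb)\to\fp^\bot(\fb)$ along $\fk^\bot(\fb)$. I show $p_\fp|_\fn$ is injective: its kernel lies in $\fn\cap\fk$, and any such $x$ satisfies $\theta x=x$ while $\theta\fn=\ol{\fn}$, so $x\in\fn\cap\ol{\fn}=0$. The same argument gives $p_\fk|_\fn$ injective. Combined with $2\dim\fn=\dim\fp^\bot(\fb)+\dim\fk^\bot(\fb)$ coming from \eqref{eq:mpk1}, both projections must be isomorphisms, and the same reasoning applies to $\ol{\fn}$. Part \ref{nng6} is then immediate, because $K_M\subset K$ preserves both splittings $\fg=\fp\oplus\fk$ and $\fg=\fz(\fb)\oplus\fz^\bot(\fb)$, making $p_\fp$ and $p_\fk$ $K_M$-equivariant.

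The most delicate step is \ref{nng4}. Combining \ref{nng2} with its analogue for $\ol{\fn}$ yields $\ol{\fn}\simeq\fn^*$ and $\fn\simeq\ol{\fn}^*$ as $M$-modules, reducing the task to establishing the self-duality $\fn\simeq\fn^*$. My plan is to exploit $\delta(M)=0$ from \eqref{eqdm=0}, which makes $T\subset K_M$ a compact Cartan of the connected reductive group $M$; finite-dimensional representations of $M$ are then determined by their $T$-character. The $T$-weights of $\fn_\bC$ are $\{\alpha|_\ft:\alpha\in R_+^{f_\fb}\}$ with multiplicities, and the fixed-point-free involution $\alpha\mapsto-\theta\alpha$ from \ref{nng1} satisfies $(-\theta\alpha)|_\ft=-\alpha|_\ft$ since $\theta|_\ft=\mathrm{id}$, so the multiset of $T$-weights is invariant under negation. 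Hence $\fn$ and $\fn^*$ have identical $T$-characters, and $\fn\simeq\fn^*$ as $M$-modules. For the exterior-power statement, $\fn\simeq\fn^*$ forces $(\det\fn)^{\otimes 2}$ trivial as an abelian character of the connected group $M$, hence $\det\fn$ itself trivial, and the natural pairing $\Lambda^j\fn^*\otimes\Lambda^{\dim\fn-j}\fn^*\to\det\fn^*\simeq 1$ produces \eqref{eqVAMs1}. The hardest ingredient throughout is the absence of real roots for the fundamental Cartan, which underlies both the parity in \ref{nng1} and the self-duality in \ref{nng4}; I expect to cite this classical fact rather than reprove the Cayley-transform argument.
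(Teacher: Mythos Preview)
Your argument is correct. The paper does not prove this proposition in the text; it simply cites \cite[Proposition 3.2, Corollary 3.3]{Shen20} and \cite[Proposition 3.10]{BS19}, so there is no in-paper argument to compare against line by line. Your direct proof via the root-space description of $\fn_\bC$ and the free involution $\alpha\mapsto-\theta\alpha=\ol{\alpha}$ on $R_+^{f_\fb}$ is a clean and self-contained route.

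One step deserves a word more of justification: your assertion that finite-dimensional representations of $M$ are determined by their $T$-character. This holds here because $\delta(M)=0$ forces $\fz_\fm\subset\ft\subset\fk_\fm$, so $M$ has compact centre; by Proposition~\ref{proprepa} every finite-dimensional complex $M$-module then carries an admissible metric and is completely reducible, after which highest-weight theory identifies irreducibles with equal $T$-characters. Alternatively, you can bypass the $T$-character computation entirely and reach $\fn\simeq\fn^*$ more quickly using machinery already in the paper: Proposition~\ref{propMtt} applied to $M$ gives $\fn\simeq\fn^\theta$ as $M$-modules, and since $\theta(\Ad(m)x)=\Ad(\theta m)\theta x$ for $m\in M$, the map $\theta:\fn\to\ol{\fn}$ intertwines $\fn^\theta$ with the adjoint $M$-action on $\ol{\fn}$; combined with $\ol{\fn}\simeq\fn^*$ from part~\ref{nng2} this yields $\fn\simeq\fn^*$. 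Either way the conclusion and the deduction of \eqref{eqVAMs1} via triviality of $\det\fn^*$ go through as you wrote.
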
 
\begin{proof}
This is  \cite[Proposition 3.2, Corollary 3.3]{Shen20} 
(see also \cite[Proposition 3.10]{BS19}). 
\end{proof}

Let $R(K)$ be the representation ring of $K$. We can identify $R(K)$ 
with the subring of the $\Ad(K)$-invariant smooth functions on $K$ which is generated by the 
characters of finite dimensional complex representations of $K$. 

Similarly, we can define $R(T)$.  The Weyl group 
$W(T:K)=N_{K}(T)/T$ acts on $R(T)$. By \cite[Proposition VI.2.1]{BrockerDieck}, 
the restriction induces an isomorphism of rings
\begin{align}\label{eqRKRT}
	 R(K)\simeq  R(T)^{W(T:K)}. 
\end{align}

Since $K_{M}$ and $K$ have the same maximal torus $T$, 
the restriction induces  an  
injective morphism $R(K)\to R(K_{M})$ of rings.  Recall a key result 
established in \cite[Theorem  3.5, Corollary 3.6]{Shen20} (see also \cite[Theorem 
6.1, Corollary 6.12]{Shfried}). 

\begin{thm}\label{corkey}
	For $i, j \in \mathbf N$, the adjoint representations of $K_M$ on 
$\Lambda^{i}(\fp_{\fm,\bC}^{*})$ and $\Lambda^{j}(\fn^{*}_{\bC})$ have unique 
lifts in $R(K)$. 
\end{thm}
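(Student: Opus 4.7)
The plan is to apply the isomorphism \eqref{eqRKRT} both for $K$ and, analogously, for $K_M$: since $T$ is a common maximal torus (it lies in $K_M$ because $\ft$ centralises $\fb$, and remains maximal there as any larger torus would contradict maximality in $K$), we have $R(K_M)\simeq R(T)^{W(T:K_M)}$ as well. This reduces the entire theorem to two invariance checks in $R(T)$.

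Uniqueness is the easier half. The obvious inclusion $N_{K_M}(T)\subset N_K(T)$ yields $W(T:K_M)\subset W(T:K)$, so the restriction morphism $R(K)\to R(K_M)$ factors as the tautological inclusion $R(T)^{W(T:K)}\hookrightarrow R(T)^{W(T:K_M)}$ and is therefore injective. Any lift in $R(K)$ is thus unique.

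For existence, it is enough to show that the $T$-characters of the $K_M$-representations $\fp_{\fm,\bC}$ and $\fn_{\bC}$ already lie in $R(T)^{W(T:K)}$; the conclusion for $\Lambda^{i}(\fp_{\fm,\bC}^{*})$ and $\Lambda^{j}(\fn_{\bC}^{*})$ will follow automatically since exterior-power characters are symmetric polynomial functions of the underlying weights, and $W(T:K)$-invariance is preserved under such functions. The first case is immediate: the intrinsic characterisation $\fb=\{a\in\fp:[a,\ft]=0\}$ of \eqref{eqh=bt} shows that $\Ad(N_K(T))$ preserves $\fb$ (as it preserves both $\fp$ and $\ft$), hence also $\fz(\fb)$, $\fm$, $\fp_\fm=\fp\cap\fm$, and the orthogonal $\fz^{\bot}(\fb)$. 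This yields the $W(T:K)$-invariance of $\chi_{\fp_{\fm,\bC}}$ and of $\chi_{\fz^{\bot}(\fb)_{\bC}}$ in $R(T)$ at once.

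The main subtlety, and the step I expect to need the most care, is that $\fn$ itself is not generally $\Ad(N_K(T))$-stable, since it depends on the auxiliary element $f_{\fb}$ which can be moved by the Weyl group action on $\fb$. I would bypass this by combining the decomposition $\fz^{\bot}(\fb)_{\bC}=\fn_{\bC}\oplus\ol{\fn}_{\bC}$ from \eqref{eqZbnn} with the fact that $\theta$ acts trivially on $\fk$ and therefore commutes with $\Ad(T)$, so that $\theta:\fn\xrightarrow{\sim}\ol{\fn}$ is a $T$-equivariant isomorphism. This gives the identity $2\,\chi_{\fn_{\bC}}=\chi_{\fz^{\bot}(\fb)_{\bC}}$ in $R(T)$, and since $R(T)$ is a free $\bZ$-module, the $W(T:K)$-invariance of the right-hand side forces that of $\chi_{\fn_{\bC}}$, completing the existence argument via \eqref{eqRKRT}.
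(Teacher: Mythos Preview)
Your argument is correct. The paper itself does not give a proof of this statement here; it only cites \cite[Theorem 3.5, Corollary 3.6]{Shen20} and \cite[Theorem 6.1, Corollary 6.12]{Shfried}. Your route---reduce to checking $W(T:K)$-invariance of the $T$-character via \eqref{eqRKRT}, observe that $N_K(T)$ preserves $\fb$ and hence $\fp_\fm$ and $\fz^{\bot}(\fb)$, and handle $\fn$ through the identity $2\chi_{\fn_\bC}=\chi_{\fz^{\bot}(\fb)_\bC}$ in the torsion-free module $R(T)$---is exactly the strategy the paper itself employs a few pages later in the proof of Theorem~\ref{thm:key}, and it is also the argument used in the cited references. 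The passage from the invariance of $\chi_{\fn_\bC}$ to that of $\chi_{\Lambda^j(\fn^*_\bC)}$ is cleanest phrased by noting that $W(T:K)$-invariance of the character of a finite-dimensional $T$-module $V$ forces $V\simeq V^w$ for every $w$, whence $\Lambda^j(V^*)\simeq(\Lambda^j(V^*))^w$; this is what your ``symmetric polynomial'' remark amounts to.
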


\subsection{Admissible metrics}\label{sadRepG}
Let  $\rho : G 
\to \GL(V)$ be a finite dimensional complex representation of $G$.  Set 
\begin{align}\label{rt=rt}
\rho^\theta=\rho\circ\theta.
\end{align}
Then $\rho^{\theta}:G\to \GL(V)$ is still a representation of $G$. %Clearly, 
% $\rho$ is irreducible if and only if $\rho^{\theta}$ is irreducible. 
% Also,  $\rho$ has an admissible metric if and only if $\rho^{\theta}$ 
% has an admissible metric. 

\begin{prop}\label{propMtt}
	If $\delta(G)=0$,  we have an isomorphism of  
	representations of $G$, 
	\begin{align}\label{eqd0tth}
	\rho\simeq \rho^{\theta}.
	\end{align}
\end{prop}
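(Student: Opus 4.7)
The plan is to reduce the question to comparing characters on the fundamental Cartan subalgebra, and to exploit the fact that when $\delta(G)=0$ the Cartan involution acts trivially there.

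Since $G$ is connected, two finite dimensional complex representations of $G$ are isomorphic if and only if the corresponding $\fg_{\bC}$-modules are isomorphic, and for finite dimensional modules over the reductive complex Lie algebra $\fg_{\bC}$, this is detected by the formal character with respect to any Cartan subalgebra. By the assumption $\delta(G)=\dim\fb=0$, the fundamental Cartan subalgebra satisfies $\fh=\ft\subset \fk$, so $\ft_{\bC}$ is a Cartan subalgebra of $\fg_{\bC}$. Because $\theta$ acts as the identity on $\fk$, it fixes $\ft$ pointwise, hence $\theta$ acts as the identity on $\ft_{\bC}$.

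Next I would compare weight space decompositions. Write $V=\bigoplus_{\mu\in \ft_{\bC}^{*}} V_{\mu}$ for the weight decomposition of $\rho$ with respect to $\ft_{\bC}$. For $v\in V_{\mu}$ and $H\in \ft_{\bC}$, one has
\begin{align}
\rho^{\theta}(H)v=\rho(\theta H)v=\rho(H)v=\mu(H)v,
\end{align}
so $V_{\mu}$ is also the $\mu$-weight space of $\rho^{\theta}$. Therefore $\rho$ and $\rho^{\theta}$ have the same formal character with respect to $\ft_{\bC}$.

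Since finite dimensional representations of the reductive complex Lie algebra $\fg_{\bC}$ are completely reducible and irreducible ones are determined up to isomorphism by their characters (equivalently, by their highest weights), the equality of formal characters forces $\rho\simeq \rho^{\theta}$ as $\fg_{\bC}$-modules, and the connectedness of $G$ then yields \eqref{eqd0tth}. No genuine obstacle is expected here; the only subtle point to be careful about is that one really needs $\theta$ to preserve a Cartan subalgebra of $\fg$ pointwise, which is precisely what the hypothesis $\delta(G)=0$ supplies (in higher fundamental rank, $\theta$ acts as $-1$ on $\fb$ and will generically permute weights nontrivially, which is why the statement fails in general).
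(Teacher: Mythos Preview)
Your argument has a genuine gap at the final step. You assert that finite dimensional representations of the reductive complex Lie algebra $\fg_{\bC}$ are completely reducible and hence determined by their formal characters. This is false in general: for a reductive (as opposed to semisimple) Lie algebra the centre can act non-semisimply, and two representations with the same composition factors need not be isomorphic --- the paper itself records the example $G=\bR$, $x\mapsto\bigl(\begin{smallmatrix}1&x\\0&1\end{smallmatrix}\bigr)$ in the remark following Proposition~\ref{proprepa}. What rescues your approach here is the additional observation that $\delta(G)=0$ forces $\fz_{\fp}=0$ (any element of $\fz_{\fp}$ commutes with $\ft$ and hence lies in $\fb=0$), so $Z_G$ is compact; then Proposition~\ref{proprepa}, whose proof does not use the present statement, supplies an admissible metric on $V$, and complete reducibility follows. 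With this patch your character argument becomes valid.

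The paper's proof takes a shorter and more robust route that avoids complete reducibility altogether: when $\delta(G)=0$ there exists $k_{0}\in K$ with $\Ad(k_{0})=-1$ on $\fp$ and $\Ad(k_{0})=1$ on $\fk$, so that $\theta$ is the inner automorphism $g\mapsto k_{0}gk_{0}^{-1}$ of $G$. Then $\rho(k_{0})$ is an explicit $G$-intertwiner from $\rho$ to $\rho^{\theta}$, and this works for \emph{every} finite dimensional $\rho$, completely reducible or not. Your approach, once repaired, has the merit of being self-contained at the level of weights, but the paper's argument is both briefer and conceptually cleaner: the isomorphism $\rho\simeq\rho^{\theta}$ is simply a reflection of $\theta$ being inner.
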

\begin{proof}
	When $\delta(G)=0$, by \cite[Problem XII.10.14]{Knappsemi}, 
	there is $k_{0}\in K$, such that $\Ad(k_{0})=-1$ on $\fp$ and 
	$\Ad(k_{0})=1$ 
	on $\fk$. 	 By \eqref{eq:cartan2}, for $g\in G$, we have 
	$\theta(g)=k_{0}gk_{0}^{-1}$. Therefore, $\rho(k_{0}):V\to V$ 	
	is the required isomorphism  \eqref{eqd0tth}. %of representations of $G$. 
	% 	Let us give another proof. Since $\delta(G)=0$, $Z_{G}$ is 
% 	compact. Then, $\rho$ is completely reducible, i.e., $\rho$ can 
% 	be decomposed as a direct sum of irreducible $G$-representations. 
% 	We can assume that $\rho$ is irreducible. Let us show $\rho$ and 
% 	$\rho^{\theta}$ are equivalent $(\fg_{\bC},K)$-module. Clearly, 	there are equivalent $K$-module.  
% 	
% 	
	% 	
% 	Since $\delta(G)=0$, $Z_{G}$ is compact. By Proposition 
% 	\ref{proprepa}, $\rho$ has an admissible metric and is 
% 	completely 	reducible. We can 	
% 	assume that $\rho$ is  irreducible. Since  $ G$ 
% 	is connected, $\rho$ is an irreducible representation of 
% 	$\fg$. Since $\delta(G)=0$,  the fundamental Cartan 	
% 	subalgebra $\fh=\ft$ is compact. So, $\theta$ acts as identity on 
% 	$\fh$. In particular, $\rho$ 
% 	and $\rho^{\theta}$ have 
% the same highest weight, which implies  $\rho\simeq \rho^{\theta}$. 	
% 	Recall that $\fh\subset \fg$ is the fundamental Cartan subalgera. Up 
% 	to $W(\fh_{\bR},\fu)$, $V$ is uniquely determined  by its highest 
% 	weight with respect to the Cartan subalgebra $\fh_{\bC}$ of $\fg_{\bC}$. Since 
% 	$\delta(G)=0$, the fundamental  Cartan subalgbera $\fh$ is 
% 	compact. Thus $\fh=\fh_{\bR}=\ft$. 
% 	In particular, $\theta$ acts as identity on $\fh$. Thus, $\rho$ 
% 	and $\rho^{\theta}$ have 
% the same highest weight. So $V\simeq V^{\theta}$. 	
\end{proof}

\begin{defin}
	A Hermitian metric $\<,\>_{V}$ on $V$ is called admissible, if for  $u,v\in V$, $Y_{1}\in \fp$, $Y_{2}\in \fk$, we have 
	\begin{align}\label{eqadmissible}
&\left\<\rho(Y_{1})u,v\right\>_{V}=\left\<u,\rho(Y_{1})v\right\>_{V},&\left\<\rho(Y_{2})u,v\right\>_{V}=-\left\<u,\rho(Y_{2})v\right\>_{V}.
	\end{align}
%Similarly, we can  define Euclidean admissible metrics for  real representations. 
\end{defin}
	
Assume that  $V$ has an admissible metric.
If $\ol{\rho}^{*}$ denotes the anti-dual representation of $\rho$, 
the admissible metric induces  an isomorphism of $G$-representations, 
	\begin{align}\label{eqrhothetarho}
		\rho^{\theta}\simeq \ol{\rho}^{*}. 
	\end{align}
If $W\subset V$ is a 
 $(\fg_{\bC},K)$-submodule of $V$, then the orthogonal space 
 $W^{\bot}\subset V$  is still a $(\fg_{\bC},K)$-submodule. 
 Moreover, by restrictions, $W$ and $W^{\bot}$ still have  
 admissible metrics.  In this way, we see that any finite dimensional 
 $G$-representation with an 
admissible metric is completely reducible, i.e., it can be 
decomposed as a direct sum of irreducible $G$-representations.

By \cite[Lemma 3.1]{MatsuchimaMurakami}, if $G$ is semisimple, any finite dimensional representation of $G$ 
has an admissible Hermitian metric. When $G$ is reductive and has a 
compact centre, we have a similar  result. 

\begin{prop}\label{proprepa}
	If $G$ has a  compact centre $Z_{G}$, then any finite 
	dimensional  complex  representation $\rho: G\to \GL(V)$ has an admissible Hermitian metric. 
\end{prop}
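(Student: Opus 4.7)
The plan is to reduce the statement to the semisimple case (already handled by Matsushima-Murakami) by splitting off the centre of $G$, and then to reassemble the two pieces by averaging over the compact group $Z_G$.

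First I would exploit the hypothesis that $Z_G$ is compact: combined with the identification $Z_G = \exp(\fz_\fp) \times (Z_G \cap K)$ recorded in Section \ref{sReductive}, this forces $\fz_\fp = 0$, since $\exp(\fz_\fp)$ is a closed vector subgroup of $G$ diffeomorphic to $\fz_\fp$. Setting $\fg_{ss} = [\fg,\fg]$, the Cartan decomposition therefore refines to
\begin{align}
\fg = \fz_\fk \oplus \fg_{ss}, \qquad \fp \subset \fg_{ss}, \qquad \fk = \fz_\fk \oplus (\fk \cap \fg_{ss}),
\end{align}
with the whole of the centre contained in $\fk$ and the whole of $\fp$ contained in the semisimple part. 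Applying \cite[Lemma 3.1]{MatsuchimaMurakami} to the restriction of $\rho$ to $\fg_{ss}$ produces a Hermitian metric $\<\cdot,\cdot\>'$ on $V$ under which $\fp$ acts symmetrically and $\fk \cap \fg_{ss}$ acts antisymmetrically.

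The second step is to average $\<\cdot,\cdot\>'$ over $Z_G$ against its normalised Haar measure, setting
\begin{align}
\<u,v\> = \int_{Z_G} \<\rho(z)u,\rho(z)v\>' \, dz.
\end{align}
By construction $\<\cdot,\cdot\>$ is Hermitian and $Z_G$-invariant, hence $\fz_\fk$ acts antisymmetrically under $\<\cdot,\cdot\>$. Since $Z_G$ centralises $\fg_{ss}$, the operator $\rho(z)$ commutes with $\rho(Y)$ for every $Y \in \fg_{ss}$, and a one-line manipulation inside the integral shows that the symmetric action of $\fp$ and the antisymmetric action of $\fk \cap \fg_{ss}$ carried by $\<\cdot,\cdot\>'$ both survive the averaging. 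Combining these facts, $\fp$ acts symmetrically and $\fk = \fz_\fk \oplus (\fk \cap \fg_{ss})$ acts antisymmetrically under $\<\cdot,\cdot\>$, which is exactly admissibility.

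The one slightly delicate point is invoking Matsushima-Murakami for the Lie subalgebra $\fg_{ss}$ rather than directly for a group in the class considered in the paper; this is harmless, because their argument proceeds by integrating the representation to the (compact) compact real form of $\fg_{ss,\bC}$, which depends only on the Lie algebra action and is insensitive to whether the analytic subgroup $G_{ss} \subset G$ is closed. No further analytic ingredient is required.
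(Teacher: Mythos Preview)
Your proof is correct and follows essentially the same route as the paper: both arguments split $G$ into its semisimple part and its compact centre, use Weyl's unitary trick on the compact form of the semisimple part, and average over the centre. The only organisational difference is that the paper packages everything into a single averaging over the compact group $\widetilde{U}_{\rm ss}\times Z_G^0$, whereas you invoke \cite{MatsuchimaMurakami} as a black box for the semisimple step and then average over $Z_G$ separately, checking that the two steps do not interfere because $Z_G$ centralises $\fg_{ss}$.
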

\begin{proof}
	Let $G_{\rm ss}\subset G$ be the connected Lie subgroup of $G$ 
	associated to the Lie algebra $\[\fg,\fg\]\subset \fg$. By 
	\cite[Corollary 7.11]{KnappLie}, $G_{\rm ss}$ is a closed subgroup 
	of $G$, which is semisimple and  $G=G_{\rm ss}\cdot Z^{0}_{G}.$ Let $U_{\rm ss}$ be the compact form of $G_{\rm ss}$. By Weyl's Theorem 
\cite[Theorem 4.69]{KnappLie}, the universal cover 
$\widetilde{U}_{\rm ss}$ of $U_{\rm ss}$ is still compact. Since 
$\widetilde{U}_{\rm ss}$ is 
simply connected, by Weyl's unitary trick, the group  
$\widetilde{U}_{\rm ss}$ 
acts on $V$ which is compatible with the action of $[\fg,\fg]$. 
Moreover, the $\widetilde{U}_{\rm ss}$-action commutes with the  
$Z_{G}^{0}$-action. Thus, the group 
$\widetilde{U}_{\rm ss}\times Z_{G}^{0}$ acts on $V$. Since 
$\widetilde{U}_{\rm ss}\times Z_{G}^{0}$ is 
compact, there is a $\widetilde{U}_{\rm ss}\times Z^{0}_{G}$-invariant Hermitian metric on $V$, which is the desired  admissible 
Hermitian metric.  
\end{proof}

% \begin{re}\label{reliU}
% 	If $G$ has a compact center, then by \cite[Proposition 
% 	5.3]{Knappsemi}, the compact form $U$ of $G$ 
% 	exists. In general, $U$ does not act on $V$. The proof of above 
% 	proposition shows that  the finite cover $\widetilde{U}_{ss}\times 	
% 	Z_{G}^{0}$ of $U$ acts on $V$. 
% \end{re}

\begin{re}
	If $G$ has a noncompact centre, Proposition \ref{proprepa} does 
	not hold. For example, when $G=\mathbf{R}$, the representation $x\in \mathbf{R}\to 
	\begin{pmatrix}
		1 & x \\
		0 & 1
	\end{pmatrix}\in \GL_{2}(\bC)
	$  is not completely reducible, so it does not have an  admissible 
	metric. 
\end{re}

\section{The zeta functions of Ruelle and Selberg}\label{Sselberg}
The purpose of this section is to introduce the 
zeta functions of Ruelle and of Selberg on  locally symmetric 
spaces. 

This section is organised as follows. In Section \ref{sec:sym}, we 
introduce the symmetric space $X=G/K$,  the 
$K$-principal bundle $p:G\to X$, and a Hermitian vector bundle   associated to a finite dimensional unitary  representation  of $K$. 

%Given a finite dimensional unitary representation  $\tau:K\to  U(E_{\tau})$ of $K$, we construct  a  Hermitian vector bundle  $\cE_{\tau}$ on $X$.  % We introduce  the Casimir operator acting $C^{\infty}(X,\cE_{\tau})$. 

% In Section \ref{sec:orb}, , its heat 
% operator, 
% and  the associted semisimple orbital 
% integrals. 

In Section \ref{sec:Gamma}, we introduce a discrete cocompact  subgroup 
$\Gamma\subset G$ of $G$, the corresponding 
locally symmetric space $Z=\Gamma\backslash X$, and a flat  vector bundle associated to a finite  dimensional representation  of $\Gamma$. 

Finally, in Sections \ref{SRuelle} and \ref{sSelbergzeta},  we introduce the 
zeta functions of Ruelle and of Selberg. We recall their  properties  established in \cite[Section 7]{Shfried} and \cite[Section 5]{Shen20}.

We use the notation in Section \ref{Sreductive}. 
\subsection{Symmetric spaces}\label{sec:sym}
%We use the notation in  Section \ref{sReductive}. 
% Let $\omega^{\fg}$ be the canonical left-invariant $1$-form on $G$ 
% with values in $\fg$. By \eqref{eq:Cartande}, $\omega^{\fg}$ splits as 
% \begin{align}
% 	\omega^{\fg}=\omega^{\fp}+\omega^{\fk}. 
% \end{align}
Set $X=G/K$. Let $p:G\to X$ be the 
natural  projection. Then $p: G\to X$ is a 
$K$-principal bundle. %, and $\omega^{\fk}$ is a connection form. 

The group $K$ acts isometrically on $\fp$. The tangent bundle of $X$ is given by
\begin{align}\label{eqTX}
TX=G\times_K \fp.
\end{align}
By \eqref{eqTX}, the scalar product $B|_{\fp}$ on $\fp$ induces a 
Riemannian metric $g^{TX}$ on $X$. Classically, 
%The connection $\nabla^{TX}$ on $TX$ which is induced by 
%$\omega^{\fk}$ is the Levi-Civita connection of $TX$, and its 
$(X,g^{TX})$ has a parallel and nonpositive sectional curvature.

% Let 
% $e(TX,\nabla^{TX})\in \Omega^{ m}(X,o(TX))$ be the Euler characteristic 
% form on $X$,  where $o(TX)$ is the orientation line 
% bundle on $X$. Let  $dv_{X}\in \Omega^{m}(X,o(TX))$ be  the 
% Riemannian volume form on $(X,g^{TX})$. Define 
% $\[e(TX,\nabla^{TX})\]^{\max}\in \mathbf{R}$ by 
% \begin{align}
% 	e\(TX,\nabla^{TX}\)=\[e\(TX,\nabla^{TX}\)\]^{\max}dv_{X}.  
% \end{align}
% By \cite[Proposition 4.1]{Shfried}, we have
% \begin{align}
% \[e(TX,\nabla^{TX})\]^{\max}\neq0
% \iff
% \delta(G)=0. 
% \end{align}
% An explicit formula for $\[e(TX,\nabla^{TX})\]^{\max}$ can be found in \cite[(4-5)]{Shfried}. 

Let $\tau$ be an orthogonal (resp. unitary) representation of $K$ acting on a finite dimensional Euclidean (resp. Hermitian) space $E_\tau$. Set
\begin{align}
  \label{eq:Etau}
  \cE_\tau=G\times_K E_\tau.
\end{align}
Then $\cE_{\tau}$ is a Euclidean (resp. Hermitian) vector bundle on 
$X$. % which is equipped with a connection induced by $\omega^{\fk}$. 

By \eqref{eq:Etau}, we have an identification
\begin{align}\label{eqCXEGEK}
	C^{\infty}(X,\cE_\tau)=C^\infty(G,E_\tau)^K.
\end{align} 
% identify the space $C^{\infty}(X,\cE_\tau)$ of smooth sections of 
% $\cE_{\tau}$ with the space   $C^\infty(G,E_\tau)^K$ of smooth 
% $E_{\tau}$-valued $K$-invariant functions. 
The group $G$ acts on the left on $C^{\infty}(X,\cE_\tau)$. 
Denote by $C^{\fg,X,\tau}$ the Casimir element of $G$ on 
$C^{\infty}(X,\cE_\tau)$. By \eqref{eq:Cg}, $C^{\fg,X,\tau}$ is a   
generalised Laplacian  on $X$ in the sense of \cite[Definition 
2.2]{BGV}, which is self adjoint with respect to the standard 
$L^{2}$-product (c.f. \eqref{eqL2prod}).

\subsection{Locally symmetric spaces}\label{sec:Gamma}
Let $\Gamma\subset G$  be a discrete  
cocompact subgroup of $G$. By \cite[Lemma 1]{Selberg60} (see also 
\cite[Proposition 3.9]{Ma_bourbaki}), $\Gamma$ 
contains only semisimple elements. 
Let $\Gamma_{e}\subset \Gamma$ be the subset of 
elliptic  elements, and let  $\Gamma_{+}=\Gamma-\Gamma_{e}$ be the subset of 
nonelliptic elements. 

The group $\Gamma$ acts isometrically on the left on $X$. 
Take 
\begin{align}
	Z=\Gamma\backslash X=\Gamma \backslash 
G/K. 
\end{align}
Then, $Z$ is a compact orbifold with Riemannian metric $g^{TZ}$. We denote by $\widehat{p}:\Gamma\backslash G\to Z$ and $\widehat{\pi}:X\to Z$ \index{P@$\widehat{p},\widehat{\pi}$}the natural projections, so that the diagram
\begin{align}
\begin{aligned}
\xymatrix{
G \ar[d]^p \ar[r] &\Gamma \backslash G\ar[d]^{\widehat{p}}\\
X \ar[r]^{\widehat{\pi}} &Z}
\end{aligned}
\end{align}
commutes.

From now on until Section \ref{S:rep}, we assume that 
$\Gamma$ is torsion free, i.e., $\Gamma_{e}=\{\rm id\}$. Then $Z$ is a 
connected closed orientable Riemannian locally symmetric manifold with nonpositive sectional curvature. Since $X$ is contractible,
  $\pi_1(Z)=\Gamma$ and $X$ is the universal cover of $Z$.   
In Section \ref{Snontorsionfree}, we will 
consider  the  case where $\Gamma$ is not torsion free.

The $\Gamma$-action on $X$ lifts to all the homogeneous Euclidean or 
Hermitian vector bundles $\cE_\tau$ on $X$ constructed in 
\eqref{eq:Etau}.  Then $\cE_\tau$ descends to a 
Euclidean or Hermitian  vector 
bundle on $Z$,
\begin{align}\label{eqFtau0}
	\cF_{\tau}=\Gamma\backslash \cE_{\tau}=\Gamma\backslash 
G\times_{K}E_{\tau}. 
\end{align}

% The Casimir operator  $C^{\fg,X,\tau}$ acts on 
% $C^{\infty}(G,E_{\tau})^{K}$. 
% So it acts on 
% $C^{\infty}(G,E_{\tau})^{K}\otimes_{\bR} \bC^{r}$  in the evident 
% way. 
%\subsection{The Casimir operator on locally symmetric space}

If $r\in \mathbf N^{*}$, and if $\rho:\Gamma\to \GL_{r}(\bC)$ is a representation of $\Gamma$,  
let $F$ be the associated flat vector bundle on $Z$,
\begin{align}\label{eqFhol1}
	F=\Gamma\backslash (X\times \mathbf{C}^{r}). 
\end{align} 
  The group $\Gamma$ acts  on  $C^{\infty}(G,E_{\tau})^{K}$, as well  as  on $\bC^{r}$ by $\rho$.  We have the 
  identification 
  \begin{align}\label{eqC316}
  	C^{\infty}(Z,\cF_{\tau}\otimes 
  	F)=\(C^{\infty}(X,\cE_{\tau})\otimes \bC^{r}\)^{\Gamma}. 
  \end{align}
The Casimir operator $C^{\fg,X,\tau}\otimes {\rm id}$ preserves  the 
above invariant space. Its action  on $C^{\infty}(Z,\cF_{\tau}\otimes 
  	F)$ will be denoted by $C^{\fg,Z,\tau,\rho}$. If $\rho$ is 
	unitary, $C^{\fg,Z,\tau,\rho}$ is self-adjoint with respect to 
	the $L^{2}$-product on $C^{\infty}(Z,\cF_{\tau}\otimes 
  	F)$ induced by	the Hermitian metric on $\cE_{\tau}$, the standard 
	Hermitian metric on $\bC^{r}$, as well as  the Riemannian volume of
	$(Z,g^{TZ})$ 	(c.f. \eqref{eqL2prod}). When 		$\rho$ is the trivial representation, we denote it by 	$C^{\fg,Z,\tau}$.

\subsection{The Ruelle zeta function}\label{SRuelle}
Let us recall the definition of the Ruelle dynamical zeta function introduced by Fried \cite[Section 5]{Friedconj}. 

For $\gamma\in \Gamma$, set 
\begin{align}
	\Gamma(\gamma)=Z(\gamma)\cap \Gamma. 
\end{align}
By \cite[Lemma2]{Selberg60} (see also \cite[Proposition 
4.9]{Shfried}, \cite[Proposition 3.9]{Ma_bourbaki}), $\Gamma(\gamma)$ is cocompact in $Z(\gamma)$.

Let $[\Gamma_{+}]$ and $[\Gamma]$ be the sets of conjugacy classes in $\Gamma_{+}$ and $\Gamma$.  
If $\gamma\in \Gamma$, the associated conjugacy class in $\Gamma$ is 
denoted by 
$[\gamma]\in [\Gamma]$.\footnote{
The quantity  $\ell_{[\gamma]}$  depends only on  the conjugacy class of  $\gamma$ in $G$. So they are well defined on the conjugacy classes of $\Gamma$.  
%There will be no confusion. 
% In the sequel, we will use $[\gamma]$ for 
% the conjugacy class of $[\gamma]$ in $[\Gamma]$.
}
If $[\gamma]\in [\Gamma]$, for all $\gamma'\in [\gamma]$,  the 
locally symmetric spaces 
\begin{align}\label{eqBgamma}
	\Gamma(\gamma')\backslash Z(\gamma')/K(\gamma')
\end{align}
are canonically diffeomorphic, and will be denoted by 
$B_{[\gamma]}$. 

By \cite[Proposition 5.15]{DuistermaatKolkVaradarajan}, the set of  
 nontrivial closed geodesics  on  $Z$ consists of a disjoint union %of smooth connected closed manifolds
\begin{align}\label{DKVB}
\coprod_{[\gamma]\in [\Gamma_{+}]}B_{[\gamma]}.
\end{align}
If $[\gamma]\in [\Gamma_{+}]$, all the  elements of 
$B_{[\gamma]}$ have the same length $\ell_{[\gamma]}>0$.
\index{B@$B_{[\gamma]}$} 

If $[\gamma]\in [\Gamma_{+}]$,  the geodesic flow induces a  locally free action of 
$\mathbb{S}^1$ on $B_{[\gamma]}$, so that $ 
B_{[\gamma]}/ \mathbb{S}^1$ is a closed orbifold. Let $\chi_{\rm 
		orb}\(B_{[\gamma]}/\mathbb{S}^{1}\)\in \mathbf{Q}$ be the 
		orbifold Euler characteristic number \cite{SatakeGaussB}. We refer the reader to \cite[Proposition 5.1]{Shfried} for an explicit formula for $\chi_{\rm 
		orb}\(B_{[\gamma]}/\mathbb{S}^{1}\)$. In particular,  
if $\delta(G)\g 2$, or if $\delta(G)=1$ and $\gamma$ can not be 
conjugate by an element of $G$ into the fundamental Cartan subgroup $H$, then  
\begin{align}\label{eqchi0}
\chi_{\rm 
		orb}\(B_{[\gamma]}/\mathbb{S}^{1}\)=0.
\end{align}

% 		In particular, if 
% 		$\delta(G)\g 2$ or if $\delta(G)=1$ and  $\gamma$ can not be 
% 		conjugated in to $H$, then 
% 		\begin{align}
% 			\chi_{\rm 
% 		orb}\(B_{[\gamma]}/\mathbb{S}^{1}\)=0. 
% 		\end{align}
The $\mathbb{S}^1$-action on $B_{[\gamma]}$ is not necessarily effective. Let
\begin{align}
m_{[\gamma]}=\left|\ker\big(\bbS^1\to {\rm 
Diff}(B_{[\gamma]})\big)\right|\in \mathbf{N}^{*}
\end{align}
be the generic multiplicity.\index{M@$m_{[\gamma]}$}

Recall that $\rho:\Gamma\to \GL_{r}(\bC)$ is a representation of $\Gamma$.  By \cite[(4.4)]{Shen20}, there is $\sigma_{0}>0$ such that
\begin{align}
	\sum_{[\gamma]\in [\Gamma_{+}]}\frac{\left|\chi_{\rm 
		orb}\(B_{[\gamma]}/\mathbb{S}^{1}\)\right|}{m_{[\gamma]}} 
		\left|\Tr\[\rho(\gamma)\]\right| e^{-\sigma_{0} 
		\ell_{[\gamma]}}<\infty. 
\end{align}

\begin{defin}
	For $\Re(\sigma)\g \sigma_{0}$, the Ruelle dynamical zeta 
	function is defined  by 
	\begin{align}\label{defRrho}
		R_{\rho}(\sigma)=\exp\Big(\sum_{[\gamma]\in 
		[\Gamma_{+}]}\frac{\chi_{\rm 
		orb}\(B_{[\gamma]}/\mathbb{S}^{1}\)}{m_{[\gamma]}}\Tr\[\rho(\gamma)\]e^{-\sigma \ell_{[\gamma]}}\Big).
	\end{align}
% Clearly, $R_{\rho}$ is holomorphic in the domain $\sigma\in \bC$ with 
% $\Re(\sigma)\g \sigma_{0}$. 
\end{defin}

\begin{re}\label{reR=1}
	By \eqref{eqchi0}, if $\delta(G)\g 2$, the 
	Ruelle zeta function $R_\rho(\sigma)$ is the constant function 
	$1$. Moreover, if $\delta(G)=1$, then the sum on the right-hand 	side of \eqref{defRrho} can be reduced to 
	 a sum over 	$[\gamma]\in [\Gamma_{+}]$ such that $\gamma$ can be conjugate into $H$. 
\end{re}

\begin{thm}
	If $\dim Z$ is odd, the Ruelle zeta function $R_{\rho}$ has a meromorphic extension to 
$\sigma\in \bC$. 
\end{thm}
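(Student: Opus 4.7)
The plan is to reduce to the low-rank case $\delta(G) = 1$ and then express $R_\rho$ as a finite alternating product of twisted Selberg-type zeta functions whose meromorphic extension is established in \cite{Shfried,Shen20}.

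First, by Remark \ref{reR=1}, if $\delta(G) \geq 2$ then $R_\rho(\sigma) \equiv 1$ and the conclusion is trivial. Hence one may assume $\delta(G) = 1$, in which case the sum defining $R_\rho(\sigma)$ is supported on those $[\gamma] \in [\Gamma_{+}]$ whose representative $\gamma$ can be conjugated by an element of $G$ into the fundamental Cartan subgroup $H = \exp(\fb) \times T$. Write such a representative as $\gamma = g_\gamma e^a k^{-1} g_\gamma^{-1}$ with $a \in \fb \setminus \{0\}$ and $k \in T$; the orbifold quantities $m_{[\gamma]}$ and $\chi_{\mathrm{orb}}(B_{[\gamma]}/\bbS^1)$ admit explicit formulas in $(a,k)$ via \cite[Proposition 5.1]{Shfried}.

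Next, following \cite{Shen20}, I would introduce, for each virtual $K_M$-representation $\eta$ whose induced $K_M$-representation on $\Lambda^{\scriptscriptstyle\bullet}(\fp_{\fm,\bC}^{*})\widehat{\otimes} E_\eta$ lifts to $K$ via Theorem \ref{corkey}, a twisted Selberg-type zeta function
\begin{align*}
Z_{\rho,\eta}(\sigma) = \exp\Bigl(-\sum_{\substack{[\gamma]\in [\Gamma_+]\\ \gamma\sim e^a k^{-1}}} \frac{\chi_{\mathrm{orb}}(B_{[\gamma]}/\bbS^1)}{m_{[\gamma]}} \cdot \frac{\Trs^{E_\eta}[k^{-1}]\,\Tr[\rho(\gamma)]}{\bigl|\det(1-\Ad(e^a k^{-1}))|_{\fz^{\bot}(\fb)}\bigr|^{1/2}}\, e^{-\sigma\ell_{[\gamma]}}\Bigr),
\end{align*}
and expand the denominator as an alternating sum of traces of $\Ad(e^a k^{-1})$ on $\Lambda^{\scriptscriptstyle\bullet}(\fn^{*})$ using the factorisation $\fz^{\bot}(\fb) = \fn \oplus \ol{\fn}$ of Proposition \ref{propnng}. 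Combined with a shift of $\sigma$ by the real weights of $\fb$ acting on $\fn$, this expansion converts the Ruelle sum \eqref{defRrho} into a finite alternating product of such $Z_{\rho,\eta}(\sigma - c_\beta)^{\pm 1}$, up to an entire nonvanishing factor absorbing the sign in $|\det(\cdot)|^{1/2}$.

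The main obstacle is then the meromorphic extension of each $Z_{\rho,\eta}$, which is the content of \cite[Section 4]{Shen20}, generalising \cite[Section 7]{Shfried}. The proof proceeds via Bismut's orbital integral formula applied to the heat kernel of the Casimir operator $C^{\fg,Z,\widehat{\eta},\rho}$ acting on sections of $\cF_{\widehat{\eta}} \otimes F$: one expresses $\log Z_{\rho,\eta}(\sigma)$, modulo an entire function, as the logarithm of a regularised determinant of $C^{\fg,Z,\widehat{\eta},\rho} + \sigma_\eta + \sigma^2$ for some explicit constant $\sigma_\eta \in \bR$. Since this operator is an elliptic generalised Laplacian, its regularised determinant is an entire function of $\sigma$ by \cite{Voros} and \cite[Theorem 1.5]{Shen20}, with zeros at the negative of its spectrum. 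Consequently each $Z_{\rho,\eta}$ is meromorphic on $\bC$, and so is the finite product expressing $R_\rho$, completing the proof.
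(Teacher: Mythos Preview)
Your sketch is correct and follows the same route as the paper: here the paper's proof is simply a citation to \cite[Theorem 0.1 i)]{Shen20}, and you have accurately outlined the argument of that reference --- reduce to $\delta(G)=1$ via Remark \ref{reR=1}, expand $R_\rho$ as a finite alternating product of $\rho$-twisted Selberg zeta functions $Z_{\rho,\eta}$ using the $\Lambda^{\scriptscriptstyle\bullet}(\fn^{*})$-decomposition, and obtain meromorphy of each factor from the determinant formula for the (non-self-adjoint) generalised Laplacian $C^{\fg,Z,\widehat{\eta},\rho}$. One small imprecision: the hypothesis on $\eta$ is that $\eta_{|K_M}$ itself lifts to $R(K)$ (Assumption \ref{ass}), not merely that $\Lambda^{\scriptscriptstyle\bullet}(\fp_{\fm,\bC}^{*})\widehat{\otimes}E_\eta$ does; and the relevant section of \cite{Shen20} for the meromorphic extension of $Z_{\rho,\eta}$ is Section 5, not Section 4.
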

\begin{proof}
This is  \cite[Theorem 0.1 i)]{Shen20}. 
\end{proof} 

Let $\ol{R}_{\rho}$ be the meromorphic function 
defined for $\sigma\in \bC$ by
\begin{align}
	\ol{R}_{\rho}(\sigma)=\ol{R_{\rho}(\ol{\sigma})}.
\end{align}
By \cite[Proposition 4.4]{Shen20}, we have 
\begin{align}\label{eqRrhorho1}
&	R_{\rho^{*}}=R_{\rho},&R_{\ol{\rho}}=\ol{R}_{\rho}.
\end{align} 

If $\rho$ is a finite dimensional complex representation of $G$, the restriction 
$\rho_{|\Gamma}$ is a representation of $\Gamma$. We write 
$R_{\rho}=R_{\rho_{|\Gamma}}$ to ease the notation. 

\begin{prop}\label{propRoR}
	If $\rho$ is a finite dimensional complex representation of $G$ with an admissible metric,  then the following identity of  meromorphic functions on $\bC$ holds, 
\begin{align}\label{eqRoR}
	R_{\rho^{\theta}}=\ol{R}_{\rho}. 
\end{align}
\end{prop}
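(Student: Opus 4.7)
The plan is to reduce the claim to the two identities in \eqref{eqRrhorho1} via the isomorphism \eqref{eqrhothetarho} supplied by the admissible metric.

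First, I would note that the Ruelle zeta function $R_{\rho}$ defined by \eqref{defRrho} depends on $\rho$ only through the character values $\Tr[\rho(\gamma)]$ for $\gamma\in\Gamma_{+}$. Consequently, if $\rho_{1}$ and $\rho_{2}$ are isomorphic as representations of $\Gamma$, then $R_{\rho_{1}}=R_{\rho_{2}}$ as meromorphic functions on $\bC$. In particular, isomorphisms of $G$-representations restrict to isomorphisms of $\Gamma$-representations, hence induce equalities of Ruelle zeta functions.

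Next, since $\rho$ carries an admissible metric, the isomorphism \eqref{eqrhothetarho} gives
\begin{align}
\rho^{\theta}\simeq \ol{\rho}^{*}
\end{align}
as $G$-representations. Restricting to $\Gamma$ and applying the character-invariance observation, I obtain
\begin{align}
R_{\rho^{\theta}}=R_{\ol{\rho}^{*}}.
\end{align}

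Finally, I would apply the two identities of \eqref{eqRrhorho1}. The first identity, applied to the representation $\ol{\rho}$, yields $R_{\ol{\rho}^{*}}=R_{\ol{\rho}}$, and the second identity gives $R_{\ol{\rho}}=\ol{R}_{\rho}$. Combining everything produces
\begin{align}
R_{\rho^{\theta}}=R_{\ol{\rho}^{*}}=R_{\ol{\rho}}=\ol{R}_{\rho},
\end{align}
which is \eqref{eqRoR}. There is no genuine obstacle here: the entire content of the proposition lies in the admissible metric identification $\rho^{\theta}\simeq\ol{\rho}^{*}$, which has already been recorded in \eqref{eqrhothetarho}, together with the elementary fact that the Ruelle zeta function is a character invariant.
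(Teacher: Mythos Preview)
Your proof is correct and follows essentially the same approach as the paper's own proof, which simply cites \eqref{eqrhothetarho} and \eqref{eqRrhorho1} without further elaboration. You have merely spelled out the chain $R_{\rho^{\theta}}=R_{\ol{\rho}^{*}}=R_{\ol{\rho}}=\ol{R}_{\rho}$ explicitly.
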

 \begin{proof}
	This is a consequence of \eqref{eqrhothetarho} and \eqref{eqRrhorho1}.
\end{proof}

\subsection{The Selberg zeta function}
\label{sSelbergzeta} 
In this subsection, we assume $\delta(G)=1$ and  we use the notation 
in Section \ref{sSpliting}.  Recall that $K_{M}$ is defined in 
\eqref{eqKM0}.  We have seen that  the morphism   $R(K)\to 
R({K_{M}})$ of rings is injective. 

\begin{ass}\label{ass}
	Assume that $\eta=\eta^{+}-\eta^{-}$ is a virtual    $M$-representation  
	on the   finite 	dimensional complex vector space $E_{\eta}^{+}-E_{\eta}^{-}$ 	such that 
\begin{enumerate}
	\item\label{ass1}  $\eta_{|K_{M}}=\eta^{+}_{|K_{M}}-\eta^{-}_{|K_{M}}\in R(K_{M})$ has a unique lift in $R(K)$.
	\item\label{ass2}  the Casimir $C^{\fu_{\fm}}$ of $\fu_{\fm}$ acts on 
	$\eta^{\pm}$ by the same 	scalar $C^{\fu_{\fm},\eta}\in \bR$. 
\end{enumerate}
\end{ass}

%Let $\eta$ be a virtuelle 	representation of $M$ satisfying  Assumption \ref{ass}. 
Following \cite[Definition 7.4]{Shfried} and \cite[Defintion 5.7]{Shen20}, let us define  the 
Selberg zeta function associated\footnote{\label{f4}A more general 
construction for the Selberg zeta function  is given in 
\cite{Shen20},  which is  associated  to $\eta$ and to an 
arbitrary representation of $\rho:\Gamma\to \GL_{r}(\bC)$.} to $\eta$.
%in the following way. 
Recall 
that $H=\exp(\fb)\times T$ is the fundamental Cartan subgroup of $G$. 
For $e^{a}k^{-1}\in H$,  we write $\gamma\sim e^{a}k^{-1}\in H$ if there is $g_{\gamma}\in G$ such 
that $\gamma=g_{\gamma}e^{a}k^{-1}g_{\gamma}^{-1}$. 
% \begin{re}
% 	We can also introduce another useful version of Assumption. 	Assume that $\eta$ is a finite dimensional virtuelle complex representation of $M$ such that 
% \begin{enumerate}
% 	\item  $\eta|_{K_{M}}\in R(K_{M})$ has a unique lift in $R(K)$.
% 	\item  for any $g\in M$, we have
% 	\begin{align}
% 		\Tr_{\rm s}^{E_{\eta}}\[g\exp(-tC^{\fm,\eta})\]= \Tr_{\rm 
% 		s}\[g\]e^{-t c^{\fm,\eta}}.
% 	\end{align}
% 	\end{enumerate}
% \end{re}
By \cite[(7-62)]{Shfried}, there is 
$\sigma_{1}>0$ such that
\begin{align}
	\sum_{\tiny\substack{[\gamma]\in 
		[\Gamma_{+}]\\ \gamma\sim e^{a}k^{-1}\in H}}\frac{\left|\chi_{\rm 
		orb}\(B_{[\gamma]}/\mathbb{S}^{1}\)\right|}{m_{[\gamma]}}  \frac{e^{-\sigma_{1} 
		\ell_{[\gamma]}}}{\left|\det\(1-\Ad(e^{a}k^{-1})\)|_{\fz^{\bot}(\fb)}\right|^{1/2}}<\infty. 
\end{align}

\begin{defin}
		For $\Re(\sigma)\g \sigma_{1}$, set 
	\begin{align}\label{eqdefsel}
		Z_{\eta}(\sigma)=\exp\Bigg(-\sum_{\tiny\substack{[\gamma]\in 
		[\Gamma_{+}]\\ \gamma\sim e^{a}k^{-1}\in H}}\frac{\chi_{\rm 
		orb}\(B_{[\gamma]}/\mathbb{S}^{1}\)}{m_{[\gamma]}}\frac{\Trs^{E_{\eta}}[k^{-1}]}{\left|\det\(1-\Ad(e^{a}k^{-1})\)|_{\fz^{\bot}(\fb)}\right|^{1/2}}e^{-\sigma \ell_{[\gamma]}}\Bigg).
		\end{align}
\end{defin}

Recall that by Corollary \ref{corkey},  
$\Lambda^{\scriptscriptstyle\bullet }(\fp_{\fm,\bC}^{*}) $ has a unique 
lift
% \footnote{In \cite[Corollary 6.12]{Shfried}, we have show a stronger result, i.e., the 
% lift is indeed in $RO(K)$. In fact, it is enough to use the fact that 
% the lifting exists  in $R(K)$. See Corollary \ref{corNcR} for a simple new proof of this weak fact.}
in $R(K)$.  % Following \cite[(7-1)]{Shfried}, l
\begin{defin}
Let
$\widehat{\eta}\in R(K)$ be the unique 
virtual representation of $K$ on $E_{\widehat{\eta}}=E^{+}_{\widehat\eta}-E^{-}_{\widehat\eta}$
such that the following identity in $R(K_M)$  holds, 
\begin{align}\label{eqetahat}
	E_{\widehat{\eta}{|K_{M}}}=\Lambda^{\scriptscriptstyle\bullet 
	}(\fp_{\fm,\bC}^{*})\widehat{\otimes} E_{\eta{|K_{M}}} \in R(K_{M}). 
\end{align}
\end{defin}

Let $C^{\fg,Z,\widehat{\eta}}$ be the self adjoint generalised 
Laplacian acting on $C^{\infty}(Z,\cF_{\widehat{\eta}})$ introduced  below  \eqref{eqC316}. For $\lambda\in \bC$, set 
\begin{align}
	m_{\eta}(\lambda)=\dim \ker 
	\(C^{\fg,Z,\widehat{\eta}^{+}}-\lambda\)-\dim \ker 
	\(C^{\fg,Z,\widehat{\eta}^{-}}-\lambda\). 
\end{align}

Let 
\begin{align}\label{eqdetCc}
\mathrm{det}_{\rm 
  gr}\(C^{\fg,Z,\widehat{\eta}}+\sigma\)=\frac{\mathrm{det}\big(C^{\fg,Z,\widehat{\eta}^{+}}+\sigma\big)}{\mathrm{det}\big(C^{\fg,Z,\widehat{\eta}^{-}}+\sigma\big)}
 \end{align} 
  be a 
  graded determinant of $C^{\fg,Z,\widehat{\eta}}+\sigma$. As in 
  \eqref{eqTsigma}, by \cite{Voros} (see also \cite[Theorem 1.5]{Shen20}), the function \eqref{eqdetCc} is 
  meromorphic on $\sigma\in \bC$. Moreover, its zeros and poles 
  belong to the  set $\left\{-\lambda: \lambda\in \Sp\( 
  C^{\fg,Z,\widehat{\eta}}\)\right\}$. 
  If $\lambda\in \Sp \(C^{\fg,Z,\widehat{\eta}}\)$, the order of 
  the zero  at $\sigma=-\lambda$ is $m_{\eta}(\lambda)$.

Following  \cite[(7-60)]{Shfried} and \cite[(5.18), (5.19)]{Shen20}, 
we set
\begin{align}\label{eqseta}
	\sigma_\eta=\frac{1}{8}\Tr^{\fu^\bot(\fb)}\[C^{\fu(\fb),\fu^{\bot}(\fb)}\]-C^{\fu_{\fm},\eta}. 
\end{align}
\begin{re}
When $Z_{G}$ is non compact,  by \cite[(4-52)]{Shfried}, we have 
\begin{align}\label{eqGbM}
	G=\exp(\fb)\times M. 
\end{align}
Therefore, $\fu^{\bot}(\fb)=0$. By  \eqref{eqseta},  we have
\begin{align}\label{eqsen1}
	\sigma_{\eta}=-C^{\fu_{\fm},\eta}.
\end{align}

Let $P_{\eta}(\sigma)$ be the odd polynomial  defined in 
\cite[(7-61)]{Shfried} and \cite[(5.20), Remark 5.9]{Shen20}.  
%  Let 
% \begin{align}
% 	X_{M}=M/K_{M}
% \end{align}
% be the associated  symmetric space.  When $G$ has non compact center, we have 
% \begin{align}
% 	P_{\eta}(\sigma)=. 
% \end{align} 

\begin{thm}\label{thm:detfor}
	The Selberg zeta function $Z_{\eta}(\sigma)$ has a meromorphic 
	extension to $\sigma\in \bC$ such that  the following identity of meromorphic functions on $\bC$ holds,
\begin{align}\label{eq:detfor}
  Z_{\eta}(\sigma)=\mathrm{det}_{\rm 
  gr}\(C^{\fg,Z,\widehat{\eta}}+\sigma_{\eta}+\sigma^2\)\exp\big(\vol(Z) P_{\eta}(\sigma)\big).
\end{align}
The zeros and poles of $Z_{\eta}(\sigma)$
belong to the set 
$\left\{\pm i\sqrt{\lambda+\sigma_{\eta}}:\lambda\in 
\Sp\(C^{\fg,Z,\widehat{\eta}}\)\right\}.
$
If $\lambda\in 
\Sp\(C^{\fg,Z,\widehat{\eta}}\)$ and 
$\lambda\neq -\sigma_{\eta}$, the order of zero  at $\sigma=\pm 
i\sqrt{\lambda+\sigma_{\eta}}$ is $m_{\eta}(\lambda)$. The order 
of zero at $\sigma=0$ is $2m_{\eta}(-\sigma_{\eta})$.
Also,
\begin{align}\label{eq:funceq}
  Z_{\eta}(\sigma)=Z_{\eta}(-\sigma)\exp\big(2\vol(Z) P_{\eta}(\sigma)\big).
\end{align}
\end{thm}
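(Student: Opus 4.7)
The plan is to prove this by combining the Selberg trace formula with Bismut's orbital integral formula, following the strategy used in \cite{Shfried,Shen20}. The starting point is to study the heat trace
\begin{align*}
\Trs\bigl[\exp(-t C^{\fg,Z,\widehat{\eta}})\bigr]
\end{align*}
on $C^\infty(Z,\cF_{\widehat{\eta}})$, where the supertrace is with respect to the $\bZ_2$-grading $\widehat{\eta}^+-\widehat{\eta}^-$. By the Selberg trace formula and the preliminary analysis of conjugacy classes in $\Gamma$, this trace decomposes as the sum of the identity contribution (a constant times $\vol(Z)$, responsible for the polynomial $P_\eta$) plus a sum of orbital integrals $\Tr^{[\gamma]}$ indexed by $[\gamma]\in[\Gamma_+]$.

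The key geometric input is Bismut's orbital integral formula, which evaluates $\Tr^{[\gamma]}[\exp(-tC^{\fg,\widehat{\eta}})]$ as an integral over $\fk(\gamma)$ of an explicit Gaussian-type density. The crucial observation, established in \cite{Shfried}, is that the choice of $\widehat{\eta}$ via the factor $\Lambda^{\scriptscriptstyle\bullet}(\fp_{\fm,\bC}^{*})$ in \eqref{eqetahat} forces a cancellation in Bismut's formula: the supertrace over $\Lambda^{\scriptscriptstyle\bullet}$ produces a determinant factor which, by the splitting \eqref{eq:mpk1} and Proposition \ref{propnng}, exactly reproduces the Jacobian $\bigl|\det(1-\Ad(e^a k^{-1}))|_{\fz^{\bot}(\fb)}\bigr|^{1/2}$ appearing in the denominator of \eqref{eqdefsel}. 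This is the reason the Selberg zeta function on the left of \eqref{eq:detfor} matches a spectral object built from $C^{\fg,Z,\widehat{\eta}}$ on the right.

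The constant $\sigma_\eta$ in \eqref{eqseta} arises from \eqref{eqD2cp}: on the orbital side, Bismut's formula produces a factor $\exp(-t\sigma_\eta)$ from the squared Dirac operator identity together with Kostant's strange formula, which shifts the spectral parameter. One then integrates in $t$ to pass from heat traces to determinants: concretely, taking the Mellin transform of the non-identity part of the trace and differentiating, the logarithmic derivative $Z_\eta'(\sigma)/Z_\eta(\sigma)$ is expressed as the logarithmic derivative of $\det_{\mathrm{gr}}(C^{\fg,Z,\widehat{\eta}}+\sigma_\eta+\sigma^2)$ modulo the polynomial contribution from $\vol(Z)P_\eta(\sigma)$. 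Integrating and normalising yields \eqref{eq:detfor}. The statement about zeros and poles then follows from the characterisation of zeros of regularised determinants recalled after \eqref{eqTsigma}, together with the factorisation $\sigma^2+\sigma_\eta+\lambda=(\sigma-i\sqrt{\lambda+\sigma_\eta})(\sigma+i\sqrt{\lambda+\sigma_\eta})$, which accounts for the doubling of the zero at $\sigma=0$ when $\lambda=-\sigma_\eta$. The functional equation \eqref{eq:funceq} is then immediate, since $\sigma_\eta+\sigma^2$ is invariant under $\sigma\mapsto -\sigma$ and only $P_\eta$, being odd, contributes.

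The main technical obstacle lies in justifying the meromorphic continuation from the region $\Re(\sigma)\geq \sigma_1$ of absolute convergence to all of $\bC$. This requires controlling the large-$t$ behaviour of the orbital integrals and of the heat trace, and exchanging the Mellin integral with the summation over $[\gamma]$. These estimates are precisely the ones carried out in \cite[Section 7]{Shfried} and \cite[Section 5]{Shen20}, and I would invoke them directly rather than redo them, since the only new feature in the present setting compared to those references is the presence of the admissible-metric representation $\rho$, which enters the construction of $\eta$ but does not affect the structure of the proof.
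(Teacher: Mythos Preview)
Your proposal is correct and aligns with the paper's approach: the paper's proof is a one-line citation to \cite[Theorem 5.10]{Shen20} (for the trivial twist, as noted in the paper's footnote), and you do essentially the same thing --- you sketch the mechanism from \cite{Shfried,Shen20} (Selberg trace formula, Bismut's orbital integral, the cancellation produced by $\Lambda^{\scriptscriptstyle\bullet}(\fp_{\fm,\bC}^{*})$, Mellin transform) and then explicitly say you would invoke those references rather than redo the estimates.

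One small correction: your final remark that ``the only new feature in the present setting compared to those references is the presence of the admissible-metric representation $\rho$'' is misplaced. In this theorem there is no $\rho$ at all; the statement concerns an arbitrary virtual $M$-representation $\eta$ satisfying Assumption~\ref{ass}, and the Selberg zeta function here is the one with trivial twist. So Theorem~\ref{thm:detfor} is \emph{literally} \cite[Theorem 5.10]{Shen20} with nothing new to check. The admissible-metric $\rho$ enters only later (Sections~\ref{sproofth1cc} and~\ref{SPdn1}) when constructing the specific $\eta_\beta$ to which this theorem is applied.
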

\begin{proof}
	This is \cite[Theorem 5.10]{Shen20} for the trivial twist (c.f. 
	Footnote \ref{f4}). 
\end{proof}

% Moreover, let $X_{M}=M/K_{M}$ be the Riemannian symmetric space 
% associated to $(M,B|_{\fm})$. Let $e\(TX_{M},\nabla^{TX_{M}}\)$ be 
% the Euler form of $X_{M}$ associated to the Levi-Civita connection. 
% Take  $\[e\(TX_{M},\nabla^{TX_{M}}\)\]^{\max}\in \bR$ to be a real number  defined by
% \begin{align}
% \[	
% e\(TX_{M},\nabla^{TX_{M}}\)\]^{\max}dv_{X_{M}}=e\(TX_{M},\nabla^{TX_{M}}\),
% \end{align} 
% where $dv_{X_M}$ is the 
% Riemannian volume form of $X_{M}$. Then, 
% \begin{align}\label{eqPes1}
% P_{\eta}(\sigma)=-\(\dim E_{\eta}^{+}-\dim 
% E_{\eta}^{-}\)\[e\(TX_{M},\nabla^{TX_{M}}\)\]^{\max}\sigma. 
% \end{align}
\end{re}

\section{The  Fried conjecture and  admissible 
metrics}\label{SFCAM}
In this section, we introduce a class of Hermitian flat vector 
bundles on locally symmetric spaces associated to 
representations of $G$ with an admissible metrics.  We state the main 
result (Theorem \ref{thm1}) of 
this article, which confirms the Fried conjecture for this 
class of Hermitian flat vector 
bundles. 

	This section is organised as follows. In Section 
	\ref{sFadmetric}, we introduce a Hermitian metric on a
	flat vector bundle whose holonomy representation  is the 
	restriction $\rho_{|\Gamma}$ of  a representation $\rho$ of $G$ 	with an admissible metric. 
	
	In Section \ref{sMR}, we state Theorem 	
	\ref{thm1}. 
	
	In Sections \ref{sC2}-\ref{sPC2}, we deduce Theorem \ref{Thm1},  
	Corollaries \ref{cori1} and \ref{cori2} from Theorem 	
	\ref{thm1}.

	Finally, in Section \ref{sproofth1cc}, we show Theorem 	
	\ref{thm1} when $\delta(G)=1$ and $Z_{G}$ is compact. 
% 	The proof 	in the case where $\delta(G)=1$ and $Z_{G}$ is compact will be 	given in Section  \ref{SPdn1} and \ref{S:rep}.

\subsection{Hermitian metrics on flat vector bundles}\label{sFadmetric}
Let $\rho:G\to \GL(E)$ be a finite dimensional complex representation 
of $G$ with admissible Hermitian metric $\<,\>_{E}$.  

Let $F$ be the flat vector bundle associated to $\rho_{|\Gamma}$ 
defined in \eqref{eqFhol1}.
Let us construct a Hermitian metric $g^{F}$ on $F$ following  
\cite[Section 2.5]{Muller_3_torsion} and \cite[Section 8.1]{BMZ}. 
% We equip $F_{1}$ with the unitarily flat metric $g^{F_{1}}$, and 
% equip $F_{2}$ a 
% 
% $g^{F_{2}}$
% 
% We equipped $F$ a metric induced by the unitary metric on $F_{1}$ and 
% by the admissible metric on $F_{2}$. 
% \begin{re}
%  %By if $G$ has a compact center, then $G_\bC$ exists. If $U$ is simple connected, Recall that, 
% By Weyl's unitary trick \cite[Proposition 5.7]{Knappsemi}, if $U$ is simply connected, it is equivalent to consider representations
% of $G$, of $U$ on $E$, or holomorphic representations of $G_\bC$ on $E$, or representations of $\fg$, of
% $\fu$, or complex representations of $g_\bC$ on $E$.
% \end{re}
By the second identity of \eqref{eqadmissible},  the restriction 
$\rho_{|K}$ of $\rho$ to $K$ is  unitary.   Let 
\begin{align}
	\cE_{\rho_{|K}}=G\times_K E
\end{align}
 be the Hermitian vector bundle  on $X$ defined 
in \eqref{eq:Etau}. 
% Let $\cE_{\rho_{2|K}}$ be the homogenous vector bundle on $X$ defined as 
% \eqref{eq:Etau}. 
% We have the cannical isomorphisme of left $G$-vector bundles 
% $\cE_{\rho_{2|K}}$ and $\widehat{\pi}^{*}F$ over $X$, 
We have a  canonical  $G$-equivariant identification
% between $\cE_{\rho_{2|K}}$ 
% and  $\widehat{\pi}^{*}F_{2}$ given by 
\begin{align}\label{eq:flathomo}
 [g,v]\in  G\times_K E \to  (pg,gv)\in  X\times E. 
\end{align}
In this way, the $G$-invariant Hermitian metric  on 
$\cE_{\rho_{|K}}$  induces a $G$-invariant Hermitian metric 
$g^{\widehat{\pi}^{*}F}$
 on the trivial vector bundle $\widehat{\pi}^{*}F=X\times E$. 
It descends to a Hermitian metric $g^{F}$ on $F=\Gamma \backslash (X\times E)$.  

\begin{defin}
	We  will call such $(F,g^{F})$ an admissible Hermitian flat 
	vector 	bundle. The $g^{F}$ will be called an 
	admissible 	Hermitian metric on $F$. 
\end{defin}

%Now we will describe  a formula  relating  $\Box^{X}$ and $\Box^{Z}$ with the Casimir operator.  
By \eqref{eq:flathomo}, as in \eqref{eqCXEGEK} and \eqref{eqC316}, we have the identifications
\begin{align}\label{eqid55}
&	\Omega^{\scriptscriptstyle\bullet}\(X,\widehat{\pi}^{*}F\)=C^{\infty}\(G,\Lambda^{\scriptscriptstyle\bullet }(\fp^{*})\otimes_{\bR} E\)^{K},
	&\Omega^{\scriptscriptstyle\bullet 
	}(Z,F)=C^{\infty}\(\Gamma\backslash G,\Lambda^{\scriptscriptstyle\bullet }(\fp^{*})\otimes_{\bR} E\)^{K}.  
\end{align}

Let $\Box^{X}$ be the Hodge Laplacian on 
$X$ acting on $\Omega^{\scriptscriptstyle\bullet }(X,{\widehat{\pi}^{*}F})$ with 
respect to the  metrics $(g^{TX},g^{\widehat{\pi}^{*}F})$. 
Let 
$\Box^{Z}$ be the Hodge Laplacian on  $Z$ acting on $\Omega^{\scriptscriptstyle\bullet }(Z,F)$ with 
respect to the  metrics $(g^{TZ},g^{F})$.
% Let $\nabla^{\cE_\rho,f}$ be the flat connection. Then
% \begin{align}\label{eq:wgk}
%   \nabla^{\cE_\rho}=\nabla^{\cE_\rho,f}+\rho\omega^{\fp}.
% \end{align}
% We have
% \begin{align}
%   \frac{1}{2}\omega\(\nabla^{\cE_\rho},g^{\cE_\rho}\)=-\rho\omega^{\fp}.
% \end{align}
% In particular, $g^{E_\tau}$ is unimodular, since $\Tr\[\omega\(\nabla^{\cE_\rho},g^{\cE_\rho}\)\]=0$.
Recall that $C^{\fu,\rho}\in \End(E)$ is the Casimir operator 
of $\fu$ acting on $E$ (see \eqref{eqCu=Cg}). The following proposition is classical (see \cite[Proposition 8.4]{BMZ}).

\begin{prop}\label{prop:D=C-C}
 	Under the identifications \eqref{eqid55}, we have  
  \begin{align}\label{eq:D=C-C}
&    \Box^X=C^{\fg,X,\Lambda^{\scriptscriptstyle\bullet 
	}(\fp^{*})\otimes_{\bR} E_{|K}}-C^{\fu,\rho},&    \Box^Z=C^{\fg,Z,\Lambda^{\scriptscriptstyle\bullet 
	}(\fp^{*})\otimes_{\bR}E_{|K}}-C^{\fu,\rho}.
  \end{align}
\end{prop}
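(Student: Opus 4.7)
The plan is to verify the two identities by direct computation in the homogeneous picture \eqref{eqid55}. Since $\Box^Z$ is obtained from $\Box^X$ by descent under the $\Gamma$-action (both sides being $G$-invariant, hence $\Gamma$-invariant, differential operators), and the same is true for the Casimir operators on the right-hand side, it suffices to establish the identity on $X$; the identity on $Z$ then follows automatically.

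The main calculation is to express $d^X$ in the model \eqref{eqid55}. Using the identification $[g,v]\mapsto (pg,\rho(g)v)$ of \eqref{eq:flathomo}, a direct computation shows that for $s\in C^{\infty}(G,\Lambda^{\scriptscriptstyle\bullet}(\fp^{*})\otimes_{\bR} E)^{K}$,
\begin{align*}
d^X s = \sum_{i=1}^{m} e^i \wedge \bigl(e_i s + \rho(e_i) s\bigr),
\end{align*}
where $\{e_i\}$ is an orthonormal basis of $(\fp,B|_{\fp})$, $e_i$ acts as the corresponding left-invariant derivative on $G$, and $\rho(e_i)\in \End(E)$ acts on the $E$-factor. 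The extra term $\rho(e_i)$ captures precisely the non-trivial gauge transformation introduced when trivialising $\cE_{\rho_{|K}}$ via $\rho(g)$. Using the first identity of \eqref{eqadmissible} (symmetry of $\rho(\fp)$), the formal adjoint can be written in an analogous closed form involving $\iota_{e_i}(e_i+\rho(e_i))$.

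Expanding $\Box^X=[d^X,(d^X)^{*}]$ and applying the Clifford-type relation $e^i\wedge\iota_{e_j}+\iota_{e_j}e^i\wedge=\delta_{ij}$ produces a bulk term $-\sum_i(e_i+\rho(e_i))^2$ and cross terms that involve the Lie brackets $[e_i,e_j]\in\fk$. By the second identity of \eqref{eqadmissible} (skew-symmetry of $\rho(\fk)$) together with the corresponding action of $\fk$ on $\Lambda^{\scriptscriptstyle\bullet}(\fp^{*})$ via the adjoint representation, these cross terms reassemble into the $\fk$-contribution $+\sum_{i=m+1}^{m+n}e_i^2$ of the Casimir on $\Lambda^{\scriptscriptstyle\bullet}(\fp^{*})\otimes_{\bR}E_{|K}$. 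Combining with \eqref{eq:Cg} and isolating the scalar part acting purely on $E$, which by \eqref{eqCu=Cg} is $C^{\fg,\rho}=C^{\fu,\rho}$, one obtains the advertised identity.

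The main obstacle is the careful bookkeeping of the $\fk$-action contributions: brackets $[e_i,e_j]\in\fk$ act both on the exterior algebra part (via the adjoint representation on $\Lambda^{\scriptscriptstyle\bullet}(\fp^{*})$) and on $E_{|K}$ (via $\rho$), and the admissibility hypothesis is precisely what makes these contributions combine into genuine Casimirs rather than producing uncontrolled curvature-type terms. Since this Matsushima--Murakami-type calculation is entirely classical, one may refer to \cite[Proposition 8.4]{BMZ} for the detailed verification.
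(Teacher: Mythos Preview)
Your proposal is correct and ends with the same citation the paper uses: the paper does not give a proof but simply labels the result as classical and refers to \cite[Proposition 8.4]{BMZ}. Your sketch of the Matsushima--Murakami/Kuga computation is sound (in particular, the bracket terms in $d^X$ vanish because $[\fp,\fp]\subset\fk$ on a symmetric space, and the $\fk$-contributions reassemble into the Casimir exactly as you describe), so you have in fact supplied more detail than the paper itself.
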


Let $T(F)$ be the analytic torsion of $F$ associated to 
$(g^{TZ},g^{F})$. Let $N^{\Lambda^{\scriptscriptstyle\bullet 
}(T^*Z)}$ be the number operator, i.e. $N^{\Lambda^{\scriptscriptstyle\bullet 
}(T^*Z)}$ acts by multiplication by $k$ on $\Omega^k(Z,F)$.

\begin{thm}\label{thmMS0}
 Assume $\delta(G)\neq 1$. For any $t>0$, we have 
\begin{align}
	    \Trs\[\(N^{\Lambda^{\scriptscriptstyle\bullet }(T^*Z)}-\frac{m}{2}\)\exp\(-t\Box^Z\)\]=0.
\end{align}
In particular, 
\begin{align}\label{eq:tau=0}
    T(F)=1.
  \end{align}
\end{thm}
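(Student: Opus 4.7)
The plan is to establish the heat-trace identity first; the identity $T(F)=1$ then follows from the Mellin-transform definition \eqref{eqRST}, since a vanishing integrand produces vanishing zeta-regularised determinants at $s=0$.

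By Proposition \ref{prop:D=C-C}, we have $\Box^Z = C^{\fg,Z,\Lambda^{\scriptscriptstyle\bullet}(\fp^*)\otimes_{\bR} E_{|K}} - C^{\fu,\rho}$. The Casimir $C^{\fu,\rho}\in\End(E)$ commutes with $\rho(G)$, and by admissibility $\rho$ is completely reducible. Decompose $E=\bigoplus_i E_i$ into $G$-irreducibles; by Schur's lemma together with the skew-Hermitian action of $\fu$ under the admissible metric, $C^{\fu,\rho}$ acts on each $E_i$ as a real scalar $\mu_i$. Correspondingly, $F=\bigoplus_i F_i$ splits as an orthogonal direct sum of flat subbundles, and writing $C_i = C^{\fg,Z,\Lambda^{\scriptscriptstyle\bullet}(\fp^*)\otimes_{\bR}(E_i)_{|K}}$ we get
\begin{align*}
\Trs\left[\left(N^{\Lambda^{\scriptscriptstyle\bullet}(T^{*}Z)}-\tfrac{m}{2}\right)e^{-t\Box^Z}\right]=\sum_i e^{t\mu_i}\,\Trs\left[\left(N^{\Lambda^{\scriptscriptstyle\bullet}(T^{*}Z)}-\tfrac{m}{2}\right)e^{-tC_i}\right].
\end{align*}
It thus suffices to prove the vanishing of each summand, i.e., the heat trace of the Casimir operator twisted by a single irreducible piece.

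For this, expand each trace via Bismut's orbital integral formula (the geometric form of the Selberg trace formula) as $\sum_{[\gamma]\in[\Gamma]} \vol(\Gamma(\gamma)\backslash Z(\gamma))\, I_\gamma(t)$, where $I_\gamma(t)$ is the orbital integral of the heat kernel with the insertion of $N-m/2$. Bismut's explicit formula represents $I_\gamma(t)$ as a Gaussian-weighted integral whose integrand carries a supertrace factor on $\Lambda^{\scriptscriptstyle\bullet}(\fp^*)$ coming from the Clifford action on forms, multiplied by a character evaluation $\Tr^{E_i}[\rho(\gamma)]$. The insertion $N^{\Lambda^{\scriptscriptstyle\bullet}(T^{*}Z)}-m/2$, combined with this Clifford supertrace via the standard Mathai--Quillen manipulation, produces a Pfaffian-type integrand on the fundamental part of $\fp$; a parity count shows this Pfaffian vanishes identically whenever $\delta(G)\neq 1$. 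This is the Moscovici--Stanton vanishing in its heat-equation form, cf.~\cite[Corollary 2.2]{MStorsion} for the original unitary case and \cite[Theorem 8.6]{BMZ}, \cite[Remark 4.2]{Shen20} for the extensions used here.

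The principal obstacle is that Moscovici--Stanton vanishing was originally formulated for unitary twists, whereas $(F,g^F)$ is only admissible. The reduction to irreducible components makes this extension automatic: on each summand the twist enters only as the scalar exponential $e^{t\mu_i}$ arising from the Casimir shift and as the character $\Tr^{E_i}[\rho(\gamma)]$ appearing in the orbital integral, and neither interferes with the Clifford-level cancellation on $\Lambda^{\scriptscriptstyle\bullet}(\fp^*)$ that drives the vanishing.
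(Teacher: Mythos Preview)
Your proposal is correct and takes essentially the same approach as the paper: the paper's own proof is simply a citation to \cite{BMZ1}, \cite[Theorem 8.6, Remark 8.7]{BMZ}, and the Moscovici--Stanton antecedent \cite[Corollary 2.2]{MStorsion}, and your sketch unpacks exactly that argument via Bismut's orbital integral formula and the Clifford/Pfaffian vanishing when $\delta(G)\neq 1$. One minor imprecision: in Bismut's formula the twist by $E_i$ enters through the character of the $K$-representation $(E_i)_{|K}$ evaluated at the elliptic part $k^{-1}$ of $\gamma$ (and at nearby points $k^{-1}e^Y$ in the Gaussian integral), not literally as $\Tr^{E_i}[\rho(\gamma)]$; but as you correctly observe, this factor is purely multiplicative and does not interfere with the $\Lambda^{\scriptscriptstyle\bullet}(\fp^*)$-level cancellation that forces the vanishing.
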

\begin{proof}
	This is \cite{BMZ1} and \cite[Theorem 8.6, Remark 8.7]{BMZ} (see also \cite[Theorem 5.5]{Ma_bourbaki}), which generalises  a 
	vanishing theorem originally due to \cite[Corollary 2.2]{MStorsion} (see also \cite[Theorem 
7.9.3]{B09}) where $F$ is assumed to be unitarily flat. 
\end{proof}

% \begin{proof}
% The proof is identical to the one given in \cite[Theorems 7.9.1 and 
% 7.9.3]{B09} (see also \cite[Theorem 4.12 and Corollary 
% 4.13]{Shfried} or \cite[Theorems 5.4 and 5.5]{Ma_bourbaki}). 
% \end{proof}

% The following Proposition extends \cite[Theorem 7.9.1]{B09}.
% \begin{thm}\label{prop:vanishT}
%  Let $\gamma\in G$ be semisimple such that $\dim \fb(\gamma)\g 2 $. For $Y\in \fk(\gamma)$, we have
% \begin{align}\label{eq:vanishT}
%   \Trs^{\Lambda^\cdot(\fp^*)}\[N^{\Lambda^\cdot(\fp^*)}\Ad(k^{-1})\exp(-i\ad(Y))\]=0.
% \end{align}
% In particular, for $t>0$, we have
% \begin{align}\label{eq:vanishT2}
% 	\Trs^{[\gamma]}\[N^{\Lambda^\cdot(T^*X)}\exp\(-t(C^{\fg,X}-C^{\fg,\rho})/2\)\]=0.
%   \end{align}
% \end{thm}
% 

Recall that $\rho^{\theta}=\rho\circ \theta$. Clearly,  $\<,\>_{E}$ 
is still an  admissible metric for $\rho^{\theta}$.  Let 
$(F^{\theta},g^{F^{\theta}})$ be the  associated admissible Hermitian flat vector bundle on $Z$.  Since $\rho_{|K}=\rho_{|K}^{\theta}$, by \eqref{eqid55}, the Hodge  Laplacians of $(F,g^{F})$ and $(F^{\theta},g^{F^{\theta}})$ act on the same space. By \eqref{eq:D=C-C} and by  
$C^{\fu,\rho}=C^{\fu,\rho^{\theta}}$, these two Laplacians  coincide.  In particular,  
\begin{align}\label{eqHH}
	& H^{\scriptscriptstyle\bullet }\(Z,F\)\simeq  
	H^{\scriptscriptstyle\bullet }\(Z,F^{\theta}\),
&	T(F)=T\(F^{\theta}\). 
\end{align}

\subsection{The statement of the main result}\label{sMR}
The main result of this article  is the following.

\begin{thm}\label{thm1}
	Assume that $\dim Z$ is odd and that  $\rho:G\to \GL(E)$ is a 
	finite dimensional complex representation of $G$ with an admissible metric. 
	Let $(F,g^{F})$ be
	the associated  admissible Hermitian flat vector bundle. If $\rho \simeq \rho^\theta$, then 	
	there exist explicit  constants $C_{\rho}\in \bR^{*}$ 
	and $r_{\rho}\in \mathbf{Z}$ (see \eqref{eqTTb2} and Remark \ref{rerCr}) such that when $\sigma\to 0$, we have 
	\begin{align}\label{eq1rd}
		R_{\rho}(\sigma)=
		C_{\rho}T(F)^{2}\sigma^{r_{\rho}}+\cO(\sigma^{r_{\rho}+1}).
	\end{align}
	Moreover, if $H^{\scriptscriptstyle\bullet }(Z,F)=0$, then 
	\begin{align}\label{eq2rd}
&		C_{\rho}=1,&r_{\rho}=0,
	\end{align}
	so that 
	\begin{align}
		R_{\rho}(0)=T(F)^{2}. 
	\end{align}
\end{thm}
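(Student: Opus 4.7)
The plan is to follow the three-step strategy outlined in the introduction: first reduce to $\delta(G) = 1$, then for that rank express $R_\rho$ as an alternating product of Selberg zeta functions built from Dirac-cohomology data, and finally apply Theorem \ref{thm:detfor} together with Proposition \ref{prop:D=C-C} to identify the resulting graded Casimir determinant with the Hodge Laplacian on $\Omega^{\scriptscriptstyle\bullet}(Z,F)$.

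Since $\dim Z$ is odd and $\delta(G)$ has the same parity as $\dim Z$, $\delta(G)$ is odd. When $\delta(G) \geq 3$, Theorem \ref{thmMS0} gives $T(F) = 1$ while Remark \ref{reR=1} gives $R_\rho \equiv 1$, so the conclusion holds trivially with $C_\rho = 1$ and $r_\rho = 0$. One may therefore assume $\delta(G) = 1$. When the centre $Z_G$ is noncompact, the splitting \eqref{eqGbM} forces $\fn = 0$ and the proof streamlines considerably, bypassing Dirac cohomology entirely; I would handle this in a separate short subsection as promised by Section \ref{sproofth1cc}.

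For $\delta(G) = 1$ with $Z_G$ compact, I would construct the family $\{\eta_\beta\}_{\beta \in \fb^{*}}$ of virtual $M$-representations from the Dirac cohomology $H_D^{\pm}(\rho)$ of $\rho$ associated with the compact symmetric pair $(\fu, \fu(\fb))$: writing the resulting $(\fb_{\bC} \oplus \fm_{\bC}, K_M)$-module as $\bigoplus_{\beta} \bC_{\beta} \boxtimes \eta_\beta^{\pm}$ defines $\eta_\beta = \eta_\beta^{+} - \eta_\beta^{-}$. The admissible metric yields complete reducibility, which reduces matters to the irreducible case where the Casimir of $\fg$ acts on $\rho$ by a scalar, verifying Assumption \ref{ass}\ref{ass2}. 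The unique lift of $\eta_\beta|_{K_M}$ to $R(K)$ required by Assumption \ref{ass}\ref{ass1} is a non-trivial point to be addressed in Section \ref{srepetab} by methods analogous to the spin construction of \cite[Section 6]{Shfried}. Standard properties of Dirac cohomology then yield both \eqref{eqAS333in} and \eqref{eqsumbetainto}. Substituting \eqref{eqAS333in} into \eqref{defRrho} and regrouping in the shape of \eqref{eqdefsel} rewrites $R_\rho$ as an alternating product of Selberg zeta functions $Z_{\eta_\beta}$. Each factor, by Theorem \ref{thm:detfor}, is a graded Casimir determinant $\mathrm{det}_{\rm gr}(C^{\fg,Z,\widehat{\eta}_\beta} + \sigma_{\eta_\beta} + \sigma^{2})$ times a non-vanishing entire exponential. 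Identity \eqref{eqsumbetainto} ensures that the combined Casimir operator is, after the shifts $\sigma_{\eta_\beta}$ and the substitution $\sigma \mapsto \sigma^{2}$, precisely $\Box^{Z}$ via Proposition \ref{prop:D=C-C}. Letting $\sigma \to 0$ and comparing with \eqref{eqTsigma} gives \eqref{eq1rd}; the hypothesis $\rho \simeq \rho^{\theta}$ ensures that $C_{\rho}$ is real, since the polynomial contributions $P_{\eta_\beta}$ pair up under this symmetry.

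The main obstacle is the second assertion. Under $H^{\scriptscriptstyle\bullet}(Z,F) = 0$, establishing $C_\rho = 1$ and $r_\rho = 0$ requires showing both that the zero eigenspaces of each $C^{\fg,Z,\widehat{\eta}_\beta} + \sigma_{\eta_\beta}$ are trivial and that the volume polynomial contributions $P_{\eta_\beta}$ cancel at $\sigma = 0$. For the first, a Matsushima-type decomposition rewrites $\ker(C^{\fg,Z,\widehat{\eta}_\beta} + \sigma_{\eta_\beta})$ in terms of $(\fg,K)$-cohomology of unitary Harish-Chandra $(\fg_{\bC},K)$-modules with prescribed infinitesimal character; I would then invoke the vanishing theorems of Vogan-Zuckermann \cite{VoganZuckerman}, Vogan \cite{Vogan2}, and Salamanca-Riba \cite{Salamanca} exactly as in \cite[Section 8]{Shfried}. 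The polynomial cancellation would follow by adapting the parallel argument of \cite[Section 8]{Shfried} to the present admissible-metric setting, where the hypothesis $\rho \simeq \rho^\theta$ plays the role that unitarity played there.
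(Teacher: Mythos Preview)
Your outline matches the paper's proof essentially step for step: the reduction to $\delta(G)=1$, the noncompact/compact centre split, the Dirac-cohomology construction of $\eta_\beta$, the Selberg factorisation via \eqref{eqAS333in}, the determinant identification through \eqref{eqsumbetainto} and Proposition \ref{prop:D=C-C}, and the Vogan--Zuckerman--Salamanca-Riba input for \eqref{eq2rd} are exactly the paper's route.

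One correction to your final paragraph: there is no separate ``polynomial cancellation'' to arrange, and $\rho\simeq\rho^\theta$ is not what makes $C_\rho$ real. In the key identity (Theorem \ref{prop:RT}) only $P_{\eta_0}$ survives, and it is an odd polynomial by construction (see the line preceding Theorem \ref{thm:detfor}), so it vanishes at $\sigma=0$ automatically. The constant $C_\rho$ is real simply because the explicit formula \eqref{eqTTb2} reads $C_\rho=\prod_{\beta\in\fb_+^*}(-4|\beta|^2)^{-r_{\eta_\beta}}$, a product of real numbers; the role of $\rho\simeq\rho^\theta$ is rather to give the symmetry $\eta_\beta\simeq\eta_{-\beta}$ (Proposition \ref{propkey1}\,\ref{keyp1})) that allows the grouping over $\fb_+^*$ in the first place. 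Once every $r_{\eta_\beta}=0$ is established---and this is precisely what the Matsushima decomposition plus Vogan--Zuckerman--Salamanca-Riba yields, via Corollary \ref{cor:forr1}---both $r_\rho=0$ and $C_\rho=1$ drop out of \eqref{eqTTb2} with no further work.
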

\begin{proof}
	Since $\dim Z$ is odd, $\delta(G)$ is odd. If $\delta(G)\g3$, by 
	Remark \ref{reR=1} and Theorem  \ref{thmMS0}, our theorem follows easily. If $\delta(G)=1$, 	we will 
	consider the case where $Z_G$ is non compact  in Section 
	\ref{sproofth1cc} and  the case where $Z_{G}$ is  compact  in Sections 	\ref{SPdn1} and \ref{S:rep}. 
\end{proof}

\begin{re}\label{re45}
Assume $\dim Z$ is even. Then $\delta(G)$ is even as well. If 
$\delta(G)\g 2$, then by Remark \ref{reR=1} and Theorem \ref{thmMS0},  we have $R_{\rho}(0)=T(F)^{2}=1$. If 
$\delta(G)=0$,  by the Theorem of Gauss-Bonnet-Chern and by 
\cite[Proposition 4.1]{Shfried}, we have 
\begin{align}
	\chi(Z,F)=\dim E \cdot  \chi(Z) \neq0.
\end{align} 
In particular, there are no acyclic  flat vector bundles.
\end{re}

\subsection{Proof of Corollary \ref{cori1}}\label{sC2}Let us restate 
Corollary \ref{cori1}. 
\begin{thm}\label{thm2}
	Assume that $\dim Z$ is odd and that  $\rho:G\to \GL(E)$ is a finite dimensional complex representation of $G$ with an admissible metric. 
	Let $(F,g^{F})$ be
	the associated  admissible Hermitian flat vector bundle.  If  
	$\rho$ is  irreducible and $\rho\not\simeq \rho^{\theta}$,  then 
\begin{align}\label{eqHvan}
H^{\scriptscriptstyle\bullet }(Z,F)=0,
\end{align}
and 
$R_{\rho}$ is regular at $\sigma=0$,	so that 
	\begin{align}\label{eqRabs}
		\left|R_{\rho}(0)\right|=T(F)^{2}. 
	\end{align}
\end{thm}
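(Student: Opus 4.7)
The plan is to deduce Theorem \ref{thm2} from Theorem \ref{thm1} applied to the representation $\rho\oplus\rho^{\theta}$, together with Proposition \ref{propRoR} and the Borel--Wallach vanishing theorem \cite[Theorem VII.6.7]{BW} already invoked in the text just before Corollary \ref{cori1}.

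First I would invoke \cite[Theorem VII.6.7]{BW} to obtain $H^{\scriptscriptstyle\bullet}(Z,F)=0$ directly from the irreducibility of $\rho$ and the assumption $\rho\not\simeq\rho^{\theta}$. By \eqref{eqHH}, this also yields $H^{\scriptscriptstyle\bullet}(Z,F^{\theta})=0$, hence $H^{\scriptscriptstyle\bullet}(Z,F\oplus F^{\theta})=0$.

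Next I would form $\tau=\rho\oplus\rho^{\theta}$, equip it with the obvious direct-sum admissible metric, and note that swapping the two summands provides an isomorphism $\tau\simeq\tau^{\theta}$. The associated admissible flat bundle is $F_{\tau}=F\oplus F^{\theta}$, and the analytic torsion is multiplicative under direct sums, so $T(F_{\tau})=T(F)T(F^{\theta})=T(F)^{2}$ by \eqref{eqHH}. Since cohomology vanishes, Theorem \ref{thm1} applied to $\tau$ gives $r_{\tau}=0$ and $R_{\tau}(0)=T(F)^{4}$.

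Finally, from the definition \eqref{defRrho} the Ruelle zeta function turns direct sums into products, so $R_{\tau}=R_{\rho}\cdot R_{\rho^{\theta}}$. Proposition \ref{propRoR} then gives $R_{\rho^{\theta}}(\sigma)=\overline{R_{\rho}(\overline{\sigma})}$, so $R_{\rho}$ and $R_{\rho^{\theta}}$ have the same order of vanishing at the real point $\sigma=0$. Since their product $R_{\tau}$ is holomorphic and nonzero at $0$, each factor must itself be regular and nonzero at $0$, which gives the regularity claim. Evaluating at $0$ yields $R_{\rho}(0)\overline{R_{\rho}(0)}=T(F)^{4}$, i.e., $|R_{\rho}(0)|=T(F)^{2}$, which is \eqref{eqRabs}. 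The argument is entirely a bookkeeping reduction to the self-dual case handled by Theorem \ref{thm1}; there is no genuine analytic obstacle, the only conceptual input being the Borel--Wallach vanishing result.
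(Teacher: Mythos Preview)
Your proposal is correct and follows essentially the same route as the paper's own proof: the paper also sets $\rho'=\rho\oplus\rho^{\theta}$, uses \eqref{eqHH} and the Borel--Wallach vanishing to get $H^{\scriptscriptstyle\bullet}(Z,F')=0$ and $T(F')=T(F)^{2}$, applies Theorem \ref{thm1} to $\rho'$, and then combines $R_{\rho'}=R_{\rho}\ol{R}_{\rho}$ from Proposition \ref{propRoR} to conclude. Your explicit remark that $R_{\rho}$ and $R_{\rho^{\theta}}=\ol{R}_{\rho}$ have equal order at $\sigma=0$ (hence each is regular since their product is) makes explicit a step the paper leaves implicit.
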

\begin{proof}
	The vanishing of the cohomology $H^{\scriptscriptstyle\bullet }(Z,F)$ is a consequence of \cite[Theorem 
	VII.6.7]{BW} (see \eqref{eqHZF=0} for a proof). 	
	
	Take 
	\begin{align}\label{eqrrr3}
	\rho'=\rho\oplus \rho^{\theta}.
	\end{align}
	By \eqref{eqRoR} and \eqref{eqrrr3}, we have 
	\begin{align}\label{eqRTP0}
		R_{\rho'}=R_{\rho}\ol{R}_{\rho}.
	\end{align}
	
Note that $\rho'$ has an admissible metric. Let $F'$ be the admissible Hermitian  flat vector bundle associated to $\rho'$. 
	By \eqref{eqHH} and \eqref{eqHvan}, we have
	\begin{align}\label{eqHFP0}
	&	H^{\scriptscriptstyle\bullet }\(Z,F'\)=H^{\scriptscriptstyle\bullet }(Z,F)\oplus 
		H^{\scriptscriptstyle\bullet }\(Z,F^{{\theta}}\)=0,
&		T(F')=T(F)T\(F^{\theta}\)=T(F)^{2}. 
	\end{align}
	
	Since $\rho'=\rho\oplus \rho^{\theta}$ is invariant by 
	$\theta$,	we can apply Theorem \ref{thm1} to $\rho'$. 
	Therefore, $R_{\rho'}$ is regular at $\sigma=0$ and 
	\begin{align}\label{eqRTpp}
		R_{\rho'}(0)=T(F')^{2}. 
	\end{align}
	By \eqref{eqRTP0}-\eqref{eqRTpp}, we get \eqref{eqRabs}.
\end{proof}

\subsection{Proof of Theorem \ref{Thm1}}
	Since any $G$-representation with an admissible metric can be decomposed as a direct sum of irreducible $G$-representations, 
	which still have admissible metrics, by Theorems \ref{thm1} and  
	\ref{thm2}, 	we get Theorem \ref{Thm1}.  \qed

\subsection{Proof of Corollary \ref{cori2}}\label{sPC2}
Let $\rho_{0}:\Gamma\to {\rm U}(r)$ be a unitary representation of 
$\Gamma$. Let $F_{0}$ be the associated flat vector bundle . Since 
$\rho_{0}$ is unitary, $F_{0}$ admits  a flat metric $g^{F_{0}}$. We will show that $(F_{0},g^{F_{0}})$ is indeed an admissible Hermitian flat 
vector bundle associated to a larger  reductive group. 

Let 
\begin{align}
&	\ul{G}=G\times {\rm U}(r),&\ul{K}=K\times {\rm U}(r). 
\end{align} 
Then, $\ul{G}$ is a connected real reductive group with maximal compact subgroup 
$\ul{K}$. We have an identification
\begin{align}
	\ul{G}/\ul{K}\simeq X. 
\end{align} 

Let $\ul{\Gamma}$ be the graph of $\rho_{0}$, i.e., 
\begin{align}
	\ul{\Gamma}=\left\{(\gamma,\rho_{0}(\gamma))\in \ul{G}:\gamma\in 
	\Gamma\right\}. 
\end{align} 
Then $\ul{\Gamma}$ is a discrete torsion free and cocompact subgroup of $\ul{G}$, so that 
\begin{align}
	\ul{\Gamma}\backslash \ul{G}/\ul{K}\simeq Z. 
\end{align} 

Let $\ul{\rho}_{0}:	\ul{G}\to {\rm U}(r)$ be the representation of $\ul{G}$ defined by the 
projection onto  the second component. The standard Hermitian metric 
on $\bC^{r}$ is admissible for the representation $\ul{\rho}_{0}$. Also, we have  an 
identification of  Hermitian flat vector bundles
\begin{align}
\ul{\Gamma}\backslash(	\ul{G}/\ul{K}\times \bC^{r})\simeq F_{0}. 
\end{align} 
In particular, $(F_{0},g^{F_{0}})$ is an admissible  Hermitian flat vector 
bundle associated to the representation $\ul{\rho}_{0}$ of $\ul{G}$. 

 More generally,  the admissible 
Hermitian flat vector bundle  $(F,g^{F})$ associated to the $G$-representation 
$\rho$ is also an admissible 
Hermitian flat vector bundle associated to the 
$\ul{G}$-representation $\ul{\rho}=\rho \boxtimes \mathbf 1$.   
Therefore, $F_{0}\otimes F$ with the  induced Hermitian metric is an 
admissible Hermitian flat vector bundle associated to 
$\ul{\rho}_{0}\otimes {\ul{\rho}}$. By these considerations, Corollary \ref{cori2} follows from Theorem \ref{Thm1} and Corollary \ref{cori1}. \qed

\subsection{Proof of Theorem \ref{thm1} when $\delta(G)=1$ and 
$Z_{G}$ is noncompact}\label{sproofth1cc}
Suppose now  $\delta(G)=1$ and  $Z_G$ is  non compact.
% Recall that by 
% \eqref{eqGbM}, $G=\exp(\fb)\times M$. 
Let $\rho$ be a representation of $G$ with an admissible metric such that $\rho\simeq \rho^{\theta}$.  We can and we will assume that the Casimir $C^{\fu}$ acts on $\rho$ as a scalar $C^{\fu,\rho}\in \mathbf{R}$.  

For $\beta\in \fb^{*}_{\bC}$, denote by $\bC_{\beta}$ the one dimensional representation of 
$\exp(\fb)$ such that $a\in \fb$ acts as the scalar 
$\<\beta,a\>\in \bC$. Clearly, $\bC_{\beta}$ has an admissible metric 
if and only if $\beta\in \fb^{*}$. 

Recall that since $Z_G$ is not compact,  we have $G=\exp(\fb)\times 
M$ (see \eqref{eqGbM}).  Since a representation with an admissible metric is completely 
reducible, we can  write
\begin{align}\label{eqVAM1}
 \rho=\bigoplus_{\beta\in \fb^*}\bC_{\beta}\boxtimes  
  \eta_{\beta},
\end{align}
where $\eta_{\beta}$ is a representation of $M$. 
\begin{prop}\label{propetabii}
	The following statements hold.
		\begin{enumerate}[i)]
		\item  For $\beta\in \fb^{*}$, we have isomorphisms  of 
representations of $M$, 
\begin{align}\label{eqVAM2}
\eta_{-\beta}\simeq \eta_{\beta}. 
\end{align}
		\item For $\beta\in \fb^{*}$, the representation 
		$\eta_{\beta}$ of $M$ satisfies 	Assumption \ref{ass}, so 
		that 
		\begin{align} \label{eqCud1}
C^{\fu_{\fm},\eta_{\beta}}=C^{\fu,\rho}+|\beta|^{2}\in \bR,
\end{align} 
	\end{enumerate} 
\end{prop}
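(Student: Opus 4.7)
The plan is to exploit the decomposition \eqref{eqVAM1} together with the splitting $G = \exp(\fb) \times M$, which is available here since $Z_G$ is noncompact and $\delta(G) = 1$.

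For part (i), my approach is to apply $\theta$ to both sides of \eqref{eqVAM1}. Since $\fb \subset \fp$, the Cartan involution acts as $-1$ on $\fb$, so $\bC_\beta^{\theta} \simeq \bC_{-\beta}$. On the other hand, $\fm$ is $\theta$-stable and $\theta|_{\fm}$ is the Cartan involution of $M$. These observations will yield
\begin{align*}
    \rho^{\theta} \simeq \bigoplus_{\beta\in \fb^{*}} \bC_{-\beta} \boxtimes \eta_\beta^{\theta}.
\end{align*}
Combined with the uniqueness of the weight decomposition under $\exp(\fb)$ and the assumption $\rho \simeq \rho^{\theta}$, this gives $\eta_{-\beta} \simeq \eta_\beta^{\theta}$. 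Since $\delta(M) = 0$ by \eqref{eqdm=0}, I can then apply Proposition \ref{propMtt} to $M$ to conclude $\eta_\beta^{\theta} \simeq \eta_\beta$, whence \eqref{eqVAM2}.

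For part (ii), I first need to verify that $\eta_{\beta|K_M}$ admits a unique lift in $R(K)$. The key observation here is that under $G = \exp(\fb) \times M$, the fixed point subgroup $K = G^{\theta}$ consists of $\{1\} \times K_M$, so $K = K_M$ under the natural identification. The lift is therefore trivially $\eta_\beta$ itself, and uniqueness is automatic. For the Casimir condition, I would decompose $C^{\fg}$ along the orthogonal splitting $\fg = \fb \oplus \fm$ (recall $\fz^{\bot}(\fb) = 0$ in this case) into $C^{\fb} + C^{\fm}$ using \eqref{eq:Cg}. A direct computation shows that $C^{\fb}$ acts on $\bC_\beta$ by the scalar $-|\beta|^{2}$, while $C^{\fm}$ acts on $\eta_\beta$ as $C^{\fu_\fm,\eta_\beta}$ thanks to \eqref{eqCu=Cg}. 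Rearranging and using that $C^{\fu,\rho}$ has been normalised to act by a scalar will yield \eqref{eqCud1}. In particular, the same scalar acts on $\eta_\beta^{+} = \eta_\beta$ and $\eta_\beta^{-} = 0$, so Assumption \ref{ass} is satisfied.

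Nothing here looks like a genuine obstacle: part (i) is a routine weight-decomposition argument plus an invocation of Proposition \ref{propMtt} applied to $M$, and part (ii) reduces to bookkeeping with the Casimir once $K = K_M$ is established. The only point requiring care is the sign convention in \eqref{eq:Cg}, which contributes the crucial $+|\beta|^{2}$ (rather than $-|\beta|^{2}$) on the right-hand side of \eqref{eqCud1}.
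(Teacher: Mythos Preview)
Your proof is correct and follows essentially the same approach as the paper: for (i) you apply $\theta$ to the decomposition \eqref{eqVAM1}, use $\rho\simeq\rho^\theta$ to get $\eta_{-\beta}\simeq\eta_\beta^\theta$, and then invoke Proposition~\ref{propMtt} for $M$ (since $\delta(M)=0$) to conclude $\eta_\beta^\theta\simeq\eta_\beta$; for (ii) you use $G=\exp(\fb)\times M$ to get $K=K_M$, making Assumption~\ref{ass}~(\ref{ass1}) trivial, and then split the Casimir along $\fg=\fb\oplus\fm$ to obtain \eqref{eqCud1}. The paper's proof is the same argument, stated more tersely.
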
 
\begin{proof}
Since 
$\rho^{\theta}\simeq \rho$, by \eqref{eqVAM1}, we have isomorphisms  of 
representations of $M$,  $
\eta_{\beta}^{\theta}\simeq \eta_{-\beta}.$ Since $\delta(M)=0$, by Proposition \ref{propMtt}, we have isomorphisms   of 
representations of $M$, $\eta_{\beta}^{\theta}\simeq 
\eta_{\beta}$. From these considerations, \eqref{eqVAM2} follows.  

By \eqref{eqGbM}, we have $K_{M}=K$, so Assumption \ref{ass} 
\eqref{ass1} is trivial. Since $C^{\fu,\rho}$ is a scalar, by 
	\eqref{eqGbM} and \eqref{eqVAM1}, the Casimir of $\fu_{\fm}$ acts on $\eta_{\beta}$ as  
a scalar given in \eqref{eqCud1}. In particular, 
$\eta_{\beta}$ satisfies also Assumption \ref{ass} \eqref{ass2}. 
\end{proof} 

Recall that in Section \ref{sSpliting}, we have fixed a positive 
element $f_{\fb}\in 
\fb$ in $\fb$. Set
	\begin{align}
		\fb_{+}^{*}=\{\alpha\in \fb^{*}: \<\alpha,f_{\fb}\>>0\}. 
	\end{align} 
By 
\eqref{eqVAM2}, we can rewrite 
\eqref{eqVAM1} as 
\begin{align}\label{eqVAM0d1}
	 \rho=\mathbf 1\boxtimes  \eta_{0}\oplus \bigoplus_{\beta\in \fb^*_{+}}\(\bC_{\beta}\oplus \bC_{-\beta}\)\boxtimes
  \eta_{\beta}.
\end{align}
Note that $\eta_{0}$ can be zero.

Let
$Z_{\eta_{\beta}}(\sigma)$ be Selberg zeta function associated to $\eta_{\beta}$.

\begin{prop}\label{propd1RZ}
	The following identity of meromorphic functions on $\bC$ holds, 
	\begin{align}\label{eqRZZ}
	R_{\rho}(\sigma)=\(Z_{\eta_{0}}(\sigma)\)^{-1}\prod_{\beta\in 
	\fb^{*}_{+}}\(Z_{\eta_{\beta}}\(\sigma+|\beta|\)Z_{\eta_{\beta}}\(\sigma-|\beta|\)\)^{-1}. 
\end{align}
\end{prop}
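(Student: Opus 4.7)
The plan is to match the logarithmic expansions of both sides term by term in the region of convergence, and then appeal to meromorphic continuation. The standing hypothesis that $\delta(G)=1$ and $Z_G$ is noncompact makes the computation clean: by \eqref{eqGbM} we have $G=\exp(\fb)\times M$, hence $\fz^{\bot}(\fb)=0$ and $K_M=K$, so the determinant factor $\lvert\det(1-\Ad(e^a k^{-1}))|_{\fz^{\bot}(\fb)}\rvert^{1/2}$ appearing in \eqref{eqdefsel} is trivial. Moreover, each $\eta_\beta$ from \eqref{eqVAM0d1} is a genuine (ungraded) $M$-representation, so $\Trs^{E_{\eta_\beta}}=\Tr^{E_{\eta_\beta}}$.

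First, I would use Remark \ref{reR=1} to restrict the sum defining $\log R_\rho(\sigma)$ to conjugacy classes $[\gamma]\in [\Gamma_+]$ with $\gamma\sim e^{a}k^{-1}\in H$, since all other terms carry $\chi_{\rm orb}=0$. For such $\gamma$, I would apply the decomposition \eqref{eqVAM0d1}. Because $k\in T\subset M$ acts trivially on $\bC_{\pm\beta}$ and $e^{a}\in\exp(\fb)$ acts trivially on each $\eta_\beta$, we obtain
\begin{align}
\Tr[\rho(e^{a}k^{-1})]=\Tr[\eta_0(k^{-1})]+\sum_{\beta\in\fb^{*}_{+}}\bigl(e^{\langle\beta,a\rangle}+e^{-\langle\beta,a\rangle}\bigr)\Tr[\eta_\beta(k^{-1})].
\end{align}
Since $\dim\fb=1$ and $\ell_{[\gamma]}=|a|$, one has $|\langle\beta,a\rangle|=|\beta|\,\ell_{[\gamma]}$, so the bracket equals $e^{|\beta|\ell_{[\gamma]}}+e^{-|\beta|\ell_{[\gamma]}}$, which is independent of the sign of $a$.

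Second, I would record that under the current simplifications, \eqref{eqdefsel} specialises to
\begin{align}
-\log Z_{\eta_\beta}(\sigma)=\sum_{\substack{[\gamma]\in[\Gamma_{+}]\\ \gamma\sim e^{a}k^{-1}\in H}}\frac{\chi_{\rm orb}(B_{[\gamma]}/\bbS^{1})}{m_{[\gamma]}}\Tr[\eta_\beta(k^{-1})]\,e^{-\sigma\ell_{[\gamma]}},
\end{align}
and therefore
\begin{align}
-\log\bigl[Z_{\eta_\beta}(\sigma+|\beta|)Z_{\eta_\beta}(\sigma-|\beta|)\bigr]=\sum_{\gamma}\frac{\chi_{\rm orb}(B_{[\gamma]}/\bbS^{1})}{m_{[\gamma]}}\Tr[\eta_\beta(k^{-1})]\bigl(e^{-|\beta|\ell_{[\gamma]}}+e^{|\beta|\ell_{[\gamma]}}\bigr)e^{-\sigma\ell_{[\gamma]}}.
\end{align}
Summing this over $\beta\in\fb^{*}_{+}$ and adding $-\log Z_{\eta_0}(\sigma)$, the computation in the previous step shows that the result equals $\log R_\rho(\sigma)$ for $\Re(\sigma)$ large enough.

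Finally, by Theorem \ref{thm:detfor} each Selberg zeta $Z_{\eta_\beta}$ extends meromorphically to $\bC$, and $R_\rho$ does also (by \cite[Theorem 0.1 i)]{Shen20}). Two meromorphic functions on $\bC$ that agree on a right half-plane coincide, giving \eqref{eqRZZ}. There is no real obstacle; the only mild subtlety is to verify that the sign symmetry $\eta_{-\beta}\simeq\eta_\beta$ from \eqref{eqVAM2} is precisely what causes the two shifts $\sigma\pm|\beta|$ to pair up, and to check that the ungraded/trivial-$\fz^{\bot}(\fb)$ reductions eliminate the prefactors of \eqref{eqdefsel} so that the Selberg sums assemble without spurious constants.
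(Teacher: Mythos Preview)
Your proof is correct and follows essentially the same approach as the paper: both expand $\Tr[\rho(e^{a}k^{-1})]$ via the decomposition \eqref{eqVAM0d1}, observe that $\fz^{\bot}(\fb)=0$ kills the determinant factor in \eqref{eqdefsel}, and then match the logarithms of \eqref{defRrho} and \eqref{eqdefsel} term by term. You have simply spelled out in more detail what the paper compresses into a one-line citation of \eqref{defRrho}, \eqref{eqdefsel}, and \eqref{eqd1E2}.
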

\begin{proof}
	By \eqref{eqVAM0d1}, for $e^{a}k^{-1}\in H$, we have 
\begin{align}\label{eqd1E2}
	\Tr\[\rho\(e^{a}k^{-1}\)\]
	=\Tr\[\eta_{0}\(k^{-1}\)\]+\sum_{\beta\in 
	\fb^{*}_{+}}\Tr\[\eta_{\beta}\(k^{-1}\)\]\(e^{|\beta||a|}+e^{-|\beta||a|}\).
\end{align}
By \eqref{eqGbM}, we have $\fz^{\bot}(\fb)=0$, so that the denominator  
$\left|\det(1-\Ad(\gamma))|_{\fz^{\bot}(\fb)}\right|^{1/2}$ in 
\eqref{eqdefsel} disappears. By  \eqref{defRrho}, \eqref{eqdefsel}, and \eqref{eqd1E2}, we get \eqref{eqRZZ}.
\end{proof}

% 	Let $(F_{\beta},g^{F_{\beta}})$ be the admissible Hermitian flat 
% vector bundle associated to $ \bC_{\beta}\boxtimes \rho_{\beta}$. 
% By the consideration given after \eqref{eqVAM1}, we have 
% \begin{align}
% \(	F_{\beta}^{\theta},g^{F_{\beta}^{\theta}}\)=\(	
% F_{-\beta},g^{F_{-\beta}}\). 
% \end{align} 
% Therefore, $T(F_{\beta})=T(F_{-\beta})$, so that   
% \begin{align}\label{eqTTb1}
% 	T(F)=\prod_{\beta\in \fb^{*}}T(F_{\beta})=T(F_{0})\prod_{\beta\in 
% 	\fb^{*}_{+}}T(F_{\beta})^{2}. 
% \end{align}
% 
% Let $T_{\beta}(\sigma)$ be the meromorphic function defined 
% in  \eqref{eqTsigma} when the flat vector bundle is $F_{\beta}$. We have similar relations
% $	T_{\beta}(\sigma)=T_{-\beta}(\sigma),$ so that 
% \begin{align}\label{eqTTs1}
% 	T(\sigma)=T_{0}(\sigma)\prod_{\beta\in 
% 	\fb_{+}^{*}}T_{\beta}(\sigma)^{2}. 
% \end{align} 

  By \eqref{eqsen1}, \eqref{eq:detfor}, and \eqref{eqCud1}, we see 
  that  
\begin{align}\label{eqsumbetamad1}
	Z_{\eta_{\beta}}\(-\sqrt{\sigma^2+|\beta|^2}\)Z_{\eta_{\beta}}\(\sqrt{\sigma^2+|\beta|^2}\)=\[{\rm det}_{\rm gr}\(C^{\fg,Z,\widehat{\eta}_{\beta}}-C^{\fu,\rho}+\sigma^{2}\)\]^{2}
\end{align}
is a meromorphic function on $\bC$.  We have a generalisation of \cite[Theorem 5.6]{Shfried}. Recall that $T(\sigma)$ is defined in \eqref{eqTsigma}. 

\begin{thm}
  \label{prop:RT11}
  The following identity of meromorphic functions on $\bC$ holds, 
  \begin{align}\label{eq:RT11}
    R_\rho(\sigma)=T\(\sigma^2\)  \exp\(-\vol(Z)P_{\eta_{0}}(\sigma)\)
 \prod_{\beta\in \fb^{*}_{+}}  
\frac{Z_{\eta_{\beta}}\(-\sqrt{\sigma^2+|\beta|^2}\)Z_{\eta_{\beta}}\(\sqrt{\sigma^2+|\beta|^2}\)}{Z_{\eta_{\beta}}\big(\sigma+|\beta|\big)Z_{\eta_{\beta}}\big(\sigma-|\beta|\big)}.
  \end{align}
\end{thm}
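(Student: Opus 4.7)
The plan is to combine Proposition \ref{propd1RZ} with the determinant formula of Theorem \ref{thm:detfor} and the Hodge-Laplacian identity of Proposition \ref{prop:D=C-C}. Starting from Proposition \ref{propd1RZ} and multiplying and dividing by $Z_{\eta_\beta}\(\sqrt{\sigma^2+|\beta|^2}\)Z_{\eta_\beta}\(-\sqrt{\sigma^2+|\beta|^2}\)$ for each $\beta \in \fb^*_+$, the target identity \eqref{eq:RT11} becomes equivalent to
\begin{equation*}
\frac{1}{Z_{\eta_{0}}(\sigma) \prod_{\beta \in \fb^*_+} Z_{\eta_\beta}\(\sqrt{\sigma^2+|\beta|^2}\) Z_{\eta_\beta}\(-\sqrt{\sigma^2+|\beta|^2}\)} = T(\sigma^2) \exp\(-\vol(Z) P_{\eta_0}(\sigma)\).
\end{equation*}
Inserting Theorem \ref{thm:detfor} for $Z_{\eta_0}(\sigma)$ (with $\sigma_{\eta_0} = -C^{\fu,\rho}$ by \eqref{eqsen1} and \eqref{eqCud1}) cancels the exponential factor, and together with \eqref{eqsumbetamad1} this further reduces the problem to the determinant identity
\begin{equation*}
T(\sigma^2)^{-1} = \mathrm{det}_{\rm gr}\(C^{\fg,Z,\widehat{\eta}_0} - C^{\fu,\rho} + \sigma^2\) \prod_{\beta \in \fb^*_+} \[\mathrm{det}_{\rm gr}\(C^{\fg,Z,\widehat{\eta}_\beta} - C^{\fu,\rho} + \sigma^2\)\]^2.
\end{equation*}

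By Proposition \ref{prop:D=C-C} and the definition \eqref{eqTsigma}, the left-hand side can be written as $\prod_{i=1}^m \det\(C^{\fg,Z,\Lambda^i(\fp_\bC^*) \otimes \rho_{|K}} - C^{\fu,\rho} + \sigma^2\)^{(-1)^{i+1} i}$. Thus the claim reduces to the following equality of virtual $K$-representations in $R(K)$:
\begin{equation*}
-\sum_{i=1}^m (-1)^i i \, \Lambda^i(\fp_\bC^*) \otimes \rho_{|K} = \widehat{\eta}_0 + 2 \sum_{\beta \in \fb^*_+} \widehat{\eta}_\beta.
\end{equation*}

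To establish this representation-theoretic identity I would use the hypothesis that $Z_G$ is noncompact: by \eqref{eqGbM} we have $G = \exp(\fb) \times M$, so $K_M = K$, and moreover $\fb$ is central in $\fg$, whence $K$ acts trivially on $\fb^*_\bC$ and $\Lambda^i(\fp_\bC^*) \simeq \Lambda^i(\fp_{\fm,\bC}^*) \oplus \Lambda^{i-1}(\fp_{\fm,\bC}^*)$ in $R(K)$. Combining the defining formula \eqref{eqetahat} for $\widehat{\eta}_\beta$ with the restriction $\rho_{|K} = \eta_0 \oplus 2\bigoplus_{\beta \in \fb^*_+}\eta_\beta$ (extracted from \eqref{eqVAM0d1}, using that $\bC_{\pm\beta}|_K$ is trivial) shows the right-hand side equals $\(\sum_{j=0}^{m-1}(-1)^j \Lambda^j(\fp_{\fm,\bC}^*)\) \otimes \rho_{|K}$, and a short telescoping computation using $\dim \fp_\fm = m-1$ (so that $\Lambda^m(\fp_{\fm,\bC}^*) = 0$) completes the verification. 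The main difficulty is the careful bookkeeping between the three layers---Ruelle zeta, Selberg zetas, and analytic torsion---but the pieces fit together cleanly: the oddness of $P_{\eta_\beta}$ kills the polynomial factors arising from the numerator $Z_{\eta_\beta}(\pm\sqrt{\sigma^2+|\beta|^2})$, the shift $\sigma_{\eta_\beta} = -C^{\fu,\rho} - |\beta|^2$ exactly absorbs the $|\beta|^2$ introduced by the square-root argument, and the virtual $K$-representation identity encodes the alternating weights on the analytic-torsion side.
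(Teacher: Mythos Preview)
Your proof is correct and follows essentially the same approach as the paper's: both reduce the identity to the determinant formula $T(\sigma)^{-1}=\mathrm{det}_{\rm gr}\bigl(C^{\fg,Z,\widehat{\eta}_{0}}-C^{\fu,\rho}+\sigma\bigr)\prod_{\beta\in\fb^{*}_{+}}\mathrm{det}_{\rm gr}\bigl(C^{\fg,Z,\widehat{\eta}_{\beta}}-C^{\fu,\rho}+\sigma\bigr)^{2}$ via the same $R(K)$ telescoping (the paper records this per $\beta$ as $\widehat{\eta}_{\beta}=\sum_{i=1}^{m}(-1)^{i-1}i\,\Lambda^{i}(\fp^{*}_{\bC})\otimes\eta_{\beta|K}$, while you write only the summed version, which suffices). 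The remaining steps---applying Theorem~\ref{thm:detfor}, using the oddness of $P_{\eta_\beta}$ to eliminate the polynomial in the $\pm\sqrt{\sigma^{2}+|\beta|^{2}}$ product, and combining with Proposition~\ref{propd1RZ}---are identical; you merely run the argument in reverse order.
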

\begin{proof}
By \eqref{eqGbM}, we have $\fp=\fp_{\fm}\oplus \fb$.  By \eqref{eqetahat}, we have an identity in $R(K)$, 
\begin{align}\label{eqrhobeta1}
\widehat{	
\eta}_{\beta}=\sum_{i=1}^{m}(-1)^{i-1}i\Lambda^{i}(\fp^{*}_{\bC})\otimes \eta_{\beta|K}. 
\end{align} 
% Since $K=K_{M}\subset M$, by \eqref{eqVAM2}, we have 
% \begin{align}\label{eqrhobeta11}
% \widehat{\rho}_{\beta}\simeq \widehat{\rho}_{-\beta}. 	
% \end{align} 
By \eqref{eq:D=C-C}, \eqref{eqVAM0d1}, and \eqref{eqrhobeta1},  %\eqref{eqrhobeta11}, 
% we see that 
% $C^{\fg,Z,\widehat{\rho}_{\beta}}-C^{\fu,\rho}$ is just 
% the Hodge Laplacian on $Z$ acting on the sections of the 
% virtuelle bundle
% $\sum_{i=0}^{m}(-1)^{i-1}i\Lambda^{i}(T^{*}Z)\otimes F_{\beta}$. 	Therefore, 
we have an identity of meromorphic functions,
\begin{align}\label{eq:detfortor11}
  T(\sigma)
  ={\rm 
  det}_{\rm 
  gr}\(C^{\fg,Z,\widehat{\eta}_{0}}-C^{\fu,\rho}+\sigma\)^{-1}\prod_{\beta\in \fb^{*}_{+}}{\rm 
  det}_{\rm 
  gr}\(C^{\fg,Z,\widehat{\eta}_{\beta}}-C^{\fu,\rho}+\sigma\)^{-2}.%=\exp\(\sum_{j=0}^{2l}(-1)^{j-1}\frac{\p}{\p s}\theta_{\eta_j,\rho}\(0,\sigma+(l-j)^2|\alpha|^2\)\).
\end{align}

% Then,
% \begin{align}\label{eqRZRZ1}
% 	{\rm det}_{\rm gr}
% 	\(C^{\fg,Z,\widehat{\rho_{\beta}}}-C^{\fu,\rho}+\sigma^{2}\pm2|\beta|\sigma\)=T_{\beta}\(\sigma^{2}\pm 2|\beta|\sigma\)^{-1}. 
% \end{align}
% 

By \eqref{eqsen1}, \eqref{eq:detfor},  \eqref{eqCud1},  \eqref{eqsumbetamad1},  \eqref{eq:detfortor11},  we have
\begin{multline}\label{eq:thtoZ11}
  T\(\sigma^2\)=
Z_{\eta_{0}}(\sigma)^{-1}\exp\big(\vol(Z)P_{\eta_{0}}(\sigma)\big)\\
\times \prod_{\beta\in\fb_{+}^{*}}  
\(Z_{\eta_{\beta}}\(-\sqrt{\sigma^2+|\beta|^2}\)Z_{\eta_{\beta}}\(\sqrt{\sigma^2+|\beta|^2}\)\)^{-1}.
\end{multline}
By  \eqref{eqRZZ} and \eqref{eq:thtoZ11}, we get \eqref{eq:RT11}. 
\end{proof}

Set
\begin{align}\label{eqrrhobeta}
	r_{\eta_{\beta}}=\dim \ker 
	\(C^{\fg,Z,\widehat{\eta}_{\beta}^{+}}-C^{\fu,\rho}\)-\dim \ker 
	\(C^{\fg,Z,\widehat{\eta}_{\beta}^{-}}-C^{\fu,\rho}\). 
\end{align}
Following \cite[(7-75)]{Shfried}, put 
\index{C@$C_\rho$}\index{R@$r_\rho$}
\begin{align}\label{eqTTb2}
 & 
 C_\rho=\prod_{\beta\in 
 \fb_{+}^{*}}\big(-4|\beta|^2\big)^{-r_{\eta_\beta}},&r_\rho=-2\sum_{\beta\in \{0\}\cup \fb^{*}_{+}}r_{\eta_{\beta}}.
\end{align}
% Set
% \begin{align}\label{eqTTb2}
% &C_{\rho}=1,	&r_{\rho}=2\sum_{\beta\in \{0\}\cup\fb_{+}^{*}}\chi'(Z,F_{\beta}). 
% \end{align}
% \begin{proof}[Proof of Theorem \ref{thm1} in the case $\delta(G)=1$ 
% 	and $Z_{G}$ non 
% compact]
Proceeding as  
\cite[(7-76)-(7-78)]{Shfried},  using  Theorem \ref{prop:RT11} instead of \cite[Theorem 
7.8]{Shfried}, we get 	\eqref{eq1rd}.
% \begin{align}\label{eq:RTd1}
% 	R_{\rho}(\sigma)=\sigma^{r_{\rho}}T(F)^{2}+\cO(\sigma^{r_{\rho}+1}). 
% \end{align}
%It remains to show \eqref{eq2rd}. 

Let $F_{\beta}$ be the admissible Hermitian flat subbundle of $F$ associated to the 
	$G$-representation  $\bC_{\beta}\boxtimes \eta_{\beta}$. By 
	\eqref{eqEuler}, \eqref{eq:D=C-C}, \eqref{eqrhobeta1}, and \eqref{eqrrhobeta}, we have   
\begin{align}\label{eqrbetad1}
r_{\eta_\beta}=-	\chi'(Z,F_{\beta}).
\end{align} 
If $H^{\scriptscriptstyle\bullet }(Z,F)=0$, then for all $\beta\in 
\fb^{*}$,  $H^{\scriptscriptstyle\bullet }(Z,F_{\beta})=0$, so  $r_{\eta_\beta}=0$. By \eqref{eqTTb2}, we get \eqref{eq2rd}.

The proof of Theorem \ref{thm1} in the case when $\delta(G)=1$ and $Z_{G}$ is non compact  is completed. 
\qed

\section{The proof of (\ref{eq1rd})  when 
$\delta(G)=1$ and $Z_{G}$ is compact}
\label{SPdn1}
The purpose of this section is to show (\ref{eq1rd})  when 
$\delta(G)=1$ and $Z_{G}$ is compact by generalising the arguments 
given in Section \ref{sproofth1cc}. One of the difficulties is to 
construct virtual representations $\eta_{\beta}$  of $M$
satisfying Assumption \ref{ass} such that  an analogue of 
\eqref{eqRZZ} holds. In the case of hyperbolic manifolds, such representations are 
constructed using $\fn$-cohomology \cite{Muller_3_torsion}. Here, we construct 
$\eta_{\beta}$ via Dirac cohomology. These two methods are 
equivalent. We adopt the latter since it is closer to certain 
constructions given in \cite[Section 6]{Shfried}. 

This section is organised as follows. In Section \ref{srank1}, we 
recall some facts on the structure of real reductive groups with  $\delta(G)=1$ established in \cite[Section 6]{Shfried}. 

In Section \ref{srho2rho2}, we decompose  $\rho$ according to the action 
of $\fb$.  We show certain representations of $K_{M}$ obtained in this way can be lifted in $R(K)$. 

In Section \ref{srepetab}, we introduce  virtual representations $\eta_{\beta}$ of $M$ satisfying Assumption \ref{ass}. 

Finally, in Section \ref{sZetaetabeta}, we establish analogues of 
Proposition \ref{propd1RZ} and Theorem \ref{prop:RT11}, and we show (\ref{eq1rd}). 

In this section, we assume that  $\delta(G)=1$ and $Z_G$ is  compact. Suppose also  that  the $G$-representation $(\rho,E)$ has an admissible metric and is such that  $\rho\simeq 
\rho^\theta$.  As in Section \ref{sproofth1cc},  we can and we will 
assume that Casimir operator $C^{\fu}$ acts as a scalar  
$C^{\fu,\rho}\in \mathbf{\bR}$.

\subsection{The structure of the reductive group $G$ with 
$\delta(G)=1$}\label{srank1}Since $G$ has a compact centre and $\dim 
\fb=1$, we have $\fb\not\subset \fz_{\fg}$, so $\fn\neq0$.  By Proposition \ref{propnng} \ref{nng1}), $\dim \fn$ is a positive 
even number. Set
\begin{align}
  \ell=\frac{1}{2}\dim \fn\in \mathbf N^{*}.\index{L@$\ell$}
\end{align}
%  Since $G$ has a compact center, we have $\fb\not\subset \fz_{\fg}$. 
% Therefore,  $\fz^{\bot}(\fb)\neq0$ and $\ell>0$. Recall that we have fixed a system of positive roots $R_{+}$. 

\begin{prop}\label{prop:nnam1}
	Elements of $\fb$ act on $\fn$ and $\ol{\fn}$ as a scalar, i.e., 	there is $\alpha_{0}\in \fb^{*}$ such that for $a\in 
\fb$, $f\in \fn$, $\ol{f}\in \ol{\fn}$, we have
\begin{align}\label{eq:ab1}
  &\[a,f\]=\<\alpha_{0},a\>f, &\[a,\ol{f}\]=-\<\alpha_{0},a\>\ol{f}.
\end{align}
In particular,  
\begin{align}\label{eq:nnam}
&\[\fn,\ol{\fn}\]\subset \fz(\fb),&[\fn,\fn]=\[\ol{\fn},\ol{\fn}\]=0,
\end{align}
and 
\begin{align}\label{eq:fzbot1}
&\[\fz(\fb),\fz(\fb)\]\subset 
\fz(\fb),&\[\fz(\fb),\fz^\bot(\fb)\]\subset 
\fz^{\bot}(\fb),&&\[\fz^{\bot}(\fb),\fz^\bot(\fb)\]\subset \fz(\fb),\\
&\[\fu(\fb),\fu(\fb)\]\subset \fu(\fb),& 
\[\fu(\fb),\fu^\bot(\fb)\]\subset \fu^{\bot}(\fb),&&  
\[\fu^{\bot}(\fb),\fu^\bot(\fb)\]\subset \fu(\fb).\notag
\end{align}
% Also,
% \begin{align}\label{eq:Bnn=0}
% &  B|_{\fn\times \fn=0}, &B|_{\ol{\fn}\times \ol{\fn}}=0.
% \end{align}
\end{prop}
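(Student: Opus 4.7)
The plan is to reduce the entire proposition to a single structural statement: because $\delta(G) = \dim \fb = 1$ and $Z_G$ is compact, there exists a unique $\alpha_0 \in \fb^*$ with $\langle\alpha_0, f_\fb\rangle > 0$ such that every nonzero restriction to $\fb$ of a root of $(\fg_\bC, \fh_\bC)$ equals $\pm\alpha_0$. Once this is established, the rest is formal bookkeeping via the Jacobi identity.

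First I would decompose $\fg_\bC = \fh_\bC \oplus \bigoplus_{\alpha \in \Phi} \fg_{\bC,\alpha}$ with respect to the fundamental Cartan $\fh = \fb \oplus \ft$. Since $\fb \subset \fp$, the root spaces of roots $\alpha$ with $\alpha|_\fb = 0$ lie in $\fz(\fb)_\bC$, while those of roots with $\alpha|_\fb \neq 0$ lie in $\fz^\bot(\fb)_\bC$. For $a \in \fb$, the operator $\ad(a)$ acts on each such root space by the scalar $\langle\alpha|_\fb, a\rangle$. By the choice of $f_\fb \in \fb$ from Section \ref{sSpliting}, the positive (respectively negative) eigenspace of $\ad(f_\fb)$ on $\fz^\bot(\fb)$ is $\fn$ (respectively $\ol{\fn}$).

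The main obstacle is the structural claim that all nonzero restrictions of roots to $\fb$ coincide up to sign, i.e., that the restricted root system is of type $A_1$ rather than $BC_1$. I would prove this by exploiting the fact that $\fh$ is the \emph{fundamental}, that is, maximally compact, Cartan subalgebra, following the strategy of the analogous structure results in \cite[Section 6]{Shfried}: the Cayley transform relating $\fh$ to the maximally split Cartan, combined with $\dim \fb = 1$ and the compactness of $Z_G$, prevents the occurrence of a restricted root $\pm 2\alpha_0$. Granted this, \eqref{eq:ab1} is immediate, since each root space contributing to $\fn$ (respectively $\ol\fn$) is acted on by $\ad(a)$ via the scalar $\langle\alpha_0, a\rangle$ (respectively $-\langle\alpha_0, a\rangle$).

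For \eqref{eq:nnam}, I would apply the Jacobi identity: for $a \in \fb$, $f \in \fn$, $\ol f \in \ol\fn$,
\begin{equation*}
\ad(a)[f,\ol f] = \bigl(\langle\alpha_0,a\rangle - \langle\alpha_0,a\rangle\bigr)[f,\ol f] = 0,
\end{equation*}
so $[\fn,\ol\fn] \subset \fz(\fb)$. For $f_1, f_2 \in \fn$, the bracket $[f_1,f_2]$ is an $\ad(a)$-eigenvector with eigenvalue $2\langle\alpha_0,a\rangle$, which is forbidden by the structural claim, so $[\fn,\fn] = 0$; symmetrically $[\ol\fn,\ol\fn] = 0$. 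Finally, \eqref{eq:fzbot1} follows by assembling: $\fz(\fb)$ is a Lie subalgebra and preserves each of $\fn$, $\ol\fn$ by construction, whereas $\fz^\bot(\fb) = \fn \oplus \ol\fn$ combined with \eqref{eq:nnam} forces $[\fz^\bot(\fb),\fz^\bot(\fb)] \subset \fz(\fb)$. The analogous inclusions on the compact form $\fu$ transfer verbatim through the identifications $\fu(\fb) = \sqrt{-1}\fb \oplus \fu_\fm$ and $\fu^\bot(\fb) = \sqrt{-1}\fp^\bot(\fb) \oplus \fk^\bot(\fb)$, since the brackets on $\fu$ are obtained from those on $\fg$ by multiplying the $\fp$-components by $\sqrt{-1}$, an operation that preserves the grading type.
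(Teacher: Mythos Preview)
Your proposal is correct and follows essentially the same approach as the paper, which simply cites \cite[Propositions 6.2, 6.3, and (6-29)]{Shfried} for the entire result; you have reproduced that strategy while spelling out the Jacobi-identity consequences explicitly. One small remark: the exclusion of a $BC_1$-type restricted system in \cite{Shfried} rests primarily on the absence of real roots for the fundamental Cartan (so that for any complex root $\alpha$ the element $\alpha-\theta\alpha$ is never a root), rather than on a Cayley transform argument, but since you defer to \cite[Section 6]{Shfried} for this step the outline stands.
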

\begin{proof}
	This is  \cite[Propositions 6.2, 6.3, and (6-29)]{Shfried}. %whose proof is 	based on the classification theory of simple real Lie algebras. 
\end{proof}

 Let $a_{0}\in \fb$ be such that 
 \begin{align}
 \<	\alpha_{0},a_{0}\>=1. 
 \end{align}

By \eqref{eq:fzbot1}, $(\fu,\fu(\fb))$ is a compact symmetric pair. 
Recall that $G$ has a compact centre.  Let $U$ be the compact form of $G$ \cite[Proposition 5.3]{Knappsemi}. Let $U(\fb)$ be the centraliser of $\fb$ in $U$. Then, 
$U(\fb)$ is a connected Lie group \cite[Corollary 4.51]{KnappLie} with Lie algebra $\fu(\fb)$. 

%Let $\theta_\fb\in \End(\fu)$ be such that
%\begin{align}
%  &\theta_\fb|_{\fu(\fb)}=1,&\theta_\fb|_{\fu^\bot(\fb)}=-1.
%\end{align}
%By \eqref{eq:fubot}, $\theta_\fb$ is an automorphism of the Lie algebra $\fu$.
In \cite[(6-31)]{Shfried}, we have shown that 
$J=\ad\(\sqrt{-1}a_{0}\)\in \End(\fu^{\bot}(\fb))$
defines a  $U(\fb)$-invariant complex structure on 
$\fu^{\bot}(\fb)$. Moreover, the associated holomorphic and 
anti-holomorphic subspaces  are $\fn_{\bC}$ 
and $\ol{\fn}_{\bC}$, so that we have a $U(\fb)$-equivariant splitting 
\begin{align}\label{equbnn1}
	\fu^{\bot}(\fb)\otimes_{\bR}{\bC}=\fn_{\bC} \oplus \ol{\fn}_{\bC}.
\end{align} 
Let $S^{\fu^\bot(\fb)}$ \index{S@$S^{\fu^\bot(\fb)}$} be the spinor 
of $(\fu^\bot(\fb), -B|_{\fu^\bot(\fb)})$. Classically  
(\cite{Hitchin74}, see also \cite[(6-33) and (6-34)]{Shfried}), we have an isomorphism of $U(\fb)$-representations, 
\begin{align}\label{eq:Rnspinc}
	S^{\fu^\bot(\fb)}_{\pm}\simeq  \Lambda^{\rm even/odd 
	}\(\ol{\fn}_\bC^*\)\otimes 
  \det\(\fn_\bC\)^{-1/2}. 
\end{align} 

In the sequel, there are representations which do not always lift  to 
$U(\fb)$. Therefore, it is more convenient to consider $S^{\fu^\bot(\fb)}$ as a $(\fb_{\bC}\oplus \fm_{\bC}, 
K_{M})$-module. 

For $0\l j\l 2\ell$, let  $\eta_{j}$ \index{E@$\eta_{j}$} be the 
$(\fm_{\bC},K_{M})$-module $\Lambda^{j}(\fn^{*}_{\bC})$. By 
\eqref{eqVAMs1}, we have an isomorphism of $(\fm_{\bC},K_{M})$-modules, 
\begin{align}\label{eqVAMs11}
	\eta_{\ell-j}\simeq \eta_{\ell+j}. 
\end{align}

% Recall that $(\fu,\fu(\fb))$ is a compact symmetric pair. 
% Let $U$ and $U(\fb)$ be the Lie group associated to Lie algebra  
% $\fu$ and $\fu(\fb)$. Since $G$ has a compact center,  $U$ is 
% compact. Since $U(\fb)$ is the centralise of $\fb$ in $U$, it is  also compact. 

\begin{prop}\label{propSpinor}
	We have an isomorphism of $(\fb_{\bC}\oplus \fm_{\bC},K_{M})$-modules,
\begin{align}\label{eq:Rnspinc1}
  S^{\fu^\bot(\fb)}\simeq \bigoplus_{j=-\ell }^{\ell}\bC_{j\alpha_{0}} 
	\boxtimes \eta_{\ell-j}.
\end{align}
% When restrict to $\fu_{\fm}$, we have the isomorphism of representations of $\fu_{\fm}$,
% \begin{align}\label{eq:Rnspin}
% S^{\fu^\bot(\fb)}\simeq   \Lambda^{\scriptscriptstyle\bullet }(\ol{\fn}_\bC^*). 
% \end{align}
% \begin{re}
% 	In \cite[(6-33)]{Shfried}, we have shown that the $\fu(\fb)$-actions on 
% 	$S^{\fu^\bot(\fb)}$ and $\ol{\fn}_\bC$ lift to the group  $U(\fb)$ such that 	\eqref{eq:Rnspinc} is an isomorphism of representations 
% 	of $U(\fb)$. 
% \end{re}
% 
% We have an isomorphism of representations of $\fu(\fb)$,
% 	\begin{align}\label{eqVAMs0}
% 	S^{\fu^{\bot}(\fb)}\simeq \bigoplus_{j=-\ell }^{\ell}\bC_{j\alpha_{0}} 
% 	\boxtimes \eta_{\ell-j}.
% \end{align}
For $e^{a}k^{-1}\in H$, we have
\begin{align}\label{eqSnbot}
	\Trs^{S^{\fu^\bot(\fb)}}\[e^{a}k^{-1}\]=\left|\det\(1-\Ad(e^{a}k^{-1})\)|_{\fz^{\bot}(\fb)}\right|^{1/2}.
\end{align}
\end{prop}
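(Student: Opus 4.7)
The strategy is to deduce both assertions from the known identity \eqref{eq:Rnspinc} together with Proposition \ref{propnng} and the weight relations \eqref{eq:ab1}.

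For the decomposition \eqref{eq:Rnspinc1}, the plan is to reindex the spinor identity $S^{\fu^\bot(\fb)}_\pm \simeq \Lambda^{\mathrm{even/odd}}(\ol{\fn}^*_\bC) \otimes \det(\fn_\bC)^{-1/2}$ by setting $j=j'+\ell$ with $j'=-\ell,\ldots,\ell$, and to verify the three structural matchings separately. First, by Proposition \ref{propnng}\ref{nng4}), the $M$-modules $\ol{\fn}^*_\bC$ and $\fn_\bC$ are equivalent, and $\Lambda^{i}(\fn^*_\bC)\simeq \Lambda^{2\ell-i}(\fn^*_\bC)$ as $M$-modules; specializing to $i=0$ shows that $\det(\fn_\bC)$ is $M$-trivial, hence $\det(\fn_\bC)^{-1/2}$ is unambiguously the trivial $(\fm_\bC, K_M)$-module by connectedness of $M$. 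Combining this with Poincaré duality on $\Lambda^{\scriptscriptstyle\bullet}(\fn_\bC)$ yields $\Lambda^{\ell+j'}(\ol{\fn}^*_\bC)\simeq \Lambda^{\ell-j'}(\fn^*_\bC)=\eta_{\ell-j'}$ as $(\fm_\bC, K_M)$-modules. Secondly, by \eqref{eq:ab1}, $\fb$ acts on $\fn$ by the character $\alpha_0$, hence on $\ol{\fn}^*$ by $\alpha_0$ as well, so the $\fb$-weight of the tensor product is $(\ell+j')\alpha_0-\ell\alpha_0=j'\alpha_0$. Summing over $j'$ gives the required identification.

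For the trace formula \eqref{eqSnbot}, applying $\Trs$ to \eqref{eq:Rnspinc} and using that $\det(\fn_\bC)^{-1/2}(e^a k^{-1})=e^{-\ell\<\alpha_0,a\>}$ yields
\begin{align*}
\Trs^{S^{\fu^\bot(\fb)}}\bigl[e^ak^{-1}\bigr]=e^{-\ell\<\alpha_0,a\>}\det{}_{\!\fn_\bC}\bigl(1-\Ad(e^ak^{-1})\bigr).
\end{align*}
Separately, using $\fz^\bot(\fb)=\fn\oplus\ol{\fn}$ from \eqref{eqZbnn} and the fact that $\theta$ supplies a $K$-equivariant real isomorphism $\fn\simeq\ol{\fn}$, the eigenvalues $\mu_1,\ldots,\mu_{2\ell}$ of $\Ad(k^{-1})$ on $\fn_\bC$ lie on the unit circle (since $k\in K$), form a conjugation-closed multiset (since $\fn$ is real), and agree with those on $\ol{\fn}_\bC$. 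The elementary identity $|1-x^{-1}\mu|=x^{-1}|1-x\mu|$ for $x>0$ and $|\mu|=1$ then gives $\det_{\ol{\fn}}(1-\Ad(e^ak^{-1}))=e^{-2\ell\<\alpha_0,a\>}\det_{\fn}(1-\Ad(e^ak^{-1}))$, each factor being non-negative; hence $|\det_{\fz^\bot(\fb)}(1-\Ad(e^ak^{-1}))|^{1/2}=e^{-\ell\<\alpha_0,a\>}\det_{\fn_\bC}(1-\Ad(e^ak^{-1}))$, which matches the supertrace above.

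The main delicacies I expect are (i) ensuring that $\det(\fn_\bC)^{-1/2}$ is a bona fide $(\fb_\bC\oplus\fm_\bC, K_M)$-module rather than only a representation of a spin cover, which is handled by the $M$-triviality of $\det(\fn_\bC)$ and the connectedness of $M$ (and of $K_M$, or the extension-by-triviality argument through $M$); and (ii) keeping track of the $\mathbf{Z}/2$-grading under the reindexing, so that the parity of $\ell+j'$ is correctly identified with the $\pm$-grading of $S^{\fu^\bot(\fb)}$ and the supertrace signs combine into the positive quantity demanded on the right-hand side of \eqref{eqSnbot}.
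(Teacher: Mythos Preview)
Your argument for \eqref{eq:Rnspinc1} is correct and is precisely the route the paper takes (it invokes \eqref{eq:ab1}, \eqref{eq:Rnspinc}, and \eqref{eqVAMs11}); you have simply unpacked the one-line citation.

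For \eqref{eqSnbot} your approach differs from the paper's: the paper simply quotes \cite[Proposition 6.5]{Shfried}, whereas you compute directly. Your computation is sound up to one point, and the elementary identity $|1-x^{-1}\mu|=x^{-1}|1-x\mu|$ for $x>0$, $|\mu|=1$ is indeed correct. The gap is the clause ``each factor being non-negative'': you assert $\det_{\fn}\!\bigl(1-\Ad(e^ak^{-1})\bigr)\geq 0$ but never justify it, and your closing remark~(ii) only flags the issue without resolving it. This positivity is exactly what is needed to pass from $e^{-\ell\<\alpha_0,a\>}|\det_{\fn_\bC}(1-\Ad(e^ak^{-1}))|$ to $e^{-\ell\<\alpha_0,a\>}\det_{\fn_\bC}(1-\Ad(e^ak^{-1}))$ and match the supertrace.

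Here is one way to close the gap. Write $x=e^{\<\alpha_0,a\>}>0$ and let $\mu_1,\dots,\mu_{2\ell}$ be the eigenvalues of $\Ad(k^{-1})$ on $\fn_\bC$; they lie on the unit circle and form a conjugation-closed multiset since $\fn$ is real. Non-real pairs contribute $(1-x\mu)(1-x\ol{\mu})=|1-x\mu|^2\geq 0$. The real eigenvalues are $\pm 1$; let $n_\pm$ be their multiplicities. Since $T$ is connected and $\det_{\fn}\Ad(\cdot):T\to\{\pm1\}$ is continuous with value $1$ at the identity, $\det_{\fn}\Ad(k^{-1})=1$, hence $n_-$ is even. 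As $n_++n_-$ has the same parity as $2\ell$, $n_+$ is even as well. Thus $(1-x)^{n_+}(1+x)^{n_-}\geq 0$, and the full product $\prod_i(1-x\mu_i)\geq 0$. With this in hand your direct computation is complete and gives \eqref{eqSnbot}. The advantage of your route is that it is self-contained and avoids an external citation; the paper's route is shorter because \cite[Proposition 6.5]{Shfried} packages exactly this sign analysis.
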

\begin{proof}
% The exterior product induces a $M$-equivariant non degenerated bilinear map
% \begin{align}
% \Lambda^{\ell-j}(\fn^{*})\otimes  \Lambda^{\ell+j}	(\fn^{*})\to 
% \Lambda^{2\ell}(\fn^{*}). 
% \end{align} 
	By  \eqref{eq:ab1},  
\eqref{eq:Rnspinc}, and \eqref{eqVAMs11},  we get  
\eqref{eq:Rnspinc1}. By \eqref{eq:Rnspinc1} and \cite[Proposition 
6.5]{Shfried}, we get \eqref{eqSnbot}. 
\end{proof}

Let $B^{*}$   be the bilinear form on $\fg^{*}$ induced by $B$. 
\begin{prop}
	We have
\begin{align}\label{eqlla0}
\frac{1}{8}\Tr\[C^{\fu(\fb),\fu^{\bot}(\fb)}\]=-\ell^{2}B^{*}\(\alpha_{0},\alpha_{0}\). 
\end{align} 
\end{prop}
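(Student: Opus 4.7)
My plan is to reduce the trace identity to a Casimir computation on a single one-dimensional summand of the spin module $S^{\fu^{\bot}(\fb)}$. The bracket relations \eqref{eq:fzbot1} show that $[\fu^{\bot}(\fb),\fu^{\bot}(\fb)] \subset \fu(\fb)$, so $(\fu,\fu(\fb))$ is a compact symmetric pair (with the symmetry acting as $-1$ on $\fu^{\bot}(\fb)$ and $+1$ on $\fu(\fb)$). The Parthasarathy square identity, proved for $(\fu,\fk)$ in \eqref{eqD2cp} and extending verbatim here because the cubic term of Kostant's Dirac operator vanishes for any symmetric pair, takes the form
\begin{equation*}
-\bigl(D^{S^{\fu^{\bot}(\fb)}\otimes V}\bigr)^{2} = C^{\fu,V} + \tfrac{1}{8}\Tr\bigl[C^{\fu(\fb),\fu^{\bot}(\fb)}\bigr] - C^{\fu(\fb),\,S^{\fu^{\bot}(\fb)}\otimes V}.
\end{equation*}
On the trivial representation $V=\bC$ both $D$ and $C^{\fu,V}$ vanish, so this collapses to the operator identity $C^{\fu(\fb)}|_{S^{\fu^{\bot}(\fb)}} = \tfrac{1}{8}\Tr[C^{\fu(\fb),\fu^{\bot}(\fb)}]\cdot\mathrm{Id}$; in particular, $C^{\fu(\fb)}$ acts on the entire spin module as a single scalar.

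Next I would pin down this scalar by evaluating it on the most convenient summand of the decomposition in Proposition \ref{propSpinor}. Choosing $j = \ell$ one has $\eta_{\ell-j} = \eta_{0} = \Lambda^{0}(\fn^{*}_{\bC}) = \bC$, trivial as $\fm$-representation, so that $C^{\fu_{\fm}}$ acts by zero on this summand. On the remaining one-dimensional factor $\bC_{\ell\alpha_{0}}$, if $b_{0}\in\fb$ is a $B$-unit vector, then $\sqrt{-1}b_{0}$ acts by $\sqrt{-1}\,\ell\,\alpha_{0}(b_{0})$ and hence $C^{\sqrt{-1}\fb} = (\sqrt{-1}b_{0})^{2}$ acts by $-\ell^{2}\alpha_{0}(b_{0})^{2} = -\ell^{2}B^{*}(\alpha_{0},\alpha_{0})$, using $\alpha_{0}(b_{0})^{2} = B^{*}(\alpha_{0},\alpha_{0})$ (definition of $B^{*}$). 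Equating the two expressions for this common scalar gives \eqref{eqlla0}.

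The only substantive obstacle is the Parthasarathy identity for $(\fu,\fu(\fb))$: this is not recorded in the excerpt but follows from the same cubic-Dirac derivation used for \eqref{eqD2cp}, with \eqref{eq:fzbot1} supplying the symmetric-pair bracket relations that kill the cubic term. A purely elementary alternative is to evaluate the trace directly from the root decomposition $\fu^{\bot}(\fb)_{\bC} = \fn_{\bC}\oplus\overline{\fn}_{\bC}$, using $\rho_{\fn}|_{\fb} = \ell\alpha_{0}$ (immediate from \eqref{eq:ab1}) together with the vanishing $\rho_{\fn}|_{\ft} = 0$; the latter is a consequence of \eqref{eqVAMs1} for $j = 0$, which identifies $\Lambda^{\dim\fn}(\fn^{*})$ with the trivial $\fm$-module and thereby forces its $\ft$-weight $-2\rho_{\fn}|_{\ft}$ to vanish.
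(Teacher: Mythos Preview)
Your proof is correct. The paper's own proof is only a reference to \cite[Proposition 6.13]{Shfried} together with the hint that a direct argument follows from Kostant's strange formula; your approach is precisely one clean way to make that hint explicit. The Parthasarathy identity you invoke for the symmetric pair $(\fu,\fu(\fb))$ is exactly \eqref{eqD2cpu} in the paper (stated a bit later, but logically independent of \eqref{eqlla0} and itself a consequence of Kostant's formula via \eqref{eqD2}), so specializing it to $V=\bC$ to identify $\tfrac{1}{8}\Tr[C^{\fu(\fb),\fu^{\bot}(\fb)}]$ with the common Casimir eigenvalue on $S^{\fu^{\bot}(\fb)}$, and then reading off that eigenvalue on the one-dimensional summand $\bC_{\ell\alpha_{0}}\boxtimes\eta_{0}$ from Proposition~\ref{propSpinor}, is entirely legitimate and in the spirit of the paper's hint.

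Two small remarks. First, when you write $C^{\sqrt{-1}\fb}=(\sqrt{-1}b_{0})^{2}$ you are implicitly using the paper's sign convention \eqref{eq:Cg} (negative of the standard Casimir), under which $C^{\fu}=\sum g_{i}^{2}$ for a $(-B)$-orthonormal basis $g_{i}$ of $\fu$; it would be worth one line to make this explicit. Second, your alternative route via the root decomposition---reducing the trace to $|\rho_{\fn}|^{2}$ and showing $\rho_{\fn}|_{\ft}=0$ from the triviality of $\Lambda^{\dim\fn}(\fn^{*})$ as an $M$-module \eqref{eqVAMs1}---is also valid and is closer to a direct application of Kostant's strange formula without passing through the spinor; either route works.
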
 
\begin{proof}
This is \cite[Proposition 6.13]{Shfried} with $j=0$.  A direct proof 
of this result can be obtained by applying Kostant's stranger  formula (see also \cite[(2.6.11), (7.5.4)]{B09}), which is left to reader. 
\end{proof}

\subsection{A splitting of $\rho$ according to the $\fb$-action}\label{srho2rho2}
% We identify the $1$-dimential representation of $A$ with $\fb^{*}$. For $\beta\in 
% \fb^{*}$, we denote by $e^{\beta}$ the corresponding representation of $A$.  
Recall that $Z^{0}(\fb)=\exp(\fb)\times M$. 
% When restriction to $Z^{0}(\fb)$, we have
% \begin{align}\label{eqVAM1}
%  \rho_{2|Z^{0}(\fb)}=\bigoplus_{\beta\in \fb^*}\bC_{\beta}\boxtimes  
%   \rho_{2,\beta},
% \end{align}
% where $\rho_{2,\beta}$ is a representation of $M$.  Since 
% $\rho_{2}^{\theta}\simeq \rho_{2}$,  we have equivalence of 
% representations of $M$,  $\rho_{2,-\beta}\simeq 
% \rho_{2,\beta}^{\theta}.$ Since $\delta(M)=0$, by Proposition 
% \ref{propMtt}, we 
% have $\rho_{2,\beta}^{\theta}\simeq 
% \rho_{2,\beta}$. Thus, we have
% \begin{align}\label{eqVAM2}
% \rho_{2,-\beta}\simeq \rho_{2,\beta}. 
% \end{align}
% Recall that $\alpha_{0}$ defines  an orientation of $\fb$. By 
% \eqref{eqVAM2}, we can rewrite 
% \eqref{eqVAM1} as 
Since $\rho$ has an admissible metric and since  $\rho \simeq \rho^{\theta}$, as in \eqref{eqVAM1} and  \eqref{eqVAM0d1},  we can 
write
\begin{align}\label{eqVAM0}
	 \rho_{{|\exp(\fb)\times M}}=\bigoplus_{\beta\in \fb^*}\bC_{\beta}\boxtimes  
  \rho_{\beta}=\mathbf{1}\boxtimes  \rho_{0}\oplus \bigoplus_{\beta\in \fb^*_{+}}\(\bC_{\beta}\oplus \bC_{-\beta}\)\boxtimes
  \rho_{\beta}.
\end{align}
where $\rho_{\beta}$ are representations of $M$  such that $\rho_{-\beta}\simeq \rho_{\beta}$. 

% As in 
%  \eqref{eqVAM2},  since  $\rho \simeq 
% \rho^{\theta}$ and since $\rho$ has an admissible metric, we have isomorphisms of $M$-representations $
% \rho_{-\beta}\simeq \rho_{\beta}$. As in \eqref{eqVAM0d1}, we can rewrite \eqref{eqVAM0} as
% \begin{align}\label{eqVAM01}
% 	\rho_{{|\exp(\fb)\times M}}=\mathbf{1}\boxtimes  \rho_{0}\oplus \bigoplus_{\beta\in \fb^*_{+}}\(\bC_{\beta}\oplus \bC_{-\beta}\)\boxtimes
%   \rho_{\beta}.
% \end{align} 

% \begin{prop}
% For $\gamma=e^{a}k^{-1}\in H$, we have  
% \begin{align}\label{eqVAM}
% 		\Tr^{E_{2}}\[e^{a}k^{-1}\]=\Tr\[\rho_{2,0}\(k^{-1}\)\]+\sum_{\beta\in 
% 		\fb^{*}_{+}}\Tr\[\rho_{2,\beta}\(k^{-1}\)\]\(e^{|\beta||a|}+e^{-|\beta||a|}\). 
% 	\end{align}
% \end{prop}
% \begin{proof}
% 	By \eqref{eqVAM1} and \eqref{eqVAM2},  we have
% \begin{align}\label{eqVAM3}
% \Tr^{E_{2}}\[e^{a}k^{-1}\]=\Tr\[\rho_{2,0}\(k^{-1}\)\]+\sum_{\beta\in 
% \fb^{*}_{+}}\Tr\[\rho_{2,\beta}\(k^{-1}\)\]\(e^{\<\beta,a\>}+e^{-\<\beta,a\>}\). 
% \end{align}
% Clearly,
% \begin{align}\label{eqVAM4}
% e^{\<\beta,a\>}+e^{-\<\beta,a\>}=e^{|\beta||a|}+e^{-|\beta||a|}. 
% \end{align}
% By \eqref{eqVAM3} and \eqref{eqVAM4}, we 
% get \eqref{eqVAM2}. 
% \end{proof}
% 
Recall that the restriction induces an injective  morphism 
$R(K)\to R(K_M)$ of rings. 

\begin{thm}\label{thm:key}
	For all $\beta\in \fb^{*}$, 
  the restriction  $\rho_{\beta|{K_{M}}}$ has a unique lift in $R(K)$.
\end{thm}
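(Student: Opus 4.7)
The plan is to apply the identification $R(K)\simeq R(T)^{W(T:K)}$ of \eqref{eqRKRT}. Since $\ft$ commutes with $\fb$ by \eqref{eqh=bt}, we have $\ft\subset \fk_{\fm}$, so that $T$ is a common maximal torus of $K_{M}$ and $K$; hence the restriction $R(K_{M})\to R(T)$ is injective, and a class in $R(K_{M})$ admits a (necessarily unique) lift to $R(K)$ if and only if its character on $T$ is $W(T:K)$-invariant. Uniqueness in the theorem will therefore be automatic, and the task reduces to establishing this Weyl-group invariance for $\chi_{\rho_{\beta}}|_{T}$.

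The key geometric input is that for any $k\in N_{K}(T)$, the automorphism $\Ad(k)$ preserves $\fp$ and $\ft$, hence preserves $\fb=\{a\in\fp: [a,\ft]=0\}$. Because $\dim\fb=\delta(G)=1$, there is a sign $\epsilon_{w}\in\{\pm1\}$, depending only on $w=[k]\in W(T:K)$, such that $\Ad(k)|_{\fb}=\epsilon_{w}$. Consequently, conjugation by $k$ sends $e^{a}t\in H$ to $e^{\epsilon_{w}a}w(t)$, which still lies in $H$.

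Combining the class-function identity $\chi_{\rho}(e^{a}t)=\chi_{\rho}(k e^{a}t k^{-1})$ with the decomposition \eqref{eqVAM0} yields, after reindexing $\beta\mapsto\epsilon_{w}\beta$ on the right,
\begin{align*}
\sum_{\beta\in\fb^{*}} e^{\<\beta,a\>}\,\chi_{\rho_{\beta}}(t)
 \;=\; \sum_{\beta\in\fb^{*}} e^{\<\beta,a\>}\,\chi_{\rho_{\epsilon_{w}\beta}}(w(t)).
\end{align*}
Linear independence of the exponentials $a\mapsto e^{\<\beta,a\>}$ on $\fb$ then gives $\chi_{\rho_{\beta}}(t)=\chi_{\rho_{\epsilon_{w}\beta}}(w(t))$ for every $t\in T$. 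When $\epsilon_{w}=+1$ this is exactly the required $w$-invariance; when $\epsilon_{w}=-1$ it reduces to the same statement provided $\rho_{-\beta}\simeq\rho_{\beta}$ as $M$-representations.

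This last isomorphism is precisely the one already used in the proof of Proposition \ref{propetabii}: from $\rho\simeq\rho^{\theta}$ and \eqref{eqVAM0} one deduces $\rho_{\beta}^{\theta}\simeq\rho_{-\beta}$, while $\delta(M)=0$ (\eqref{eqdm=0}) together with Proposition \ref{propMtt} gives $\rho_{\beta}^{\theta}\simeq\rho_{\beta}$. The main subtlety of the argument is the sign bookkeeping around $\epsilon_{w}$ and the verification that $\Ad(N_{K}(T))$ preserves $\fb$; no substantively new input beyond these structural facts, Proposition \ref{propMtt}, and the injectivity $R(K)\hookrightarrow R(K_{M})$ is needed.
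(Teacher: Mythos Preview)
Your proof is correct and follows essentially the same route as the paper: reduce to $W(T:K)$-invariance of $\chi_{\rho_\beta}|_T$ via \eqref{eqRKRT}, observe that $\Ad(w)$ acts on the one-dimensional $\fb$ as $\pm1$, and compare the character expansions coming from \eqref{eqVAM0}. The only cosmetic difference is that the paper first folds the $\beta\leftrightarrow-\beta$ symmetry into the sum (writing $(e^{|\beta||a|}+e^{-|\beta||a|})$ over $\fb^*_+$), so that the sign $\epsilon_w$ drops out immediately, whereas you keep the full sum over $\fb^*$ and invoke $\rho_{-\beta}\simeq\rho_\beta$ at the end.
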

\begin{proof}
% ecall  $W(T:K)$ be  the Weyl groups of $K$ with respect to 
%	Recall that $T\subset K$ is a maximal torus $K$. 	
	By \eqref{eqRKRT}, it is enough to show that  the character 
	of $\rho_{\beta|{T}}$ is invariant under  the Weyl group $W(T:K)$.  

For $e^{a}k^{-1}\in H$, by in \eqref{eqVAM0}, as \eqref{eqd1E2}, we have  
\begin{align}\label{eqVAM}
		\Tr\[\rho\(e^{a}k^{-1}\)\]=\Tr\[\rho_{0}\(k^{-1}\)\]+\sum_{\beta\in 
		\fb^{*}_{+}}\(e^{|\beta||a|}+e^{-|\beta||a|}\)\Tr\[\rho_{\beta}\(k^{-1}\)\]. 
	\end{align}
Let $w\in N_K(T)$. %which normalise $T$. 
Since 
$\rho$ is a $G$ representation,  for $e^{a}k^{-1}\in H$, we have 
\begin{align}\label{eqplif11}
\Tr\[\rho\(e^{a}k^{-1}\)\]=	
\Tr\[\rho\(we^{a}k^{-1}w^{-1}\)\]=\Tr\[\rho\(e^{\Ad(w)a}wk^{-1}w^{-1}\)\].
\end{align}
By \eqref{eqh=bt}, $\Ad(w)$ preserves $\ft$ and $\fb$ (see 
\cite[Proposition 3.4]{Shen20}). Since  $K$ preserves 
$B|_{\fp}$, and since $\dim \fb=1$, we see that  $\Ad(w)a=a$ or $-a$. By 
\eqref{eqVAM}, we get
\begin{multline}\label{eqplif1}
	\Tr\[\rho\(e^{\Ad(w)a}wk^{-1}w^{-1}\)\]=\Tr\[\rho_{0}\(wk^{-1}w^{-1}\)\]\\
	+ \sum_{\beta\in 
		 \fb^{*}_{+}}\(e^{|\beta||a|}+e^{-|\beta||a|}\)\Tr\[\rho_{\beta}\(wk^{-1}w^{-1}\)\].
\end{multline}	
By \eqref{eqVAM}-\eqref{eqplif1}, since $\dim \fb=1$, for  $\beta\in 
\fb^{*}$, we have
\begin{align}
	\Tr\[\rho_{\beta}\(wk^{-1}w^{-1}\)\]=\Tr\[\rho_{\beta}\(k^{-1}\)\],
\end{align} 
i.e., the character of $\rho_{\beta|T}$ is invariant under the Weyl 
group $W(T:K)$.  
\end{proof}

\subsection{The representation $\eta_{\beta}$}\label{srepetab}
% Recall that $\eta_{j}$ denotes the representation of $M$ on 
% $\Lambda^{j}(\fn^{*}_{\bC})$.  
In \cite{Shfried}, we have shown  that $\eta_{j}$ satisfies Assumption 
\ref{ass} and we consider the  Selberg zeta function associated to $\eta_{j}$.  A naive way to 
generalise the arguments  in Section \ref{sproofth1cc} is to consider  the Selberg zeta function associated to   $\eta_{j}\otimes \rho_{\beta}$. 
However,  $\eta_{j}\otimes \rho_{\beta}$ 
satisfies  Assumption \ref{ass} (\ref{ass1}), while Assumption 
\ref{ass} (\ref{ass2}) fails in general. We need consider all the $j$ 
together to produce a virtual representation $\eta_{\beta}$. 
Recall that  $\rho:G\to \GL(E)$ is a representation of $G$, and  
that $S^{\fu^{\bot}(\fb)}$ is the spinor of $\(\fu^{\bot}(\fb), -B|_{\fu^{\bot}(\fb)}\)$. 
% We 
% have isomorphism of $\fu(\fb)$ representations.  
% \begin{align}
% 	S^{\fu^{\bot}(\fb)}_{\pm}\simeq \Lambda^{\rm even/odd}(\ol{\fn}^{*}_{\bC})\otimes 
% 	\(\det(\fn_{\bC})\)^{1/2}. 
% \end{align}
%The proof of our proposition is completed. 
% The same identity holds for 
% \begin{align}
% \(	D^{\fu}_{V}\)^{2}=C^{\fu(\fb), 
% \ldots}+\frac{1}{6}\Tr^{\fu(\fb)}\[C^{\fu(\fb),\fu(\fb)}\]. 
% \end{align}
% 
% 
% Since $D^{\fu}_{V}$ is also a Dirac operator of Kostant for 
% $\fu(\fb)$, we have
% \begin{align}
% 	D^{\fu}_{V}=C^{\fu(\fb)}+\frac{1}{6}\Tr^{\fu(\fb)}\[C^{\fu(\fb),\fu(\fb)}\]. 
% \end{align}
% 
% 	\begin{align}
% 		\begin{aligned}
% 		\(D^{S^{\fu^{\bot}(\fb)}\otimes 
% 	E_{2}}\)^{2}=&\sum_{1\l i,j\l m}c(e_{i})c(e_{j})\rho_{2}(e_{i})\rho_{2}(e_{j})\\
% 	=&\sum_{1\l i\neq j\l  
% 	m}c(e_{i})c(e_{j})\rho_{2}(e_{i})\rho_{2}(e_{j})-C^{\fu,\rho_{2}}.
% 	\end{aligned}
% 	\end{align}
% 	Moreover, we have
% 	\begin{align}
% 		\begin{aligned}
% 		\sum_{1\l i\neq j\l  
% 	m}c(e_{i})c(e_{j})\rho_{2}(e_{i})\rho_{2}(e_{j})=&\frac{1}{2}\sum_{1\l i\neq j\l  
% 	m}c(e_{i})c(e_{j})\rho_{2}\(\[e_{i},e_{j}\]\)\\
% 	=&\frac{1}{2}\sum_{k}\sum_{1\l i\neq j\l  
% 	m}c(e_{i})c(e_{j})\<[e_{i},e_{j}],e_{k}\>\rho_{2}\(e_{k}\)\\
% 	=&2\sum_{k}c(\ad(e_{k}))\rho_{2}\(e_{k}\)
% 	\end{aligned}
% 	\end{align}
% 	
% 	Also, 
% 	\begin{align}
% 		\begin{aligned}
% 	C^{\fu(\fb),S^{\fu^{\bot}(\fb)}\otimes 
% 	E_{2}}&=\sum_{k}\(c(\ad(e_{k}))+\rho_{2}(e_{k})\)^{2}\\&=C^{\fu(\fb),S^{\fu^{\bot}(\fb)}}+C^{\fu(\fb),\rho_{2}}+2\sum_{k}c(\ad(e_{k}))\rho_{2}(e_{k}).
% 	\end{aligned}
% 	\end{align}
Let $D^{S^{\fu^{\bot}(\fb)}\otimes 		E}$ be the Dirac operator defined in Section \ref{sDirac}. Let $D_{\pm}^{S^{\fu^{\bot}(\fb)}\otimes 
		E}$ be the restriction of $D^{S^{\fu^{\bot}(\fb)}\otimes 		
		E}$ to $S_{\pm}^{\fu^{\bot}(\fb)}\otimes 		E$.

By \eqref{eqD2cp}, we have 
\begin{align}\label{eqD2cpu}
		-\(D^{S^{\fu^{\bot}(\fb)}\otimes 		
		E}\)^{2}=C^{\fu,\rho}+\frac{1}{8}\Tr\[C^{\fu(\fb),\fu^{\bot}(\fb)}\]-C^{\fu(\fb),S^{\fu^{\bot}(\fb)}\otimes 
	E}.
	\end{align}
Since $\fu$ acts unitarily on $(E,\<,\>_{E})$	, we  have
\begin{align}\label{eqD2cpu23}
	\ker D^{S^{\fu^{\bot}(\fb)}\otimes 		
		E}=\ker \(D^{S^{\fu^{\bot}(\fb)}\otimes 		
		E}\)^{2}. 
\end{align} 
%Note that $\dim \fu^{\bot}(\fb)=4\ell $ is even. 
% Since $S^{\fu^{\bot}(\fb)}$ and $E$ are  
% $(\fb_{\bC}\oplus \fm_{\bC},K_{M})$-modules, we see that
If the $\fu(\fb)$-action on $E$ lifts to $U(\fb)$, then $\ker 
D_\pm^{S^{\fu^{\bot}(\fb)}\otimes 		E}$ are  
representations\footnote{They are called 
Dirac cohomology of $E$ (see \cite{HuangDiracCoh}).} of $U(\fb)$. In general, $\ker 
D_\pm^{S^{\fu^{\bot}(\fb)}\otimes 		E}$ are  
	$(\fb_{\bC}\oplus \fm_{\bC},K_{M})$-modules. Note that by 
	\eqref{eq:Rnspinc1} and \eqref{eqVAM0}, $\fb$ acts semisimplely on 
	$\ker 
D_\pm^{S^{\fu^{\bot}(\fb)}\otimes 		E}$. 

\begin{defin}
	Define the $(\fm_{\bC},K_{M})$-modules  $\eta^{+}_{\beta}$ and 
	$\eta^{-}_{\beta}$, so that  
	\begin{align}\label{eqdetapm}
&		\ker D_{+}^{S^{\fu^{\bot}(\fb)}\otimes 
		E}=\bigoplus_{\beta\in\fb^{*}}\mathbf{\bC}_{\beta}\boxtimes 
		\eta_{\beta}^{+},& \ker D_{-}^{S^{\fu^{\bot}(\fb)}\otimes 
		E}=\bigoplus_{\beta\in\fb^{*}}\mathbf{\bC}_{\beta}\boxtimes 
		\eta_{\beta}^{-}. 
	\end{align}
	Set
	\begin{align}
		\eta_{\beta}=\eta_{\beta}^{+}-\eta_{\beta}^{-}. 
	\end{align}
\end{defin}

% By the propriety of Dirac operator, 
% we have the identification of virtuelle 
% $(\fb_{\bC}\oplus \fm_{\bC},K_{M})$-representations 
% \begin{align}
% \(	S^{\fu^{\bot}(\fb)}_{+}-S^{\fu^{\bot}(\fb)}_{+}\)\otimes 
% E_{2}\simeq \ker \(D_{+}^{S^{\fu^{\bot}(\fb)}\otimes 
% 		E_{2}}\)-\ker \(D_{-}^{S^{\fu^{\bot}(\fb)}\otimes 
% 		E_{2}}\)\simeq \sum_{\beta\in 
% 	\fb^{*}}\mathbf{\bC}_{\beta}\boxtimes \eta_{\beta}. 
% \end{align}
% We can write
% \begin{align}
% \bigg(\(	S^{\fu^{\bot}(\fb)}_{+}-S^{\fu^{\bot}(\fb)}_{+}\)\otimes 
% E_{2}\bigg)|_{AK_{M}}\simeq \mathbf{\bC}\boxtimes 
% \eta_{0|_{K_{M}}}\oplus \sum_{\beta\in 
% 	\fb^{*}}\(\mathbf{\bC}_{\beta}\oplus \mathbf{C}_{-\beta}\)\boxtimes \eta_{\beta}. 
% \end{align}

Recall that $C^{\fu,\rho}\in \bR$ is a scalar.

\begin{prop}\label{propkey1}
	The following statements hold.
	\begin{enumerate}[i)]
		\item\label{keyp1}  We have  isomorphisms of $(\fm_{\bC},K_{M})$-modules,
		\begin{align}\label{eqAS771}
	\eta_{\beta}^{\pm}\simeq \eta_{-\beta}^{\pm}. %&\eta_{\beta}^{-}\simeq \eta^{-}_{-\beta}.
\end{align} 
% 		In particular, we have an
% 		isomorphism of virtual $(\fm_{\bC},K_{M})$-modules, 
% \begin{align}\label{eqAS77}
% \eta_{\beta}\simeq \eta_{-\beta}. 
%	\end{align}
	
		\item\label{keyp2}  The virtual $(\fm_{\bC},K_{M})$-modules 
		$\eta_{\beta}$ satisfy Assumption \ref{ass}, so that 
				  the Casimir of $\fu_{\fm}$ acts on $\eta_\beta^{\pm}$ by the same scalar  
\begin{align}\label{eqAS2}
		C^{\fu_{\fm},\eta_{\beta}}=|\beta|^{2}+C^{\fu,\rho}+\frac{1}{8}\Tr^{\fu^{\bot}(\fb)}\[C^{\fu(\fb),\fu^{\bot}(\fb)}\]\in \bR. 
	\end{align}
% 
% 		
% 		The restriction  $\eta_{\beta|K_{M}}\in R(K_{M})$ 
% has a unique lift in $R(K)$. 
% 	
	\end{enumerate}
	\end{prop}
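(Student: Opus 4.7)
The plan is to derive both parts from two organising principles: the identity \eqref{eqD2cpu} relating $D^{2}$ to Casimir operators, and the McKean--Singer-style equality between the kernel and cokernel of $D$ as virtual $(\fz(\fb)_{\bC},K_{M})$-modules. The latter is simply the fact that $D_{+}$ identifies $(\ker D_{+})^{\bot}$ with $(\ker D_{-})^{\bot}$ equivariantly, yielding in the virtual representation ring
\begin{equation*}
\ker D_{+}-\ker D_{-}=S_{+}^{\fu^{\bot}(\fb)}\otimes E-S_{-}^{\fu^{\bot}(\fb)}\otimes E,
\end{equation*}
which is legitimate precisely because $\fu^{\bot}(\fb)$ and $E$ are finite dimensional.

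For the assertion \eqref{keyp1}, I would build a natural automorphism of $S^{\fu^{\bot}(\fb)}\otimes E$ exchanging the $\fb$-weights $\beta$ and $-\beta$ and commuting with $D$ and with $K_{M}$. The automorphism is $\Psi\otimes\Phi$, where $\Phi\colon E\to E$ implements the given isomorphism $\rho\simeq\rho^{\theta}$ and $\Psi\colon S^{\fu^{\bot}(\fb)}\to S^{\fu^{\bot}(\fb)}$ is the spin lift of $\theta|_{\fu^{\bot}(\fb)}$. This lift preserves the $\bZ_{2}$-grading because $\theta$ equals $-1$ on the $2\ell$-dimensional subspace $\sqrt{-1}\fp^{\bot}(\fb)$ and $+1$ on $\fk^{\bot}(\fb)$, hence has determinant $+1$. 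The map $\Psi\otimes\Phi$ interlaces the $\fm$-action in a $\theta$-twisted way, but since $\delta(M)=0$, the proof of Proposition \ref{propMtt} provides $k_{0}^{M}\in K_{M}$ realising $\theta|_{M}$ as the inner automorphism $\Ad(k_{0}^{M})$; post-composing with $\rho(k_{0}^{M})$ then yields an untwisted $(\fm_{\bC},K_{M})$-isomorphism $\eta_{\beta}^{\pm}\simeq\eta_{-\beta}^{\pm}$.

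For the lifting statement in \eqref{keyp2}, taking the $\bC_{\beta}$-isotypic component for the $\fb$-action in the virtual equality above and using \eqref{eq:Rnspinc1} together with \eqref{eqVAM0} expresses $\eta_{\beta}^{+}-\eta_{\beta}^{-}$ as an explicit $\mathbf{Z}$-combination of summands $\eta_{\ell-j}\otimes\rho_{\beta-j\alpha_{0}}$. Each factor $\eta_{\ell-j}=\Lambda^{\ell-j}(\fn_{\bC}^{*})$ has a unique lift to $R(K)$ by Theorem \ref{corkey}, each $\rho_{\beta-j\alpha_{0}|K_{M}}$ has a unique lift by Theorem \ref{thm:key}, and the injectivity of $R(K)\to R(K_{M})$ produces the required unique lift of $\eta_{\beta}|_{K_{M}}$.

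Finally, for the Casimir identity \eqref{eqAS2}, restricting \eqref{eqD2cpu} to $\ker D_{\pm}$ and using \eqref{eqD2cpu23} gives that $C^{\fu(\fb)}$ acts on $\ker D_{\pm}$ as the scalar $C^{\fu,\rho}+\tfrac{1}{8}\Tr[C^{\fu(\fb),\fu^{\bot}(\fb)}]$. Decomposing $C^{\fu(\fb)}=C^{\sqrt{-1}\fb}+C^{\fu_{\fm}}$ along the orthogonal splitting $\fu(\fb)=\sqrt{-1}\fb\oplus\fu_{\fm}$, and reading from \eqref{eqCu=Cg} and \eqref{eq:Cg} that $C^{\sqrt{-1}\fb}$ acts on $\bC_{\beta}$ as $-|\beta|^{2}$, restriction of this scalar identity to the summand $\bC_{\beta}\boxtimes\eta_{\beta}^{\pm}$ of $\ker D_{\pm}$ isolates $C^{\fu_{\fm}}$ on $\eta_{\beta}^{\pm}$ as the required scalar (the same value for both signs, as demanded by Assumption \ref{ass}). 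I expect the most delicate point to be, in \eqref{keyp1}, ensuring that the symmetry descends to an honest $(\fm_{\bC},K_{M})$-module isomorphism rather than a $\theta$-twisted one; this is exactly where the hypothesis $\delta(M)=0$ is essential.
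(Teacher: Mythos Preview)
Your argument is correct, and parts \ref{keyp2}) (both the Casimir computation and the lifting to $R(K)$) follow essentially the same route as the paper. The one genuine divergence is in part \ref{keyp1}).

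For \eqref{eqAS771} the paper proceeds in the opposite order: it first establishes the Casimir identity \eqref{eqAS2} exactly as you do, and then observes that $S^{\fu^{\bot}(\fb)}_{\pm}\otimes E$ already has a decomposition of the shape \eqref{eqVAM0}, i.e.\ the $\beta$- and $(-\beta)$-isotypic pieces are isomorphic as $(\fm_{\bC},K_{M})$-modules, because of \eqref{eqVAMs11} together with $\rho_{\beta}\simeq\rho_{-\beta}$. Since by \eqref{eqD2cpu}--\eqref{eqD2cpu23} the kernel of $D_{\pm}$ is precisely the sum of those isotypic components on which the Casimir condition \eqref{eqAS2} holds, and since that condition depends only on $|\beta|$, the symmetry of the ambient space restricts to the kernel. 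This bypasses the spin lift $\Psi$ and the $k_{0}^{M}$-correction entirely. Your construction, by contrast, produces an explicit intertwiner and makes transparent exactly where $\rho\simeq\rho^{\theta}$ and $\delta(M)=0$ enter; the paper's argument hides these inside the already-proved facts \eqref{eqVAMs11} and \eqref{eqVAM0}. One small imprecision: where you write ``post-composing with $\rho(k_{0}^{M})$'' you need the full action of $k_{0}^{M}$ on $S^{\fu^{\bot}(\fb)}\otimes E$, not just on the $E$-factor, to untwist the $\fm$-action on both tensorands simultaneously.
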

\begin{proof}
	 By \eqref{eqD2cpu}-\eqref{eqdetapm},
	on $\mathbf{C}_{\beta}\boxtimes \eta_{\beta}^{\pm}$, we have
	\begin{align}\label{eqAS35}
		C^{\fu,\rho}+\frac{1}{8}\Tr\[C^{\fu(\fb),\fu^{\bot}(\fb)}\]-C^{\fu(\fb),S^{\fu^{\bot}(\fb)}\otimes 
	E}=0.
	\end{align}
On $\mathbf{C}_{\beta}\boxtimes \eta_{\beta}^{\pm}$, we have 
	\begin{align}\label{eqAS3}
		C^{\fu(\fb),S^{\fu^{\bot}(\fb)}\otimes 
		E}=-|\beta|^{2}+C^{\fu_{\fm},\eta_{\beta}^{\pm}}. 
	\end{align} 
	By \eqref{eqAS35} and \eqref{eqAS3},  we 
	see that $C^{\fu_{\fm},\eta_{\beta}^{\pm}}$ coincide and are given by 	\eqref{eqAS2}.	
	
By \eqref{eqVAMs11}, \eqref{eq:Rnspinc1}, and \eqref{eqVAM0}, we see 
that $S^{\fu^{\bot}(\fb)}_{\pm}\otimes E$ admits a decomposition  
like \eqref{eqVAM0}. By \eqref{eqD2cpu} and \eqref{eqD2cpu23}, $\ker 
D_{\pm}^{S^{\fu^{\bot}(\fb)\otimes E}}$ consists of the components on 
which \eqref{eqAS2} holds. Form these two considerations,   
\ref{keyp1}) follows. % and \eqref{eqAS77}.
	
It remains to show each $\eta_{\beta|{K_{M}}}$ lifts to $R(K)$.  By \eqref{eqdetapm},	 for $e^{a}k^{-1}\in H$, we have
	\begin{align}\label{eqAS5}
		\sum_{\beta\in 
		\fb^{*}}e^{\<a,\beta\>}\Tr_{\rm 
		s}\[\eta_{\beta}(k^{-1})\]=\Trs^{\ker \big(D^{S^{\fu^{\bot}(\fb)}\otimes 
	E}\big)}\[e^{a}k^{-1}\]. 
	\end{align}
	
	We claim that for $e^{a}k^{-1}\in H$, we have 
\begin{align}\label{eqAS4}
	\Trs^{\ker \big(D^{S^{\fu^{\bot}(\fb)}\otimes 
	E}\big)}\[e^{a}k^{-1}\]= \Tr_{\rm s}^{S^{\fu^\bot(\fb)}\otimes 
	E}\[e^{a}k^{-1}\]. 
\end{align}
Indeed, 	since $D^{S^{\fu^{\bot}(\fb)}\otimes 
	E}$ commutes with $e^{a}k^{-1}$, using the fact that the 
	super trace varnishes  on  the super commutator, we see that 
	\begin{align}
		\Tr_{\rm s}^{S^{\fu^\bot(\fb)}\otimes 
	E}\[e^{a}k^{-1}\exp\(-t\(D^{S^{\fu^{\bot}(\fb)}\otimes 
	E}\)^{2}\)\]
	\end{align}
does not depend on $t\in \bR$, from which we get \eqref{eqAS4}. 

From \eqref{eqAS5} and \eqref{eqAS4}, for $e^{a}k^{-1}\in H$, we have
	\begin{align}\label{eqAS33}
		\sum_{\beta\in 
		\fb^{*}}e^{\<\beta,a\>}\Tr_{\rm 
		s}\[\eta_{\beta}(k^{-1})\]=\Trs^{S^{\fu^\bot(\fb)}}\[e^{a}k^{-1}\]\Tr\[\rho\(e^{a}k^{-1}\)\]. 
	\end{align}
By Theorems  \ref{corkey}, \ref{thm:key}, \eqref{eq:Rnspinc1}, \eqref{eqVAM0}, the right-hand side of \eqref{eqAS33} is a sum 
of products  of $e^{\<\beta,a\>}$ with  elements in $R(K)\simeq R(T)^{W(T:K)}$. Thus, 
$\eta_{\beta}$ has a lift in $R(K)$. 
\end{proof}

The following two propositions are  analogues  of \cite[Propositions 
6.5, 6.10]{Shfried}.

\begin{prop}
	We have an isomorphism of virtual  
	$(\fb_{\bC}\oplus\fm_{\bC},K_{M})$-modules,
\begin{align}\label{eqSrhoeta1}
\(S^{\fu^{\bot  }(\fb)}_{+}-S^{\fu^{\bot  }(\fb)}_{-}\)\otimes  
\rho_{|\exp(\fb)\times M}=\bigoplus_{\beta\in \fb^{*}} \bC_{\beta}\boxtimes \eta_{\beta}=\mathbf 1\boxtimes \eta_{0}\oplus 
\bigoplus_{\beta\in \fb_{+}^{*}} \(\bC_{\beta}\oplus 
\bC_{-\beta}\)\boxtimes \eta_{\beta}.
\end{align} 
	For $e^{a}k^{-1}\in H$, 	we have
\begin{multline}\label{eqAS333}
	\Tr_{\rm 
		s}\[\eta_{0}\(k^{-1}\)\]+	\sum_{\beta\in 
		\fb^{*}_{+}}\(e^{|a||\beta|}+e^{-|a||\beta|}\)\Tr_{\rm 
		s}\[\eta_{\beta}\(k^{-1}\)\]\\
		=\left|\det\(1-\Ad(e^{a}k^{-1})\)|_{\fz^{\bot}(\fb)}\right|^{1/2}\Tr\[\rho\(e^{a}k^{-1}\)\] 
	\end{multline}
\end{prop}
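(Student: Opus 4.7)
The plan is to first prove the virtual module identity in \eqref{eqSrhoeta1} via a McKean--Singer-type argument for the Dirac operator $D = D^{S^{\fu^{\bot}(\fb)}\otimes E}$, and then to derive the character identity \eqref{eqAS333} by evaluating supertraces on $H$.

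For the first step, the key input is that $(\fu,\fu(\fb))$ is a compact symmetric pair, which holds by \eqref{eq:fzbot1}. By a Kostant--Parthasarathy-type commutation identity, $D$ commutes with the diagonal action of $\fu(\fb) = \sqrt{-1}\fb\oplus \fu_{\fm}$ on $S^{\fu^{\bot}(\fb)}\otimes E$, where this action is $\sigma_{\rm spin}\otimes 1 + 1\otimes \rho$. Complexifying, $D$ then commutes with the $(\fb_{\bC}\oplus \fm_{\bC}, K_{M})$-action, so both $S^{\fu^{\bot}(\fb)}\otimes E$ and $\ker D$ carry natural $(\fb_{\bC}\oplus \fm_{\bC}, K_{M})$-module structures. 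Since $\ker D = \ker D^{2}$ by \eqref{eqD2cpu23}, and since $D$ is odd for the $\bZ_{2}$-grading, the standard McKean--Singer argument yields that $D$ restricts to a $(\fb_{\bC}\oplus \fm_{\bC}, K_{M})$-equivariant isomorphism between the even and odd parts of $(\ker D)^{\bot}$. Hence at the virtual level
\begin{align*}
\(S^{\fu^{\bot}(\fb)}_{+} - S^{\fu^{\bot}(\fb)}_{-}\)\otimes E = \ker D_{+} - \ker D_{-}.
\end{align*}
Combining with the definition \eqref{eqdetapm} of $\eta_{\beta}^{\pm}$ gives the first equality in \eqref{eqSrhoeta1}, and the second equality then follows by pairing up the $\pm\beta$ summands using \eqref{eqAS771} and isolating the $\beta = 0$ term.

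For the character identity, I would take supertraces of \eqref{eqSrhoeta1} at an element $e^{a}k^{-1} \in H = \exp(\fb)\times T$. The LHS supertrace factors as $\Trs^{S^{\fu^{\bot}(\fb)}}\[e^{a}k^{-1}\]\cdot\Tr\[\rho(e^{a}k^{-1})\]$, and the spinor factor is identified by \eqref{eqSnbot} with $\left|\det\(1-\Ad(e^{a}k^{-1})\)|_{\fz^{\bot}(\fb)}\right|^{1/2}$. On the RHS, each summand $\bC_{\beta} \boxtimes \eta_{\beta}$ contributes $e^{\<\beta,a\>}\Trs\[\eta_{\beta}(k^{-1})\]$, and since $\dim \fb = 1$ we have $\<\beta,a\> = \pm|\beta||a|$, so $e^{\<\beta,a\>} + e^{-\<\beta,a\>} = e^{|\beta||a|} + e^{-|\beta||a|}$. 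Assembling these pieces yields exactly \eqref{eqAS333}.

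The main technical obstacle lies in verifying the Kostant--Parthasarathy commutation $[X, D] = 0$ for the diagonal $\fu(\fb)$-action; this is standard for compact symmetric pairs, but one must ensure it respects the $(\fm_{\bC}, K_{M})$-module structure and is compatible with the lifts from $K_{M}$ to $K$ established in Theorem \ref{thm:key}. Once this is in place, McKean--Singer upgrades the supercharacter identity \eqref{eqAS33} already recorded in Proposition \ref{propkey1} to a genuine virtual module identity, and the remaining manipulations are formal consequences of \eqref{eq:Rnspinc1}, \eqref{eqVAM0}, and the one-dimensionality of $\fb$.
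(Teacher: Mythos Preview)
Your approach is correct and essentially coincides with the paper's. The paper's proof is a one-liner citing \eqref{eqSnbot}, \eqref{eqAS771}, and \eqref{eqAS33}; the McKean--Singer identity you invoke is precisely \eqref{eqAS4}--\eqref{eqAS33}, already proved inside Proposition \ref{propkey1} (where the commutation of $D$ with $e^{a}k^{-1}$ is used exactly as you describe). Your module-level formulation---that $D$ gives a $(\fb_{\bC}\oplus\fm_{\bC},K_{M})$-equivariant isomorphism between the even and odd parts of $(\ker D)^{\bot}$---is a slightly more direct route to \eqref{eqSrhoeta1} than passing through the character identity \eqref{eqAS33} and invoking linear independence of characters on $H$, but the content is the same.

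One remark: your stated ``main technical obstacle'' about compatibility with the lifts from $K_{M}$ to $K$ in Theorem \ref{thm:key} is a red herring for this proposition. The statement \eqref{eqSrhoeta1} is purely about $(\fb_{\bC}\oplus\fm_{\bC},K_{M})$-modules; no $K$-structure enters. The $K$-lifts matter later (for $\widehat{\eta}_{\beta}$ and the Selberg zeta functions), not here.
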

\begin{proof}
	This is a consequence of \eqref{eqSnbot}, \eqref{eqAS771}, and \eqref{eqAS33}. 
\end{proof}

\begin{prop}
The following identity in $R(K)$ holds, 
\begin{align}\label{eqsumbeta}
		\bigoplus_{\beta\in 
		\fb^{*}}\widehat{\eta}_{\beta}=\bigoplus_{i=1}^{m}(-1)^{i-1}i 
		\Lambda^{i}\(\fp^{*}_{\bC}\)\otimes \rho_{|K}. 
	\end{align}
\end{prop}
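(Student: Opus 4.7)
I will prove \eqref{eqsumbeta} by restricting both sides to $K_M$ and then comparing characters on the common maximal torus $T$. Since $K$ and $K_M$ share $T$, the chain $R(K)\hookrightarrow R(K_M)\hookrightarrow R(T)$ is injective, so no information is lost. Unpacking \eqref{eqetahat} with the super tensor product convention (on virtual modules $\widehat{\otimes}$ coincides with $\otimes$, while $\Lambda^{\scriptscriptstyle\bullet}$ becomes the signed sum $\Lambda^{\text{even}}-\Lambda^{\text{odd}}$) gives $\widehat{\eta}_{\beta}|_{K_M}=\sum_{j}(-1)^{j}\Lambda^{j}(\fp_{\fm,\bC}^{*})\otimes \eta_{\beta}|_{K_M}$ in $R(K_M)$. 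Summing over $\beta$ and invoking \eqref{eqSrhoeta1} (equivalently the character identity \eqref{eqAS333} with $a=0$), the left-hand side of \eqref{eqsumbeta}, restricted to $K_M$, becomes
\begin{equation*}
\Bigl(\sum_{j}(-1)^{j}\Lambda^{j}(\fp_{\fm,\bC}^{*})\Bigr)\otimes (S^{+}-S^{-})|_{K_M}\otimes \rho|_{K_M}.
\end{equation*}

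For the right-hand side, I will exploit the $K_M$-invariant orthogonal decomposition $\fp=\fb\oplus\fp_{\fm}\oplus\fp_{\fm}^{\bot}$, in which $\fb$ is the trivial one-dimensional $K_M$-representation. Setting $P_{V}(q):=\det(1+q t^{-1}|V_{\bC})$ for $t\in T$, multiplicativity yields $P_{\fp}(q)=(1+q)\,P_{\fp_{\fm}}(q)\,P_{\fp_{\fm}^{\bot}}(q)$. Differentiating and specialising at $q=-1$ annihilates every term save the one in which $d/dq$ hits the factor $(1+q)$, producing
\begin{equation*}
\sum_{i=1}^{m}(-1)^{i-1}i\,\chi_{\Lambda^{i}\fp_{\bC}^{*}}(t)=\det(1-t^{-1}|\fp_{\fm,\bC})\cdot\det(1-t^{-1}|\fp_{\fm,\bC}^{\bot}).
\end{equation*}

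By \eqref{eqSnbot} the character of $(S^{+}-S^{-})|_{K_M}$ at $t\in T$ equals $|\det(1-\Ad(t)|\fz^{\bot}(\fb))|^{1/2}$. Comparing the two displayed expressions and cancelling the common factor $\det(1-t^{-1}|\fp_{\fm,\bC})\cdot\chi_{\rho}(t)$ (generically nonzero on $T$), the whole identity reduces to
\begin{equation*}
\bigl|\det(1-\Ad(t)|\fz^{\bot}(\fb))\bigr|^{1/2}=\det(1-t^{-1}|\fp_{\fm,\bC}^{\bot}).
\end{equation*}
This in turn follows from the $K_M$-equivariant splitting $\fz^{\bot}(\fb)=\fp_{\fm}^{\bot}\oplus\fk_{\fm}^{\bot}$ together with an isomorphism $\fp_{\fm}^{\bot}\simeq\fk_{\fm}^{\bot}$ of $K_M$-representations obtained from the chain $\fp_{\fm}^{\bot}\simeq\fn\xrightarrow[\sim]{\theta}\overline{\fn}\simeq\fk_{\fm}^{\bot}$, using items \ref{nng5}) and \ref{nng6}) of Proposition \ref{propnng} and the $K_M$-equivariance of the Cartan involution $\theta$ (which commutes with $K\supset K_M$); since $\fp_{\fm}^{\bot}$ is a real $K_M$-module, the eigenvalues of $\Ad(t)$ on its complexification come in conjugate pairs $(\lambda,\lambda^{-1})$, so the determinant on $\fz^{\bot}(\fb)$ is a nonnegative real square whose square root is $\det(1-t^{-1}|\fp_{\fm,\bC}^{\bot})$. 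The main bookkeeping hurdles are correctly tracking the super tensor product convention and establishing the isomorphism $\fp_{\fm}^{\bot}\simeq\fk_{\fm}^{\bot}$ via $\theta$; the rest is a generating-function identity on $T$.
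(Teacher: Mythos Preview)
Your proof is correct and follows essentially the same strategy as the paper: restrict to $K_M$ (equivalently $T$) using injectivity of $R(K)\to R(K_M)$, express the left-hand side via the definition \eqref{eqetahat} and the spinor identity \eqref{eqSrhoeta1}, and then use the $K_M$-equivalence of $\fn,\ol\fn,\fp_\fm^\bot,\fk_\fm^\bot$ from Proposition~\ref{propnng}~\ref{nng6}) together with the trivial one-dimensional line $\fb$ to match the right-hand side.

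The only stylistic difference is in the execution. The paper stays in $R(K_M)$: it invokes \eqref{eq:Rnspinc} to rewrite $(S^+-S^-)|_{K_M}$ as $\Lambda^{\scriptscriptstyle\bullet}(\fn^*_\bC)|_{K_M}$ (the half-determinant twist being trivial for $K_M$), obtains $\sum_\beta\eta_\beta|_{K_M}=\Lambda^{\scriptscriptstyle\bullet}(\fn^*_\bC)\otimes\rho|_{K_M}$, and then reassembles $\Lambda^{\scriptscriptstyle\bullet}(\fp_{\fm,\bC}^*)\widehat\otimes\Lambda^{\scriptscriptstyle\bullet}(\fn^*_\bC)$ as $\sum_i(-1)^{i-1}i\,\Lambda^i(\fp^*_\bC)$ using $\fn\simeq\fp_\fm^\bot$ and the extra $\fb$-line. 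You instead pass to characters on $T$, use the determinant formula \eqref{eqSnbot} for the spinor supertrace, and handle the $\fb$-line via the generating-function identity $P_\fp'(-1)=P_{\fp_\fm}(-1)P_{\fp_\fm^\bot}(-1)$. Your route buys a concrete computation; the paper's route is slightly more conceptual but both rest on identical structural inputs. One small remark: your ``cancellation'' step is really a reduction step (you prove the quotient identity directly, then multiply back), so no division by a possibly-vanishing factor is actually needed; and your nonnegativity argument for $\det(1-t^{-1}|\fp_{\fm,\bC}^\bot)$ should be phrased as ``nonreal eigenvalues pair as $(\lambda,\bar\lambda)$ and real eigenvalues contribute nonnegative factors,'' since a real eigenvalue $-1$ need not pair with anything.
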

\begin{proof}
By  \eqref{eq:Rnspinc} and \eqref{eqSrhoeta1}, we have  an identity in $R(K_{M})$, 
\begin{align}\label{eqetabetaN}
	\bigoplus_{\beta\in 
		\fb^{*}}\eta_{\beta|K_{M}}=
		\Lambda^{\scriptscriptstyle\bullet }\(\fn^{*}_{\bC}\)_{|K_{M}}\otimes \rho_{|K_{M}}.
\end{align}
By \eqref{eq:mpk1}, \eqref{eqetahat}, \eqref{eqetabetaN} and 
Proposition \ref{propnng} \ref{nng6}), we get
\begin{align}\label{eqetabetaN1}
\bigoplus_{\beta\in 
		\fb^{*}}\widehat{\eta}_{\beta|K_{M}}=	\bigoplus_{\beta\in 
	\fb^{*}}\Lambda^{\scriptscriptstyle\bullet 
	}\(\fp_{\fm,\bC}^{*}\)\widehat{\otimes} \eta_{\beta}=\bigoplus_{i=1}^{m}(-1)^{i-1}i 
		\Lambda^{i}\(\fp^{*}_{\bC}\)_{|K_{M}}\otimes \rho_{|K_{M}}.
\end{align} 
Since the restriction $R(K)\to R(K_{M})$ is injective, from 
\eqref{eqetabetaN1},  we get \eqref{eqsumbeta}.
\end{proof}

\subsection{The Selberg zeta function  
$Z_{\eta_{\beta}}$}\label{sZetaetabeta}
By Proposition \ref{propkey1} \ref{keyp2}), $\eta_{\beta}$ 
satisfies Assumption \ref{ass}.  
Let $Z_{\eta_{\beta}}$ be the associated Selberg zeta 
function.  We have an analogue of \cite[Theorem 7.7]{Shfried} and of 
Proposition \ref{propd1RZ}. 

\begin{prop}\label{cor:R}
  The following identity of meromorphic function on $ \bC$ holds,
  \begin{align}\label{eq:RbyZ}
  R_\rho(\sigma)= \(Z_{\eta_{0}}(\sigma)\)^{-1}
 \prod_{\beta\in \fb^{*}_{+}} \(
  Z_{\eta_{\beta}}\(\sigma+|\beta|\)Z_{\eta_{\beta}}\(\sigma-|\beta|\)\)^{-1}.
\end{align}
\end{prop}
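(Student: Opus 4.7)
The plan is essentially a direct computation matching the logarithmic expansions of both sides, made possible by the key trace identity \eqref{eqAS333} that relates $\Tr[\rho(e^{a}k^{-1})]$ to the supertraces $\Trs[\eta_{\beta}(k^{-1})]$ and the determinant factor $|\det(1-\Ad(e^{a}k^{-1}))|_{\fz^{\bot}(\fb)}|^{1/2}$ appearing in the Selberg zeta function.

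First, I would work in a right half-plane $\Re(\sigma)\gg 0$ where both the Ruelle zeta function and all Selberg zeta functions $Z_{\eta_{\beta}}(\sigma\pm|\beta|)$ are given by absolutely convergent exponentials; the identity of meromorphic functions on $\bC$ would then follow by the uniqueness of meromorphic extension, using the meromorphic continuations already known from \cite{Shen20} (quoted after \eqref{eq:detfor}) and from \cite{Shfried}. Second, invoking Remark \ref{reR=1}, since $\delta(G)=1$ the defining sum \eqref{defRrho} of $R_{\rho}(\sigma)$ may be restricted to the conjugacy classes $[\gamma]\in[\Gamma_{+}]$ with $\gamma\sim e^{a}k^{-1}\in H$; for such $\gamma$ one has $\ell_{[\gamma]}=|a|$.

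Third, taking $-\log$ of the right-hand side of \eqref{eq:RbyZ} and using the definition \eqref{eqdefsel} of the Selberg zeta function yields
\begin{align*}
\sum_{\tiny\substack{[\gamma]\in[\Gamma_{+}]\\ \gamma\sim e^{a}k^{-1}\in H}}\frac{\chi_{\rm orb}(B_{[\gamma]}/\bbS^{1})}{m_{[\gamma]}}\frac{e^{-\sigma\ell_{[\gamma]}}}{|\det(1-\Ad(e^{a}k^{-1}))|_{\fz^{\bot}(\fb)}|^{1/2}} \Bigl(\Trs[\eta_{0}(k^{-1})] + \sum_{\beta\in\fb_{+}^{*}}\Trs[\eta_{\beta}(k^{-1})]\bigl(e^{|\beta||a|}+e^{-|\beta||a|}\bigr)\Bigr),
\end{align*}
where the shifts $\sigma\mapsto\sigma\pm|\beta|$ in $Z_{\eta_{\beta}}$ produce exactly the extra factors $e^{\mp|\beta|\ell_{[\gamma]}}=e^{\mp|\beta||a|}$. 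By the identity \eqref{eqAS333}, the content of the parenthesis equals $|\det(1-\Ad(e^{a}k^{-1}))|_{\fz^{\bot}(\fb)}|^{1/2}\Tr[\rho(e^{a}k^{-1})]$; the determinant factor cancels, and what remains is precisely $-\log R_{\rho}(\sigma)$ after applying Remark \ref{reR=1} and the fact that $\Tr[\rho(\gamma)]$ is conjugation-invariant.

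The argument is not hard once the representation-theoretic identity \eqref{eqAS333} is in hand, so the main obstacle is not in this proposition itself but rather in the preceding construction of the virtual representations $\eta_{\beta}$ via Dirac cohomology that makes \eqref{eqAS333} hold with the correct determinant denominator; here one only needs to be careful about sign conventions (the Ruelle zeta is an exponential while Selberg is the reciprocal of one) and about checking that the formal manipulations of the doubly-indexed sums over $[\gamma]$ and $\beta\in\fb_{+}^{*}$ are justified in the half-plane of absolute convergence, which is guaranteed by the convergence estimates recalled before Definitions \ref{defRrho} and \eqref{eqdefsel}.
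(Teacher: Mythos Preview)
Your proof is correct and follows exactly the paper's approach: the paper's one-line proof simply cites \eqref{defRrho}, \eqref{eqdefsel}, and \eqref{eqAS333}, which is precisely what you have unpacked in detail. There is a harmless sign slip in your write-up (the displayed expression is actually $+\log$ of the right-hand side, and after applying \eqref{eqAS333} it equals $+\log R_{\rho}(\sigma)$, not $-\log R_{\rho}(\sigma)$; the two sign errors cancel), but the argument and its justification via Remark \ref{reR=1} and meromorphic continuation are sound.
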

\begin{proof}
	This is a consequence of \eqref{defRrho}, \eqref{eqdefsel}, and \eqref{eqAS333}.
\end{proof}

	As in \eqref{eqsumbetamad1}, by \eqref{eqseta}, \eqref{eq:detfor},  and \eqref{eqAS2},  
\begin{align}\label{eqsumbetama}
	Z_{\eta_{\beta}}\(-\sqrt{\sigma^2+|\beta|^2}\)Z_{\eta_{\beta}}\(\sqrt{\sigma^2+|\beta|^2}\)=\[{\rm det}_{\rm gr}\(C^{\fg,Z,\widehat{\eta}_{\beta}}-C^{\fu,\rho}+\sigma^{2}\)\]^{2}.
\end{align}
is a meromorphic function on $\bC$. The following 
 proposition is an analogue of \cite[Theorem 7.8]{Shfried} and of  Theorem \ref{prop:RT11}.

\begin{thm}
  \label{prop:RT}
  The following identity of meromorphic functions on $\bC$ holds, 
  \begin{align}\label{eq:RT}
    R_\rho(\sigma)=T\(\sigma^2\)  \exp\(-\vol(Z)P_{\eta_{0}}(\sigma)\) \prod_{\beta\in \fb^{*}_{+}}  
\frac{Z_{\eta_{\beta}}\(-\sqrt{\sigma^2+|\beta|^2}\)Z_{\eta_{\beta}}\(\sqrt{\sigma^2+|\beta|^2}\)}{Z_{\eta_{\beta}}\big(\sigma+|\beta|\big)Z_{\eta_{\beta}}\big(\sigma-|\beta|\big)}.
  \end{align}
\end{thm}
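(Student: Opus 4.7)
The plan is to mirror the three-step strategy used for Theorem \ref{prop:RT11} in the noncompact-center case (Section \ref{sproofth1cc}), with the spinor bookkeeping of Sections \ref{srank1}--\ref{srepetab} replacing the elementary weight decomposition \eqref{eqVAM0d1}. First I would rewrite the analytic torsion side. By Proposition \ref{prop:D=C-C} the Hodge Laplacian on $\Omega^{\scriptscriptstyle\bullet}(Z,F)$ equals $C^{\fg,Z,\Lambda^{\scriptscriptstyle\bullet}(\fp^*)\otimes E_{|K}}-C^{\fu,\rho}$ (and $C^{\fu,\rho}\in\bR$ is a scalar), so the definition \eqref{eqTsigma} together with the key identity \eqref{eqsumbeta} in $R(K)$ immediately gives
\begin{equation*}
T(\sigma)=\prod_{\beta\in\fb^{*}}{\rm det}_{\rm gr}\bigl(C^{\fg,Z,\widehat{\eta}_{\beta}}-C^{\fu,\rho}+\sigma\bigr)^{-1}.
\end{equation*}
Using the $K$-isomorphism $\widehat{\eta}_{\beta}\simeq\widehat{\eta}_{-\beta}$ that follows from Proposition \ref{propkey1} \ref{keyp1}) (applied to $\eta_{\beta|K_M}$ and then lifted uniquely to $R(K)$ via Theorem \ref{corkey} and Theorem \ref{thm:key}), this rearranges as
\begin{equation*}
T(\sigma)={\rm det}_{\rm gr}\bigl(C^{\fg,Z,\widehat{\eta}_{0}}-C^{\fu,\rho}+\sigma\bigr)^{-1}\prod_{\beta\in\fb^{*}_{+}}{\rm det}_{\rm gr}\bigl(C^{\fg,Z,\widehat{\eta}_{\beta}}-C^{\fu,\rho}+\sigma\bigr)^{-2}.
\end{equation*}

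Next I would substitute $\sigma\mapsto\sigma^{2}$ and feed each factor into the Selberg determinant formula \eqref{eq:detfor}. The crucial numerical input is \eqref{eqAS2}, which says $\sigma_{\eta_{\beta}}-|\beta|^{2}=-C^{\fu,\rho}$; consequently $C^{\fg,Z,\widehat{\eta}_{\beta}}-C^{\fu,\rho}+\sigma^{2}=C^{\fg,Z,\widehat{\eta}_{\beta}}+\sigma_{\eta_{\beta}}+(\sigma^{2}-|\beta|^{2})$. Applying \eqref{eq:detfor} to the single $\beta=0$ factor and applying \eqref{eqsumbetama} to each pair with $\beta\in\fb^{*}_{+}$ yields
\begin{equation*}
T(\sigma^{2})=Z_{\eta_{0}}(\sigma)^{-1}\exp\bigl(\vol(Z)P_{\eta_{0}}(\sigma)\bigr)\prod_{\beta\in\fb^{*}_{+}}\Bigl(Z_{\eta_{\beta}}\bigl(-\sqrt{\sigma^{2}+|\beta|^{2}}\bigr)Z_{\eta_{\beta}}\bigl(\sqrt{\sigma^{2}+|\beta|^{2}}\bigr)\Bigr)^{-1},
\end{equation*}
as a meromorphic identity on $\bC$.

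Finally I would combine this with Proposition \ref{cor:R}. Dividing the expression for $R_{\rho}(\sigma)$ in \eqref{eq:RbyZ} by the above formula for $T(\sigma^{2})$, the factors $Z_{\eta_{0}}(\sigma)^{-1}$ cancel and the remaining ratio is exactly the right-hand side of \eqref{eq:RT}. The main obstacle is a bookkeeping one rather than a conceptual one: I need to verify that the lift $\widehat{\eta}_{\beta}\in R(K)$ coming from Theorem \ref{corkey} is actually the $K$-representation whose Casimir implements \eqref{eq:D=C-C}, and that the identity \eqref{eqsumbeta}, which holds in $R(K)$, transfers to an identity between the spectra (with multiplicity) of the corresponding generalised Laplacians on $Z$. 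This is a standard Frobenius reciprocity argument, but it is where the compact-centre case differs substantively from Section \ref{sproofth1cc}, since the spinor contribution encoded in $\frac{1}{8}\Tr[C^{\fu(\fb),\fu^{\bot}(\fb)}]$ (absent when $Z_{G}$ is noncompact) must be absorbed precisely into $\sigma_{\eta_{\beta}}$ via \eqref{eqseta} and \eqref{eqAS2}.
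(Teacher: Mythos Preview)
Your proposal is correct and follows exactly the route the paper takes: the paper's own proof simply observes that \eqref{eq:detfortor11} holds in the present setting by \eqref{eqTsigma}, \eqref{eq:D=C-C}, \eqref{eqAS771}, and \eqref{eqsumbeta}, and then refers back verbatim to the proof of Theorem \ref{prop:RT11}. One small slip: from \eqref{eqseta} and \eqref{eqAS2} one gets $\sigma_{\eta_{\beta}}+|\beta|^{2}=-C^{\fu,\rho}$ (not $\sigma_{\eta_{\beta}}-|\beta|^{2}$), so the correct rewriting is $C^{\fg,Z,\widehat{\eta}_{\beta}}-C^{\fu,\rho}+\sigma^{2}=C^{\fg,Z,\widehat{\eta}_{\beta}}+\sigma_{\eta_{\beta}}+(\sigma^{2}+|\beta|^{2})$; this is what makes \eqref{eqsumbetama} come out, and since you invoke \eqref{eqsumbetama} directly the slip does not affect your argument.
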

\begin{proof}
	By \eqref{eqTsigma}, \eqref{eq:D=C-C},  \eqref{eqAS771}, and 
	\eqref{eqsumbeta}, 	the statement of \eqref{eq:detfortor11} still 
	holds in the current 	situation. The rest part of the proof is 
	exactly the same as in the 	proof of Theorem \ref{prop:RT11}. 
	% 	we have the identity of meromorphic functions,
% \begin{multline}\label{eq:detfortor}
%   T(\sigma)=\prod_{\beta\in \fb^{*}}{\rm 
%   det}_{\rm 
%   gr}\(C^{\fg,Z,\widehat{\eta}_{\beta},\rho_{1}}-C^{\fu,\rho_{2}}+\sigma\)^{-1}\\
%   ={\rm 
%   det}_{\rm 
%   gr}\(C^{\fg,Z,\widehat{\eta}_{0},\rho_{1}}-C^{\fu,\rho_{2}}+\sigma\)^{-1}\prod_{\beta\in \fb^{*}_{+}}{\rm 
%   det}_{\rm 
%   gr}\(C^{\fg,Z,\widehat{\eta}_{\beta},\rho_{1}}-C^{\fu,\rho_{2}}+\sigma\)^{-2}.%=\exp\(\sum_{j=0}^{2l}(-1)^{j-1}\frac{\p}{\p s}\theta_{\eta_j,\rho}\(0,\sigma+(l-j)^2|\alpha|^2\)\).
% \end{multline}
% By \eqref{eqseta}, \eqref{eq:detfor}, \eqref{eqAS2}, 
% \eqref{eqAS77}, \eqref{eqsumbetama}, and \eqref{eq:detfortor}, we have
% \begin{multline}\label{eq:thtoZ}
%   T(\sigma^2)=
% Z_{\eta_{0},\rho_{1}}(\sigma)^{-1}\exp\big(r_{1}\vol(Z)P_{\eta_{0}}(\sigma)\big)\\
% \times \prod_{\beta\in\fb_{+}^{*}}  
% \(Z_{\eta_{\beta},\rho_{1}}\(-\sqrt{\sigma^2+|\beta|^2}\)Z_{\eta_{\beta},\rho_{1}}\(\sqrt{\sigma^2+|\beta|^2}\)\)^{-1}.
% \end{multline}
% By  \eqref{eq:RbyZ} and \eqref{eq:thtoZ}, we get \eqref{eq:RT}. 
\end{proof}

\begin{re}\label{rerCr}
Define $r_{\eta_{\beta}}$, $C_{\rho}$, and $r_{\rho}$ by the same 
formula as in \eqref{eqrrhobeta} and \eqref{eqTTb2}. Proceeding as 
\cite[(7-76)-(7-78)]{Shfried}, using Theorem \ref{prop:RT} instead of 
\cite[Theorem 7.8]{Shfried}, we get \eqref{eq1rd} when $\delta(G)=1$ and $Z_{G}$ is compact.
\end{re}
% 
% Set
% \begin{align}
% 	r_{\eta_{\beta},\rho_{1}}=\dim \ker 
% 	\(C^{\fg,Z,\widehat{\eta}_{\beta}^{+},\rho_{1}}-C^{\fu,\rho_{2}}\)-\dim \ker 
% 	\(C^{\fg,Z,\widehat{\eta}_{\beta}^{-},\rho_{1}}-C^{\fu,\rho_{2}}\). 
% \end{align}
% By  \eqref{eq:detfortor}, we have
% \begin{align}\label{eq:chid}
% 	\chi'(Z,F)=-r_{\eta_{0},\rho_{1}}-2\sum_{\beta\in 
% 	\fb_{+}^{*}}r_{\eta_\beta,\rho_{1}}.
% \end{align}
% Set\index{C@$C_\rho$}\index{R@$r_\rho$}
% \begin{align}\label{eq:Cr}
%  & 
%  C_\rho=\prod_{\beta\in 
%  \fb_{+}^{*}}\big(-4|\beta|^2\big)^{-r_{\eta_\beta,\rho_{1}}},&r_\rho=-2\sum_{\beta\in \{0\}\cup \fb^{*}_{+}}r_{\eta_{\beta},\rho_{1}}.
% \end{align}
% 
% 
% \begin{thm}\label{thm3}
% 	Suppose that $G$ has a compact center with  $\delta(G)=1$ and 
% 	that 	$\rho=\rho_{1}\otimes\rho_{2|\Gamma}$ as in \eqref{eqrhoF12} such that  
% 	$\rho_{2}\simeq \rho_{2}^\theta$ and $C^{\fu,\rho_{2}}\in \bR$ is a 
% 	scalar. As $\sigma\to 0$, we have 
% 	\begin{align}
% 		R_{\rho}(\sigma)=C_{\rho}T(F)^{2}\sigma^{r_{\rho}}+\cO(\sigma^{r_{\rho}+1}).
% 	\end{align}
% 	In particular, if $r_{\eta_\beta,\rho_{1}}=0$ for all $\beta\in 
% 	\{0\}\cup \fb_{+}^{*}$, then 
% 	\begin{align}
% &	C_{\rho}=1,&r_{\rho}=0,
% 	\end{align}
% 	so that 
% 	\begin{align}
% 		R_{\rho}(0)=T(F)^{2}. 
% 	\end{align}
% \end{thm}
% \begin{proof}
% 	Proceeding as in the proof of \cite[(5-12)]{Shfried} given at 
% \cite[p. 55]{Shfried},  using  Theorem \ref{prop:RT} instead of \cite[Theorem 
% 7.8]{Shfried}, we get our theorem. 
% \end{proof}

\section{A cohomological formula for 
$r_{\eta_{\beta}}$}\label{S:rep}

The purpose of this section is to show \eqref{eq2rd} when 
$\delta(G)=1$ and $Z_{G}$ is compact. Its proof 
relies on some deep results from the classification of unitary 
representations of real reductive groups.
%due to Vogan-Zuckermann \cite{VoganZuckerman}, Vogan \cite{Vogan2}, and Salamanca-Riba \cite{Salamanca}. 

This section is organised as follows. In Section 
\ref{sec:rep}, we recall the definition of the infinitesimal 
characters of a $\mathscr U(\fg_{\bC})$-modules, the Harish-Chandra $(\fg_{\bC},K)$-modules, and a relation between the infinitesimal 
character and the  vanishing of $(\fg,K)$-cohomology of a unitary 
Harish-Chandra $(\fg_{\bC},K)$-module, which is due to 
Vogan-Zuckermann \cite{VoganZuckerman}, Vogan \cite{Vogan2}, and 
Salamanca-Riba \cite{Salamanca}. The latter is our 
essential algebraic input in the proof of \eqref{eq2rd}. 

In Section \ref{secbetti}, we obtain a  formula relating  
$H^{\scriptscriptstyle\bullet }(Z,F)$ and the $(\fg,K)$-cohomology of 
certain Harish-Chandra $(\fg_{\bC},K)$-modules. 

Finally, in Section \ref{secnonbetti}, we deduce a similar  formula for $r_{\eta_{\beta}}$ and we prove \eqref{eq2rd}.

We use the notation in Sections \ref{Sreductive} and \ref{Sselberg}. In Sections \ref{sec:rep} and \ref{secbetti},  we  assume neither $\delta(G)=1$ nor $Z_{G}$ is compact. 
\subsection{Some results from representation theory}\label{sec:rep}
We recall some basic facts on the representation theory of real reductive groups.  %We use  the notation in Section \ref{sReductive}. 

\subsubsection{Infinitesimal characters}
 A morphism of algebras 
$\chi:\mathscr{Z}(\fg_\bC)\to \bC$ will be called a character of 
$\mathscr Z(\fg_\bC)$. Clearly, for $a\in \bC$, we have
\begin{align}
	\chi(a)=a. 
\end{align}

By \eqref{eqZcZc}, $\mathscr Z(\fg_{\bC})$ 
is equipped with a complex conjugation. Moreover,  the Cartan involution 
$\theta$ extends to complex automorphism on 
$\mathscr Z(\fg_{\bC})$. Also, the anti-automorphism $z \to  z^{\rm tr}$ of $\mathscr{U}(\fg_{\bC})$ \cite[Proposition 3.7]{KnappLie}, induced by $a\in \fg\to -a\in \fg$, 
descends to a complex automorphisms of $\mathscr Z(\fg_{\bC})$.  For $z\in \mathscr Z(\fg_{\bC})$, set 
\begin{align}
&	\ol{\chi}(z)=\ol{\chi\(\ol{z}\)},&\chi^{\theta}(z)=\chi(\theta 
z),& &\chi^{\rm tr}(z)=\chi\(z^{\rm tr}\). 
\end{align} 
Then, $\ol{\chi}$, $\chi^{\theta}$, and  $\chi^{\rm tr}$ are characters of $\mathscr 
Z(\fg_{\bC})$.

% In particular, $\chi_{\Lambda}=0$ if and only if there is $w\in W(\fh_{\bR},\fu)$ such that
% \begin{align}
% w\Lambda=\rho_{\fu}. 
% \end{align}

\begin{defin}
A complex representation of $\fg_\bC$ is said to have  infinitesimal character $\chi$,
if $z\in \mathscr Z(\fg_\bC)$ acts as a scalar $\chi(z)\in\bC$.

A complex representation of $\fg_\bC$ is said to have  generalised infinitesimal character $\chi$,
if for there is $i\gg1$ such that for all $z\in \mathscr 
Z(\fg_{\bC})$,  $(z-\chi(z))^i$ acts like $0$. 
\end{defin}

If $W$ is a complex representation of $\fg$ with infinitesimal 
character $\chi_{W}$, then $\ol{W}$,  $W^{\theta}$ (defined in an 
obvious way as \eqref{rt=rt}), and $W^{*}$ have  infinitesimal 
characters  $\chi_{\ol{W}}$, $\chi_{W^{\theta}}$, and $\chi_{W^{*}}$, so that 
\begin{align}\label{eqchiWd}
&\chi_{\ol{W}}=\ol{\chi}_{W}, & \chi_{W^{\theta}}=\chi_{W}^{\theta}, 
&&	\chi_{W^{*}}=\chi_{W}^{\rm tr}. 
\end{align}
Therefore, if $W$ is a unitary representation of $\fg$, we have
\begin{align}\label{eqchiWcd}
\ol{\chi}^{\rm tr}_{W}=\chi_{W}.
\end{align}
If $W$ is a representation of $\fg$ which is equipped with  an admissible Hermitian metric, then $\ol{W}^{*}\simeq 
W^{\theta}$, so that 
\begin{align}\label{eqchiWdd}
	\ol{\chi}_{W}^{\rm tr}=\chi_{W}^{\theta}. 
\end{align}

Let us recall the definition of the Harsh-Chandra parameter for  a 
character of $\mathscr Z(\fg_{\bC})$. Let $\fh_{\bC}\subset \fg_{\bC}$ be a Cartan subalgebra of 
$\fg_{\bC}$. Let $S(\fh_{\bC})$ be the symmetric algebra of 
$\fh_{\bC}$. If  $W(\fh_{\bC}:\fg_{\bC})$ denotes  the  algebraic Weyl group, let  $S(\fh_{\bC})^{W(\fh_{\bC}:\fg_{\bC})}\subset S(\fh_{\bC})$ be the 
$W(\fh_{\bC}:\fg_{\bC})$-invariant subalgebra of $S(\fh_{\bC})$.
 Let
\begin{align}
  \phi_{\rm HC}:\mathscr Z(\fg_\bC)\simeq S(\fh_{\bC})^{W(\fh_{\bC}:\fg_{\bC})}
\end{align}
be the Harish-Chandra isomorphism \cite[Section V.5]{KnappLie}. For 
$\Lambda\in \fh^*_{\bC}$, we can associate to it a character 
$\chi_\Lambda$ of $\mathscr Z(\fg_\bC)$ as follows: for $z\in \mathscr Z(\fg_\bC)$,\index{C@$\chi_{\Lambda}$}
\begin{align}
  \chi_\Lambda(z)=\<\phi_{\rm HC}(z), \Lambda\>.
\end{align}
By \cite[Theorem 5.62]{KnappLie}, every character of $\mathscr Z(\fg_\bC)$ 
is of the form $\chi_\Lambda$, for some $\Lambda\in \fh^*_{\bC}$. 
Also, $\Lambda$ is uniquely determined up to an action of 
$W(\fh_{\bC}: \fg_{\bC})$. Such an element $\Lambda\in \fh^*_{\bC}$ is called  the Harish-Chandra parameter of the character.

% In Section \ref{sRoot}, we have fix  a positive root system  $R^{+}\subset R$ of $(\fh,\fg)$.

% If $\lambda\in \fh^*_{\fp}\oplus \sqrt{-1} \fh^{*}_{\fk}$ is algebraically integral and dominant, 
% let $V_\lambda$ be the complex finite dimensional irreducible 
% representation of the Lie algebra $\fg_\bC$ with the highest weight 
% $\lambda$. Then $V_\lambda$ possesses  an infinitesimal character 
% with Harish-Chandra parameter $\lambda+\varrho^\fg\in \fh^*_{\fp}\oplus  \sqrt{-1}  \fh^{*}_{\fk}$. 
% If two irreducible finite dimensional  representations
% with the  same infinitesimal character, there are equivariant. 

\subsubsection{Harish-Chandra $(\fg_\bC,K)$-module and its $(\fg,K)$-cohomology}
%\subsubsection{Harish-Chandra $(\fg_\bC,K)$-modules and admissible representations of $G$}
% We follow  \cite[p. 54-55]{HechtSchmid} and \cite[p. 207]{Knappsemi}.
\begin{defin}
 A complex $\mathscr U(\fg_\bC)$-module $V$, equipped with an  action of $K$,  is called a Harish-Chandra $(\fg_\bC,K)$-module, if
 \begin{enumerate}
  \item the space $V$ is a finitely generated $\mathscr U(\fg_\bC)$-module;
  \item every $v\in V$ is $K$-finite, i.e., $\{k\cdot v\}_{k\in K}	$ spans a finite dimensional vector space;
  \item the actions of $\fg_\bC$ and $K$ are compatible;
  \item each irreducible $K$-module occurs only for a finite number of times in $V$.
\end{enumerate}
\end{defin}

% \begin{defin}
% We say that a representation $\pi$ of $G$ on a Hilbert space is admissible if the followings hold:
% \begin{enumerate}
%     \item when restricted to $K$, $\pi|_K$ is unitary;
%     \item each $\tau\in \widehat{K}$ occurs with only finite multiplicity in $\pi|_K$.
%   \end{enumerate}
% \end{defin}

Let $\widehat{G}_u$ be the unitary dual of $G$, that is  the set of 
equivalence classes  of complex irreducible unitary representations 
$\pi$ of $G$ on Hilbert spaces $V_\pi$. For $(\pi,V_{\pi})\in 
\widehat{G}_u$, let $V_{\pi,K}$ be the space of $K$-finite vectors.  
By \cite[Theorem 8.1, Proposition 
 8.5]{Knappsemi}, $\fg_\bC$ acts on $V_{\pi,K}$ such that 
 $V_{\pi,K}$ is a Harish-Chandra $(\fg_\bC,K)$-module.

% Let $\pi$ be a finitely generated  admissible representation of  $G$ on the Hilbert space $V_\pi$. If $\tau\in \widehat{K}$, let $V_\pi(\tau)\subset V_\pi$  be the $\tau$-isotopic subspace of $V_\pi$. Then $V_\pi(\tau)$ is the image of the evaluation map
% \begin{align}
%  (f,v)\in \Hom_K(V_\tau,V_\pi)\otimes V_\tau\to f(v)\in V_\pi.
% \end{align}
% Let
% \begin{align}\label{eq:V_piK}
%  V_{\pi,K}=\bigoplus_{\tau\in \widehat{K}}V_\pi(\tau)\subset V_\pi
% \end{align}
%  be the algebraic sum of representations of $K$. By \cite[Proposition 
%  8.5]{Knappsemi}, %$\fg_\bC$ acts on $V_{\pi,K}$ such that 
%  $V_{\pi,K}$ is a Harish-Chandra $(\fg_\bC,K)$-module.
% It is explained in \cite[Section4]{VoganRepCompAna} that,
% by  results of Casselman, Harish-Chandra, Lepowsky and Wallach, any Harish-Chandra $(\fg_\bC,K)$-module $V$ can be constructed in this way and  the corresponding $V_\pi$ is called a Hilbert globalization of $V$.  Moreover, %
%  $V$ is an irreducible Harish-Chandra $(\fg_\bC,K)$-module if and only if $V_\pi$ is an irreducible admissible  representation of $G$. In this case, $V$ or $V_\pi$ has an infinitesimal character.
% 
%  We note that a Hilbert globalization of a Harish-Chandra $(\fg_\bC,K)$-module is not unique.

If $V$ is a Harish-Chandra $(\fg_\bC,K)$-module, let 
$H^{\scriptscriptstyle\bullet }(\fg,K;V)$ \index{H@$H^{\scriptscriptstyle\bullet }(\fg,K;V)$} be the 
$(\fg,K)$-cohomology of $V$ \cite[Section I.1.2]{BW}. 

% The following 
% two theorems are the essential algebraic ingredients in our proof of 
% \eqref{eq2rd}.

\begin{thm}\label{thm:vankey}
Let $V$ be a Harish-Chandra $(\fg_\bC,K)$-module with generalised infinitesimal  character $\chi$.
Let $W$ be a finite dimensional $(\fg_\bC,K)$-module with infinitesimal 
character $\chi_{W}$.  If $\chi\neq \chi_{W}^{\rm tr}$, then
\begin{align}\label{eq:key1}
   H^{\scriptscriptstyle\bullet }(\fg,K;V\otimes W)=0.
\end{align}
\end{thm}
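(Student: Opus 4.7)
The plan is to reduce the statement to a classical Wigner-type lemma by interpreting $(\fg,K)$-cohomology as an Ext group and exploiting that central elements act in two compatible ways on that group.

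First, since $W$ is finite dimensional, there is a natural isomorphism of $(\fg_\bC,K)$-modules $V\otimes W \simeq \Hom_\bC(W^*,V)$. Combined with the standard identification $H^{\scriptscriptstyle\bullet}(\fg,K;-)=\mathrm{Ext}^{\scriptscriptstyle\bullet}_{(\fg_\bC,K)}(\bC,-)$ and the tensor-Hom adjunction in the category of Harish-Chandra modules, this yields
\begin{equation*}
H^{\scriptscriptstyle\bullet}(\fg,K;V\otimes W)\simeq \mathrm{Ext}^{\scriptscriptstyle\bullet}_{(\fg_\bC,K)}(W^*,V).
\end{equation*}

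Second, I would study the two natural $\mathscr Z(\fg_\bC)$-actions on this Ext group. For any $z\in\mathscr Z(\fg_\bC)$, the map ``multiplication by $z$'' is a $(\fg_\bC,K)$-endomorphism of both $V$ and $W^*$ (because $z$ is central), hence induces endomorphisms $z_V$ and $z_{W^*}$ of the Ext group through functoriality in the second and first variable, respectively. A standard functoriality check---modelled via a $(\fg_\bC,K)$-projective resolution of $W^*$, on which a central element lifts canonically to multiplication by itself---shows that $z_V=z_{W^*}$. Now by \eqref{eqchiWd}, $W^*$ has honest infinitesimal character $\chi_W^{\rm tr}$, so $z_{W^*}$ is multiplication by the scalar $\chi_W^{\rm tr}(z)$; and since $V$ has generalised infinitesimal character $\chi$, there exists $i\gg1$ such that $(z-\chi(z))^i=0$ on $V$, whence $(z_V-\chi(z))^i=0$ on Ext. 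Combining both descriptions, the scalar $(\chi_W^{\rm tr}(z)-\chi(z))^i$ annihilates every class in $\mathrm{Ext}^{\scriptscriptstyle\bullet}_{(\fg_\bC,K)}(W^*,V)$.

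The hypothesis $\chi\neq \chi_W^{\rm tr}$ finally provides some $z\in\mathscr Z(\fg_\bC)$ with $\chi_W^{\rm tr}(z)\neq \chi(z)$; then $(\chi_W^{\rm tr}(z)-\chi(z))^i$ is a nonzero complex number, forcing the Ext group---and therefore $H^{\scriptscriptstyle\bullet}(\fg,K;V\otimes W)$---to vanish. The main subtlety is the identification $z_V=z_{W^*}$: these two actions on Ext are a priori distinct endomorphisms, and their coincidence rests on the standard but not entirely transparent fact that a central element of $\mathscr U(\fg_\bC)$ lifts canonically to a chain endomorphism of any $(\fg_\bC,K)$-projective resolution whose induced action on the $\Hom$-complex agrees with post-composition with the action on the target module. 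Once this compatibility is in hand, the rest is elementary algebra, and in particular no appeal to the deeper results of Vogan-Zuckerman, Vogan, or Salamanca-Riba is needed for this direction.
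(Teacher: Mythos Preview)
Your argument is correct and is precisely the classical ``Wigner lemma'' proof: identify $(\fg,K)$-cohomology with $\mathrm{Ext}^{\scriptscriptstyle\bullet}_{(\fg_\bC,K)}(W^*,V)$, observe that a central element $z$ acts on this Ext group simultaneously as the scalar $\chi_W^{\rm tr}(z)$ (via $W^*$) and as a nilpotent perturbation of $\chi(z)$ (via $V$), and conclude. The identification $z_V=z_{W^*}$ you flag as the subtle point is exactly the standard computation---for a projective resolution $P_{\scriptscriptstyle\bullet}\to W^*$ and $\phi\in\Hom_{(\fg_\bC,K)}(P_i,V)$ one has $\phi(z\cdot p)=z\cdot\phi(p)$ because $\phi$ is a module map and $z$ is central---so there is no hidden difficulty there.

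The paper itself gives no proof beyond citing \cite[Theorem~I.5.3(ii)]{BW} for the case of an honest infinitesimal character and \cite[Theorem~8.8]{Shfried} for the generalised case; the argument in those references is exactly the one you have written out. So your proposal is not a different route but rather a faithful unpacking of the cited proofs, including the easy passage from infinitesimal to generalised infinitesimal character. Your closing remark is also apt: the deep input of Vogan--Zuckerman and Salamanca-Riba is only needed for the converse direction, Theorem~\ref{thm:vankey1}, not here.
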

\begin{proof}
	If $V$ has an  infinitesimal  character, the theorem is due to 
	\cite[Theorem I.5.3(ii)]{BW}. If $V$ has a generalised 
	infinitesimal  character, a proof can be found in  \cite[Theorem 
	8.8]{Shfried}.
\end{proof}

The converse of \eqref{eq:key1} is not true in general. But it still 
holds if $V$ is unitary. 

% Indeed, If $(\pi,V_\pi)\in \widehat{G}_u$, by \cite[Theorem 8.1]{Knappsemi}, $\pi$ is irreducible admissible.  Let $\chi_\pi$  be the corresponding infinitesimal character. %and global character. Let $C^{\fg,\pi}$ be the Casimir element acting on $V_\pi$. Then $C^{\fg,\pi}\in \bR$.

\begin{thm}\label{thm:vankey1}Let $W$ be a finite dimensional 
	$(\fg_\bC,K)$-module with infinitesimal character $\chi_{W}$.
If  $(\pi,V_\pi)\in \widehat{G}_u$, then
\begin{align}\label{eq:key2}
 \chi_{\pi}\neq\chi_{W}^{\rm tr} \iff H^{\scriptscriptstyle\bullet }(\fg,K;V_{\pi,K}\otimes W)=0.
\end{align}
\end{thm}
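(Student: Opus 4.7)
My plan is to prove the equivalence in two parts, with the reverse implication carrying essentially all of the difficulty. For the forward implication $\chi_\pi \neq \chi_W^{\rm tr} \Rightarrow H^{\scriptscriptstyle\bullet}(\fg, K; V_{\pi,K} \otimes W) = 0$: since $(\pi, V_\pi) \in \widehat{G}_u$ is irreducible, a theorem of Harish-Chandra implies that the underlying $(\fg_\bC, K)$-module $V_{\pi,K}$ is irreducible, and Schur's lemma then guarantees that it admits an honest infinitesimal character $\chi_\pi$ (not merely a generalised one). Under the hypothesis $\chi_\pi \neq \chi_W^{\rm tr}$, Theorem \ref{thm:vankey} applies directly and yields the vanishing, without using unitarity in an essential way.

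The reverse implication is the heart of the theorem and relies on the classification of irreducible unitary $(\fg_\bC,K)$-modules. Suppose $\chi_\pi = \chi_W^{\rm tr}$. Note that $\chi_W^{\rm tr}$ is precisely the infinitesimal character of the finite-dimensional dual module $W^{*}$, so $V_{\pi,K}$ is an irreducible unitary Harish-Chandra module sharing its infinitesimal character with a finite-dimensional representation. By Salamanca-Riba's theorem \cite{Salamanca}, every such module is isomorphic to a Vogan-Zuckerman cohomologically induced module $A_{\fq}(\lambda)$, for some $\theta$-stable parabolic subalgebra $\fq \subset \fg_\bC$ and some admissible character $\lambda$, with $(\fq,\lambda)$ determined by the infinitesimal character up to the usual ambiguities.

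To conclude, I would invoke Vogan-Zuckerman's computation \cite{VoganZuckerman,Vogan2} of the $(\fg,K)$-cohomology of the modules $A_{\fq}(\lambda)$, which shows that $H^{\scriptscriptstyle\bullet}(\fg,K; A_{\fq}(\lambda) \otimes W)$ is non-zero --- with an explicit description in terms of exterior powers attached to the Levi component of $\fq$ --- precisely when the infinitesimal character of $W^{*}$ coincides with that of $A_{\fq}(\lambda)$. Since this matching is exactly our hypothesis, we obtain $H^{\scriptscriptstyle\bullet}(\fg,K; V_{\pi,K} \otimes W) \neq 0$, completing the reverse implication.

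The main obstacle is, of course, the reverse direction: the argument is not a direct algebraic manipulation but reduces to citing two deep classification results from unitary representation theory, namely Salamanca-Riba's exhaustiveness theorem identifying irreducible unitary modules with a finite-dimensional infinitesimal character as Vogan-Zuckerman modules, together with the Vogan-Zuckerman non-vanishing criterion for $(\fg,K)$-cohomology. The delicate bookkeeping lies in verifying that the parameters $(\fq, \lambda)$ produced by Salamanca-Riba align with the infinitesimal character of $W$ in such a way that Vogan-Zuckerman's non-vanishing criterion applies in the stated form.
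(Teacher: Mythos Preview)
Your proposal is correct and follows essentially the same approach as the paper: the forward direction is immediate from Theorem~\ref{thm:vankey}, and for the reverse direction (argued contrapositively) the paper likewise observes that $\chi_\pi=\chi_W^{\rm tr}$ forces the Harish-Chandra parameter of $\chi_\pi$ to be strongly regular in the sense of Salamanca-Riba, so that $V_{\pi,K}$ is an $A_\fq(\lambda)$ module by \cite{Salamanca}, whence its $(\fg,K)$-cohomology with coefficients in $W$ is nonzero by \cite{VoganZuckerman,Vogan2}. Your extra remark that $\chi_W^{\rm tr}=\chi_{W^*}$ is the infinitesimal character of a finite-dimensional module is exactly what guarantees strong regularity, so the bookkeeping you flag is indeed the only thing to check.
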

\begin{proof}
	  The  direction $\implies$ of \eqref{eq:key2} is \eqref{eq:key1}.
The  direction  $\impliedby$  of \eqref{eq:key2} is a consequence of 
Vogan-Zuckerman \cite{VoganZuckerman}, Vogan \cite{Vogan2}, and  
Salamanca-Riba \cite{Salamanca}. Indeed, if 
$\chi_{\pi}=\chi_{W}^{\rm tr}$, then the Harish-Chandra parameter of 
$\chi_{\pi}$ is stronger regular in the sense of \cite[p. 
525]{Salamanca}. Such representation $\pi$ is classified in 
\cite{Salamanca}, which has non vanishing $(\fg,K)$-cohomology by \cite{VoganZuckerman, Vogan2}.\end{proof}

\subsubsection{ Root system and $\fn$-homology} \label{ssRS}
We use the notation in Section \ref{sCartanF}.  Let $\fh=\fb\oplus \ft$ be the fundamental Cartan subalgebra 
of $\fg$.  Let $R\subset \fb^{*}\oplus \sqrt{-1}\ft^{*}$ be a root 
system of $(\fh,\fg)$. By \cite[Proposition 11.16]{KnappLie} (see also 
\cite[Proposition 3.7]{BS19}), there are no  real roots in $R$.  Let $R^{\rm im}$ and $R^{\rm c}$ be 
the systems of imaginary roots and complex roots, so that 
\begin{align}
	R=R^{\rm im}\sqcup R^{\rm c}.
\end{align} 
Then, $R^{\rm im}$ is a root system of 
$(\fh,\fz(\fb))$. Also, $R^{\rm im}|_{\ft}$ is a root system of 
$(\ft,\fm)$. 

We fix a positive root system $R_{+}\subset R$. Set 
\begin{align}
&	R^{\rm im}_{+}=R^{\rm im}\cap R_{+},& R^{\rm c}_{+}=R^{\rm 
c}\cap R_{+}. 
\end{align} 
As explained in \cite[Section 3.5]{BS19}, we can choose $R_{+}$ such that $R^{\rm c}_{+}$ is stable 
under complex conjugation. 

Set\index{R@$\rho^{\fk}$}
\begin{align}\label{eq:rhok88}
  \varrho^\fu=\frac{1}{2}\sum_{\alpha\in 
  R^{+}}\alpha\in \fb^{*}\oplus \sqrt{-1}\ft^{*}. 
\end{align}
 By Kostant's strange formula \cite{Kostant76} or \cite[Proposition 7.5.1]{B09}, we have
 \begin{align}\label{eq:kostant}
   \left|\varrho^\fu\right|^2=-\frac{1}{24}\Tr^{\fu}\[C^{\fu,\fu}\].
 \end{align}
Define $\varrho^{\fu(\fb)} \in \fb^{*}\oplus \sqrt{-1}\ft^{*}$ and 
$\varrho^{\fu_{\fm}}\in \sqrt{-1}\ft^{*}$ in the same way, which are 
associated to $R^{\rm 
im}_{+}$ and $R^{\rm im}_{+|\ft}$. Then, 
\begin{align}\label{equbum}
&\varrho^{\fu(\fb)}=\(0,\varrho^{\fu_\fm}\), 
&\varrho^{\fu}_{|\ft}=\varrho^{\fu_\fm}. 
\end{align}

% Recall also that  $R^{\rm im}$ is the system of imaginaire root with 
% positive roots $R_{+}^{\rm im}$. 
%  Then,
% \begin{align}
% 	\varrho^{\fu(\fb)}=\frac{1}{2}\sum_{\alpha\in R^{\rm im}_{+}}\alpha.
% \end{align}
% In particular, we have
% \begin{align}\label{equbum}
% &\varrho^{\fu(\fb)}=(0,\varrho^{\fu_\fm}), &	
% \varrho^{\fu}=\varrho^{\fu(\fb)}+\(\ell \alpha_{0},0\). 
% \end{align}
% 

% 
% Let $R(\ft,\fu_\fm)\subset i\ft^{*}$ be the  root system of 
% $(\ft,\fu_\fm)$. We have the identification 
% \begin{align}
% R(\ft,\fu_\fm)\subset \fb^{*}\oplus 	\sqrt{-1}\ft^{*}.
% \end{align}
% % We identify 
% % $\Delta(\ft,\fu_\fm)\subset \ft^{*}$ with 
% % $\Delta(\fh_{\bR},\fu(\fb))\subset\fh_{\bR}^{*}$.  
% Clearly, 
%  $R(\ft,\fu_\fm)$ is a subset of $R$ consisting 
%  of the elements
% the restriction of $R(\ft,\fu_{\fm})$ to $\fb$  vanish. 
% 
% Set
% \begin{align}
% 	R_{+}(\ft,\fu_{\fm})=R(\ft,\fu_{\fm})\cap R_{+}. 
% \end{align}

% We fix a 
% a positive roots $\Delta^{+}(\ft,\fu_\fm)$.  It induces a positive roots 
% $\Delta^{+}(\fh_{\bR}^{*},\fu(\fb))$.  
% Following \cite[p. 485]{KnappLie},  a positive root system 
% $R_{+}(\ft,\fm)$ 
% %$\Delta^{+}(\ft,\fu_\fm)$ 
% determines a 
%  positive roots $\Delta^{+}(\fh,\fg)$, that is $\alpha\in 
%  \Delta^{+}(\fh_{\bR},\fu)$ if
%  $\alpha\in R_{+}(\ft,\fm)$ or if $\alpha|_{\fb}\in 
%  \fb^{*}_{+}$.  In particular, we have
% \begin{align}\label{equbum}
% &\rho^{\fu(\fb)}=(0,\rho^{\fu_\fm}), &	\rho^{\fu}=\rho^{\fu(\fb)}+\(\frac{\ell \alpha}{\sqrt{-1}\pi},0\). 
% \end{align}

If $V$ is a Harish-Chandra $(\fg_{\bC},K)$-module, denote by $H_{\scriptscriptstyle\bullet }(\fn,V)$ its 
$\fn$-homology. By \cite[Proposition 2.24]{HechtSchmid}, 
$H_{\scriptscriptstyle\bullet }(\fn,V)$ \index{H@$H_\cdot(\fn,V)$} is a Harish-Chandra  
$(\fm_{\bC}\oplus\fb_{\bC}, K_M)$-module. 
 If $V$ possesses  an infinitesimal character with Harish-Chandra 
 parameter $\Lambda\in \fh^*_{\bC}$, by \cite[Corollary 
 3.32]{HechtSchmid},  $H_{\scriptscriptstyle\bullet }(\fn,V)$ can be decomposed into a 
finite direct sum of Harish-Chandra $(\fm_{\bC}\oplus\fb_{\bC}, 
K_M)$-modules  whose   generalised infinitesimal characters are given by
\begin{align}\label{eqinHn}
\chi_{w\Lambda+\varrho^\fu-\varrho^{\fu(\fb)}},
\end{align}
for some $w\in W(\fh_{\bC}:\fg_{\bC})$.

\subsection{The cohomology of $H^{\scriptscriptstyle\bullet }(Z,F)$}\label{secbetti}
We use the notation in Section \ref{sFadmetric}. 
Recall that $\Gamma\subset G$ is a discrete cocompact torsion free 
subgroup of $G$ and that  $\rho: G\to \GL(E)$ is a representation of $G$ 
with an  admissible metric. Let $(F,g^{F})$ be the associated 
Hermitian flat vector bundle.

By \cite[p.~23, Theorem]{GMP}, we can decompose $L^2(\Gamma\backslash G)$ into a
direct Hilbert  sum of countable irreducible unitary representations of $G$,
\begin{align}\label{eq:Geldecomp}
 L^2\(\Gamma\backslash G\)=\bigoplus_{\pi\in 
 \widehat{G}_u}^{\mathrm{Hil}}n(\pi)V_\pi,
\end{align}
with $n(\pi)<\infty$.

For any unitary representation $(\tau,E_{\tau})$ of $K$, since $ 
C^{\fg,Z,\tau}$ is elliptic and self-adjoint and since $Z$ is 
compact,  we have a finite sum
\begin{align}\label{eqKerCpi}
	\ker \(C^{\fg,Z,\tau}-\lambda \)=\bigoplus_{ { \pi\in 
 \widehat{G}_u, \chi_{\pi}(C^{\fg})=\lambda}} 
 n(\pi)\(V_{\pi,K}\otimes E_{\tau}\)^{K}.
\end{align}

Let $\cX(\rho^{*})$ be set of the infinitesimal 
characters of all  irreducible subrepresentations of $\rho^{*}$. Note that by Remark \ref{regGsub}, the sets of all  irreducible 
subrepresentations of $\rho^{*}$ of the group $G$ and of the Lie 
algebra $\fg$ coincide. 

%$G$ of   coincides with the set of all   irreducible $\fg$-subrepresentations of $\fg$ of  $\rho^{*}$. 

\begin{thm}\label{thmHZF009}
	We have
	\begin{align}\label{eqHZF}
		H^{\scriptscriptstyle\bullet }(Z,F)=\bigoplus_{ \pi\in 
 \widehat{G}_u,\chi_{\pi}\in \cX({\rho^{*}})} 
 n(\pi)H^{\scriptscriptstyle\bullet }\(\fg,K;V_{\pi,K}\otimes E\).
	\end{align}
If $H^{\scriptscriptstyle\bullet }(Z,F)=0$, then for any $\pi\in \widehat{G}_u$ such that 
$\chi_\pi\in \cX({\rho^{*}})$, we have
\begin{align}\label{eq:mushiva}
  n(\pi)=0.
\end{align}	
If $\rho$ is irreducible such that $\rho^{\theta}\neq \rho$, then 
\begin{align}\label{eqHZF=0}
H^{\scriptscriptstyle\bullet }(Z,F)=0.
\end{align}
\end{thm}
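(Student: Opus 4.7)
The plan is to establish \eqref{eqHZF} by a Matsushima-type argument and then extract \eqref{eq:mushiva} and \eqref{eqHZF=0} from the two directions of Theorem \ref{thm:vankey1}. First I would use the identifications \eqref{eqid55} and Proposition \ref{prop:D=C-C} to rewrite $\Omega^\bullet(Z,F)$ as the $K$-invariants of $C^\infty(\Gamma\backslash G)\otimes \Lambda^\bullet(\fp^*_\bC)\otimes E$, with $\Box^Z$ replaced by $C^{\fg,Z,\Lambda^\bullet(\fp^*)\otimes E_{|K}}-C^{\fu,\rho}$; since $\rho$ carries an admissible metric it decomposes as a direct sum of irreducibles on each of which $C^\fu$ acts as a scalar, so this rewriting is legitimate summand by summand. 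Inserting the spectral decomposition \eqref{eq:Geldecomp} and applying Kuga's lemma (see \cite[Chap.~II.3]{BW}) identifies $\ker\Box^Z\simeq H^\bullet(Z,F)$ with $\bigoplus_\pi n(\pi)H^\bullet(\fg,K;V_{\pi,K}\otimes E)$ summed over those unitary $\pi$ whose Casimir eigenvalue matches; the direction $\implies$ of Theorem \ref{thm:vankey1} then restricts the non-vanishing indices to $\chi_\pi\in\cX(\rho^*)$ and yields \eqref{eqHZF}.

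For \eqref{eq:mushiva} the argument is essentially formal: if $H^\bullet(Z,F)=0$ each summand in \eqref{eqHZF} must vanish, but the direction $\impliedby$ of Theorem \ref{thm:vankey1} guarantees $H^\bullet(\fg,K;V_{\pi,K}\otimes E)\neq 0$ whenever $\pi$ is unitary with $\chi_\pi\in\cX(\rho^*)$, hence $n(\pi)=0$. For \eqref{eqHZF=0}, I would argue by contradiction: suppose some $\pi$ contributes to \eqref{eqHZF}. Since $\rho$ is irreducible, $\cX(\rho^*)=\{\chi_\rho^{\rm tr}\}$ by \eqref{eqchiWd}, so $\chi_\pi=\chi_\rho^{\rm tr}$. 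Complex-conjugating this identity and invoking the admissibility relation \eqref{eqchiWdd} gives $\ol{\chi}_\pi=\ol{\chi}_\rho^{\rm tr}=\chi_\rho^\theta$, whereas the unitarity of $\pi$ together with \eqref{eqchiWcd} gives $\ol{\chi}_\pi=\chi_\pi^{\rm tr}=\chi_\rho$. Hence $\chi_\rho=\chi_\rho^\theta$; but irreducible finite-dimensional representations of the connected reductive group $G$ are separated by their infinitesimal characters (via highest weight and central character), so this would force $\rho\simeq\rho^\theta$, contradicting the hypothesis. Therefore no such $\pi$ exists and \eqref{eqHZF=0} holds.

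The main technical point I expect to grind through is the application of Kuga's lemma in this spectral setting, namely the verification that on a $\pi$-isotypic component whose Casimir eigenvalue matches (the relevant irreducible summand of) $\rho$, the Matsushima--Kuga Laplacian on $(\Lambda^\bullet(\fp^*_\bC)\otimes V_{\pi,K}\otimes E)^K$ vanishes, so that harmonic forms and $(\fg,K)$-cohomology classes coincide. This reduces to the Dirac identity \eqref{eqD2} and is standard. Once that is in place, the rest of the proof is formal manipulation of infinitesimal characters, with Theorem \ref{thm:vankey1} carrying the essential weight of the classification of unitary representations with non-vanishing $(\fg,K)$-cohomology.
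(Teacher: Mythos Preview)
Your proposal is correct and follows essentially the same route as the paper's proof: reduce to irreducible $\rho$, combine Hodge theory with Proposition~\ref{prop:D=C-C} and the spectral decomposition \eqref{eq:Geldecomp} to obtain \eqref{eqHoLie}, then invoke \cite[Proposition~II.3.1]{BW} (what you call Kuga's lemma) and Theorems~\ref{thm:vankey}, \ref{thm:vankey1} to get \eqref{eqHZF} and \eqref{eq:mushiva}; your infinitesimal-character argument for \eqref{eqHZF=0} is exactly the chain \eqref{eqchi22}. One small correction: the vanishing of the $(\fg,K)$-Laplacian on Casimir-matching isotypic components is governed by Proposition~\ref{prop:D=C-C} rather than the Dirac identity \eqref{eqD2}.
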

\begin{proof}
	Since the $G$-representation with an admissible metric is completely reducible, we 
	can assume that $\rho$ is irreducible with the infinitesimal 	
	character $\chi_{\rho}$. 	
	
	By \eqref{eqchiWd}, we have 
	$\chi_{\rho}^{\rm tr}(C^{\fg})=\chi_{\rho}(C^{\fg})=C^{\fg,\rho}$. 
	By \eqref{eqHo1}, \eqref{eq:D=C-C} and \eqref{eqKerCpi}, we  have
		\begin{align}\label{eqHoLie}
		H^{\scriptscriptstyle\bullet }(Z,F)=\bigoplus_{ { \pi\in 
 \widehat{G}_u, \chi_{\pi}(C^{\fg})=\chi_{\rho}^{\rm tr}(C^{\fg})}} 
 n(\pi)\(V_{\pi,K}\otimes 
 \Lambda^{\scriptscriptstyle\bullet }(\fp^{*}_{\bC})\otimes E\)^{K}.
	\end{align}
When $\chi_{\pi}(C^{\fg})=\chi_{\rho}^{\rm tr}(C^{\fg})$, by  Hodge theory 
for Lie algebras \cite[Proposition II.3.1]{BW}, we have
\begin{align}\label{eqHoLie2}
	\(V_{\pi,K}\otimes 
 \Lambda^{\scriptscriptstyle\bullet }(\fp^{*}_{\bC})\otimes 
 E\)^{K}=H^{\scriptscriptstyle\bullet }(\fg,K;V_{\pi,K}\otimes E). 
\end{align}
By \eqref{eq:key1}, \eqref{eqHoLie}, and \eqref{eqHoLie2}, we get \eqref{eqHZF}. 
By \eqref{eq:key2} and \eqref{eqHZF}, we get 
\eqref{eq:mushiva}. 

To show \eqref{eqHZF=0}, it is enough to show that if 
$\rho^{\theta}\neq \rho$, then for all $\pi\in 
\widehat{G}_{u}$, we have  
\begin{align}
	\chi_{\pi}\neq \chi_{\rho}^{\rm tr}. 
\end{align}
Otherwise there is $\pi\in \widehat{G}_{\pi}$ such that 
$\chi_{\pi}=\chi_{\rho}^{\rm tr}$. Using $ \ol{\pi}^{*}\simeq \pi$ and $\ol{\rho}^{*}\simeq\rho^{\theta}$, by \eqref{eqchiWcd} and 
\eqref{eqchiWdd}, we have 
\begin{align}\label{eqchi22}
\chi_{\rho}=\chi_{\pi}^{\rm tr}=\ol{\chi}_{\pi}=\ol{\chi}_{\rho}^{\rm tr}=\chi_{\rho}^{\theta}.
\end{align}
Since  $\rho$ and  $\rho^{\theta}$ are irreducible and  have finite 
dimensions, Equation \eqref{eqchi22} implies $\rho\simeq \rho^{\theta}$,  which is  a contradiction with our assumption. 
\end{proof}

\begin{re}	
	Equations \eqref{eqHZF} and 
	\eqref{eqHZF=0} are   \cite[Theorems VII.6.1 and VII.6.7]{BW}. Equation \eqref{eqHZF} is originally due to Matsushima 
	\cite{MatsushimaBetti} where  $\rho$ is supposed to be
	 trivial.  
	\end{re}

\subsection{A formula for $r_{\eta_{\beta}}$}\label{secnonbetti}
Assume now that $\delta(G)=1$ and $Z_{G}$ is compact, and that 
$\rho:G\to \GL(E)$ is a 
$G$-representation with an admissible metric  such that $\rho\simeq 
\rho^{\theta}$ and that $C^{\fu,\rho}\in \bR$ is a scalar. % Clearly, if 
% $\chi\in \chi(\rho_{2})$, then 
% \begin{align}
% 	\chi^{\theta}\in  \chi(\rho_{2}). 
% \end{align}

% We have
% \begin{align}
% 	L^{2}(Z,E_{\widehat{\eta}_{j,\beta}}\otimes F_{1})=\bigoplus_{\pi\in 
% 	\widehat{G}_u}^{\mathrm{Hil}}n_{\rho_{1}}(\pi)\(V_\pi\otimes 
% 	E_{\widehat{\eta}_{j,\beta}}\)^{K}.
% \end{align}
% We have
% \begin{align}
% r_{j,\beta}=\sum_{\pi\in 
% 	\widehat{G}_u,\chi_{\pi}(C^{\fg})=C^{\fu,\rho}}n(\pi)\dim_{\rm gr}\(V_\pi\otimes 
% 	E_{\widehat{\eta}_{j,\beta}}\)^{K},
% \end{align}
%where the sum is finite.  
% By \cite[Theorem 8.14]{Shfried}, we have 
% \begin{multline}
% 	\dim_{\rm gr}\(V_{\pi,K} \otimes \widehat{\eta}_{j,\beta}\)^K\\
% = \frac{1}{\chi(K/K_M)}\sum_{i=0}^{\dim \fp_\fm}\sum_{k=0}^{2l} 
% (-1)^{i+k}\dim_\bC H^i\big(\fm, K_M; H_k(\fn,V_{\pi,K})\otimes 
% E_{\eta_{j,\beta}}\big).
% \end{multline}
By \cite[Corollary 8.15]{Shfried}, we have
\begin{multline}\label{eq814}
	r_{\eta_{\beta}}=\frac{1}{\chi(K/K_M)}\sum_{\tiny \substack{\pi\in 
	\widehat{G}_u,\chi_{\pi}(C^{\fg})=C^{\fu,\rho}\\ 0\l i\l \dim 
	\fp_{\fm}\\ 0\l j\l 2\ell }}
(-1)^{i+j} n(\pi) \Big(\dim  H^i\big(\fm, K_M; H_j(\fn,V_{\pi,K})\otimes 
E_{\eta_{\beta}}^{+}\big)\\-\dim H^i\big(\fm, K_M; H_j(\fn,V_{\pi,K})\otimes 
E_{\eta_{\beta}}^{-}\big)\Big) .
\end{multline}

% Recall that $\cX(\rho^{*})$ is the set of the infinitesimal 
% character of all the irreducible subrepresentation of  $\rho^{*}$. 

\begin{prop}\label{prop:vani3}
Let $(\pi,V_\pi)\in \widehat{G}_u$. Assume that 
$\chi_\pi(C^{\fg})=C^{\fu,\rho}$ and
\begin{align}\label{eq:Hn0}
\bigoplus_{\tiny \substack{0\l i\l \dim \fp_{\fm}\\ 0\l j\l 2\ell \\ s\in 
\{\pm\} }}H^i\big(\fm, K_M; H_j(\fn,V_{\pi,K})\otimes 
E_{\eta_{\beta}}^{s}\big)\neq0.
\end{align}
%for all $0\l j\l 2l$.
Then,
\begin{align}
\chi_\pi\in \cX(\rho^{*}).
\end{align}
\end{prop}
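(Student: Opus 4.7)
The plan is to push the non-vanishing hypothesis \eqref{eq:Hn0} through two successive layers of representation-theoretic reduction and then match Harish-Chandra parameters; this is the admissible-twist analogue of the argument in \cite[Section 8]{Shfried}. First, I apply Theorem \ref{thm:vankey} to the reductive group $M$ (with Cartan involution $\theta|_{M}$) and to the finite-dimensional $(\fm_\bC, K_M)$-module $E_{\eta_\beta}^s$. The non-vanishing of $H^i(\fm, K_M; H_j(\fn, V_{\pi,K}) \otimes E_{\eta_\beta}^s)$ then forces some irreducible $(\fm_\bC, K_M)$-subquotient of $H_j(\fn, V_{\pi, K})$ to carry, as generalized infinitesimal character on $\mathscr Z(\fm_\bC)$, the character $\chi_{E_{\eta_\beta}^s}^{\rm tr}$.

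Writing $\chi_\pi = \chi_{\Lambda_\pi}$ with $\Lambda_\pi \in \fh^*_\bC$, the Hecht--Schmid description recalled in \eqref{eqinHn} says that the generalized infinitesimal characters appearing in $H_j(\fn, V_{\pi, K})$, viewed as $(\fm_\bC \oplus \fb_\bC, K_M)$-modules, have Harish-Chandra parameters $w\Lambda_\pi + \varrho^\fu - \varrho^{\fu(\fb)}$ for some $w \in W(\fh_\bC : \fg_\bC)$. Projecting to $\ft^*_\bC$ and using the cancellation $\varrho^\fu|_\ft = \varrho^{\fu(\fb)}|_\ft = \varrho^{\fu_\fm}$ from \eqref{equbum}, this parameter collapses on $\mathscr Z(\fm_\bC)$ to $(w\Lambda_\pi)|_\ft$. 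In parallel, $\bC_\beta \boxtimes E_{\eta_\beta}^s$ is a direct summand of $\ker D^{S^{\fu^\bot(\fb)} \otimes E}$: an $\fh$-weight computation on $S^{\fu^\bot(\fb)} \otimes E$ using the spinor identification \eqref{eq:Rnspinc1}, or equivalently the Vogan--Huang--Pandzic theorem on Dirac cohomology, shows that the Harish-Chandra parameter of any summand of $\ker D$ lies in the $W(\fh_\bC : \fg_\bC)$-orbit of the Harish-Chandra parameter $\Lambda_\rho$ of $\rho$. Hence the $\ft$-component of the parameter of $E_{\eta_\beta}^s$ on $\mathscr Z(\fm_\bC)$ is $(w'\Lambda_\rho)|_\ft$ for some $w' \in W(\fh_\bC : \fg_\bC)$.

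Combining, and using that the ${\rm tr}$-operation on $\mathscr Z(\fm_\bC)$ corresponds, modulo Weyl group, to $-1$ on Harish-Chandra parameters, one obtains $(w\Lambda_\pi)|_\ft \in -W(T : K_M)\cdot (w'\Lambda_\rho)|_\ft$. The $\fb$-components are then matched using the Casimir hypothesis $\chi_\pi(C^\fg) = C^{\fu, \rho}$: via Kostant's formula \eqref{eq:kostant} this gives $|\Lambda_\pi|^2 = |\Lambda_\rho|^2$, so the $\fb$-components agree in absolute value. Since $\dim \fb = 1$ and $\fn \neq 0$, any residual sign is absorbed by a root reflection in $W(\fh_\bC : \fg_\bC)$ along a complex root with nonzero $\fb$-component (Proposition \ref{prop:nnam1}). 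This places $\Lambda_\pi$ in the $W(\fh_\bC : \fg_\bC)$-orbit of $-\Lambda_{\rho_i}$ for some irreducible subrepresentation $\rho_i$ of $\rho$, equivalently $\chi_\pi = \chi_{\rho_i^*} \in \cX(\rho^*)$.

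The main obstacle is this final coordination step: several Weyl-type corrections (the $w$ from Hecht--Schmid, the $w'$ from Dirac cohomology, the $W(T:K_M)$-element from Step 1, the ${\rm tr}$-induced sign on $\mathscr Z(\fm_\bC)$, and the root reflection flipping $\fb$) must be assembled into a single element of $W(\fh_\bC : \fg_\bC)$ realizing the match between $\Lambda_\pi$ and some $-\Lambda_{\rho_i}$. The verification exploits the specific structure of $W(\fh_\bC : \fg_\bC)$ when $\delta(G) = 1$, together with the interplay between the subgroups $W(\ft: \fm)$ and $W^{\rm im}_+$ acting on the $\ft$-components.
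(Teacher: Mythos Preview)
Your overall strategy is the paper's: push the non-vanishing hypothesis through Theorem \ref{thm:vankey} applied to $M$, read off the $\ft$-component via Hecht--Schmid \eqref{eqinHn}, invoke the Dirac-cohomology description of $\ker D^{S^{\fu^\bot(\fb)}\otimes E}$ to relate $\mu_\beta$ to a Harish-Chandra parameter of $\rho$, and then use the Casimir identity to pin down the $\fb$-component. The paper even works with $\Lambda(\pi^*)$ instead of $\Lambda_\pi$ to avoid carrying your $-1$'s around, but that is cosmetic.

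The genuine gap is your handling of the $\pm$ ambiguity on the $\fb$-component. Once you have matched the $\ft$-components and used $|\Lambda_\pi|^2=|\Lambda_\rho|^2$ to conclude that the $\fb$-components of (suitable Weyl translates of) $\Lambda_\pi$ and $\Lambda_\rho$ agree up to sign, you propose to absorb the residual sign by ``a root reflection along a complex root with nonzero $\fb$-component''. This does not work as stated: a reflection $s_\alpha$ in a complex root $\alpha$ moves \emph{both} the $\fb$- and $\ft$-components, so it will not flip the sign on $\fb^*$ while leaving the already-matched $\ft$-component fixed. There is no reason, for a generic element of $\fb^*\oplus\sqrt{-1}\ft^*$, that $(\beta,\nu)$ and $(-\beta,\nu)$ lie in the same $W(\fh_\bC:\fg_\bC)$-orbit, and Proposition \ref{prop:nnam1} says nothing of the sort. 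Your final paragraph concedes this is ``the main obstacle'' but does not actually close it.

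The paper resolves the sign ambiguity differently, and more cleanly: after obtaining $w''\Lambda(\pi^*)=(\pm\beta,\mu_\beta)+\varrho^{\fu(\fb)}$, it does \emph{not} try to relate the two signs via a Weyl element. Instead it appeals to the symmetry $\eta_\beta^\pm\simeq\eta_{-\beta}^\pm$ established in \eqref{eqAS771}, which implies that both $(\beta,\mu_\beta)$ and $(-\beta,\mu_\beta)$ occur as highest weights in $\ker D^{S^{\fu^\bot(\fb)}\otimes E}$. The Dirac-cohomology theorem \cite[Theorem 4.2.2]{HuangDiracCoh} then applies to each separately, giving $(\pm\beta,\mu_\beta)+\varrho^{\fu(\fb)}\in W(\fh_\bC:\fg_\bC)\cdot\Lambda(\rho)$ in both cases. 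So whichever sign appears in $w''\Lambda(\pi^*)$, you land in the Weyl orbit of some $\Lambda(\rho)$, and the proof concludes. Replace your root-reflection argument with this use of \eqref{eqAS771} and the proof goes through.
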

\begin{proof}%By t we can show our Proposition case by case. We give a proof more conceptual.
We use the notation in Section \ref{ssRS}.  Let $\Lambda(\pi^{*})\in \fh^*_\bC$ be a Harish-Chandra parameter of 
$V_{\pi^{*}\!\!,\,K}$. We need to show that there is $w\in 
W(\fh_\bC:\fg_{\bC})$ and a Harish-Chandra parameter  
$\Lambda(\rho)\in \fb^{*}\oplus \sqrt{-1}{\ft}^{*}$ of an irreducible 
$\fg$-submodule   of $\rho$, such that 
\begin{align}\label{eq:hou}
  w\Lambda(\pi^{*})=\Lambda(\rho).
\end{align}
%As in the proof of Proposition \ref{prop:bggbuu}, \eqref{eq:hou} can be shown case by case by the classification Theorem \ref{thm:cla}. Here we give a conceptual proof.

Recall that $B^*$ is the bilinear form on $\fg^*$ induced by $B$. It extends to $\fg^*_\bC$ in an obvious way. 
Since $\chi_\pi(C^{\fg})=C^{\fu,\rho}$, using Harish-Chandra 
isomorphism (see \cite[Example 5.64]{KnappLie}), we have
\begin{align}\label{eq:C=0}
  B^*\(\Lambda(\pi^{*}),\Lambda(\pi^{*})\)-B^{*}\(\varrho^{\fu},\varrho^{\fu}\)=C^{\fu,\rho}.
\end{align}

%
%By \cite[Corollairy 3.32]{HechtSchmid}, the Harish-Chandra parameter of the infinitesimal character of $H_\cdot(n,V_{\pi,K})$ is give by, for some $w\in W(\fh_\bC,\fg_\bC)$,
%\begin{align}
%  w\Lambda(\pi)+\rho^{\fu}-\rho^{\fu(\fb)}\in \fh^*_\bC.
%\end{align}

% We identify $\fh_\bR^*=\sqrt{-1}\fb^*\oplus \ft^*$.
% By definition,
% \begin{align}
%   \rho^{\fu}=\(\frac{\ell\alpha}{2\sqrt{-1}\pi},\rho^{\fu_{\fm}}\)\in  \sqrt{-1}\fb^*\oplus \ft^* \hbox{\quad and \quad} \rho^{\fu(\fb)}=(0, \rho^{\fu_\fm})\in \sqrt{-1}\fb^*\oplus \ft^*.
% \end{align}
%As a Harish-Chandra $(\fm_\bC,K_M)$-module, the Harish-Chandra parameter of
%the possible infinitesimal characters of in the decomposition \eqref{eq:V=Vchi} of $H_\cdot(\fn,V_{\pi,K})$ are given by, for some $w\in W(\fh_\bR,\fu)$,
%\begin{align}\label{eq:sf1}
%\(w\Lambda(\pi)+\rho^\fu-\rho^{\fu(\fb)}\)|_{\ft_\bC}=  w\Lambda(\pi)|_{\ft_\bC}\in \ft^*_\bC.
%\end{align}
%
%\begin{align}\label{eq:sf2}
%\Lambda^j(\fn_\bC)\simeq \Lambda^j(\fn^*_\bC) \simeq  \Lambda^j(\ol{\fn}_\bC)\simeq \Lambda^{2l-j}(\ol{\fn}_\bC^*).
%\end{align}
By \eqref{eq:key1},  \eqref{equbum}, \eqref{eqinHn}, and \eqref{eq:Hn0}, there exist 
$w\in W(\fh_\bC:\fg_{\bC})$, $w'\in W(\ft_{\bC}:\fm_{\bC})\subset 
W(\fh_\bC:\fg_{\bC})$ 
and the highest weight $\mu_{\beta}\in \sqrt{-1}\ft^*$ of an 
irreducible $(\fm_{\bC},K_{M})$-submodule of $ 
\eta_{\beta}^{+}\oplus \eta_{\beta}^{-}$ such that
\begin{align}\label{eq:sf3}
	w\Lambda(\pi^{*})_{|\ft_\bC}=w'\(\mu_{\beta}+\rho^{\fu_\fm}\).
\end{align}

By Proposition \ref{prop:nnam1} and \eqref{eq:rhok88}, we have 
 \begin{align}\label{eqrhobb}
\varrho^{\fu}=\varrho^{\fu(\fb)}+\(\ell \alpha_{0},0\).	
\end{align} 
By \eqref{eqlla0}, \eqref{eqAS2}, \eqref{equbum}, \eqref{eq:C=0}-\eqref{eqrhobb}, there exists 
$w''\in  W(\fh_\bC:\fg_{\bC})$ such that
\begin{align}\label{eq:Cpi=0}
	w''\Lambda(\pi^{*})=\(\pm\beta, 
  \mu_{\beta}+\rho^{\fu_\fm}\)=\(\pm\beta, 
  \mu_{\beta}\)+\rho^{\fu(\fb)}. 
\end{align}
In particular, $\Lambda(\pi^{*})\in \fb^{*}\oplus \sqrt{-1}\ft^{*}$.

%Note that
%by \eqref{eq:MKMsl3}, \eqref{eq:nsl3}, \eqref{eq:MKM} and \eqref{eq:nsopq}, for all $0\l j\l 2l$,
%we have isomorphisms of representations of $\fm$,
%\begin{align}\label{eq:sf2}
% \fn\simeq \ol{\fn}.
%\end{align}
%Consider now $\Lambda^j(\ol{\fn}^*_\bC)$ as an $\fm_\bC\oplus \fa_\bC$-representation.
% Clearly, $\big(\pm\beta/2\sqrt{-1}\pi, \mu_\beta\big)\in \fh_\bR^*$ is the highest
% real weight of an irreducible subrepresentation of
% $\fm_\bC\oplus \fb_\bC$ on
% $\bC_{\pm \beta}\otimes 
% \eta_{\beta}$.

By \eqref{eqdetapm}, $\big(\beta, \mu_\beta\big)\in 
\fb^*\oplus \sqrt{-1}\ft^{*}$ is a highest
 weight of an irreducible $(\fm_\bC\oplus 
\fb_\bC,K_{M})$-submodule  of $\ker D^{S^{\fu^\bot(\fb)}\otimes E}$.
By \cite[Theorem 4.2.2]{HuangDiracCoh}, there exists $w_1\in 
W(\fh_\bC:\fg_{\bC})$ and  a Harish-Chandra parameter 
$\Lambda(\rho)\in \fh_{\bC}^{*}$ of an 
irreducible $\fg$-submodule  of $\rho$,  such that
\begin{align}\label{eq:la2}
  \(\beta, \mu_{\beta}\)=w_{1}\Lambda(\rho)-\rho^{\fu(\fb)}. 
\end{align}

By  \eqref{eqdetapm} and \eqref{eqAS771}, $(-\beta, \mu_\beta)\in \fb^*\oplus \sqrt{-1}\ft^{*}$ is 
also the highest weight of an irreducible   $(\fm_\bC\oplus 
\fb_\bC,K_{M})$-submodule of $\ker D^{S^{\fu^\bot(\fb)}\otimes E}$. 
As before,  there exists $w_2\in W(\fh_\bC:\fg_{\bC})$ and a Harish-Chandra parameter $\Lambda(\rho)\in \fh_{\bC}^{*}$ of an 
irreducible $\fg$-submodule of $\rho$, such that
\begin{align}\label{eq:la3}
  \(-\beta, \mu_{\beta}\)=w_2\Lambda(\rho)-\rho^{\fu(\fb)}.
\end{align}
By \eqref{eq:Cpi=0}-\eqref{eq:la3}, we get \eqref{eq:hou}.
\end{proof}

\begin{cor}\label{cor:forr1}
 For $\beta\in \fb^{*}$, we have
\begin{multline}\label{eq734}
  r_{\eta_{\beta}}=\frac{1}{\chi(K/K_M)}\sum_{\tiny \substack{\pi\in 
  \widehat{G}_{u},
  \chi_\pi\in \cX({\rho^{*}})\\ 0\l i\l \dim \fp_{\fm}\\ 0\l j\l 
  2\ell }  } (-1)^{i+j}n(\pi)
  \Big(\dim
  H^i\(\fm,K_M;H_j(\fn,V_{\pi,K})\otimes  E_{\eta_\beta}^{+}\)\\-\dim 
  H^i\(\fm,K_M;H_j(\fn,V_{\pi,K})\otimes  E_{\eta_\beta}^{-}\)\Big).
\end{multline}
In particular, if $H^{\scriptscriptstyle\bullet }(Z,F)=0$, then for all $\beta\in\fb^{*}$, we have
\begin{align}\label{eq735}
	r_{\eta_{\beta}}=0.
\end{align}
\end{cor}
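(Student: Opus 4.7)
The proof is essentially a direct combination of two inputs that are already in place: the formula \eqref{eq814} from \cite[Corollary 8.15]{Shfried}, and the vanishing criteria of Proposition \ref{prop:vani3} and Theorem \ref{thmHZF009}.

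The plan for \eqref{eq734} is to start with the expression \eqref{eq814}, which gives $r_{\eta_\beta}$ as an alternating sum over all $\pi \in \widehat{G}_u$ satisfying the Casimir condition $\chi_\pi(C^{\fg}) = C^{\fu,\rho}$. The idea is to observe that the summand corresponding to a given $\pi$ is automatically zero unless the stronger condition $\chi_\pi \in \cX(\rho^*)$ holds. Indeed, if the $(\fm,K_M)$-cohomology
\begin{align*}
H^i\bigl(\fm, K_M;\, H_j(\fn,V_{\pi,K}) \otimes E_{\eta_\beta}^{s}\bigr)
\end{align*}
is nonzero for some $i,j$ and $s \in \{\pm\}$, then the hypothesis \eqref{eq:Hn0} of Proposition \ref{prop:vani3} is satisfied, and the conclusion of that proposition gives $\chi_\pi \in \cX(\rho^*)$. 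By contraposition, whenever $\chi_\pi \notin \cX(\rho^*)$, every term $H^i(\fm, K_M; H_j(\fn,V_{\pi,K}) \otimes E_{\eta_\beta}^{\pm})$ vanishes and hence the contribution of $\pi$ to \eqref{eq814} is zero. Restricting the summation index accordingly gives precisely \eqref{eq734}.

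For \eqref{eq735}, the plan is to apply Theorem \ref{thmHZF009}. Assuming $H^{\scriptscriptstyle\bullet}(Z,F) = 0$, relation \eqref{eq:mushiva} tells us that $n(\pi) = 0$ for every $\pi \in \widehat{G}_u$ with $\chi_\pi \in \cX(\rho^*)$. But these are exactly the indices surviving in the restricted sum \eqref{eq734}, so every term in that sum now carries the factor $n(\pi) = 0$, and the total vanishes: $r_{\eta_\beta} = 0$.

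No substantive obstacle remains: all the deep inputs (the trace formula expression \eqref{eq814}, the Dirac cohomology/Harish-Chandra parameter matching in Proposition \ref{prop:vani3}, and the Borel--Wallach type identification of $H^{\scriptscriptstyle\bullet}(Z,F)$ with automorphic $(\fg,K)$-cohomology in Theorem \ref{thmHZF009}) have been established earlier in the paper. The present corollary is then a purely formal two-step manipulation: restrict the sum using Proposition \ref{prop:vani3}, then kill it using \eqref{eq:mushiva}. The only minor point to watch is bookkeeping of the conjugate character $\chi_\rho^{\mathrm{tr}}$ versus $\chi_\rho$ and of the infinitesimal characters of irreducible $\fg$-submodules of $\rho^*$, but this is already handled inside Proposition \ref{prop:vani3} and Theorem \ref{thmHZF009} so no further care is needed at this stage.
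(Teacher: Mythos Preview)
Your proposal is correct and follows exactly the paper's own argument: the paper's proof reads ``By \eqref{eq814} and Proposition \ref{prop:vani3}, we get \eqref{eq734}. From \eqref{eq:mushiva} and \eqref{eq734}, we get \eqref{eq735}.'' Your elaboration of the contraposition step and the use of Theorem \ref{thmHZF009} for \eqref{eq:mushiva} is precisely the intended reasoning.
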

\begin{proof}
By \eqref{eq814} and Proposition 
	\ref{prop:vani3}, we get \eqref{eq734}. From \eqref{eq:mushiva} and 
	\eqref{eq734}, we get  \eqref{eq735}. 
\end{proof}

\begin{re}
		By \eqref{eqTTb2}, \eqref{eq735}, and by Remark \ref{rerCr}, 
		we get \eqref{eq2rd} in the case where $\delta(G)=1$ and $Z_{G}$ is 
		compact. We finish the proof of Theorem \ref{thm1} in full 
		generality. 
\end{re}

\section{An extension to orbifolds}\label{Snontorsionfree}
In this section, we  no longer assume $\Gamma\subset G$ is 
torsion free. Then $Z=\Gamma\backslash G/K$ is a closed Riemannian 
orbifold with Riemannian metric $g^{TZ}$. 
Let us indicate the essential steps in generalising the previous results 
to orbifolds.

% If  $\tau: K\to U(E_{\tau})$ is a representation of 
% $K$ and if $\rho:\Gamma\to \GL_{r}(\bC)$ is a representation of 
% $\Gamma$, we define 
% $\cF_{\tau}$ and $F$ as in \eqref{eqFtau0} and \eqref{eqFhol1}. Then 
% $\cF_{\tau}$  is a Hermitian orbifold vector bundle on $Z$ and $F$ is 
% a flat  orbifold vector bundle.  

% The Casimir operator $C^{\fg}$ induces a generalised  Laplacian $C^{\fg,Z,\tau,\rho}$ acting on 
% $C^{\infty}(Z,\cF_{\tau}\otimes F)$. 

If $\gamma\in \Gamma$,  $\Gamma(\gamma)$ is not always torsion free. 
The cardinality 
\begin{align}
	\left|\ker\(\Gamma(\gamma)\to {\rm Diffeo} 
	(Z(\gamma)/K(\gamma))\)\right|
\end{align} 
depends only on the conjugacy class $[\gamma]$ and will be denoted by 
$n_{[\gamma]}$. We define $B_{[\gamma]}$  by the same 
formula as in \eqref{eqBgamma}. 
 By \cite[Proposition 5.3]{Shen_Yu}, we have
\begin{align}
\frac{	\vol(\Gamma(\gamma)\backslash 
Z(\gamma))}{\vol(K(\gamma))}=\frac{\vol\(B_{[\gamma]}\)}{n_{[\gamma]}}. 
\end{align}

% Let us follow 
% \cite[Section 5.6]{Shen_Yu}. 
By \cite[Remark 
5.6, (5.59)]{Shen_Yu}, as in \eqref{DKVB}, the  closed geodesics (see \cite{GuruprasadHaefiger06} or \cite[Remark 2.26]{Shen_Yu}) on the 
orbifold $Z$ with positive length are given by 
\begin{align}
	\coprod_{[\gamma]\in [\Gamma_{+}]} B_{[\gamma]}.
\end{align} 
For $[\gamma]\in [\Gamma_{+}]$, all the elements of $B_{[\gamma]}$ have the same length $\ell_{[\gamma]} > 0$.

For $[\gamma]\in [\Gamma_{+}]$, the group $\mathbb{S}^{1}$ acts locally freely on the orbifold 
$B_{[\gamma]}$ by rotation,  so that  $B_{[\gamma]}/\mathbb{S}^{1}$ is still a closed  orbifold.  Set
\begin{align}\label{eqdefm}
	m_{[\gamma]}=n_{[\gamma]} \left|\ker\(\bbS^{1}\to 
	\rm{Diffeo}(B_{[\gamma]}) \)\right|\in \mathbf{N}^{*}. 
\end{align}

If $\rho:\Gamma\to \GL_{r}(\bC)$ is a representation of $\Gamma$, for $\Re(\sigma)\gg1$ large enough, we define Ruelle's dynamical zeta 
function $R_{\rho}(\sigma)$ by the same formula  \eqref{defRrho} with $m_{[\gamma]}$ 
defined by \eqref{eqdefm}.  As before, when $\delta(G)\g 2$,  
\begin{align}
	R_{\rho}(\sigma)\equiv1. 
\end{align}
By \cite[Theorem 7.3]{Shen20}, if $\dim Z$ is odd, $R_{\rho}(\sigma)$ has a meromorphic extension to $\sigma\in \bC$.  

Let $\rho:G\to \GL(E)$ be a finite  dimensional complex representation of $G$ with an admissible metric. Let $F$ be the   orbifold flat vector 
bundle on $Z$ associated to $\rho_{|\Gamma}$.  As in Section \ref{sFadmetric}, $F$ is 
equipped canonically with a Hermitian metric $g^{F}$. The 
analytic torsion of $F$ associated to $(g^{TZ},g^{F})$ is defined in \cite{Daiyu}, \cite[Section 4.2]{Shen_Yu} (see also \cite{Ma_Orbifold_immersion}). 

\begin{thm}
The statements of Theorems \ref{thmMS0}, \ref{thm1}, and \ref{thm2} 
still hold for orbifolds. In particular, we get Theorem \ref{Thm2}. 
\end{thm}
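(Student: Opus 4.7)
The plan is to re-run the architecture of Sections \ref{SFCAM}--\ref{S:rep} in the orbifold setting, observing that almost every ingredient is either purely Lie-theoretic (and therefore indifferent to whether $\Gamma$ is torsion free) or has already been developed on orbifolds in the references \cite{Daiyu,Ma_Orbifold_immersion,Shen_Yu,Shen20}. Specifically, the construction of the admissible Hermitian flat orbifold vector bundle $(F,g^{F})$, the Matsushima--Murakami identity of Proposition \ref{prop:D=C-C} (which is purely a local computation on the universal cover $X$), the splitting $\rho=\bigoplus_{\beta}\bC_{\beta}\boxtimes\rho_{\beta}$, the Dirac-cohomology construction of the virtual $M$-representations $\eta_{\beta}$ from Section \ref{srepetab}, the key lifting Theorems \ref{corkey} and \ref{thm:key}, and the identities \eqref{eqAS333} and \eqref{eqsumbeta} all depend only on $G,K,\rho$ and therefore remain valid verbatim.

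First I would dispose of Theorem \ref{thmMS0}: Bismut's orbital integral formula has an orbifold extension (cf.\ \cite{Ma_Orbifold_immersion,Shen_Yu}) in which both the hyperbolic contribution and the elliptic contribution to $\Trs[(N^{\Lambda^{\scriptscriptstyle\bullet}(T^{*}Z)}-m/2)\exp(-t\Box^{Z})]$ are expressed as orbital integrals of Bismut's function, which vanishes whenever $\delta(G)\neq 1$; see \cite[Section 8]{BMZ} and its orbifold counterpart. Next, for Theorem \ref{thm1} with $\delta(G)=1$, I would redo Sections \ref{sproofth1cc} and \ref{sZetaetabeta} on orbifolds: the Ruelle zeta function $R_{\rho}$ and the Selberg zeta functions $Z_{\eta_{\beta}}$ are defined by the same formulas \eqref{defRrho} and \eqref{eqdefsel}, but using the orbifold multiplicity $m_{[\gamma]}$ of \eqref{eqdefm} and the orbifold Euler characteristic $\chi_{\rm orb}(B_{[\gamma]}/\bbS^{1})$. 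The meromorphic extension of $R_{\rho}$ is \cite[Theorem 7.3]{Shen20}, and the determinant formula \eqref{eq:detfor} for $Z_{\eta}$ has its orbifold analogue \cite[Theorem 5.10]{Shen20} (which is proven with a polynomial correction $P_{\eta}$ that absorbs the elliptic contributions of the Selberg trace formula). Plugging these into the chain of identities \eqref{eq:RbyZ}, \eqref{eqsumbetama}, \eqref{eq:RT} yields the asymptotic \eqref{eq1rd} with the same constants $C_{\rho}$, $r_{\rho}$ as defined in \eqref{eqTTb2}.

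To conclude \eqref{eq2rd} when $H^{\scriptscriptstyle\bullet}(Z,F)=0$ and thereby obtain Corollary \ref{cori1} (hence Theorem \ref{thm2} in full generality), I would reprove the content of Section \ref{S:rep}. The Gelfand decomposition \eqref{eq:Geldecomp} holds for any cocompact discrete subgroup, torsion or not. The identifications \eqref{eqKerCpi}--\eqref{eqHoLie2} are Hodge theory for the Casimir operator on an orbifold, which is still an elliptic self-adjoint operator, so Theorem \ref{thmHZF009} carries over; combined with \eqref{eqchiWdd} and the deep input of Vogan--Zuckerman--Salamanca-Riba (Theorem \ref{thm:vankey1}, which is a purely representation-theoretic statement about $\widehat{G}_{u}$), this gives the orbifold version of Proposition \ref{prop:vani3} and hence the vanishing \eqref{eq735} of each $r_{\eta_{\beta}}$, yielding \eqref{eq2rd}. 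The one step that requires genuine care is the reduction of the trace-of-heat-kernel identity behind \eqref{eq814}: one must invoke the orbifold local index theory of \cite{Ma_Orbifold_immersion,Shen_Yu} to separate the hyperbolic term (giving the formula above) from the elliptic term (absorbed into $P_{\eta_{\beta}}$). This is the main technical obstacle, but the required separation is precisely what is accomplished in \cite[Sections 5--7]{Shen20} and \cite[Sections 4--5]{Shen_Yu}, so the synthesis is routine once those orbifold trace-formula machinery is in hand.
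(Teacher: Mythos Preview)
Your proposal is correct and follows essentially the same architecture as the paper's own proof: both argue that the Lie-theoretic constructions (the $\eta_{\beta}$, the lifting theorems, the identities \eqref{eqAS333} and \eqref{eqsumbeta}) are insensitive to torsion in $\Gamma$, while the analytic ingredients (the vanishing Theorem \ref{thmMS0}, the determinant formula of Theorem \ref{thm:detfor}, orbifold Hodge theory, and the Gelfand decomposition \eqref{eq:Geldecomp}) all have orbifold analogues in \cite{Shen_Yu,Shen20,Ma_bourbaki}. One small remark: your concern about separating elliptic from hyperbolic contributions for \eqref{eq814} is slightly misplaced, since that formula is obtained purely from the Gelfand decomposition and Hecht--Schmid (hence carries over verbatim); the elliptic classes only enter through the polynomial $P_{\eta}$ in the orbifold version of \eqref{eq:detfor}, which is handled in \cite[Section 7.2]{Shen20} rather than \cite[Theorem 5.10]{Shen20}.
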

\begin{proof}
	Using the orbifold trace formula \cite[Theorem 5.4]{Shen_Yu} and 
	\cite[Theorem 5.4]{Ma_bourbaki}, we get the orbifold version of 
	Theorem \ref{thmMS0}.

	The proof of Theorem \ref{thm1} in the case of orbifold is similar as before and we need only consider the case 
	$\delta(G)=1$. We can define the Selberg zeta function  
	by	 the same formula \eqref{eqdefsel} with $m_{[\gamma]}$ 
	defined by \eqref{eqdefm}. By \cite[Section 7.2]{Shen20}, the statement 
	of 			Theorem \ref{thm:detfor} still holds for orbifolds. 
	By exactly the same method, the statements of Proposition 
	\ref{propd1RZ}, Theorem \ref{prop:RT11}, Proposition \ref{cor:R}, and Theorem \ref{prop:RT} hold for orbifold. Using the orbifold Hodge theory (c.f. \cite[Theorem 
	4.1]{Shen_Yu}), we can deduce that the statements of Theorem 
	\ref{thmHZF009} and Corollary \ref{cor:forr1} hold as well. In 
	this way, we get  Theorem \ref{thm1} for orbifolds. 
	
	As in the proof of Theorem \ref{thm2} 
		given in Section \ref{sC2}, 	the orbifold version of 
		Theorem \ref{thm2} is a consequence of 	the 	orbifold 
		version of Theorem \ref{thm1}. The proof of our Theorem is 
		completed. 
	\end{proof}

\def\cprime{$'$}
\providecommand{\bysame}{\leavevmode\hbox to3em{\hrulefill}\thinspace}
\providecommand{\MR}{\relax\ifhmode\unskip\space\fi MR }
% \MRhref is called by the amsart/book/proc definition of \MR.
\providecommand{\MRhref}[2]{%
  \href{http://www.ams.org/mathscinet-getitem?mr=#1}{#2}
}
\providecommand{\href}[2]{#2}

%\bibliographystyle{amsalpha}
%\bibliography{/Users/shu/Dropbox/shu/texbib/my}
%\printindex
\end{document}